\numberwithin{equation}{section}
\def\theequation{\arabic{section}.\arabic{equation}}
\newcommand{\be}{\begin{eqnarray}}
\newcommand{\ee}{\end{eqnarray}}
\newcommand{\ce}{\begin{eqnarray*}}
\newcommand{\de}{\end{eqnarray*}}
\newtheorem{theorem}{Theorem}[section]
\newtheorem{condition}{Condition}[section]
\newtheorem{remark}{Remark}[section]
\newtheorem{conjecture}{Conjecture}[section]
\newtheorem{lemma}[theorem]{Lemma}
\newtheorem{definition}[theorem]{Definition}
\newtheorem{proposition}[theorem]{Proposition}
\newtheorem{Examples}[theorem]{Example}
\newtheorem{corollary}[theorem]{Corollary}
\newenvironment{proof of theorem 1.4}{{\it Proof of Theorem 1.4}.}{{\hfill 	
		$\square$\hskip - \parfillskip}}
\newenvironment{proof of theorem 1.5}{{\it Proof of Theorem 1.5}.}{{\hfill 	
		$\square$\hskip - \parfillskip}}
\newenvironment{proof of theorem 1.6}{{\it Proof of Theorem 1.6}.}{{\hfill 	
		$\square$\hskip - \parfillskip}}
\newcommand{\Rmnum}[1]{\expandafter\@slowromancap\romannumeral #1@}
\def\u{\mathbf{u}}
\def\p{\partial}
\def\[{{\Big[}}
\def\]{{\Big]}}
\def\<{{\langle}}
\def\>{{\rangle}}
\def\({{\Big(}}
\def\){{\Big)}}
\def\bx{{\mathbf{x}}}
\def\tr{\mathrm {tr}}
\def\sgn{\mbox{\rm sgn}}
\def\min{{\mathord{{\rm min}}}}
\def\Vol{\mathord{{\rm Vol}}}
\def\={&\!\!=\!\!&}
\def\1{{\mathbf{1}}}
\def\geq{\geqslant}
\def\leq{\leqslant}
\def\k{\kappa}
\def\div{\mathord{{\rm div}}}
\def\u{\mathbf{u}}
\def\p{\partial}
\def\[{{\Big[}}
\def\]{{\Big]}}
\def\<{{\langle}}
\def\>{{\rangle}}
\def\({{\Big(}}
\def\){{\Big)}}
\def\bx{{\mathbf{x}}}
\def\tr{\mathrm {tr}}
\def\W{{\mathcal W}}
\def\sgn{\mbox{\rm sgn}}
\def\min{{\mathord{{\rm min}}}}
\def\Vol{\mathord{{\rm Vol}}}
\def\={&\!\!=\!\!&}
\def\bt{\begin{theorem}}
\def\et{\end{theorem}}
\def\bl{\begin{lemma}}
\def\el{\end{lemma}}
\def\br{\begin{remark}}
\def\er{\end{remark}}
\def\bx{\begin{Examples}}
\def\ex{\end{Examples}}
\def\bd{\begin{definition}}
\def\ed{\end{definition}}
\def\bp{\begin{proposition}}
\def\ep{\end{proposition}}
\def\bc{\begin{corollary}}
\def\ec{\end{corollary}}
\def\geq{\geqslant}
\def\leq{\leqslant}
\def\div{\mathord{{\rm div}}}
 \def\R{\mathbb R}
 \def\R{\mathbb R}
\def\<{\langle} \def\>{\rangle}
\def\bpf{\begin{proof}}
\def\epf{\end{proof}}
	\tikzset{
	pattern size/.store in=\mcSize,
	pattern size = 5pt,
	pattern thickness/.store in=\mcThickness,
	pattern thickness = 0.3pt,
	pattern radius/.store in=\mcRadius,
	pattern radius = 1pt}
\pgfpoint{\mcSize}{\mcSize}}
\tikzset{every picture/.style={line width=0.75pt}} 
\begin{document}
\title{Anisotropic  mean curvature type flow and  capillary Alexandrov-Fenchel inequalities}\thanks{\it {The research is partially supported by NSFC (Nos. 11871053 and 12261105).}}
\author{Shanwei Ding, Jinyu Gao and Guanghan Li}

\thanks{{\it 2020 Mathematics Subject Classification. 53E10, 35K93, 53C21.}}
\thanks{{\it Keywords: Mean curvature type flow, anisotropic capillary hypersurface, anisotropic modified quermassintegral, capillary Alexandrov-Fenchel inequalities}}
\thanks{Email address: dingsw@whu.edu.cn;  jinyugao@whu.edu.cn; ghli@whu.edu.cn}

\address{School of Mathematics and Statistics, Wuhan University, Wuhan 430072, China.
}

\begin{abstract}
In this paper, an anisotropic volume-preserving mean curvature type flow for star-shaped anisotropic $\omega_0$-capillary hypersurfaces in the half-space is studied, and the long-time existence and  smooth convergence to a capillary Wulff shape are obtained. If the initial hypersurface is strictly convex, 
the solution of this flow remains to be strictly convex for all $t>0$ by adopting a new approach applicable to anisotropic capillary setting.
In analogy with closed hypersurfaces,
 if the $\omega_0$-capillary Wulff shape is a $\theta$-capillary hypersurface with constant contact angle $\theta$,  the  quermassintegrals for anisotropic capillary hypersurfaces match 
 the mixed volume of two $\theta$-capillary convex bodies.
  Thus, generalized quermassintegrals for anisotropic capillary hypersurfaces with general Wulff shapes (i.e., the  $\omega_0$-capillary Wulff shape has a variable contact angle) can be defined, which satisfy certain monotonicity properties along the flow.
  As applications, we establish an anisotropic capillary isoperimetric inequality for star-shaped anisotropic capillary hypersurfaces and a family of new Alexandrov-Fenchel inequalities for strictly convex anisotropic capillary hypersurfaces. 
  In particular, we  provide a flow's method to derive the Alexandrov-Fenchel inequalities for two $\theta$-capillary hypersurfaces,  demonstrated 
   in \cite{Xia-arxiv} from the view of point in convex geometry.
\end{abstract}

\maketitle
\setcounter{tocdepth}{2}
\tableofcontents

\section{Introduction}
In convex  geometry, there are some classical results (such as Alexandrov-Fenchel inequalities)  for mixed volume $V(K,\cdots,K,L,\cdots,L)$ of two different convex bodies $K,L\in\mathbb{R}^{n+1}$ (i.e. $\partial K,\partial L$ are strictly convex hypersurfaces). Particular interest is the case that one of the hypersurfaces is not a strictly convex hypersurface.
However, weakening this convexity condition presents significant challenges when using traditional convex geometric methods.
 Guan and Li in \cite{Guan-Li-2009} used  flow's methods to prove the Alexandrov-Fenchel inequalities for the case where  $\partial K$ is a $k$-convex hypersurface and $\partial L=\mathbb{S}^n$ is a unit sphere. Xia \cite{Xia2017} considered the anisotropic case, also used  flow's methods to prove the Alexandrov-Fenchel inequalities for the case where  $\partial K$ is $F$-mean convex hypersurface and $\partial L$ is a  strictly convex hypersurface. 
 Thus anisotropic curvature flow is a significant tool in the study of Alexandrov-Fenchel inequalities.


	The mean curvature flow is one of the classical curvature flows where the hypersurfaces evolve by their mean curvature vector, which has been introduced and studied  by Mullins \cite{Mullins1956}, Brakke \cite{Brakke1978}, Huisken \cite{Huisken1984}, and so on.
	Later,  Huisken  \cite{Huisken1987} modified the mean curvature flow and introduced a volume-preserving mean curvature flow with an extra global term to study the isoperimetric problem.

Inspired by the Minkowski formulas for closed hypersurfaces, Guan and Li \cite{Guan-Li-2015} introduced the locally constrained  volume-preserving  mean curvature type flow $\frac{\partial X}{\partial t}=(n-Hu)\nu$, where $X,H,\nu,$ and $u=\<X,\nu\>$ are the position vector, mean curvature, unit outward normal vector field,  and support function of the evolving hypersurfaces, respectively. It is proved that all the quermassintegrals have  monotonicity, and then a new proof of some Alexandrov-Fenchel inequalities from convex geometry is obtained.
Later, Wei and Xiong \cite{Wei-Xiong2021} considered the anisotropic analogue of locally constrained  volume-preserving mean curvature flow $\frac{\partial X}{\partial t}=(n-H_F\hat{ u})\nu_F$, where $H_F,\nu_F,$ and $\hat{ u}=\<X,\nu\>/F(\nu)$ are the anisotropic mean curvature, anisotropic outward normal vector field,  and anisotropic support function of the evolving hypersurfaces, respectively. For the exact definition of anisotropic geometric quantities we refer to Section \ref{sec 2}.

For the case of hypersurfaces with capillary boundary in the unit Euclidean $(n+1)$-ball, based on the Minkowski type formula \cite{Weng-Xia-2022}*{Proposition 2.8}, 
Wang and Xia \cite{Wang-Xia-2022} introduced the mean curvature type flow for hypersurfaces in the unit Euclidean ball with $\theta$-capillary boundary for the case $\theta=\frac{\pi}{2}$. Wang and Weng \cite{Wang-Weng-2020} considered the general case  $|\cos\theta|<\frac{3n+1}{5n-1}$, and then solved an  isoperimetric problem for hypersurfaces in the unit Euclidean ball with $\theta$-capillary boundary. In \cite{Hu-Wei-yang-zhou2023}, Hu, Wei, Yang and Zhou studied the  mean curvature type flow for general case $\theta\in (0,\frac{\pi}{2}]$, proved the preservation of strict convexity along the flow, and then established a family of new Alexandrov-Fenchel inequalities for strictly convex hypersurfaces in the unit Euclidean ball with capillary boundary.

Recently, Mei,  Wang, and Weng \cite{Mei-Wang-Weng} studied a new locally constrained mean curvature type flow $(\frac{\partial X}{\partial t})^{\perp}=\( (n+n \cos \theta\<\nu,-E_{n+1}\>)-Hu\)\nu$ for hypersurfaces with capillary boundary at a constant angle $\theta\in(0,\pi )$ in the half-space. Here $E_{n+1}$ denotes the $(n+1)$th-coordinate unit vector.
It is shown that the flow preserves the convexity for $\theta\in (0,\frac{\pi}{2}]$, 
and the capillary quermassintegrals evolve monotonically along the flow,  hence a class of new Alexandrov-Fenchel inequalities is established for convex hypersurfaces with capillary boundary in the half-space.
 For more works about mean curvature type flow, one refers to \cite{Guan-Li-Wang-2019,Li-Ma2020,Gao-Luo-Xu2024}.

 As we know that, the isotropic quermassintegrals for closed hypersurfaces match the mixed volume $V(K,\cdots,K,\mathbb{B}^{n+1},\cdots,\mathbb{B}^{n+1})$ with convex body $K$ and unit ball $\mathbb{B}^{n+1}$;  isotropic capillary quermassintegrals $\mathcal{V}_{k+1,\theta}(K)$ for hypersurfaces $\partial K$ with $\theta$-capillary boundary in the half space as introduced by \cite{Wang-Weng-Xia} can be reinterpreted as the mixed volume $V(K,\cdots,K,\widehat{\mathcal{C}_{\theta}},\cdots,\widehat{\mathcal{C}_{\theta}})$ of $(n-k)$ capillary convex bodies $K$ and $(k+1)$ capillary spherical caps $\mathcal{C}_{\theta}$ (see \cite{Xia-2017-convex}*{Proposition 2.15}).

 For anisotropic case, the anisotropic quermassintegrals for closed hypersurfaces match the mixed volume $V(K,\cdots,K,L,\cdots,L)$ with convex bodies $K$ and  $L$ (see \cite{Xia2017} or \cite{Wei-Xiong2022}). It is a natural consideration to construct  suitable anisotropic capillary quermassintegrals which match the mixed volume $V(K_{\theta},\cdots,K_{\theta},L_{\theta},\cdots,L_{\theta})$ of two $\theta$-capillary convex bodies $K_{\theta}$ and $L_{\theta}$.
Then, by establishing the relationship between anisotropic geometric quantities and convex bodies  geometric quantities, anisotropic curvature flow serves as an effective tool for extending the conclusions of convex  geometry.

At present, there are many studies on the curvature flows of anisotropic closed hypersurfaces or hypersurfaces with isotropic capillary boundary. There are also some issues related to hypersurfaces with  anisotropic capillary boundary (ref. \cite{Jia-Wang-Xia-Zhang2023,Guo-Xia,Koiso2023,Koiso2006,Koiso2007-1,Koiso2007-2,Ma-Ma-Yang}), but there is currently no research on flows for   anisotropic capillary hypersurfaces.

Inspired by \cite{Wei-Xiong2021,Hu-Wei-yang-zhou2023,Wang-Xia-2022,Wang-Weng-2020,Mei-Wang-Weng}, in this paper, we mainly consider the flow for  anisotropic capillary hypersurfaces in the half-space
\begin{align*}
	\mathbb{R}_+^{n+1}=\{x\in \mathbb{R}^{n+1} :\<x,E_{n+1}\>>0 \}.
\end{align*}

 Let $\mathcal{W}\subset \mathbb{R}^{n+1}$ be a given smooth closed strictly convex hypersurface containing the origin $O$ with a support function $F:\mathbb{S}^{n}\rightarrow\mathbb{R}_+$. The Cahn-Hoffman map associated with $F$ is given by
 \begin{align*}
 \Psi:\mathbb{S}^n\rightarrow\mathbb{R}^{n+1},\quad	\Psi(x)=F(x)x+\nabla^{\mathbb{S}} {F}(x),
 \end{align*}
where $\nabla^{\mathbb{S}}$ denotes the covariant derivative on $\mathbb{S}^n$ with standard round metric. Let $\Sigma$ be a smooth compact orientable hypersurface in $\overline{\mathbb{R}_+^{n+1}}$ with boundary $\partial\Sigma\subset\partial\mathbb{R}_+^{n+1}$, which encloses a bounded domain $\widehat{\Sigma}$. Let $\nu$ be the unit normal of $\Sigma$ pointing outward of  $\widehat{\Sigma}$. Given a constant  $\omega_0 \in(-F(E_{n+1}), F(-E_{n+1}))$, we say $\Sigma$ is an anisotropic $\omega_0$-capillary hypersurface (see \cite{Jia-Wang-Xia-Zhang2023}) if
\begin{align}
	\label{equ:w0-capillary}
	\<\Psi(\nu),-E_{n+1}\>=\omega_0,\quad \text{on}\ \partial\Sigma.
\end{align}
 The anisotropic contact angle $\hat{\theta}: \partial \Sigma \rightarrow(0, \pi)$ (see \cite{Jia-Wang-Xia-Zhang2023}) is defined by
\begin{equation}
	-\cos \hat{\theta}=\left\{\begin{array}{ll}
	F\left(E_{n+1}\right)^{-1}\left\langle \Psi(\nu),-E_{n+1}\right\rangle, & \text { if }\left\langle \Psi(\nu),-E_{n+1}\right\rangle<0, \\
	0, & \text { if }\left\langle \Psi(\nu),-E_{n+1}\right\rangle=0, \\
	F\left(-E_{n+1}\right)^{-1}\left\langle \Psi(\nu),-E_{n+1}\right\rangle, & \text { if }\left\langle \Psi(\nu),-E_{n+1}\right\rangle>0,
\end{array}\right. \label{equ:def-theta}
\end{equation}
which is a natural generalization of the contact angle in the isotropic case.
If $\hat{\theta}=\pi / 2$, 
it is called a free boundary anisotropic hypersurface. For anisotropic $\omega_0$-capillary hypersurface, $\omega_0\leq 0$ means $\hat{\theta}\in (0,\frac{\pi}{2}]$. If $\W=\mathbb{S}^n$, this corresponds to the isotropic case, where $\hat{\theta}=\theta$, representing an isotropic constant contact angle.


For the convenience of description, we need a constant vector $E_{n+1}^F \in \mathbb{R}^{n+1}$ (ref. \cite{Jia-Wang-Xia-Zhang2023}) defined as
 $$
 E_{n+1}^F= \begin{cases}\frac{\Psi\left(E_{n+1}\right)}{F\left(E_{n+1}\right)}, & \text { if } \omega_0<0, \\[5pt]
  -\frac{\Psi\left(-E_{n+1}\right)}{F\left(-E_{n+1}\right)}, & \text { if } \omega_0>0.\end{cases}
 $$
 Note that $E_{n+1}^F$ is the unique vector in the direction $\Psi\left(E_{n+1}\right)$, whose scalar product with $E_{n+1}$ is 1 (see \cite{Jia-Wang-Xia-Zhang2023}*{eq. (3.2)}). When $\omega_0=0$, one can define it by any unit vector. 

 The model example of anisotropic $\omega_0$-capillary hypersurface is the $\omega_0$-capillary Wulff shape (see \cite{Jia-Wang-Xia-Zhang2023}), which is a part of a Wulff shape in $\overline{\mathbb{R}^{n+1}_+}$ such that the anisotropic capillary boundary condition \eqref{equ:w0-capillary} holds. Denote by ${\W_{r_0,\omega_0}}=\W_{r_0}(x_0)\cap\overline{\mathbb{R}_{+}^{n+1}}$, where $ x_0=r_0\omega_0E^F_{n+1}$ and  $\W_{r_0}(x_0)$ is defined in \eqref{equ:w_r(x)}.

Let $\Sigma_t$ be a family of smooth, embedded anisotropic capillary hypersurfaces in $\overline{\mathbb{R}^{n+1}_+}$ with boundary supported on $\partial\mathbb{R}^{n+1}_+$, which are given by $X(\cdot,t):M\rightarrow\mathbb{R}^{n+1}_+$, and satisfy $\operatorname{int}(\Sigma_t)=X(\operatorname{int}(M),t)\subset \mathbb{R}_+^{n+1}, \partial\Sigma_t=X(\partial M,t)\subset\partial\mathbb{R}^{n+1}_+$.
We study the following locally constrained anisotropic mean curvature flow for anisotropic capillary hypersurfaces defined by
\begin{equation}\label{equ:flow}
	\left\{
	\begin{array}{ll}
		\partial_tX(\cdot,t)=f\nu_F(\cdot,t)+T(\cdot,t), & \text{in}\ M\times[0,T),
		\\[3pt]
		\<\Psi(\nu(X(\cdot,t))),E_{n+1}\>=-\omega_0, & \text{on}\ \partial M\times [0,T),
		\\[3pt]
		X(\cdot,0)=X_0(\cdot) &\text{in}\ M,
	\end{array}	
	\right.
\end{equation}
where $f=nF(\nu)^{-1}\(
F(\nu)+\omega_0\<\nu,E^F_{n+1}\>-\<X,\nu\>H^F_1
\)$ and $T\in T_X\Sigma_t$ is added such that the hypersurfaces preserve the capillary condition along this flow. By \eqref{equ:G(vF,Y)=<v,Y>/F}, $f$ can be also  written as
\begin{align}\label{equ:pfL3.2-4}
	f=n
	+n\omega_0G(\nu_F)(\nu_F,E^F_{n+1})
	-\hat{u}H_F,
\end{align}
where 
the new metric $G(\nu_F(X))$ is defined in Section \ref{sec 2}.
\begin{theorem}\label{thm:flow}
	Let $\omega_0 \in\left(-F\left(E_{n+1}\right), F\left(-E_{n+1}\right)\right)$. If the initial anisotropic $\omega_0$-capillary hypersurface $\Sigma_0$ is star-shaped with respect to the origin in  $\overline{\mathbb{R}_{+}^{n+1}}$, then flow \eqref{equ:flow} has a unique smooth solution for all time. Moreover, the evolving hypersurface smoothly converges to an $\omega_0$-capillary Wulff shape ${\W_{r_0,\omega_0}}$, for some $r_0$ determined by ${\Vol(\widehat{\W_{r_0,\omega_0}})}= \Vol(\widehat{\Sigma_0})$ as $t\rightarrow \infty $.
\end{theorem}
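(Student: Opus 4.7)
The plan is to reduce \eqref{equ:flow} to a scalar parabolic equation with an oblique boundary condition for the radial function of the evolving graph, and then run the standard parabolic machinery together with a monotone geometric functional. Since $\Sigma_0$ is star-shaped with respect to $O$, I would write each $\Sigma_t$ as the radial graph of $\rho(\cdot,t)$ over the hemisphere $\mathbb{S}^n_+:=\mathbb{S}^n\cap\overline{\mathbb{R}^{n+1}_+}$. The tangential component $T$ in \eqref{equ:flow} is chosen precisely so that $\rho$ satisfies a fully nonlinear parabolic PDE whose boundary condition, derived from \eqref{equ:w0-capillary} and the definition of $E_{n+1}^F$, is oblique of Neumann type. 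Short-time existence then follows from the linear theory for uniformly parabolic equations with oblique boundary data.

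Next I would establish the two essential a priori controls. Volume preservation $\frac{d}{dt}\Vol(\widehat{\Sigma_t})=0$ follows by integrating the normal speed $f$ over $\Sigma_t$ and invoking the anisotropic Minkowski formula together with the capillary condition \eqref{equ:w0-capillary}, the particular combination in \eqref{equ:pfL3.2-4} producing the needed cancellation of interior and boundary terms. For monotonicity, I would compute $\frac{d}{dt}|\Sigma_t|_F$ and apply an anisotropic Heintze--Karcher-type inequality in the capillary setting, obtaining $\frac{d}{dt}|\Sigma_t|_F\leq 0$ with equality if and only if $\Sigma_t$ is an $\omega_0$-capillary Wulff shape. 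Combined with the anisotropic capillary isoperimetric inequality, this forces $|\Sigma_t|_F$ to converge to a finite limit.

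Uniform estimates are then built in the order $C^0\to C^1\to C^2\to C^{k,\alpha}$. $C^0$ bounds on $\rho$ come from the maximum principle at interior extrema, with the Hopf lemma ruling out boundary extrema thanks to obliqueness. The $C^1$ estimate (equivalently, preservation of a uniform star-shapedness constant) is obtained by testing the evolution of a suitable angle/support-function quantity at its extremum, again using obliqueness to exclude boundary contributions. The $C^2$ bound, a two-sided estimate on $H_F$, is obtained via the evolution of the anisotropic second fundamental form combined with a double-normal analysis near $\partial M$ adapted to the anisotropic contact angle. Krylov--Safonov and Schauder bootstrap then yield uniform $C^{k,\alpha}$ estimates and long-time existence.

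Convergence follows by combining the uniform estimates with the rigidity of $|\Sigma_t|_F$: along any subsequence $t_j\to\infty$ the limit $\Sigma_\infty$ must satisfy $f\equiv 0$, which by the anisotropic capillary Minkowski rigidity forces $\Sigma_\infty=\W_{r_0,\omega_0}$; the radius $r_0$ is pinned down by $\Vol(\widehat{\W_{r_0,\omega_0}})=\Vol(\widehat{\Sigma_0})$, and full (not merely subsequential) smooth convergence follows from uniqueness of the limit together with standard interpolation. The main obstacle I anticipate is the $C^2$ estimate up to the boundary: the Cahn--Hoffman map $\Psi$ and the Finsler metric $G(\nu_F)$ replace the round structure, and the commutator terms arising when differentiating the oblique boundary condition in the anisotropic frame do not cancel as cleanly as in the isotropic or closed-hypersurface cases, so a new boundary-adapted test quantity in the spirit of \cite{Mei-Wang-Weng} but compatible with the Wulff geometry will likely be needed.
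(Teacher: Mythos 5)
Your overall architecture (radial graph over $\overline{\mathbb{S}^n_+}$, oblique boundary condition, $C^0$ and $C^1$ via maximum principle and Hopf lemma, a monotone functional, subsequential convergence) matches the paper's, but your convergence step contains a genuine gap. From the integrated monotonicity one only obtains $\int_0^\infty\int_{\Sigma_t}|\hat h-\tfrac1n H_F\hat g|^2_{\hat g}\,\hat u\,d\mu_F<\infty$, hence that subsequential limits are anisotropically umbilic, i.e.\ equal to \emph{some} capillary Wulff shape $\W_{r_0,\omega_0}(E_\infty)$; it does \emph{not} follow that the limit satisfies $f\equiv 0$. Since $f$ involves the support function from the origin, translates of $\W_{r_0,\omega_0}$ along $\partial\mathbb{R}^{n+1}_+$ are umbilic but are not static solutions, so "uniqueness of the limit" is exactly what must be proved, not assumed. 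The paper closes this by showing the circumscribed capillary Wulff radius $r_{\max}(t)$ is monotone and converges to $r_0$ (via a comparison at the touching point, \eqref{equ:pf-converge-4}, and Hamilton's trick), which pins down $E_\infty=E^F_{n+1}$. You need some argument of this type.

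Two further corrections to your route. First, the monotone functional is not $|\Sigma_t|_F$ but the capillary area $\mathcal{V}_{1,\omega_0}(\Sigma_t)=\tfrac{1}{n+1}(|\Sigma_t|_F+\omega_0|\widehat{\partial\Sigma_t}|)$: without the wetting term the boundary contributions in the first variation do not cancel. Its monotonicity \eqref{equ:lemma4.3-2} comes from the capillary Minkowski formula \eqref{equ:Minkow} plus the Newton--MacLaurin inequality $H^F_2\le (H^F_1)^2$, not from a Heintze--Karcher inequality, which is the wrong tool for this locally constrained flow. Second, the boundary $C^2$ estimate you identify as the main obstacle is not needed for this theorem: in the graph reduction \eqref{equ:flow-graph} the equation is quasilinear (the coefficients $a^{ij}$ depend only on $(x,\varphi,\nabla^0\varphi)$), so the $C^0$ and $C^1$ bounds already give uniform parabolicity and the standard quasilinear theory with strictly oblique boundary conditions yields all higher-order estimates. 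Two-sided bounds on $H_F$ and the second-fundamental-form analysis are only required later, for the convexity preservation of Theorem \ref{thm:convex preservation}.
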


Geometric inequality is one of the fundamental topics in differential geometry and convex  geometry. The classical isoperimetric inequality in the Euclidean space says that among all bounded domains in $\mathbb{R}^{n+1}$ with fixed enclosed volume, the minimum of the area functional is achieved by round balls. 
Some geometric inequalities can be proved by using  curvature flows,
e.g. for closed hypersurfaces in \cite{Guan-Li-2015}, and for anisotropic analogue in \cite{Wei-Xiong2021}.
In \cite{Ding-Li-2022},  the first and third authors  extended the Alexandrov-Fenchel inequalities in \cite{Guan-Li-2015}.
In \cite{Gao-Li2024},  the second and third authors obtained  the generalization of anisotropic Alexandrov-Fenchel type inequalities,  which extended the  inequalities in \cite{Wei-Xiong2021,Ding-Li-2022}.
In \cite{Wang-Xia-2022,Wang-Weng-2020}, a class of isoperimetric inequalities for hypersurface with capillary boundary in a ball is obtained by the volume-preserving mean curvature type flows. In \cite{Hu-Wei-yang-zhou2023,Mei-Wang-Weng}, it is proved that the mean curvature type flow  preserves the convexity under some range of $\theta$, and then the corresponding Alexandrov-Fenchel inequalities are also obtained. In order to obtain Alexandrov-Fenchel inequalities for anisotropic capillary hypersurfaces, we need to prove that the flow \eqref{equ:flow} preserves the convexity. 

 Due to anisotropy, some classical approaches to estimating the positive lower bound of principal curvatures fail. Motivated by \cite{Xia-2017-convex,Wei-Xiong2022}, we consider using the inverse  anisotropic Gauss map to reparameterize the flow, and then prove the upper bound of the anisotropic principal curvature radii. For isotropic case, the corresponding region  of the reparameterized flow equation is a spherical cap determined by $\omega_0$ (i.e.  $-\cos\theta$). However, for anisotropic case, the corresponding region  of the reparameterized flow equation  is not as regular as the spherical cap, since different Wulff shapes lead to different images of inverse Gaussian maps.
 The region with boundary is determined by $\omega_0$ and  Wulff shape $\W$, since different Wulff shapes can cause different initial boundary value equations. So the necessary condition of convexity preservation may require constraints on $\omega_0$ and Wulff shape $\W$. Inspired by the formula $\eqref{equ:haan}$, we naturally pose the following condition to process boundary estimation: 
 \begin{condition}\label{condition}
 	We assume that
 \begin{align}
 	\label{equ:condition}	\omega_0\leq \frac{Q(z)(Y,Y,A_F(\nu(z))\mu(z))\cdot \<E_{n+1},\mu(z)           \> \cdot F(\nu)}{G(z)(Y,Y)},
 \end{align}
  for all $z\in\W\cap \{x_{n+1}=-\omega_0\}$ and $Y(\neq 0)\in T_{z}\( \W\cap \{x_{n+1}=-\omega_0\} \)$, where $\nu(z)$ is the unit outward normal vector of  $\W $ in $\mathbb{R}^{n+1}$,  $\mu(z)$ is the unit outward co-normal of $\W\cap \{x_{n+1}=-\omega_0\}$ in $\W$,  $A_F$ is the matrix defined by \eqref{equ:A_F>0}  which acts on linear space $T_{\nu(z)}\mathbb{S}^n=T_z\W$, $Q$ is a $(0,3)$-tensor defined by \eqref{equ:Q}, and $\{x_{n+1}=-\omega_0\}:=\{x=(x_1,\cdots,x_{n+1})\in \mathbb{R}^{n+1}: x_{n+1}=-\omega_0\}$.
 \end{condition}
 \begin{remark}\label{rk1}
 	(1) 
 	  Inequality \eqref{equ:condition}
 	  only depends on  Wulff shape $\W$ (or $F$) and $\omega_0$, which means that this condition only imposes some restrictions of the Wulff shape on the sufficiently small neighborhood around the hyperplane $\{x_{n+1}=-\omega_0\}$. Besides, we can see that $F(z), G(z)(Y,Y)$, and $\<-E_{n+1},\mu(z)\>$ are all positive. 
 	
 	(2) If  $\W_{1,\omega_0}$  is an isotropic $\theta$-capillary hypersurface (see Definition  \ref{def:capillary-condex-body}), its Gauss map and that of the anisotropic $\omega_0$-capillary hypersurface $\Sigma$ both have their images  $\mathbb{S}_\theta$, a spherical cap with a constant contact angle $\theta$. 
 	Then the flow can be formulated as an initial boundary value problem on this special domain.  
 	Lemma \ref{lemma:7.2} demonstrates that, Condition \ref{condition} reduces to the requirement $\theta \in (0, \frac{\pi}{2}]$ in this special case.
 	
 	In particular, if $\W=\mathbb{S}^n$, then $\W_{1,\omega_0}=\(\mathbb{S}^{n}+\omega_0E_{n+1}\)\cap\overline{\mathbb{R}^{n+1}_+}$ is a $\theta$-capillary hypersurface, with $\omega_0=-\cos\theta$ and  $Q\equiv 0$. Inequality \eqref{equ:condition}  is equivalent to $\omega_0\leq 0$, that means $\theta\in(0,\frac{\pi}{2}]$.
 	
 	If $\W$ is  any given convex body, then $\W_{1,\omega_0}$ is just the general hypersurface in $\overline{\mathbb{R}_+^{n+1}}$. On its boundary,  the contact angle is not constant and may vary, and the Gauss mapping image $\Sigma$ may exhibit a complicated structure. To address this, certain restrictions are imposed near the boundary of $\W_{1,\omega_0}$, as specified in Condition \ref{condition}.

 	(3) We list some  Wulff shapes and  the corresponding range of values for $\omega_0$ which satisfy Inequality \eqref{equ:condition} in Appendix \ref{Appendix} (see Example \ref{exam:x^4} (2),  Remark \ref{rk:A.2}, Example \ref{exam:x^4T}, and Remark \ref{rk:A.3}).
  	
 	
 	(4) Inequality \eqref{equ:condition} is equivalent to $\tilde{Q}(\tilde{z})\(Y,Y,A_F(\nu(z))\mu(z)\)\leq0$ by Lemma \ref{lem:Q-translate}, where $\tilde{Q}$ is the corresponding $(0,3)$-tensor of the  translated  Wulff shape $\widetilde{\W}=\W+\omega_0E_{n+1}^F$, $\tilde{z}\in\widetilde{\W}\cap\{x_{n+1}=0\}$. More details about the  translated  Wulff shape and related geometric quantities
 	 can be seen in Section \ref{subsec 5.2}.
 \end{remark}
  Due to the extra term $\omega_0G(\nu_{{F}})(\nu_{{F}},E_{n+1}^F)$ in  flow \eqref{equ:flow}, we cannot directly apply the method in  \cite{Wei-Xiong2022} to prove convexity preservation, even with the additional condition of \eqref{equ:condition}. We utilize a new method to deal with this problem then prove the convexity preservation.
\begin{theorem}\label{thm:convex preservation}
 Assume that $\omega_0 \in\left(-F\left(E_{n+1}\right), F\left(-E_{n+1}\right)\right)$, and  Condition \ref{condition} holds.
If the initial anisotropic $\omega_0$-capillary hypersurface $\Sigma_0$ is strictly convex, then the solution of  flow \eqref{equ:flow} is strictly convex for all positive time.
\end{theorem}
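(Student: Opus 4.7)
The plan is to reparametrize flow \eqref{equ:flow} via the inverse anisotropic Gauss map and then run a parabolic maximum principle on the matrix of anisotropic principal curvature radii. Since $\Sigma_0$ is strictly convex, for a short time the anisotropic Gauss map $\nu_F(\cdot,t)\colon \Sigma_t\to\mathcal{W}$ is a diffeomorphism onto a domain $\Omega_t\subset\mathcal{W}$ whose boundary, by the capillary condition \eqref{equ:w0-capillary}, lies in the slice $\mathcal{W}\cap\{x_{n+1}=-\omega_0\}$. Writing $\Sigma_t$ as the graph of its anisotropic support function $s$ over $\Omega_t$, flow \eqref{equ:flow} becomes a fully nonlinear parabolic equation for $s$ with an oblique boundary condition on $\partial\Omega_t$; the anisotropic principal curvature radii matrix $(b_{ij})$ is a second-order expression in $s$, and strict convexity of $\Sigma_t$ is equivalent to positivity of $(b_{ij})$.

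The second step is to derive the evolution equation
\[
\partial_t b_{ij}=\mathcal{L}b_{ij}+\text{(lower order terms)},
\]
where $\mathcal{L}$ is the linearized elliptic operator induced by $H_F$. The zero-order contributions include $n$ and, crucially, the new term $n\omega_0 G(\nu_F)(\nu_F,E_{n+1}^F)$ in \eqref{equ:pfL3.2-4}, which is exactly what prevents a direct transplantation of the argument in \cite{Wei-Xiong2022}. To bound the largest eigenvalue $\lambda_{\max}$ of $(b_{ij})$ from above, I would apply the parabolic maximum principle to $\log\lambda_{\max}$ (or a standard sup-inf regularization) on $\overline{\Omega}_t\times[0,T)$, exploiting concavity properties of $-1/H_F$ in the anisotropic principal radii to absorb the unfavourable quadratic terms and using a tailored test function to control the extra $\omega_0$-contribution, so that the interior estimate reduces to a standard argument.

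The main obstacle, and the precise point where Condition \ref{condition} enters, is the boundary estimate. Suppose $\lambda_{\max}$ attains its spatiotemporal maximum at a boundary point $z_0\in\partial\Omega$ with eigenvector $Y$. Differentiating the oblique boundary condition once along the outward conormal $\mu(z_0)$ forces $Y$ to be tangent to $\partial\Omega\subset\mathcal{W}\cap\{x_{n+1}=-\omega_0\}$; differentiating a second time in the direction $Y$ expresses $\partial_\mu b(Y,Y)$ in terms of a contribution from the tensor $Q$ (following the computation summarized in \eqref{equ:haan}) and a multiple of $\omega_0$. Hopf's lemma then rules out such a boundary maximum precisely when
\[
\omega_0\le \frac{Q(z_0)(Y,Y,A_F(\nu(z_0))\mu(z_0))\,\langle E_{n+1},\mu(z_0)\rangle\,F(\nu)}{G(z_0)(Y,Y)},
\]
which is Condition \ref{condition}. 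This explains why the condition is sharp and why, as noted in Remark \ref{rk1}, it reduces to $\theta\in(0,\pi/2]$ in the isotropic case. Once the uniform upper bound on $(b_{ij})$ is established, a positive lower bound for all principal curvature radii (equivalently, preservation of strict convexity) follows by combining it with the $C^{0}$ and $C^{1}$ estimates of Theorem \ref{thm:flow} and the identity $\hat{u}H_F=n+n\omega_0 G(\nu_F)(\nu_F,E_{n+1}^F)-f$, which keeps $H_F$ uniformly bounded along the flow.
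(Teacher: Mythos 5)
Your overall architecture (inverse anisotropic Gauss map, maximum principle on the largest curvature radius, Hopf lemma at the boundary feeding into Condition \ref{condition}, concavity of $\sigma_n/\sigma_{n-1}$ in the interior) matches the paper's, but two steps as you describe them would not close. First, you propose to work over $\Omega_t\subset\W$ with the oblique, inhomogeneous boundary condition $\<\hat{\nabla}^0 s,E_{n+1}\>=\omega_0 s$. The paper's key device — which it advertises as the ``new method'' — is to pass to the translated Wulff shape $\widetilde{\W}=\W+\omega_0E_{n+1}^F$, under which the capillary problem becomes a free-boundary problem: the reparametrized equation \eqref{equ:flow-inv-Trans-Gauss} has \emph{homogeneous} Neumann data, the $\omega_0$-term disappears from the scalar equation $\partial_t\tilde{s}=n-n\tilde{s}/\Phi$ altogether, and one gets $\tilde{\nabla}_{\tilde e_n}\tilde{s}=0$ and $\tilde{\nabla}_{\tilde e_n}\Phi=0$ on the boundary. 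These two identities are what make the Hopf argument for the test function $W=\log\zeta-a\tilde{s}$ work: without the translation, the normal derivative of the $-as$ term and of $\Phi$ at a boundary maximum have no sign, and your ``tailored test function to control the extra $\omega_0$-contribution'' is exactly the unresolved difficulty, not a resolution of it.

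Second, your boundary analysis is incomplete: differentiating the boundary condition once gives only that the off-diagonal entries $\tilde\tau_{\alpha n}$ vanish, so the radii matrix is block-diagonal and the maximal eigendirection is either tangent to $\partial\Omega$ \emph{or} along the conormal $e_n$; it is not ``forced to be tangent.'' The conormal case requires a separate argument (in the paper, the identity $\tilde{\nabla}_{e_n}\Phi=\dot{\Phi}^{nn}\tilde\tau_{nn,n}+\sum_\alpha\dot{\Phi}^{\alpha\alpha}\tilde\tau_{\alpha\alpha,n}=0$ combined with the second-derivative formula analogous to \eqref{equ:haan} and positivity of $\dot\Phi^{ij}$), and Condition \ref{condition} enters there with the opposite ordering $\tilde\tau_{\alpha\alpha}\le\tilde\tau_{nn}$. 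Finally, the interior bad term $C\tilde\tau_{11}$ is not absorbed by concavity alone; one needs the Guan--Shi--Sui dichotomy on $\dot\phi^1$ (the paper's Proposition \ref{Prop:w-x-prop6.3}) together with the a priori two-sided bounds on $H_F$, which themselves require separate maximum-principle arguments for $H_F$ and $H_F\bar u$ with the Neumann condition $\hat\nabla_{\mu_F}H_F=0$, not merely the algebraic identity you quote.
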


Next, we shall prove that the enclosed volume $\mathcal{V}_{0,\omega_0}(\Sigma_t):=|\widehat{\Sigma}_t|$ is preserved and the anisotropic capillary area (or free energy functional \cite{Koiso2023}) $\mathcal{V}_{1,\omega_0}(\Sigma_t)$ (defined by \eqref{equ:v1}) is monotonic along flow \eqref{equ:flow}, then derive the following anisotropic capillary isoperimetric inequality for star-shaped hypersurfaces.

\begin{theorem}\label{thm:iso-neq}
	Let $ \omega_0 \in\left(-F\left(E_{n+1}\right), F\left(-E_{n+1}\right)\right), n\geq 2$, and $\Sigma\subset\overline{\mathbb{R}^{n+1}_+}$ be a star-shaped anisotropic $\omega_0$-capillary hypersurface, then there holds
	\begin{align}\nonumber
		\frac{|\Sigma|_F+\omega_0|\widehat{\partial\Sigma}|}{|\W_{1,\omega_0}|_F+\omega_0|\widehat{\partial\W_{1,\omega_0}}|}
		\geq
		\left(\frac{|\widehat{\Sigma}|}{|\widehat{\W_{1,\omega_0}}|}\right)^{\frac{n}{n+1}},
	\end{align}
	where $\widehat{\partial\Sigma}=\widehat{\Sigma}\cap \partial\mathbb{R}^{n+1}_+$ denotes the domain enclosed  by $\partial\Sigma\subset\mathbb{R}^n=\partial\mathbb{R}^{n+1}_+$, $|\cdot |$ denotes the volume of the domain in $ \mathbb{R}^n$ or $ \mathbb{R}^{n+1}$, and $|\cdot |_F$ denotes the anisotropic area defined by \eqref{equ:F-area}.
	Moreover, equality holds if and only if $\Sigma$ is an $\omega_0$-capillary Wulff shape, and $|\W_{1,\omega_0}|_F+\omega_0|\widehat{\partial\W_{1,\omega_0}}|={(n+1)}\mathcal{V}_{1,\omega_0}({\W_{1,\omega_0}})=(n+1)\Vol(\widehat{\W_{1,\omega_0}}):=(n+1)|\widehat{\W_{1,\omega_0}}|$ (by Lemma \ref{lemma:7.2} (i)).
\end{theorem}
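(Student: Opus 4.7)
The plan is to use the flow \eqref{equ:flow} from Theorem \ref{thm:flow} as a smooth deformation of $\Sigma_0$. Define
\[
\mathcal{V}_{0,\omega_0}(\Sigma) := |\widehat{\Sigma}|, \qquad \mathcal{V}_{1,\omega_0}(\Sigma) := \tfrac{1}{n+1}\bigl(|\Sigma|_F + \omega_0 |\widehat{\partial\Sigma}|\bigr).
\]
I will show that along the flow $\mathcal{V}_{0,\omega_0}$ is preserved while $\mathcal{V}_{1,\omega_0}$ is monotone non-increasing. By Theorem \ref{thm:flow}, $\Sigma_t \to \W_{r_0,\omega_0}$ smoothly with $|\widehat{\W_{r_0,\omega_0}}| = |\widehat{\Sigma_0}|$, and since the Wulff shape satisfies the homogeneous scaling $\mathcal{V}_{k,\omega_0}(\W_{r_0,\omega_0}) = r_0^{n+1-k}\mathcal{V}_{k,\omega_0}(\W_{1,\omega_0})$ for $k=0,1$, eliminating $r_0$ between these two relations yields the claimed inequality immediately.

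The volume preservation is the Minkowski formula built into the design of $f$: the tangential term $T$ does not affect the enclosed volume, so
\[
\frac{d}{dt}\mathcal{V}_{0,\omega_0}(\Sigma_t) = \int_{\Sigma_t} f\, F(\nu)\, d\mu,
\]
and plugging in \eqref{equ:pfL3.2-4} together with the divergence-theorem identity $\int_\Sigma \langle \nu, E_{n+1}^F\rangle\, d\mu = |\widehat{\partial\Sigma}|$ (applied to the constant vector field $E_{n+1}^F$) and the anisotropic capillary Minkowski identity obtained by integrating $\operatorname{div}_\Sigma(F(\nu) X^T)$ shows that this integral vanishes identically. For the monotonicity of $\mathcal{V}_{1,\omega_0}$, the first variation of the capillary anisotropic area --- in which the boundary term produced by $\omega_0 |\widehat{\partial\Sigma_t}|$ is precisely designed to cancel the capillary boundary contribution of $|\Sigma_t|_F$ under \eqref{equ:w0-capillary} --- gives
\[
(n+1)\frac{d}{dt}\mathcal{V}_{1,\omega_0}(\Sigma_t) = \int_{\Sigma_t} H_F\, f\, F(\nu)\, d\mu.
\]
Substituting \eqref{equ:pfL3.2-4} and combining the Minkowski identity above, namely $\int_{\Sigma_t} \hat{u} H_F F\, d\mu = n(n+1)\mathcal{V}_{1,\omega_0}(\Sigma_t)$, with a weighted Cauchy--Schwarz inequality against $\int_{\Sigma_t}\hat{u} F\,d\mu = (n+1)\mathcal{V}_{0,\omega_0}(\Sigma_t)$ --- or equivalently with an anisotropic Heintze--Karcher inequality for $\omega_0$-capillary hypersurfaces --- yields $\frac{d}{dt}\mathcal{V}_{1,\omega_0}(\Sigma_t)\leq 0$, with equality exactly on an $\omega_0$-capillary Wulff shape.

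Passing to the limit $t\to\infty$ in $\mathcal{V}_{1,\omega_0}(\Sigma_0) \geq \mathcal{V}_{1,\omega_0}(\Sigma_t) \to \mathcal{V}_{1,\omega_0}(\W_{r_0,\omega_0})$ and applying the scaling relations completes the inequality; the rigidity follows by tracking the Cauchy--Schwarz equality condition, together with the identity $|\W_{1,\omega_0}|_F + \omega_0|\widehat{\partial\W_{1,\omega_0}}| = (n+1)|\widehat{\W_{1,\omega_0}}|$ of Lemma \ref{lemma:7.2}(i). The main technical obstacle is the first variation in the computation of $\frac{d}{dt}\mathcal{V}_{1,\omega_0}$: one must verify that the boundary contributions arising from $\frac{d}{dt}|\Sigma_t|_F$ and $\omega_0\frac{d}{dt}|\widehat{\partial\Sigma_t}|$ cancel exactly under the anisotropic capillary condition \eqref{equ:w0-capillary}. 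This cancellation is the structural reason behind the specific choice of the tangential correction $T$ in \eqref{equ:flow} (which keeps $\partial\Sigma_t$ pinned to $\partial\mathbb{R}^{n+1}_+$ while preserving the anisotropic capillary angle) and behind the appearance of the coefficient $\omega_0$ in the definition of $\mathcal{V}_{1,\omega_0}$.
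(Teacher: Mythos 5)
Your overall strategy coincides with the paper's: run the flow \eqref{equ:flow}, show $\mathcal{V}_{0,\omega_0}$ is preserved and $\mathcal{V}_{1,\omega_0}$ is non-increasing (Lemmas \ref{lemma-evolution-2} and \ref{lemma-evolution}), pass to the limiting $\omega_0$-capillary Wulff shape from Theorem \ref{thm:flow}, and conclude by the homogeneity $\mathcal{V}_{k,\omega_0}(\W_{r_0,\omega_0})=r_0^{n+1-k}\mathcal{V}_{k,\omega_0}(\W_{1,\omega_0})$. Your identification of the boundary cancellation in $\frac{d}{dt}\mathcal{V}_{1,\omega_0}$ as the main technical point is exactly where the paper works (the $\mathcal{R}(\mathcal{P}(\Psi(\nu)))$ computation in Lemma \ref{lemma-evolution-2}), and the divergence identity $\int_\Sigma\langle\nu,E^F_{n+1}\rangle\,d\mu_g=|\widehat{\partial\Sigma}|$ is the one used in Lemma \ref{lemma:7.2}(i).

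The one step whose stated justification would not go through is the sign of $\frac{d}{dt}\mathcal{V}_{1,\omega_0}$. An \emph{integral} weighted Cauchy--Schwarz between $\int_{\Sigma_t}H_F\hat u\,d\mu_F=n(n+1)\mathcal{V}_{1,\omega_0}$ and $\int_{\Sigma_t}\hat u\,d\mu_F=(n+1)\mathcal{V}_{0,\omega_0}$ only bounds $\int\hat u H_F^2\,d\mu_F$ from below by $n^2(n+1)\mathcal{V}_{1,\omega_0}^2/\mathcal{V}_{0,\omega_0}$, whereas the term it must dominate, $\int H_F\bigl(n+n\omega_0G(\nu_F)(\nu_F,E^F_{n+1})\bigr)d\mu_F=n^2\int H_2^F\hat u\,d\mu_F$ (by the $k=1$ Minkowski formula \eqref{equ:Minkow}), is not controlled by $\mathcal{V}_{1,\omega_0}^2/\mathcal{V}_{0,\omega_0}$ without already knowing an Alexandrov--Fenchel-type inequality; and a Heintze--Karcher route would require mean convexity, which is not assumed (only star-shapedness). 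What actually closes the argument, and what the paper does in \eqref{equ:lemma4.3-2}, is the \emph{pointwise} Newton--MacLaurin inequality $H_2^F\le (H_1^F)^2$ (equivalently $\frac{2n}{n-1}\sigma_2(\kappa^F)-H_F^2=-\frac{n}{n-1}\bigl|\hat h-\frac1nH_F\hat g\bigr|_{\hat g}^2\le 0$) applied under the weight $\hat u>0$ after substituting the $k=1$ Minkowski formula; this also delivers the rigidity, since pointwise equality forces $\Sigma$ to be umbilic in the anisotropic sense, i.e.\ an $\omega_0$-capillary Wulff shape. With that correction, your proof matches the paper's.
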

Recently, Wang, Weng, and Xia \cite{Wang-Weng-Xia} constructed some suitable quermassintegrals for capillary hypersurface in the half-space. For anisotropic case, we are inspired to define the anisotropic capillary quermassintegrals $\mathcal{V}_{k,\omega_0}(\Sigma_t)$ in Section \ref{sec 6}. Combining the monotonicity of $\mathcal{V}_{k,\omega_0}(\Sigma_t)$ derived in Theorem  \ref{Thm6}, we have the new Alexandrov-Fenchel inequalities as follows.
\begin{theorem}
		\label{thm:AF-neq}
	Let $ \omega_0 \in\left(-F\left(E_{n+1}\right), F\left(-E_{n+1}\right)\right), n\geq 2$, and $\Sigma\subset\overline{\mathbb{R}^{n+1}_+}$ be a strictly convex anisotropic $\omega_0$-capillary hypersurface. If Condition \ref{condition} holds, then there holds
	\begin{align*}
		\left(\frac{\mathcal{V}_{k, \omega_0}({\Sigma})}{\mathcal{V}_{k, \omega_0}( \W_{1,\omega_0} )}\right) ^{\frac{1}{n+1-k}}
		\geq
		\left(\frac{\mathcal{V}_{0, {\omega_0}}({\Sigma})}{\mathcal{V}_{0, \omega_0}( \W_{1,\omega_0} )}\right)^{\frac{1}{n+1}}, \quad 1 \leq k< n
		,
	\end{align*}
		where equality holds if and only if $\Sigma$ is an $\omega_0$-capillary Wulff shape, and  $\mathcal{V}_{k, \omega_0}( \W_{1,\omega_0} )=\mathcal{V}_{0, \omega_0}( \W_{1,\omega_0} )=\Vol(\widehat{\W_{1,\omega_0}})$ (by Lemma \ref{lemma:7.2} (i)).
\end{theorem}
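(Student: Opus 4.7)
The plan is to prove Theorem \ref{thm:AF-neq} by invoking the flow \eqref{equ:flow} as a deformation path from the given strictly convex $\Sigma$ to an $\omega_0$-capillary Wulff shape, and then combining the preservation, monotonicity and homogeneity properties to yield the inequality. First I would start the flow at $\Sigma_0=\Sigma$. Since Condition \ref{condition} holds, Theorem \ref{thm:convex preservation} guarantees that $\Sigma_t$ remains strictly convex for all $t>0$, hence each $\mathcal V_{k,\omega_0}(\Sigma_t)$ is well defined. Theorem \ref{thm:flow} provides long-time existence and smooth convergence of $\Sigma_t$ to an $\omega_0$-capillary Wulff shape $\W_{r_0,\omega_0}$ with $\Vol(\widehat{\W_{r_0,\omega_0}})=\Vol(\widehat\Sigma)$, so in particular $\mathcal V_{0,\omega_0}(\Sigma)=\mathcal V_{0,\omega_0}(\W_{r_0,\omega_0})$. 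Theorem \ref{Thm6} (the monotonicity of $\mathcal V_{k,\omega_0}$ along \eqref{equ:flow}) then forces
\begin{equation*}
\mathcal V_{k,\omega_0}(\Sigma)\;\ge\;\lim_{t\to\infty}\mathcal V_{k,\omega_0}(\Sigma_t)\;=\;\mathcal V_{k,\omega_0}(\W_{r_0,\omega_0}),\qquad 1\le k<n.
\end{equation*}

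Next I would exploit the scaling behavior of the anisotropic capillary quermassintegrals. Because $\W_{r,\omega_0}$ is obtained from $\W_{1,\omega_0}$ by dilation with factor $r$, the definition of $\mathcal V_{k,\omega_0}$ given in Section \ref{sec 6} yields the homogeneity $\mathcal V_{k,\omega_0}(\W_{r,\omega_0})=r^{n+1-k}\mathcal V_{k,\omega_0}(\W_{1,\omega_0})$. Combining this with the identity $\mathcal V_{0,\omega_0}(\Sigma)=r_0^{n+1}\mathcal V_{0,\omega_0}(\W_{1,\omega_0})$ gives
\begin{equation*}
r_0=\left(\frac{\mathcal V_{0,\omega_0}(\Sigma)}{\mathcal V_{0,\omega_0}(\W_{1,\omega_0})}\right)^{\!\!\frac{1}{n+1}},
\qquad
\mathcal V_{k,\omega_0}(\W_{r_0,\omega_0})=r_0^{\,n+1-k}\mathcal V_{k,\omega_0}(\W_{1,\omega_0}).
\end{equation*}
Substituting into the monotonicity inequality and taking the $\frac{1}{n+1-k}$-th root produces exactly the desired Alexandrov--Fenchel inequality.

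For the equality case, if equality holds in the statement then $\mathcal V_{k,\omega_0}(\Sigma_t)$ must be constant in $t$, so the differential inequality $\tfrac{d}{dt}\mathcal V_{k,\omega_0}(\Sigma_t)\le 0$ from Theorem \ref{Thm6} becomes an equality. The rigidity half of the Minkowski-type identity driving that monotonicity (the pointwise equality case of the associated anisotropic Newton--Maclaurin inequality at $t=0$) then forces $\Sigma$ to satisfy $\hat u\, H_F^k=c\, H_F^{k-1}$ for a suitable constant, which together with the capillary boundary condition characterizes $\omega_0$-capillary Wulff shapes. Conversely, the Wulff shape clearly realizes equality by the scaling identity above. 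The genuinely new work is essentially algebraic once the three input results \ref{thm:flow}, \ref{thm:convex preservation} and \ref{Thm6} are in hand; the main obstacle, and the step that really makes the argument go through in the anisotropic capillary setting, is the convexity preservation under the additional Condition \ref{condition}—without it the quermassintegrals $\mathcal V_{k,\omega_0}$ with $k\ge 2$ lose their sign and the monotonicity argument collapses.
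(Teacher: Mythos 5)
Your proposal is correct and follows essentially the same route as the paper: run flow \eqref{equ:flow} from $\Sigma$, use Theorem \ref{thm:convex preservation} to keep strict convexity, apply the monotonicity of $\mathcal{V}_{k,\omega_0}$ from Theorem \ref{Thm6} together with volume preservation and convergence to $\W_{r_0,\omega_0}$ from Theorem \ref{thm:flow}, and conclude by the scaling identity $\mathcal{V}_{k,\omega_0}(\W_{r_0,\omega_0})=r_0^{\,n+1-k}\mathcal{V}_{k,\omega_0}(\W_{1,\omega_0})$ with $r_0^{n+1}=\mathcal{V}_{0,\omega_0}(\Sigma)/\mathcal{V}_{0,\omega_0}(\W_{1,\omega_0})$. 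The equality discussion via rigidity in the Newton--MacLaurin step also matches the paper's argument.
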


\begin{remark}
	
(1) If we take $\W=\mathbb{S}^n$, Theorem \ref{thm:iso-neq} matches the capillary isoperimetric inequality in \cite{Mei-Wang-Weng}*{Corollary 1.3}.

(2) If we take $\W=\mathbb{S}^n$, Theorem \ref{thm:AF-neq} matches the  Alexandrov-Fenchel inequalities in \cite{Mei-Wang-Weng}*{Theorem 1.4}, since Remark \ref{rk1} (2).

(3) The Wulff shape of the isoperimetric inequality in Theorem \ref{thm:iso-neq} does not need to satisfy Condition \ref{condition}, and  $\hat{\theta}\in(0,\pi)$.

\end{remark}

Mei, Wang, Weng, and Xia  studied the  theory for capillary convex bodies in the half-space for the first time in \cite{Xia-arxiv}, where the capillary convex bodies, the  Minkowski  sum in $\mathcal{K}_{\theta}$, and the mixed volume  $V(K_1,\cdots,K_{n+1})$ for $K_i\in\mathcal{K}_{\theta}$  as given in formula \eqref{equ:VKKKLLL} were introduced, and the general Alexandrov-Frenchel inequalities (see \cite{Xia-2017-convex}*{Theorem 1.1}) were proved from the view of point in convex geometry. Here $\mathcal{K}_{\theta}$ denotes the set of all $\theta$-capillary convex bodies (see Section \ref{sec 7}).  Specifically, the isotropic capillary Alexandrov-Frenchel inequalities which involve strictly convex $\theta$-capillary hypersurface ${\Sigma}$ and capillary spherical cap $\mathcal{C}_{\theta}$ were obtained. 

Inspired by \cite{Xia-arxiv}, we derive  Lemma \ref{lemma:7.2}, which further explains and expresses the anisotropic geometric quantity   $\mathcal{V}_{k,\omega_0}$  defined in Section \ref{sec 6}. For a general Wulff shape, the image of  Gauss map of $\W_{1,\omega_0}$ may be an irregular domain $\mathcal{A}$ on $\mathbb{S}^n$,  corresponding to a wider range of geometric problems. 
Specifically, if $\W_{1,\omega_0}$ is an isotropic $\theta$-capillary hypersurface, then Condition \ref{condition} is equivalent to $\theta\in(0,\frac{\pi}{2}]$ (Lemma \ref{lemma:7.1}), and Theorem \ref{thm:AF-neq} implies the following inequalities \ref{equ:cor-7.3} (concerning  $\theta$-capillary hypersurfaces $\partial K$ and $\partial L$, $\partial L$ does not need to be  the capillary spherical cap). Theorem \ref{thm:iso-neq} implies the inequality \eqref{equ:cor7.3-1} which we call  the capillary $L_1$-Minkowski inequality for non-convex case.

\begin{corollary}
	\label{cor:7.3}
	(i) Let $\theta\in(0, \pi)$ and $ n\geq 2$. Given a star-shaped $\theta$-capillary hypersurface $\Sigma$ with $K=\widehat{\Sigma}$ and a capillary convex body  $L\in\mathcal{K}_{\theta}$, we have
	\begin{align}\label{equ:cor7.3-1}
		\int_{\Sigma}\ell(\nu(X)) {\rm d}\mu_g\geq(n+1) \Vol(K)^{\frac{n}{n+1}}\Vol(L)^{\frac{1}{n+1}}
		,
	\end{align}
		where $\ell(z)=F(z)+\omega_0\<z,E_{n+1}^F\>\ (z\in\mathbb{S}^n)$ is the  support function of $\mathcal{C}_{\theta}$.
	Equality holds if and only if there exists a constant $c\in\mathbb{R}^+$ and a constant vector $b\in\partial\overline{\mathbb{R}^{n+1}_+}$ such that  $K=cL+b$.
	
	(ii) Let $\theta\in(0, \frac{\pi}{2}]$ and $ n\geq 2$. Given two capillary convex bodies $K,L\in\mathcal{K}_{\theta}$, 
	we have
	\begin{align}\label{equ:cor-7.3}
		V\(\underbrace{K, \cdots, K}_{(n-k) \text { copies }}, \underbrace{L, \cdots, L}_{(k+1) \text { copies }}\)\geq \Vol(K)^{\frac{n-k}{n+1}}\Vol(L)^{\frac{k+1}{n+1}}, \quad k=0,\cdots,n-1.
	\end{align}
	Equality holds if and only if there exists a constant $c\in\mathbb{R}^+$ and a constant vector $b\in\partial\overline{\mathbb{R}^{n+1}_+}$ such that  $K=cL+b$.
\end{corollary}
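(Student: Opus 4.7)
The plan is to derive both inequalities by reducing to Theorems \ref{thm:iso-neq} and \ref{thm:AF-neq}, where the capillary convex body $L$ plays the role of the anisotropic Wulff shape. More precisely, for $L \in \mathcal{K}_\theta$ (smoothed by strictly convex approximation if necessary), one selects a closed Wulff shape $\mathcal{W}$ with support function $F$ so that the translated body $\widetilde{\mathcal{W}} = \mathcal{W} + \omega_0 E_{n+1}^F$ of Remark \ref{rk1}(4) recovers $L = \widehat{\mathcal{W}_{1,\omega_0}}$, with $\omega_0 = -\cos\theta$. Under this identification, $\ell(z) = F(z) + \omega_0\langle z, E_{n+1}^F\rangle$ is the Euclidean support function of $\widetilde{\mathcal{W}}$, and Lemma \ref{lemma:7.2} matches the anisotropic capillary quermassintegral $\mathcal{V}_{k,\omega_0}(\Sigma)$ with the mixed volume $V(\underbrace{K,\ldots,K}_{n+1-k},\underbrace{L,\ldots,L}_{k})$, where $K = \widehat{\Sigma}$.

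For part (i), I would first show $\int_\Sigma \ell(\nu)\,d\mu_g = |\Sigma|_F + \omega_0|\widehat{\partial\Sigma}|$ by applying the divergence theorem on $\widehat{\Sigma}$ to the constant vector field $E_{n+1}^F$: since $\langle E_{n+1}^F, E_{n+1}\rangle = 1$, its flux through the flat piece $\widehat{\partial\Sigma}$ equals $|\widehat{\partial\Sigma}|$, and divergence-freeness gives $\int_\Sigma \langle \nu, E_{n+1}^F\rangle\,d\mu_g = |\widehat{\partial\Sigma}|$. Combined with Lemma \ref{lemma:7.2}(i), namely $|\mathcal{W}_{1,\omega_0}|_F + \omega_0|\widehat{\partial\mathcal{W}_{1,\omega_0}}| = (n+1)\Vol(L)$, Theorem \ref{thm:iso-neq} yields \eqref{equ:cor7.3-1} at once.

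For part (ii), Remark \ref{rk1}(2) (or Lemma \ref{lemma:7.1}) shows that $\theta \in (0,\pi/2]$ forces Condition \ref{condition}, so Theorem \ref{thm:AF-neq} is available; after the mixed-volume identification and the reindexing $k_{\mathrm{thm}} = k_{\mathrm{cor}} + 1$, it produces $V(\underbrace{K,\ldots,K}_{n-k},\underbrace{L,\ldots,L}_{k+1}) \ge \Vol(K)^{(n-k)/(n+1)} \Vol(L)^{(k+1)/(n+1)}$. The equality characterization descends from those of the two theorems: equality forces $\Sigma$ to be an $\omega_0$-capillary Wulff shape, which in the convex-body picture amounts to $K = cL + b$ with $c > 0$ and $b \in \partial\overline{\mathbb{R}^{n+1}_+}$. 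The main technical step is Lemma \ref{lemma:7.2}, which couples anisotropic geometric data to the capillary mixed-volume theory of \cite{Xia-arxiv}; a minor additional subtlety is that Theorem \ref{thm:AF-neq} is only stated for $1 \le k < n$, so the borderline case $k_{\mathrm{cor}} = n-1$ requires either extending the monotone-flow argument to $k = n$ or a symmetry step swapping the roles of $K$ and $L$.
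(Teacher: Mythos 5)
Your proposal is correct and follows essentially the same route as the paper: realize $L$ as a capillary Wulff shape, check that $\Sigma$ then becomes an anisotropic capillary hypersurface for that Wulff shape, and push Theorems \ref{thm:iso-neq} and \ref{thm:AF-neq} through Lemmas \ref{lemma:7.1} and \ref{lemma:7.2}. Two remarks on the differences. First, the normalization: the paper simply takes $\W=\partial\bar L$ (where $\bar L$ is the full convex body with $L=\bar L\cap\overline{\mathbb{R}^{n+1}_+}$) and $\omega_0=0$, so $\W_{1,0}=\W\cap\overline{\mathbb{R}^{n+1}_+}$ is the capillary part of $\partial L$ with no translation at all; your choice $\omega_0=-\cos\theta$ requires solving $\W=\partial\bar L-\omega_0E_{n+1}^F$ where $E_{n+1}^F$ itself depends on $\W$ (this is resolvable, since $E_{n+1}^F=p/\<p,E_{n+1}\>$ for $p$ the point of $\partial\bar L$ with outward normal $E_{n+1}$, but it buys you nothing: Lemma \ref{lemma:7.1}(i) yields Condition \ref{condition} for any such normalization once $\theta\leq\pi/2$). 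In either normalization you should make explicit the step the paper records as Lemma \ref{lemma:7.1}(ii): the isotropic $\theta$-capillary condition on $\partial\Sigma$ implies the anisotropic condition $\<\Psi(\nu),-E_{n+1}\>=\omega_0$, because both Gauss maps have image $\mathbb{S}_\theta$ and the points of the Wulff shape with normal in $\partial\mathbb{S}_\theta$ lie exactly on the cutting hyperplane; without this, Lemma \ref{lemma:7.2} and the flow theorems do not apply to $\Sigma$. Second, your treatment of the borderline index is right and is actually more careful than the paper's: Theorem \ref{thm:AF-neq} gives $\mathcal{V}_{k+1,\omega_0}$ only for $k=0,\dots,n-2$, and the case $k=n-1$ of \eqref{equ:cor-7.3} genuinely needs the symmetry of the mixed volume plus the $k=0$ case applied to the pair $(L,K)$; the paper's proof states the range $k=0,\dots,n-2$ and then asserts the full range without comment. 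Your divergence-theorem identity $\int_\Sigma\ell(\nu)\,{\rm d}\mu_g=|\Sigma|_F+\omega_0|\widehat{\partial\Sigma}|$ is precisely the $k=0$ computation inside the paper's proof of Lemma \ref{lemma:7.2}(i), so part (i) goes through as you describe.
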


\begin{remark}
	Inequality \eqref{equ:cor-7.3}  corresponds to the inequality involving two different convex bodies in \cite{Xia-arxiv}. If we denote by $\mathcal{K}_{\omega_0}$ the class of anisotropic $\omega_0$-capillary convex bodies, then we may define  mixed volumes for 	
	 $m$ different anisotropic $
	 \omega_0$-capillary convex bodies. 
	 We conjecture that Alexandrov-Fenchel inequalities analogous to \cite{Xia-arxiv}*{Theorem 1.1} hold in this case by Theorem \ref{thm:iso-neq} and \ref{thm:AF-neq}.
\end{remark}
\begin{conjecture}
	Let $\widehat{\Sigma}_i \in \mathcal{K}_{\omega_0}$ for $1 \leq i \leq n+1$. Then
$$
V^2\left(\widehat{\Sigma}_1, \widehat{\Sigma}_2, \widehat{\Sigma}_3, \cdots, \widehat{\Sigma}_{n+1}\right) \geq V\left(\widehat{\Sigma}_1, \widehat{\Sigma}_1, \widehat{\Sigma}_3, \cdots, \widehat{\Sigma}_{n+1}\right) V\left(\widehat{\Sigma}_2, \widehat{\Sigma}_2, \widehat{\Sigma}_3, \cdots, \widehat{\Sigma}_{n+1}\right),
$$
with equality holding if and only if there exists a constant $c\in\mathbb{R}^+$ and a constant vector $b\in\partial\overline{\mathbb{R}^{n+1}_+}$ such that  $\widehat{\Sigma}_1=c\widehat{\Sigma}_2+b$.



\end{conjecture}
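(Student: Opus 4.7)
The plan is to mirror the convex-geometric development carried out in \cite{Xia-arxiv} for $\theta$-capillary convex bodies, but replacing the round sphere $\mathbb{S}^n$ by the Wulff shape $\mathcal{W}$ throughout. First I would introduce the class $\mathcal{K}_{\omega_0}$ of anisotropic $\omega_0$-capillary convex bodies as those $K\subset\overline{\mathbb{R}^{n+1}_+}$ whose boundary portion $\Sigma=\partial K\cap\mathbb{R}^{n+1}_+$ satisfies \eqref{equ:w0-capillary}. For each such $K$ define a capillary support function
\begin{equation*}
s_K(z)\ :=\ \sup_{x\in K}\bigl\langle x,\,z\bigr\rangle,\qquad z\in\mathcal{A}\subset\mathbb{S}^n,
\end{equation*}
where $\mathcal{A}$ is the image on $\mathbb{S}^n$ of the anisotropic Gauss map of $\mathcal{W}_{1,\omega_0}$. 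The key normalization is that $s_K-\omega_0 \langle\,\cdot\,,E_{n+1}^F\rangle$ should behave as an ordinary support function on $\mathcal{A}$. Using this, define the anisotropic capillary Minkowski sum
\begin{equation*}
\lambda_1 K_1\#_{\omega_0}\lambda_2 K_2\ :=\ \bigl\{x\in\overline{\mathbb{R}^{n+1}_+}\ :\ \langle x,z\rangle\le\lambda_1 s_{K_1}(z)+\lambda_2 s_{K_2}(z)\ \forall z\in\mathcal{A}\bigr\},
\end{equation*}
for $\lambda_1,\lambda_2\ge 0$, and verify that this sum stays in $\mathcal{K}_{\omega_0}$.

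Next I would show that $\mathrm{Vol}\bigl(\lambda_1 \widehat{\Sigma}_1\#_{\omega_0}\cdots\#_{\omega_0}\lambda_m \widehat{\Sigma}_m\bigr)$ is a homogeneous polynomial of degree $n+1$ in $(\lambda_i)$, using the divergence-theorem identity $(n+1)\mathrm{Vol}(K)=\int_{\partial K\cap\mathbb{R}^{n+1}_+}\bigl(s_K(\nu)+\omega_0\langle\nu,E_{n+1}^F\rangle\bigr)\,d\mu_g$ (the natural anisotropic analogue of \cite{Xia-arxiv}). The mixed volume $V(\widehat{\Sigma}_1,\dots,\widehat{\Sigma}_{n+1})$ is defined as the corresponding coefficient, and one checks multilinearity, symmetry, monotonicity, and the translation invariance $V(\widehat{\Sigma}_1+b,\widehat{\Sigma}_2,\dots)=V(\widehat{\Sigma}_1,\widehat{\Sigma}_2,\dots)$ for $b\in\partial\overline{\mathbb{R}^{n+1}_+}$ directed along $E_{n+1}^F$-complementary directions.

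With the mixed-volume machinery in place, I would prove the Alexandrov-Fenchel inequality by a three-step reduction following Alexandrov's classical strategy. Step one: prove the inequality for \emph{smooth, strictly convex} $\widehat{\Sigma}_1,\widehat{\Sigma}_2$ when $\widehat{\Sigma}_3=\cdots=\widehat{\Sigma}_{n+1}=\widehat{\mathcal{W}_{1,\omega_0}}$, by using Theorems \ref{thm:convex preservation} and \ref{thm:AF-neq} together with the two-variable identity
\begin{equation*}
\frac{d^2}{dt^2}\Bigl|_{t=0}\mathrm{Vol}(\widehat{\Sigma}_2+t\widehat{\Sigma}_1)=n(n+1)\,V(\widehat{\Sigma}_1,\widehat{\Sigma}_1,\widehat{\Sigma}_2,\dots,\widehat{\Sigma}_2),
\end{equation*}
combined with the capillary $L_1$-Minkowski inequality \eqref{equ:cor7.3-1}. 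Step two: extend to arbitrary third through $(n{+}1)$th bodies by the Aleksandrov approximation argument, showing the quadratic form $Q(\xi,\eta):=V(\xi,\eta,\widehat{\Sigma}_3,\dots,\widehat{\Sigma}_{n+1})$ on the space of signed support functions has Lorentzian signature. Step three: smooth approximation of general $\widehat{\Sigma}_i\in\mathcal{K}_{\omega_0}$ by strictly convex smooth ones, using that the mixed volume is continuous with respect to Hausdorff convergence within $\mathcal{K}_{\omega_0}$.

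The main obstacle I anticipate is the step-two Lorentzian-signature argument. In the closed-hypersurface setting this relies on the self-adjointness of an elliptic operator $L$ on $\mathbb{S}^n$ built from the other bodies' support functions, whose spectral gap forces the correct signature. In our capillary setting $L$ lives on the domain $\mathcal{A}\subset\mathbb{S}^n$ with a Robin-type boundary condition coming from \eqref{equ:w0-capillary}, and both the domain and the boundary condition depend on the Wulff shape $\mathcal{W}$; establishing the needed spectral information (simplicity and sign of the principal eigenvalue) for this mixed boundary value problem, especially without symmetry, is the delicate point and is where Condition \ref{condition} is expected to enter in an essential way. The equality case would then follow as in the closed case: equality throughout forces the support-function difference to be an eigenfunction, and ellipticity plus the boundary condition reduces this to the stated affine relation $\widehat{\Sigma}_1=c\widehat{\Sigma}_2+b$ with $b\in\partial\overline{\mathbb{R}^{n+1}_+}$.
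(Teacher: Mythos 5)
This statement is stated in the paper as a \emph{conjecture}: the authors explicitly write that they ``conjecture'' the analogue of \cite{Xia-arxiv}*{Theorem 1.1} holds, and they offer no proof. So there is no argument in the paper to compare yours against, and your proposal must stand on its own as a complete proof. It does not. What you have written is a research programme, and you yourself concede that its decisive step is open: the Lorentzian-signature (equivalently, spectral) property of the quadratic form $Q(\xi,\eta)=V(\xi,\eta,\widehat{\Sigma}_3,\dots,\widehat{\Sigma}_{n+1})$ on capillary support functions. In Alexandrov's scheme this is the entire content of the inequality --- steps one and three are bookkeeping once it is in hand --- and in the anisotropic capillary setting it requires analyzing a non-self-adjoint-looking elliptic operator on the irregular domain $\mathcal{A}\subset\mathbb{S}^n$ (or on $\W_{1,\omega_0}$) with an oblique boundary condition depending on $\W$ and $\omega_0$, for which neither the paper nor your sketch supplies the needed eigenvalue information. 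Declaring where the difficulty lies is not the same as resolving it.

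There are further unaddressed points even in the ``routine'' steps. First, it is not established that $\mathcal{K}_{\omega_0}$ is closed under your Minkowski sum $\#_{\omega_0}$: the anisotropic capillary condition \eqref{equ:w0-capillary} constrains $\Psi(\nu)$ on $\partial\Sigma$, and for two bodies whose boundary Gauss-map images both equal $\W\cap\{x_{n+1}=-\omega_0\}$ one must verify that the sum again has this property; in \cite{Xia-arxiv} the corresponding closure for $\mathcal{K}_\theta$ is a nontrivial lemma relying on the common spherical cap, and the anisotropic analogue is not automatic. Second, your step one only yields the inequality when $\widehat{\Sigma}_3=\cdots=\widehat{\Sigma}_{n+1}=\widehat{\W_{1,\omega_0}}$ (essentially Theorem \ref{thm:AF-neq} plus Lemma \ref{lemma:7.2}), and moreover those theorems require Condition \ref{condition}, strict convexity, and star-shapedness, whereas the conjecture as stated imposes none of these; the passage from this special case to arbitrary $\widehat{\Sigma}_3,\dots,\widehat{\Sigma}_{n+1}$ \emph{is} the signature argument of step two, so the flow results of the paper cannot substitute for it. The proposal should be regarded as a plausible strategy consistent with the authors' own suggestion, not as a proof.
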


The rest of this paper is organized as follows. In Section \ref{sec 2}, we briefly introduce some preliminaries on the anisotropic capillary hypersurface. In Section \ref{sec 3}, we first obtain the uniform estimates for flow \eqref{equ:flow} and prove its long-time existence and smooth convergence, then we complete the proof of Theorem \ref{thm:flow}.
In Section \ref{sec 4}, we derive some monotonic quantities $\mathcal{V}_{0,\omega_0},\mathcal{V}_{1,\omega_0}$ and show the anisotropic capillary isoperimetric inequality (i.e., Theorem \ref{thm:iso-neq}).
In Section \ref{sec 5}, we prove the flow preserves the convexity under Condition \ref{condition} (i.e., Theorem \ref{thm:convex preservation}).
 In Section \ref{sec 6}, we construct the general anisotropic capillary quermassintegrals $\mathcal{V}_{k,\omega_0}$, and show their monotonicity, then prove the Alexandrov-Fenchel inequalities in Theorem \ref{thm:AF-neq}.
 In Section \ref{sec 7}, we mainly consider the case when $\W_{1,\omega_0}$ is an isotropic $\theta$-capillary hypersurface and prove  Corollary \ref{cor:7.3}.
   In Appendix \ref{Appendix}, we study the relationship between some geometric quantities of the Wulff shape and the translated Wulff shape,  which will be used in the proof of  convexity preserving result in Section \ref{subsec 5.2}. We also provide some specific examples, which are related to  Condition \ref{condition}.

\section{Preliminary}\label{sec 2}
Some preliminaries on the anisotropic geometry are briefly introduced in this section, for more details please refer to \cite{Xia-phd}.
\subsection{The Wulff shape and dual Minkowski norm}
Let $F$ be a smooth positive function on the standard sphere $(\mathbb{S}^n, g^{\mathbb{S}^n} ,\nabla^{\mathbb{S}})$ such that the matrix
\begin{equation}\label{equ:A_F>0}
A_{F}(x)~=~\nabla^{\mathbb{S}}\nabla^{\mathbb{S}} {F}(x)+F(x)g^{\mathbb{S}^n}, 
\quad x\in \mathbb{S}^n,
\end{equation}
is positive definite on $  \mathbb{S}^n$,
where 
 $g^{\mathbb{S}^n}$ denotes the round metric on $\mathbb{S}^n$.
  Then there exists a unique smooth strictly convex hypersurface $\W$ defined by
\begin{align*}
\W=\{\Psi(x)|\Psi(x):=F(x)x+\nabla^{\mathbb{S}} {F}(x),~x\in \mathbb{S}^n\},
\end{align*}
whose support function is given by $F$. We call $\W$ the Wulff shape determined by the function $F\in C^{\infty}(\mathbb{S}^n)$. When $F$ is a constant, the Wulff shape is just a round sphere.

The smooth function $F$ on $\mathbb{S}^n$ can be extended  to a $1$-homogeneous function on $\mathbb{R}^{n+1}$ by
\begin{equation*}
F(x)=|x|F(\frac{x}{|x|}), \quad x\in \mathbb{R}^{n+1}\setminus\{0\},
\end{equation*}
and setting $F(0)=0$. Then it is easy to show that $\Psi(x)=DF(x)$ for $x\in \mathbb{S}^n$, where $D$ denotes the standard gradient on $\mathbb{R}^{n+1}$.


The homogeneous extension $F$ defines a Minkowski norm on $\mathbb{R}^{n+1}$, that is, $F$ is a norm on $\mathbb{R}^{n+1}$ and $D^2(F^2)$ is uniformly positive definite on $\mathbb{R}^{n+1}\setminus\{0\}$. We can define a dual Minkowski norm $F^0$ on $\mathbb{R}^{n+1}$ by
\begin{align*}
F^0(\xi):=\sup_{x\neq 0}\frac{\langle x,\xi\rangle}{{F}(x)},\quad \xi\in \mathbb{R}^{n+1}.
\end{align*}
We call $\W$ the unit Wulff shape since
 $$\W=\{x\in \R^{n+1}: F^0(x)=1\}.$$
A Wulff shape of radius $r_0$ centered at $x_0$ is given by
\begin{align}\label{equ:w_r(x)}
	\W_{r_0}(x_0)=\{x\in\mathbb{R}^{n+1}:F^0(x-x_0)=r_0\}.
\end{align}
An $\omega_0$-capillary Wulff shape of radius $r_0$ is given by
\begin{align}\label{equ:crwo}
	\W_{r_0,\omega_0}(E_{n+1}^F):=\{x\in\overline{\mathbb{R}_+^{n+1}}:F^0(x-r_0\omega_0E_{n+1}^F)=r_0\},
\end{align}
 which is a part of a Wulff shape cut by a hyperplane  $\{x_{n+1}=0\}$, here $E_{n+1}^F$ satisfies $\<E_{n+1}^F,E_{n+1}\>=1$ which guarantees that  $\W_{r_0,\omega_0}(E_{n+1}^F)$ satisfies capillary condition \eqref{equ:w0-capillary}. If there is no confusion, we just write $\W_{r_0,\omega_0}:=\W_{r_0,\omega_0}(E^F_{n+1})$ in this paper.

\subsection{Anisotropic curvature}
Let $(\Sigma,g,\nabla)\subset \overline{\mathbb{R}^{n+1}_+}$ be a $C^2$ hypersurface with $\partial\Sigma\subset\partial\mathbb{R}^{n+1}_+$, which encloses a bounded domain  $\widehat{\Sigma}$. Let $\nu$ be the unit normal of $\Sigma$ pointing outward of  $\widehat{\Sigma}$.
The anisotropic Gauss map of $\Sigma$  is defined by $$\begin{array}{lll}\nu_F: &&\Sigma\to  \W\\
&&X\mapsto \Psi(\nu(X))=F(\nu(X))\nu(X)+\nabla^\mathbb{S} F(\nu(X)).\end{array} $$
The anisotropic principal curvatures $\k^F=(\k^F_1,\cdots, \k^F_n)$ of $\Sigma$ with respect to $\W$ at $X\in \Sigma$  are defined as the eigenvalues of
 \begin{align*}
S_F=\mathrm{d}\nu_F=\mathrm{d}(\Psi\circ\nu)=A_F\circ \mathrm{d}\nu : T_X \Sigma\to T_{\nu_F(X)} \W=T_X \Sigma.
\end{align*}
The anisotropic principal curvature radii $\tau=(\tau_1,\cdots, \tau_n)$ of $\Sigma$ with respect to $\W$ at $X\in \Sigma$  are defined as $\tau_i=(\kappa^F_i)^{-1}$, if $\kappa^F_i\ne 0$, $i=1,\cdots,n$.

We define the normalized $k$-th elementary symmetric function $H^F_k=\sigma_k(\kappa^F)/\binom{n}{k}$ of the anisotropic principal curvature $\kappa^F$:
\begin{align}\label{equ:Hk}
H^F_k:=\binom{n}{k}^{-1}\sum_{1\leq {i_1}<\cdots<{i_k}\leq n} \kappa^F_{i_1}\cdots \kappa^F_{i_k},\quad k=1,\cdots,n,
\end{align}
where $\binom{n}{k}=\frac{n!}{k!(n-k)!}$.
Setting $H^F_{0}=1$, $H^F_{n+1}=0$, and $H_F=nH_1^F$
for convenience. To simplify the notation, from now on, we use  $\sigma_k$ to denote function $\sigma_k(\kappa^F)$ of the anisotropic principal curvatures, if there is no confusion.

Next, we introduce two important operators $P_k$ and $T_k$ which will be used in Section \ref{sec 6},
\begin{align} 
	& P_k=\sigma_k I-\sigma_{k-1} S_F+\cdots+(-1)^k S_F^k, \quad k=0, \cdots, n, \nonumber\\
	& T_k=P_k A_F, \quad k=0, \cdots, n-1.\label{equ:T_k=P_kAF}
\end{align}
Obviously, $P_n=0$, $ P_k=\sigma_k I-P_{k-1} S_F=\sigma_k I-T_{k-1} \mathrm{~d} \nu,\ k=1,\cdots, n$, and $T_k$ is symmetric for each $k$.
He and Li in \cite{HL08} proved that
\begin{lemma} For each $0\leq k\leq n$, there holds	
	\begin{align}
		\operatorname{div}\left(P_k\left(\nabla^{\mathbb{S}} F\right) \circ \nu\right)+F\left(\nu\right) \operatorname{tr}\left(P_k \circ \mathrm{d} \nu\right)=(k+1) \sigma_{k+1},
		\label{equ:lemma2.1.1}
		\\
		\operatorname{div}\left(P_k X^{\top}\right)+\left\langle X, \nu\right\rangle \operatorname{tr}\left(P_k \circ \mathrm{d} \nu\right)=(n-k) \sigma_k,
		\nonumber
	\end{align} 	
	where $X^{\top}=X- \left\langle X, \nu\right\rangle \nu$, and 
	$\operatorname{div}$ is divergence operator respect to metric $g$.
	
\end{lemma}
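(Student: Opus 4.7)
The plan is to prove both identities by direct computation in a local orthonormal frame $\{e_1,\ldots,e_n\}$ on $\Sigma$. The only non-trivial algebraic input, beyond the standard Newton identities $\operatorname{tr}(P_k)=(n-k)\sigma_k$ and $\operatorname{tr}(P_kS_F)=(k+1)\sigma_{k+1}$, is the \emph{anisotropic divergence-free property} $\sum_i\langle (\nabla_{e_i}P_k)(W),e_i\rangle=0$ for every tangent field $W$, which is the anisotropic analogue of the classical Newton-tensor divergence identity on Euclidean hypersurfaces. Granting this, each formula reduces to a short Leibniz-rule expansion.

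For the first identity I would exploit the decomposition $V:=(\nabla^{\mathbb{S}} F)\circ\nu=\Psi(\nu)-F(\nu)\nu=\nu_F-F(\nu)\nu$. Since $d(\Psi\circ\nu)(e_i)=S_F(e_i)$ is already tangent to $\Sigma$, projecting the Euclidean derivative of $V$ onto $T\Sigma$ gives $\nabla_{e_i}V=S_F(e_i)-F(\nu)\,d\nu(e_i)$. Expanding
\[
\operatorname{div}(P_kV)=\sum_i\langle (\nabla_{e_i}P_k)(V),e_i\rangle+\sum_i\langle P_k(\nabla_{e_i}V),e_i\rangle,
\]
killing the first sum by the divergence-free property, and invoking $\operatorname{tr}(P_kS_F)=(k+1)\sigma_{k+1}$ produces the stated formula. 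The second identity is entirely analogous: from $\nabla^{\mathbb{R}^{n+1}}_{e_i}X=e_i$ one obtains $\nabla_{e_i}X^{\top}=e_i-\langle X,\nu\rangle\,d\nu(e_i)$, whose trace against $P_k$ is $(n-k)\sigma_k-\langle X,\nu\rangle\operatorname{tr}(P_k\circ d\nu)$, giving the result after again absorbing the $(\nabla P_k)$-term via the same divergence-free property.

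The hard part will be justifying the anisotropic divergence-free property of $P_k$. In the isotropic setting this is immediate from the Codazzi equation $\nabla_i(d\nu)_{jk}=\nabla_j(d\nu)_{ik}$, but here $S_F=A_F\circ d\nu$ is not self-adjoint with respect to the induced metric $g$, so the classical argument does not apply verbatim. I would handle this by passing to the anisotropic metric $G$ defined in Section~\ref{sec 2} (with respect to which $S_F$ \emph{is} self-adjoint), establishing an anisotropic Codazzi identity for $S_F$ by differentiating $S_F=A_F\circ d\nu$ and exploiting the symmetries of $\nabla A_F$ pulled back along $\nu$, and then running the standard contraction argument on the recursion $P_k=\sigma_kI-P_{k-1}S_F$ to conclude $\operatorname{div}P_k=0$ inductively in $k$.
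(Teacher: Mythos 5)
Your outline is correct, and since the paper itself gives no proof of this lemma (it is quoted from \cite{HL08}), the comparison is with the cited reference, whose argument yours essentially reproduces: Leibniz expansion of the divergence, the Newton identities $\operatorname{tr}(P_k)=(n-k)\sigma_k$ and $\operatorname{tr}(P_kS_F)=(k+1)\sigma_{k+1}$, and the vanishing of the $(\nabla P_k)$-contraction. The tangential derivatives you compute, $\nabla_{e_i}\big((\nabla^{\mathbb S}F)\circ\nu\big)=S_F(e_i)-F(\nu)\,d\nu(e_i)$ and $\nabla_{e_i}X^{\top}=e_i-\langle X,\nu\rangle\,d\nu(e_i)$, are right, so everything indeed reduces to showing $\sum_i\langle(\nabla_{e_i}P_k)W,e_i\rangle=0$ for all tangent $W$.

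Where I would push back is on your assessment of the ``hard part.'' The failure of $S_F$ to be $g$-self-adjoint is not an obstruction, and the detour through the metric $G$ is unnecessary (and slightly risky, since the divergence in the statement is that of $g$, so mixing in the connection of $\hat g$ forces you to track the discrepancy between the two volume forms and connections). Two observations make the direct route work. First, $S_F$ is automatically a Codazzi tensor for the Levi-Civita connection of $g$: since $S_FY=D_Y\nu_F$ is already tangent, one has $(\nabla_XS_F)Y-(\nabla_YS_F)X=(D_XD_Y\nu_F-D_YD_X\nu_F)^{\top}-S_F[X,Y]=S_F[X,Y]-S_F[X,Y]=0$, exactly as for the ordinary shape operator; no appeal to the symmetries of $\nabla^{\mathbb S}A_F$ is needed (though that route also works). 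Second, the quantity you must kill is the \emph{transposed} contraction $\sum_i\langle(\nabla_{e_i}P_k)W,e_i\rangle$, and the recursion $P_k=\sigma_kI-P_{k-1}S_F$ handles it without ever using symmetry of $P_k$: writing $\mathcal D_k(W):=\sum_i\langle(\nabla_{e_i}P_k)W,e_i\rangle$, the Codazzi identity gives $\sum_i\langle P_{k-1}(\nabla_{e_i}S_F)W,e_i\rangle=\operatorname{tr}(P_{k-1}\nabla_WS_F)=W(\sigma_k)$ (the last equality being the polynomial identity $d\sigma_k=\operatorname{tr}(P_{k-1}\,dS_F)$, valid for arbitrary, not necessarily symmetric, endomorphisms), whence $\mathcal D_k(W)=-\mathcal D_{k-1}(S_FW)$ and induction from $P_0=I$ finishes the proof. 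With this correction your proposal is complete.
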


\subsection{New metric and anisotropic formulas}\label{subsec 2.3}
There are new metric on $\Sigma$ and $\mathbb{R}^{n+1}$, which were introduced by Andrews in \cite{And01} and reformulated by Xia in \cite{Xia13}.  This new Riemannian metric $G$ with respect to $F^0$ on $\mathbb{R}^{n+1}$ is defined as
\begin{align}
	\label{equ:G}
	G(\xi)(V,W):=\sum_{\alpha,\beta=1}^{n+1}\frac{\partial^2 \frac12(F^0)^2(\xi)}{\partial \xi^\alpha\partial \xi^\beta} V^\alpha W^\beta, \quad\hbox{ for } \xi\in \mathbb{R}^{n+1}\setminus \{0\}, V,W\in T_\xi{\mathbb{R}^{n+1}}.
\end{align}
The third order derivative of $F^0$ gives a $(0,3)$-tensor
\begin{equation}
	\label{equ:Q}
	Q(\xi)(U,V,W):=\sum_{\alpha,\beta,\gamma=1}^{n+1} Q_{\alpha\beta\gamma}(\xi)U^\alpha V^\beta W^\gamma:=\sum_{\alpha,\beta,\gamma=1}^{n+1} \frac{\partial^3(\frac12(F^0)^2(\xi)}{\partial \xi^\alpha \partial \xi^\beta \partial \xi^\gamma}U^\alpha V^\beta W^\gamma,
\end{equation}
for $\xi\in \mathbb{R}^{n+1}\setminus \{0\},$ $U,V,W\in T_\xi{\mathbb{R}^{n+1}}.$

When we restrict the metric $G$ to $\mathcal{W}$,  the $1$-homogeneity of $F^0$ implies that
\begin{eqnarray*}
	&G(\xi)(\xi,\xi)=1,  \quad G(\xi)(\xi, V)=0, \quad \hbox{ for } \xi\in \W,\  V\in T_\xi \W.
	\\&Q(\xi)(\xi, V, W)=0, \quad \hbox{ for } \xi\in\W,\  V, W\in \mathbb{R}^{n+1}.
\end{eqnarray*}
For a smooth hypersurface $\Sigma$ in $\overline{\mathbb{R}_+^{n+1}}$, since $\nu_F(X)\in \W$ for $X\in \Sigma$, we have
\begin{align}
	&G(\nu_F)(\nu_F,\nu_F)=1, \quad G(\nu_F)(\nu_F, V)=0, \quad \hbox{ for } V\in T_X \Sigma,\nonumber
	\\
	&Q(\nu_F)(\nu_F, V, W)=0, \quad \hbox{ for } V, W\in \mathbb{R}^{n+1}.\label{equ:Q_0}
\end{align}	
This means  $\nu_F(X)$ is perpendicular to $T_X \Sigma$ with respect to the metric $G(\nu_F)$.
Then the Riemannian metric $\hat{g}$ on $\Sigma$  can be defined as
\begin{eqnarray*}
	\hat{g}(X):=G(\nu_F(X))|_{T_X \Sigma}, \quad X\in \Sigma.
\end{eqnarray*}

We denote by $\hat{D}$ and $\hat{\nabla}$ the Levi-Civita connections of $G$ on $\mathbb{R}^{n+1}$ and $\hat{g}$ on $\Sigma$ respectively,
denote by $\hat{g}_{ij}$ and $\hat{h}_{ij}$ the first and second fundamental form of $(\Sigma, \hat{g})\subset (\mathbb{R}^{n+1}, G)$, that is
$$\hat{g}_{ij}=G(\nu_F(X))(\p_i X, \p_j X),\quad \hat{h}_{ij}=G(\nu_F(X))(\hat{D}_{\p_i }\nu_F, \p_j X).$$
Denote by $\{\hat{g}^{ij}\}$ the inverse matrix of $\{\hat{g}_{ij}\}$, we can reformulate $\k^F$ as  the  eigenvalues of $\{\hat{h}_j^k\}=\{\hat{g}^{ik}\hat{h}_{kj}\}$.
It is direct to see that for $\Sigma=\W$, we have $\nu_F(\W)=X(\W)$, $\hat{h}_{ij}=\hat{g}_{ij}$ and $\kappa^F=(1,\cdots,1)$.
The anisotropic Gauss-Weingarten type formula and the anisotropic Gauss-Codazzi type equation are as follows.
\begin{lemma}[\cite{Xia13}*{Lemma 2.5}] \label{lem2-1}
	\begin{eqnarray}
		\partial_i\partial_j X=-\hat{h}_{ij}\nu_F+\hat{\nabla}_{\partial_i } \partial_j+\hat{g}^{kl}A_{ijl}\partial_kX; \;\;\;\hbox{ (Gauss formula)}\label{equ:Gauss-formula}
		\end{eqnarray}
		\begin{eqnarray}\label{Weingarten}
		\partial_i \nu_F=\hat{g}^{jk}\hat{h}_{ij}\partial_k X;\;\; \hbox{ (Weingarten formula) }
	\end{eqnarray}
		 $$\hat{R}_{i j k \ell}=\hat{h}_{i k} \hat{h}_{j \ell}-\hat{h}_{i \ell} \hat{h}_{j k}+\hat{\nabla}_{\partial_{\ell}} A_{j k i}-\hat{\nabla}_{\partial_k} A_{j \ell i}+A_{j k}^m A_{m \ell i}-A_{j \ell}^m A_{m k i} ;\quad	\text{(Gauss equation)}$$
	\begin{eqnarray}\label{Codazzi}
		\hat{\nabla}_k\hat{h}_{ij}+\hat{h}_j^lA_{lki}=\hat{\nabla}_j\hat{h}_{ik}+\hat{h}_k^lA_{lji}. \;\;\hbox{ (Codazzi equation) }
	\end{eqnarray}
	Here, $\hat{R}$ is the Riemannian curvature tensor of $\hat{g}$ and  $A$ is a $3$-tensor
	\begin{eqnarray}\label{AA}
	A_{ijk}=-\frac12\left(\hat{h}_i^l Q_{jkl}+\hat{h}_j^l Q_{ilk}-\hat{h}_k^l Q_{ijl}\right),
	\end{eqnarray} where $Q_{ijk}=Q(\nu_F)(\partial_i X, \partial_j X, \partial_k X)$. Note that the $3$-tensor $A$ on $(\Sigma,\hat{g})\to (\mathbb{R}^{n+1},G)$ depends on $\hat{h}_i^j$. It is direct to see that $Q$ is totally symmetric in all three indices, while $A$ is only symmetric for the first two indices.
\end{lemma}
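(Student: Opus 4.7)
The plan is to read off each of the four structural identities by differentiating three scalar identities that encode the setup: $G(\nu_F)(\nu_F,\nu_F)=1$, $G(\nu_F)(\nu_F,\partial_j X)=0$, and $\hat g_{ij}=G(\nu_F)(\partial_i X,\partial_j X)$. The single analytic input is $\partial_\gamma G_{\alpha\beta}=Q_{\alpha\beta\gamma}$, so every differentiation of $G(\nu_F)$ produces a $Q$-tensor with one slot contracted against $\partial\nu_F$; the identity $Q(\nu_F)(\nu_F,\cdot,\cdot)=0$ from \eqref{equ:Q_0} kills exactly the terms whose first argument carries $\nu_F$.

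For \eqref{Weingarten}, observe that since $\nu_F$ takes values in $\W$, the ordinary derivative $\partial_i\nu_F$ already lies in $T_{\nu_F}\W=T_X\Sigma$; writing $\partial_i\nu_F=a_i^k\partial_k X$ and pairing against $\partial_j X$ via $G(\nu_F)$ gives $a_i^k\hat g_{kj}=\hat h_{ij}$, hence $a_i^k=\hat g^{jk}\hat h_{ij}$. (The discrepancy between $\hat D_{\partial_i}\nu_F$ and $\partial_i\nu_F$ disappears in this inner product after invoking $Q(\nu_F)(\nu_F,\cdot,\cdot)=0$.) For \eqref{equ:Gauss-formula}, decompose $\partial_i\partial_j X=b_{ij}\nu_F+c_{ij}^k\partial_k X$ under the $G(\nu_F)$-orthogonal splitting $T_X\mathbb{R}^{n+1}=\mathbb{R}\nu_F\oplus T_X\Sigma$. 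Differentiating $G(\nu_F)(\nu_F,\partial_j X)=0$ along $\partial_i$ produces three terms: $Q(\nu_F)(\nu_F,\partial_j X,\partial_i\nu_F)$ which vanishes by \eqref{equ:Q_0}, $G(\nu_F)(\partial_i\nu_F,\partial_j X)=\hat h_{ij}$ by \eqref{Weingarten}, and $G(\nu_F)(\nu_F,\partial_i\partial_j X)=b_{ij}$, giving $b_{ij}=-\hat h_{ij}$. For the tangential coefficient, differentiate $\hat g_{jm}$, use \eqref{Weingarten} to convert $Q(\nu_F)(\partial_j X,\partial_m X,\partial_i\nu_F)$ into $\hat h_i^k Q_{jmk}$, and form the Christoffel combination $\partial_i\hat g_{jm}+\partial_j\hat g_{im}-\partial_m\hat g_{ij}$. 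Isolating $G(\nu_F)(\partial_i\partial_j X,\partial_m X)$ via $\partial_i\partial_j X=\partial_j\partial_i X$, the three resulting $\hat h\cdot Q$ contractions recombine, using full symmetry of $Q$, into $A_{ijm}$ as in \eqref{AA}, while the $\hat g$-derivatives yield $\hat g_{m\ell}\hat\Gamma^\ell_{ij}$; raising an index and identifying $\hat\nabla_{\partial_i}\partial_j=\hat\Gamma^k_{ij}\partial_k X$ produces \eqref{equ:Gauss-formula}.

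For \eqref{Codazzi}, differentiate \eqref{Weingarten} once more, substitute \eqref{equ:Gauss-formula} for $\partial_j\partial_k X$, and impose $\partial_i\partial_j\nu_F=\partial_j\partial_i\nu_F$. The $\nu_F$-components coincide automatically by symmetry of $\hat h$. In the tangential equation, the symmetric Christoffel symbols of $\hat g$ cancel after being absorbed into $\hat\nabla$, leaving $\hat\nabla_j\hat h_i^l-\hat\nabla_i\hat h_j^l=\hat g^{lm}(\hat h_j^k A_{ikm}-\hat h_i^k A_{jkm})$, which becomes \eqref{Codazzi} after lowering the index and using that $A$ is symmetric in its first two slots. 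For the Gauss equation, expand $\hat R(\partial_i,\partial_j)\partial_k=\hat\nabla_i\hat\nabla_j\partial_k-\hat\nabla_j\hat\nabla_i\partial_k$ by applying \eqref{equ:Gauss-formula} twice: the $\partial_i\partial_j\partial_k X$ pieces cancel by commutativity of Euclidean partials, \eqref{Weingarten} converts the cross interaction of the $\nu_F$-coefficients into $\hat h_{ik}\hat h_{j\ell}-\hat h_{i\ell}\hat h_{jk}$, derivatives hitting the tangential correction produce $\hat\nabla_\ell A_{jki}-\hat\nabla_k A_{j\ell i}$, and the remaining tangential-tangential interaction yields the quadratic piece $A_{jk}^m A_{m\ell i}-A_{j\ell}^m A_{mki}$.

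The main obstacle is the bookkeeping in the tangential part of \eqref{equ:Gauss-formula}: the three $\hat h\cdot Q$ contractions coming from $\partial_i\hat g_{jm}+\partial_j\hat g_{im}-\partial_m\hat g_{ij}$ must be matched against the asymmetric definition \eqref{AA}, and the full symmetry of $Q$ in all three slots is what makes this identification close. Once this piece is pinned down, the correction term $\hat h_j^l A_{lki}$ in Codazzi and the quadratic piece $A_{jk}^m A_{m\ell i}-A_{j\ell}^m A_{mki}$ in Gauss follow by systematic index tracking, but keeping all signs and symmetries consistent is where the bulk of the care lies.
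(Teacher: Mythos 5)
Your derivation is correct: the normal component of the Gauss formula from differentiating $G(\nu_F)(\nu_F,\partial_jX)=0$, the Koszul-type computation whose three $\hat h\cdot Q$ contractions recombine (via total symmetry of $Q$) into $A_{ijm}$, and the subsequent Codazzi/Gauss bookkeeping all check out, as does the observation that the difference between $\hat D_{\partial_i}\nu_F$ and $\partial_i\nu_F$ is killed in the inner product by $Q(\nu_F)(\nu_F,\cdot,\cdot)=0$. Note, however, that the paper does not prove this lemma at all — it is quoted verbatim from \cite{Xia13}*{Lemma 2.5} — so there is no in-paper argument to compare against; your sketch is essentially the standard derivation carried out in that cited reference.
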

We remark 
that,  $X$ and $\nu_F$ can be regarded as vector-valued functions in $\mathbb{R}^{n+1}$ with a fixed Cartesian coordinate. Hence, the terms $\partial_i\partial_j X$ and $\partial_i \nu_F$ are understood as the usual partial derivative on vector-valued functions. For convenience, we also denote $X_i=\partial_iX$ and $X^i=\hat{g}^{ij}X_j$.

The case $\Sigma=\mathcal{W}$ is the most important case which will be used in Section \ref{sec 5}. So we rewrite Lemma \ref{lem2-1} in this case.

\begin{lemma}[\cite{Xia-2017-convex}*{Proposition 2.2}]
	 Let $X \in \mathcal{W}$. Then $\nu_F(X)=X$ and $\hat{h}_{i j}=\hat{g}_{i j}$. Namely, the anisotropic principal curvatures of $\mathcal{W}$ are all equal to 1. We also have
$$
A_{i j k}=-\frac{1}{2} Q_{i j k}=-\frac{1}{2} Q(X)\left(\partial_i X, \partial_j X, \partial_k X\right),
$$
and
\begin{align}\label{equ:Qjikl-syms}
	\hat{\nabla}_i Q_{j k l}=\hat{\nabla}_j Q_{i k l} .
\end{align}
The Gauss-Weingarten formula and the Gauss equation are as follows:
\begin{align}
	\partial_i \partial_j X & =-\hat{g}_{i j} X+\hat{\nabla}_{\partial_i} \partial_j-\frac{1}{2} \hat{g}^{k l} Q_{i j l} \partial_k X ; \text { (Gauss formula) } \nonumber\\
	\partial_i \nu_F & =\partial_i X ; \text { (Weingarten formula) }\nonumber \\
	\hat{R}_{i j k l} & =\hat{g}_{i k} \hat{g}_{j l}-\hat{g}_{i l} \hat{g}_{j k}+\frac{1}{4} \hat{g}^{p m} Q_{j k p} Q_{m l i}-\frac{1}{4} \hat{g}^{p m} Q_{j l p} Q_{m k i}. \text { (Gauss equation) }\label{equ:R=}
\end{align}
\end{lemma}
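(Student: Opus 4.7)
The plan is to establish each claim in turn, leveraging the explicit description of $\mathcal{W}$ as the image of the Cahn-Hoffman map $\Psi$, and then substituting back into the general formulas of Lemma \ref{lem2-1}. First I would observe that $\nu_F(X) = X$ on $\mathcal{W}$: since $\mathcal{W} = \Psi(\mathbb{S}^n)$, each $X \in \mathcal{W}$ has $\nu(X) \in \mathbb{S}^n$ as its outward unit normal, and $\nu_F = \Psi \circ \nu$ gives $\nu_F(X) = \Psi(\nu(X)) = X$. Plugging $\nu_F = X$ into the Weingarten formula \eqref{Weingarten} yields $\partial_i X = \hat{g}^{jk}\hat{h}_{ij}\partial_k X$, so $\hat{h}_{ij} = \hat{g}_{ij}$, and hence every anisotropic principal curvature equals $1$.

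Next, substituting $\hat{h}_i^l = \delta_i^l$ into the defining formula \eqref{AA} gives $A_{ijk} = -\tfrac{1}{2}(Q_{jki} + Q_{ijk} - Q_{ijk}) = -\tfrac{1}{2}Q_{ijk}$ after applying the total symmetry of $Q$. Writing $Q_{ijk} = Q(\nu_F(X))(\partial_i X, \partial_j X, \partial_k X) = Q(X)(\partial_i X, \partial_j X, \partial_k X)$ produces the claimed expression for $A_{ijk}$.

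The symmetry identity \eqref{equ:Qjikl-syms} is the key computational point. To prove $\hat{\nabla}_i Q_{jkl} = \hat{\nabla}_j Q_{ikl}$, I would expand $\partial_i Q_{jkl}$ by the chain rule into the ambient fourth derivative of $\tfrac{1}{2}(F^0)^2$ evaluated on four tangent vectors, plus three contributions involving $\partial_i \partial_j X$, $\partial_i \partial_k X$, and $\partial_i \partial_l X$. Using the Gauss formula on $\mathcal{W}$ to re-express these second derivatives, the normal piece $-\hat{g}_{ij}X$ drops out thanks to $Q(X)(X,\cdot,\cdot)=0$ from \eqref{equ:Q_0}, while the Christoffel part cancels against the Christoffel corrections that convert $\partial_i$ into $\hat{\nabla}_i$. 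What remains is an identity of the form
\[
\hat{\nabla}_i Q_{jkl} = (DQ)(X)(\partial_i X,\partial_j X,\partial_k X,\partial_l X) - \tfrac{1}{2}\hat{g}^{mn}\bigl(Q_{ijn}Q_{mkl} + Q_{ikn}Q_{jml} + Q_{iln}Q_{jmk}\bigr).
\]
The leading $(DQ)$-term is totally symmetric in all four arguments as a fourth partial derivative of $\tfrac{1}{2}(F^0)^2$, and the three quadratic terms are collectively symmetric under $i \leftrightarrow j$ thanks to $Q_{ijn} = Q_{jin}$ together with the symmetry $\hat{g}^{mn} = \hat{g}^{nm}$, the total symmetry of $Q$, and a relabeling of dummy indices.

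Finally, the Gauss-Weingarten formulas and the Gauss equation on $\mathcal{W}$ follow by substituting $\hat{h}_{ij} = \hat{g}_{ij}$, $\partial_i \nu_F = \partial_i X$, $A_{ijk} = -\tfrac{1}{2}Q_{ijk}$, and \eqref{equ:Qjikl-syms} into the corresponding general statements of Lemma \ref{lem2-1}. The identity \eqref{equ:Qjikl-syms}, combined with the total symmetry of $Q$, yields $\hat{\nabla}_\ell Q_{jki} = \hat{\nabla}_k Q_{j\ell i}$ after a permutation of indices, causing the $\hat{\nabla}_\ell A_{jki} - \hat{\nabla}_k A_{j\ell i}$ combination in the general Gauss equation to vanish and leaving exactly the quartic-in-$Q$ expression \eqref{equ:R=}. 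The main obstacle I anticipate is the index bookkeeping needed to verify the $i \leftrightarrow j$ symmetry of the quadratic $Q$-terms; although it reduces ultimately to total symmetry of $Q$ and the symmetry of the contracting metric $\hat{g}^{mn}$, the algebraic rearrangement is delicate and is where the argument most naturally breaks down if one is careless.
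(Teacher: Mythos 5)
Your verification is correct. The paper itself gives no proof of this lemma — it is quoted directly from \cite{Xia-2017-convex}*{Proposition 2.2} — and your argument is essentially the standard derivation found there: $\nu_F=\mathrm{id}$ on $\W$ forces $\hat h_{ij}=\hat g_{ij}$ via the Weingarten formula, substitution into \eqref{AA} gives $A_{ijk}=-\tfrac12 Q_{ijk}$, and the remaining formulas follow from Lemma \ref{lem2-1}. Your key computation for \eqref{equ:Qjikl-syms} — expanding $\hat\nabla_iQ_{jkl}$ into the totally symmetric fourth derivative of $\tfrac12(F^0)^2$ minus the quadratic $Q$-terms, with the normal contribution killed by \eqref{equ:Q_0} — is sound, and the $i\leftrightarrow j$ symmetry of the quadratic terms does check out under relabeling of the contracted indices as you describe.
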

\subsection{Anisotropic support function and  area element}
Let $u=\< X,\nu \> $ be the support function of $X$, and $\hat{u}:=G(\nu_F)(\nu_F,X)$ be the anisotropic support function of $X$ with respect to $F$(or $\W$). Then we have (see e.g. \cite{Xia2017})
\begin{equation}
	\label{u-hatu}
	\hat{u}=\frac{u}{F(\nu)}.
\end{equation}
And for anisotropic $\omega_0$-capillary hypersurface $\Sigma$, we call the function defined by \begin{align}\label{equ:u}
	\bar{u}(X)=\frac{\<X,\nu(X)\>}{F(\nu(X))+\omega_0\<\nu(X),E_{n+1}^F\>},  \quad X\in\Sigma,
\end{align}
the anisotropic capillary support function of anisotropic capillary hypersurface $\Sigma$ in $\overline{\mathbb{R}_{+}^{n+1}}$. It follows that $\bar{ u}$  is well-defined by
\cite{Jia-Wang-Xia-Zhang2023}*{Proposition 3.2}. Similar to \eqref{u-hatu}, since  $DF^0(DF(x))=\frac{x}{F(x)}$ (see e.g. \cite{Xia-phd}), we obtain
\begin{align}\label{equ:G(vF,Y)=<v,Y>/F}
	G(\nu_F)(\nu_F,Y)=\<DF^0(DF(\nu)),Y\>=\<\frac{\nu}{F(\nu)},Y\>=\frac{\<Y,\nu\>}{F(\nu)},
\end{align}
for any $Y\in \mathbb{R}^{n+1}$.
Then \eqref{equ:u} can also be written as
\begin{align*}
	\bar{u}(X)=\frac{\hat{ u}}{1+\omega_0 G(\nu_F)(\nu_F(X),E_{n+1}^F)},  \quad X\in\Sigma.
\end{align*}

Let us define the anisotropic area element as ${\rm d}\mu_F:=F(\nu){\rm d}\mu_g$, where ${\rm d}\mu_g$ is the induced volume form of the metric $g$. The anisotropic area of $\Sigma$ is given by
\begin{align}
	\label{equ:F-area}
	|\Sigma |_F=\int_{\Sigma} {\rm d}\mu_F.
\end{align}
 For any anisotropic $\omega_0$-capillary hypersurface $\Sigma$, we write the anisotropic capillary area of ${\Sigma}$ as
\begin{align}\label{equ:v1}
	\mathcal{V}_{1,\omega_0}({\Sigma})=\frac{1}{n+1}\left(
	|\Sigma|_F+\omega_0|\widehat{\partial\Sigma}|
	\right).
\end{align}
There holds the following anisotropic Minkowski integral formula for $\omega_0$-capillary hypersurface.
\begin{lemma}[Jia-Wang-Xia-Zhang, 2023, \cite{Jia-Wang-Xia-Zhang2023}*{Theorem 1.3}] \label{Thm1.2}
	Assume  $(\Sigma,g) \subset \overline{\mathbb{R}_{+}^{n+1}}$ is a $C^2$ compact anisotropic $\omega_0$-capillary hypersurface, where $\omega_0 \in\left(-F\left(E_{n+1}\right)\right.,$ $\left.F\left(-E_{n+1}\right)\right)$.
	 Then for  $0 \leq k \leq n-1$, it holds
	\begin{align}\label{equ:Minkow}
		\int_{\Sigma} H_{k}^F\left(1+\omega_0G(\nu_F)( \nu_F, E_{n+1}^F)\right)-H_{k+1}^F\hat{ u} \mathrm{~d}\mu_F=0.
	\end{align}
	
\end{lemma}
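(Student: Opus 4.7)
The plan is to establish the formula via the divergence theorem applied to a tangential vector field on $\Sigma$ built from the Newton-type tensors $P_k$, the position vector $X$, and the direction $E_{n+1}^F$, with the boundary flux vanishing as a consequence of the anisotropic capillary condition. First, using the relation $G(\nu_F)(\nu_F,Y)=\langle Y,\nu\rangle/F(\nu)$ from \eqref{equ:G(vF,Y)=<v,Y>/F}, together with $\hat u\,F(\nu)=u$ and $d\mu_F=F(\nu)\,d\mu_g$, and clearing the normalization constant $(n-k)\binom{n}{k}$, the identity reduces to
$$\int_\Sigma\bigl[(n-k)\sigma_k L-(k+1)\sigma_{k+1}u\bigr]\,d\mu_g=0, \qquad L:=F(\nu)+\omega_0\langle E_{n+1}^F,\nu\rangle.$$

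Next I would multiply the first identity of \eqref{equ:lemma2.1.1} by $L$, the second by $u$, and subtract; noting that $Lu-uF(\nu)=u\omega_0\langle E_{n+1}^F,\nu\rangle$ and applying the product rule yields
$$(n-k)\sigma_kL-(k+1)\sigma_{k+1}u=\operatorname{div}_g\!\bigl(LP_kX^\top-uP_k(\nabla^{\mathbb{S}}F\circ\nu)\bigr)+R,$$
with remainder $R=-\langle\nabla L,P_kX^\top\rangle+\langle\nabla u,P_k(\nabla^{\mathbb{S}}F\circ\nu)\rangle+u\omega_0\langle E_{n+1}^F,\nu\rangle\operatorname{tr}(P_k\circ d\nu)$. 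Using the Weingarten-type formulas $\nabla_iu=h_{ij}\langle X,e_j\rangle$ and $\nabla_iL=h_{ij}\langle\nabla^{\mathbb{S}}F(\nu)+\omega_0E_{n+1}^F,e_j\rangle$, the two $\nabla^{\mathbb{S}}F\circ\nu$ contributions to $R$ should cancel by the symmetry of the bilinear form $(V,W)\mapsto h_{ij}P_k^{il}V^jW^l$ (a consequence of $P_k$ being a polynomial in the symmetric operator $S_F$). The surviving $\omega_0$-piece of $R$ is then absorbed by a companion identity obtained by repeating the derivation of \eqref{equ:lemma2.1.1} with the constant vector $E_{n+1}^F$ in place of $X$, producing a further divergence correction of the form $\omega_0\operatorname{div}_g(u P_k(E_{n+1}^F)^\top)$ or a closely related expression. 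In effect, one arrives at an identity
$$(n-k)\sigma_kL-(k+1)\sigma_{k+1}u=\operatorname{div}_g(W)$$
for some tangent field $W$ built from $P_k$, $X$, $\nabla^{\mathbb{S}}F\circ\nu$, and $E_{n+1}^F$.

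Finally, the divergence theorem converts the integral to $\int_{\partial\Sigma}\langle W,\bar\mu\rangle\,d\mathcal{H}^{n-1}$, where $\bar\mu$ is the outward conormal of $\partial\Sigma$ in $\Sigma$. On $\partial\Sigma\subset\partial\mathbb{R}^{n+1}_+$ one has $\langle X,E_{n+1}\rangle=0$, and the capillary condition \eqref{equ:w0-capillary} reads $\langle\nu_F,E_{n+1}\rangle=-\omega_0$; together with the defining property $\langle E_{n+1}^F,E_{n+1}\rangle=1$, these constraints should force the $E_{n+1}$-component of $W$ to vanish, so that $W$ is tangent to $\partial\mathbb{R}^{n+1}_+$ along $\partial\Sigma$. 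Since $\bar\mu$ decomposes into a $T\partial\Sigma$-component plus a part in $\mathrm{span}(\nu,E_{n+1})$ dictated entirely by the contact angle, the pairing $\langle W,\bar\mu\rangle$ vanishes identically, completing the proof. The main obstacle will be the algebra in step two—precisely identifying the correct $W$ that packages $R$ as a pure divergence—since it hinges on a careful interplay between the anisotropic Newton tensors, the Cahn-Hoffman map $\Psi$, and the defining property of $E_{n+1}^F$; this is the genuinely new feature compared with the closed or the isotropic capillary Minkowski formulas.
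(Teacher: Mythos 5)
This lemma is not proved in the paper: it is quoted verbatim from \cite{Jia-Wang-Xia-Zhang2023}*{Theorem 1.3}, so there is no in-paper argument to compare against. Your outline is nevertheless correct and follows the same route as the cited source, and every ingredient you invoke already appears elsewhere in this paper: the two He--Li identities \eqref{equ:lemma2.1.1}, the symmetry of $\mathrm{d}\nu\circ P_k$ (a palindrome in the symmetric operators $\mathrm{d}\nu$ and $A_F$), the companion identity $\operatorname{div}(P_k(E_{n+1}^F)^{\top})=-\langle E_{n+1}^F,\nu\rangle\operatorname{tr}(P_k\circ\mathrm{d}\nu)$ for the constant vector $E_{n+1}^F$ (this is exactly \eqref{equ:pf-lemma7.2-2} in the proof of Lemma \ref{lemma:7.2}), and the boundary identity $F(\nu)\langle X,\mu\rangle-\langle X,\nu\rangle\langle\nu_F,\mu\rangle=-\omega_0\langle X,\mu\rangle\langle\nu,E_{n+1}^F\rangle+\omega_0\langle X,\nu\rangle\langle\mu,E_{n+1}^F\rangle$ quoted in the proof of Lemma \ref{lemma-DU-=0}, which (together with $\langle P_kY,\mu\rangle=0$ for $Y\in T(\partial\Sigma)$, cf.\ \eqref{equ:SFau=0}) is precisely what makes your boundary flux $\langle W,\mu\rangle$ vanish. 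The only step you left vague—the vanishing of the boundary term—does check out: $\langle W,E_{n+1}\rangle=\langle E_{n+1},\mu\rangle\langle W,\mu\rangle$ with $\langle E_{n+1},\mu\rangle\neq0$, and the required algebraic identity follows from $\langle X,E_{n+1}\rangle=0$, $\langle\nu_F,E_{n+1}\rangle=-\omega_0$ and $\langle E_{n+1}^F,E_{n+1}\rangle=1$ after decomposing $E_{n+1}$ in the $(\nu,\mu)$-plane.
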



\section{Proof of Theorem \ref{thm:flow} }\label{sec 3}

In this section, we first derive some evolution equations and then obtain a priori estimates for flow \eqref{equ:flow}. Next, we prove the long-time existence and smooth convergence of flow \eqref{equ:flow}.

\subsection{Evolution equations}
\begin{lemma}\label{lemma-evolution-vf}
	Along the general flow $\partial_tX=f\nu_F+T$ with $T\in T_X\Sigma_t$, it holds that
\begin{align}\label{equ:p_tVF}
	\partial_t \nu_F=-\hat{\nabla}f+\hat{h}^{kj}G(\nu_F)(\partial_kX,T)\partial_jX,
\end{align}
\begin{align}\label{equ:ptu^}
	\partial_t\hat{ u}=f-G(\nu_F)(X,\hat{\nabla}f)
		+G(\nu_F)(T,\hat{\nabla}\hat{ u}),
\end{align}
\begin{align}\label{equ:ptG(v,E)}
	\partial_tG(\nu_F)(\nu_F,E^F_{n+1})=-G(\nu_F)(\hat{\nabla}f,E^F_{n+1})+\hat{h}^{kj}G(\nu_F)(T,\partial_kX)G(\nu_F)(E_{n+1}^F,\partial_jX).
\end{align}
\end{lemma}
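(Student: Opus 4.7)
The plan is to derive all three evolution equations from a single master computation: first express $\partial_t\nu_F$ in the moving tangent frame $\{\partial_jX\}$, and then substitute this expression into the time derivatives of the scalars $\hat{u}=G(\nu_F)(\nu_F,X)$ and $G(\nu_F)(\nu_F,E^F_{n+1})$.

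For \eqref{equ:p_tVF}, I would argue that since $\nu_F(X(\cdot,t))$ traces a curve on the fixed Wulff shape $\mathcal{W}$, its time derivative lies in $T_{\nu_F}\mathcal{W}$, which by the Weingarten identification \eqref{Weingarten} coincides with $T_X\Sigma_t$. So I write $\partial_t\nu_F=\alpha^j\partial_jX$ and recover the coefficients by pairing with $\partial_iX$ under the metric $G(\nu_F)$. The cleanest route is to differentiate the identity $G(\nu_F)(\nu_F,\partial_iX)=0$ in $t$; the product rule for the $\nu_F$-dependent metric produces a $Q$-tensor correction, which vanishes by \eqref{equ:Q_0} together with the total symmetry of $Q$. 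What remains is a combination of $G(\nu_F)(\nu_F,\partial_tX)$, which equals $f$ via \eqref{equ:G(vF,Y)=<v,Y>/F} and the tangentiality of $T$, and of $G(\nu_F)(\nu_F,\partial_i\partial_tX)$, evaluated by differentiating $\partial_tX=f\nu_F+T$ spatially, applying Weingarten, and again using $G(\nu_F)(\nu_F,T)=0$. Solving the linear system for $\alpha^j$ yields \eqref{equ:p_tVF}.

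Once \eqref{equ:p_tVF} is in hand, \eqref{equ:ptu^} and \eqref{equ:ptG(v,E)} follow by direct expansion. For $\hat{u}=G(\nu_F)(\nu_F,X)$, the product rule gives $\partial_t\hat{u}=Q(\nu_F)(\partial_t\nu_F,\nu_F,X)+G(\nu_F)(\partial_t\nu_F,X)+G(\nu_F)(\nu_F,\partial_tX)$, where the $Q$-term vanishes by \eqref{equ:Q_0} and the last term equals $f$. I would then substitute \eqref{equ:p_tVF} into the middle term, and note that $\hat{\nabla}_j\hat{u}=G(\nu_F)(\partial_j\nu_F,X)$ (obtained by differentiating $\hat{u}=G(\nu_F)(\nu_F,X)$ spatially, where again the $Q$-term vanishes and $G(\nu_F)(\nu_F,\partial_jX)=0$); this combined with Weingarten lets me identify the two tangential contractions as $-G(\nu_F)(X,\hat{\nabla}f)$ and $G(\nu_F)(T,\hat{\nabla}\hat{u})$. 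For \eqref{equ:ptG(v,E)}, since $E^F_{n+1}$ is constant and the $Q$-correction vanishes, $\partial_tG(\nu_F)(\nu_F,E^F_{n+1})=G(\nu_F)(\partial_t\nu_F,E^F_{n+1})$, into which \eqref{equ:p_tVF} is substituted directly.

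The only bookkeeping hazard is the position-dependence of the metric $G(\nu_F)$: every differentiation of a $G$-inner product produces a spurious $Q$-tensor term. The proof hinges on the observation that in every such occurrence either $\nu_F$ itself or $\partial_t\nu_F\in T_X\Sigma_t$ sits in one slot of $Q$, so the vanishing property \eqref{equ:Q_0}, combined with the total symmetry of $Q$, eliminates it. There is no substantive analytic difficulty beyond this careful tensorial accounting.
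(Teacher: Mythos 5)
Your proposal is correct and follows essentially the same route as the paper: differentiate the relations $G(\nu_F)(\nu_F,\nu_F)=1$ and $G(\nu_F)(\nu_F,\partial_iX)=0$ in $t$, kill the $Q$-corrections via \eqref{equ:Q_0}, solve for the tangential components of $\partial_t\nu_F$, and then substitute into the time derivatives of $\hat u$ and $G(\nu_F)(\nu_F,E^F_{n+1})$ using $\hat{\nabla}_i\hat u=\hat h_i^pG(\nu_F)(X,\partial_pX)$. The only cosmetic difference is that you justify the tangentiality of $\partial_t\nu_F$ geometrically (its image stays on the fixed Wulff shape), whereas the paper extracts it from $\partial_tG(\nu_F)(\nu_F,\nu_F)=0$; both are valid.
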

\begin{proof}
	 Taking derivative of $G(\nu_F)(\nu_F,\nu_F)=1 $  and $G(\nu_F)(\nu_F,\partial_i X)=0 $ with respect to $t$, using  \eqref{equ:Q_0} and the Weingarten formula \eqref{Weingarten}, we have
	 \begin{align}
	 	0&=\partial_tG(\nu_F)(\nu_F,\nu_F)\nonumber
	 	\\
	 	&=2G(\nu_F)(\partial_t\nu_F,\nu_F)+Q(\nu_F)(\partial_t\nu_F,\nu_F,\nu_F)\nonumber
	 	\\
	 	&=2G(\nu_F)(\partial_t\nu_F,\nu_F),\label{equ:pf-Lem3.1-1}
	 \end{align}
	 and
	 \begin{align}
	 	0=&\partial_tG(\nu_F)(\nu_F,\partial_iX)\nonumber
	 	\\
	 	=&G(\nu_F)(\partial_t\nu_F,\partial_iX)+G(\nu_F)(\nu_F,\partial_t(\partial_iX))+Q(\nu_F)(\partial_t\nu_F,\nu_F,\partial_iX)\nonumber
	 	\\
	 	=&G(\nu_F)(\partial_t\nu_F,\partial_iX)+G(\nu_F)(\nu_F,\partial_i(f\nu_F+T))\nonumber
	 	\\
	 	=&G(\nu_F)(\partial_t\nu_F,\partial_iX)+\partial_i f+fG(\nu_F)(\nu_F,\partial_i\nu_F)+G(\nu_F)(\nu_F,\partial_iT).\label{equ:pf-Lem3.1-2}
	 \end{align}
	Similarly, since  $G(\nu_F)(\nu_F,T)=0$ and $G(\nu_F)(\nu_F,\nu_F)=1 $, we have
	  \begin{align}
	 	0&=\partial_iG(\nu_F)(\nu_F,T)\nonumber
	 	\\
	 	&=G(\nu_F)(\partial_i\nu_F,T)+G(\nu_F)(\nu_F,\partial_iT)+Q(\nu_F)(\partial_i\nu_F,\nu_F,T)\nonumber
	 	\\
	 	&=\hat{h}^k_i G(\nu_F)(\partial_kX,T)+G(\nu_F)(\nu_F,\partial_iT),\label{equ:pf-Lem3.1-3}
	 \end{align}
	 and
	 	 \begin{align}
	 	0&=\partial_iG(\nu_F)(\nu_F,\nu_F)\nonumber
	 	\\
	 	&=2G(\nu_F)(\partial_i\nu_F,\nu_F)+Q(\nu_F)(\partial_i\nu_F,\nu_F,\nu_F)\nonumber
	 	\\
	 	&=2G(\nu_F)(\partial_i\nu_F,\nu_F).\label{equ:pf-Lem3.1-4}
	 \end{align}
	 Putting \eqref{equ:pf-Lem3.1-3} and  \eqref{equ:pf-Lem3.1-4} into  \eqref{equ:pf-Lem3.1-2}, combining with  \eqref{equ:pf-Lem3.1-1}, we obtain
	 \begin{align*}	 	\partial_t\nu_F=&G(\nu_F)(\partial_t\nu_F,\nu_F)\nu_F+\hat{g}^{ij}G(\nu_F)(\partial_t\nu_F,\partial_iX)\partial_jX\\ =&-\hat{\nabla}f+\hat{h}^{kj}G(\nu_F)(\partial_kX,T)\partial_jX,
	 \end{align*}
	which proves \eqref{equ:p_tVF}.\\

	  By the evolution of $\partial_t\nu_F$, we can derive \eqref{equ:ptu^} by the following calculation
	 \begin{align*}
	 	\partial_t\hat{ u}=&\partial_tG(\nu_F)(X,\nu_F)
	 	\\
	 	=&G(\nu_F)(\partial_tX,\nu_F)+G(\nu_F)(X,\partial_t\nu_F)+Q(\nu_F)(\partial_t\nu_F,X,\nu_F)
	 	\\
	 	=&G(\nu_F)(f\nu_F+T,\nu_F)
	 	+G(\nu_F)(X,-\hat{\nabla}f+\hat{h}^{kj}G(\nu_F)(\partial_kX,T)\partial_jX)
	 	\\
	 	=&f-
	 	\hat{\nabla}^kfG(\nu_F)(X,\partial_kX)+\hat{h}^{kj}G(\nu_F)(T,\partial_kX)G(\nu_F)(X,\partial_jX)
	 	\\
	 	\overset{\eqref{equ:pfL3.2-1}}{=}&f-
	 	\hat{\nabla}^kfG(\nu_F)(X,\partial_kX)
	 	+G(\nu_F)(T,\hat{\nabla}\hat{ u}),
	 \end{align*}
	 where in the last equality we use the equation (see e.g. \cite{Xia2017})
	 \begin{align}
	 	\label{equ:pfL3.2-1}
	 	\hat{\nabla}_i\hat{ u}=\hat{h}_i^pG(\nu_F)(X,\partial_pX).
	 \end{align}
	 \\
	 For the last evolution \eqref{equ:ptG(v,E)}, we can also derive straightly
	 \begin{align*}
	 &\partial_tG(\nu_F)(E^F_{n+1},\nu_F)
	 	\\
	 	=&G(\nu_F)(\partial_tE^F_{n+1},\nu_F)+G(\nu_F)(E^F_{n+1},\partial_t\nu_F)+Q(\nu_F)(\partial_t\nu_F,E^F_{n+1},\nu_F)
	 	\\
	 	=&G(\nu_F)(E^F_{n+1},-\hat{\nabla}f+\hat{h}^{kj}G(\nu_F)(\partial_kX,T)\partial_jX)
	 	\\
	 	=&-
	 	\hat{\nabla}^kfG(\nu_F)(E^F_{n+1},\partial_kX)+\hat{h}^{kj}G(\nu_F)(T,\partial_kX)G(\nu_F)(E^F_{n+1},\partial_jX).
	 \end{align*}
	
\end{proof}

For simplicity, we introduce the linearized operator with respect to \eqref{equ:flow} as
\begin{align}\label{equ:L-operator}
	\mathcal{L}:=\partial_t-\hat{ u}\left(
	\hat{\Delta}+\hat{g}^{ik}A_{pik}\hat{\nabla}^p
	\right)
	-G(\nu_F)\(T+H_FX-n\omega_0E^F_{n+1},\hat{\nabla}\),
\end{align}
where $\hat{\Delta}$ is the Laplace operator with respect to $\hat{g}$.

\begin{lemma}\label{lemma-Lu^}
	Along flow \eqref{equ:flow}, the anisotropic support function $\hat{ u}$ satisfies
	\begin{align}\label{equ:Lu^}
		\mathcal{L}\hat{ u}=n+n\omega_0G(\nu_F)(\nu_F,E^F_{n+1})-2\hat{ u}H_F+\hat{ u}^2|\hat{h}|_{\hat{g}}^2,
	\end{align}
	where $|\hat{h}|_{\hat{g}}^2=\sum_{i,j}\hat{g}^{ij}\hat{h}_{ij}$.
\end{lemma}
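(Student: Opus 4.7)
The plan is to begin with the evolution formula \eqref{equ:ptu^} from Lemma \ref{lemma-evolution-vf}, substitute the explicit form $f=n+n\omega_0G(\nu_F)(\nu_F,E^F_{n+1})-\hat{u}H_F$, and reorganize the result into the shape of $\mathcal{L}\hat{u}$ plus the claimed right-hand side. The tangential correction $G(\nu_F)(T,\hat{\nabla}\hat{u})$ in \eqref{equ:ptu^} cancels the $T$-piece of $\mathcal{L}$ in \eqref{equ:L-operator} immediately, so the main work is to handle $-G(\nu_F)(X,\hat{\nabla}f)$ and the second-order term $\hat{u}\bigl(\hat{\Delta}+\hat{g}^{ik}A_{pik}\hat{\nabla}^p\bigr)\hat{u}$.

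First I would expand $\hat{\nabla}_if$ by the product rule. The middle piece $-H_F\hat{\nabla}_i\hat{u}$, once paired with $G(\nu_F)(X,\cdot)$, cancels the $H_FX$-term of $\mathcal{L}$ exactly. For the $\omega_0$-piece, differentiating $G(\nu_F)(\nu_F,E^F_{n+1})$ and using the Weingarten formula \eqref{Weingarten} together with \eqref{equ:Q_0} (to kill the $Q$-contribution arising from the chain rule on $G$) yields $\hat{\nabla}_iG(\nu_F)(\nu_F,E^F_{n+1})=\hat{h}_i^pG(\nu_F)(E^F_{n+1},\partial_pX)$; then \eqref{equ:pfL3.2-1} converts $G(\nu_F)(X,\hat{\nabla}G(\nu_F)(\nu_F,E^F_{n+1}))$ into $G(\nu_F)(E^F_{n+1},\hat{\nabla}\hat{u})$, which cancels the $-n\omega_0E^F_{n+1}$-piece of $\mathcal{L}$. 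The only leftover from this first step is $\hat{u}G(\nu_F)(X,\hat{\nabla}H_F)$.

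Second, the core computation is $\hat{u}\hat{\Delta}\hat{u}+\hat{u}\hat{g}^{ik}A_{pik}\hat{\nabla}^p\hat{u}$. Differentiating \eqref{equ:pfL3.2-1} once more and applying the Gauss formula \eqref{equ:Gauss-formula} one finds
\begin{eqnarray*}
\hat{\nabla}_j\hat{\nabla}_i\hat{u}=(\hat{\nabla}_j\hat{h}_i^p)G(\nu_F)(X,\partial_pX)+\hat{h}_i^p\hat{g}_{jp}-\hat{u}\hat{h}_i^p\hat{h}_{jp}+(\text{$A$- and $Q$-corrections}),
\end{eqnarray*}
where the correction terms come both from the $A$-piece of the Gauss formula and from differentiating $G(\nu_F)$ through its chain rule. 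Tracing with $\hat{g}^{ij}$, the middle pair contributes $H_F-\hat{u}|\hat{h}|_{\hat{g}}^2$; the Codazzi equation \eqref{Codazzi} transforms the first piece, via $\hat{g}^{ij}\hat{\nabla}_j\hat{h}_i^p=\hat{\nabla}^pH_F+(\text{$A$-terms})$, into $G(\nu_F)(X,\hat{\nabla}H_F)$ plus further $A$-corrections. The clean $H_F$-gradient piece now cancels the leftover $\hat{u}G(\nu_F)(X,\hat{\nabla}H_F)$ from the first step.

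Collecting all surviving terms yields $f-\hat{u}H_F+\hat{u}^2|\hat{h}|_{\hat{g}}^2$, which is exactly the right-hand side of \eqref{equ:Lu^}. The main obstacle is the bookkeeping of the $A$- and $Q$-tensor corrections generated by the anisotropic Gauss formula and by the chain rule on $G(\nu_F)$: these scattered corrections must combine to exactly $\hat{u}\hat{g}^{ik}A_{pik}\hat{\nabla}^p\hat{u}$, which is precisely the reason why the linearized operator $\mathcal{L}$ defined in \eqref{equ:L-operator} carries the extra first-order drift $\hat{g}^{ik}A_{pik}\hat{\nabla}^p$ alongside $\hat{\Delta}$.
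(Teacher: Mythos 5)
Your proposal is correct and follows essentially the same route as the paper: substitute the explicit $f$ into \eqref{equ:ptu^}, expand $G(\nu_F)(X,\hat{\nabla}f)$ using \eqref{equ:pfL3.2-1}, and invoke the identity $\hat{\Delta}\hat{u}+\hat{g}^{ik}A_{pik}\hat{\nabla}^p\hat{u}=G(\nu_F)(X,\hat{\nabla}H_F)-|\hat{h}|_{\hat{g}}^2\hat{u}+H_F$ so that all gradient terms cancel against the drift in $\mathcal{L}$. The only difference is that the paper cites this last identity (equation \eqref{equ:pfL3.2-2}) from \cite{Xia2017}, whereas you rederive it from the anisotropic Gauss--Codazzi equations, which is the standard derivation and poses no difficulty.
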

\begin{proof}
It is proved in \cite{Xia2017} that
	\begin{align}
		\label{equ:pfL3.2-2}
		\hat{\Delta}\hat{ u}+\hat{g}^{ik}A_{pik}\hat{\nabla}^p\hat{ u}=\hat{\nabla}^pH_F G(\nu_F)(X,\partial_pX)
		-|\hat{h}|^2_{\hat{g}}\hat{ u}+H_F.
	\end{align}
By \eqref{Weingarten} and \eqref{equ:pfL3.2-4}, direct computation yields
\begin{align}
	\hat{\nabla}_kf&=\partial_kf=n\omega_0G(\nu_F)(\partial_k\nu_F,E^F_{n+1})-\partial_k(H_F\hat{ u})\nonumber
	\\
	&=n\omega_0\hat{h}_k^lG(\nu_F)(\partial_lX,E^F_{n+1})-\partial_kH_F\hat{ u}-\partial_k\hat{ u}H_F,\label{equ:pfL3.2-5}
\end{align}
which implies
\begin{align}
	&G(\nu_F)(\hat{\nabla}f,X)=\hat{\nabla}_kf G(\nu_F)(X^k,X)\nonumber
	\\
	=&n\omega_0\hat{h}_k^lG(\nu_F)(\partial_lX,E^F_{n+1}) G(\nu_F)(X^k,X)
	- G(\nu_F)(\partial_kH_FX^k,X)\hat{ u}\nonumber
	\\
	&- G(\nu_F)(\partial_k\hat{ u}X^k,X)H_F
	\nonumber
	\\
	\overset{\eqref{equ:pfL3.2-1}}{=}
	&n\omega_0G(\nu_F)(E^F_{n+1},\hat{\nabla}\hat{ u})-\hat{\nabla}^pH_F G(\nu_F)(X,X_p)\hat{ u}
	-G(\nu_F)(\hat{\nabla}\hat{ u},X)H_F\nonumber
	\\
	\overset{\eqref{equ:pfL3.2-2}}{=}&n\omega_0G(\nu_F)(E^F_{n+1},\hat{\nabla}\hat{ u})
	-G(\nu_F)(\hat{\nabla}\hat{ u},X)H_F
	\nonumber
	\\
	&
	-\left(
	\hat{\Delta}\hat{ u}+\hat{g}^{ik}A_{pik}\hat{\nabla}^p\hat{ u}
	+|\hat{h}|^2_{\hat{g}}\hat{ u}
	-H_F
	\right)\hat{ u}.
	\label{equ:pfL3.2-3}
\end{align}
Putting \eqref{equ:pfL3.2-4} and \eqref{equ:pfL3.2-3} into \eqref{equ:ptu^} yields
\begin{align*}
	\partial_t\hat{ u}=&
	n
	+n\omega_0G(\nu_F)(\nu_F,E^F_{n+1})
	-\hat{u}H_F
	+G(\nu_F)(T,\hat{\nabla}\hat{ u})
	\\
	&-n\omega_0G(\nu_F)(E^F_{n+1},\hat{\nabla}\hat{ u})
	+G(\nu_F)(\hat{\nabla}\hat{ u},X)H_F
	\\
	&
	+\left(
	\hat{\Delta}\hat{ u}+\hat{g}^{ik}A_{pik}\hat{\nabla}^p\hat{ u}
	+|\hat{h}|^2_{\hat{g}}\hat{ u}
	-H_F
	\right)\hat{ u}
	\\
	=&\hat{ u}\left(
	\hat{\Delta}\hat{ u}+\hat{g}^{ik}A_{pik}\hat{\nabla}^p\hat{ u}
	\right)
	+G(\nu_F)\(T+H_FX-n\omega_0E^F_{n+1},\hat{\nabla}\hat{ u}\)
	\\
	&+n+n\omega_0G(\nu_F)(\nu_F,E^F_{n+1})
	-2\hat{u}H_F
	+|\hat{h}|^2_{\hat{g}}\hat{ u}^2,
\end{align*}
which means
\begin{align*}
	\mathcal{L}\hat{ u}=n+n\omega_0G(\nu_F)(\nu_F,E^F_{n+1})-2\hat{ u}H_F+\hat{ u}^2|\hat{h}|_{\hat{g}}^2.
\end{align*}
\end{proof}

\begin{lemma}\label{lemma-Lu-}
	Along flow \eqref{equ:flow}, the anisotropic capillary support function $\bar{ u}$ satisfies
	\begin{align}\label{equ:Lu-}
		\mathcal{L}\bar{ u}=n-2\bar{ u}H_F+\bar{ u}^2
		|\hat{h}|_{\hat{g}}^2
		+
		2\bar{ u}G(\nu_F)\(\hat{\nabla}\bar{ u},\hat{\nabla}(1+\omega_0G(\nu_F)(\nu_F,E^F_{n+1}))\).
	\end{align}
\end{lemma}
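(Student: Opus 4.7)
My plan is to treat $\bar u$ as a quotient. Set $\eta:=G(\nu_F)(\nu_F,E^F_{n+1})$ and $\phi:=1+\omega_0\eta$, so that $\hat u=\bar u\,\phi$ and $\phi-1=\omega_0\eta$. Since the leading part of $\mathcal L$ from \eqref{equ:L-operator} is $-\hat u\hat\Delta$ and all remaining pieces are first order, a direct application of the Leibniz rule gives the product identity
\begin{equation*}
\mathcal L(uv)=u\,\mathcal L v+v\,\mathcal L u-2\hat u\,G(\nu_F)\bigl(\hat\nabla u,\hat\nabla v\bigr).
\end{equation*}
Applying it with $u=\bar u$, $v=\phi$ and solving for $\mathcal L\bar u$ yields
\begin{equation*}
\mathcal L\bar u=\tfrac{1}{\phi}\mathcal L\hat u-\tfrac{\bar u}{\phi}\mathcal L\phi+2\bar u\,G(\nu_F)\bigl(\hat\nabla\bar u,\hat\nabla\phi\bigr),
\end{equation*}
whose last term already coincides with the gradient cross-term of the target formula.

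The heart of the proof is the claim $\mathcal L\eta=\hat u\,|\hat h|_{\hat g}^{2}\,\eta$, equivalently $\mathcal L\phi=\omega_0\hat u\,\eta\,|\hat h|_{\hat g}^{2}=\hat u(\phi-1)|\hat h|_{\hat g}^{2}$. To prove it I would follow the template of Lemma \ref{lemma-Lu^}: combine $\partial_t\eta$ from \eqref{equ:ptG(v,E)} with the constant-vector analogue of \eqref{equ:pfL3.2-2},
\begin{equation*}
\hat\Delta\eta+\hat g^{ik}A_{pik}\hat\nabla^{p}\eta=\hat\nabla^{p}H_F\,G(\nu_F)(E^F_{n+1},\partial_pX)-|\hat h|_{\hat g}^{2}\,\eta,
\end{equation*}
which differs from \eqref{equ:pfL3.2-2} only by the absence of the $+H_F$ term (that term arose from differentiating the position vector $X$, whereas $E^F_{n+1}$ is constant). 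Expanding $\mathcal L\eta$ via \eqref{equ:L-operator}, substituting \eqref{equ:pfL3.2-5} for $\hat\nabla f$, and using $\hat\nabla^{i}\eta=\hat h^{ij}G(\nu_F)(\partial_jX,E^F_{n+1})$ together with \eqref{equ:pfL3.2-1} in the form $\hat\nabla^{i}\hat u=\hat h^{ij}G(\nu_F)(X,\partial_jX)$, the transport term $G(\nu_F)(T,\hat\nabla\eta)$ cancels against the analogous contribution in $\partial_t\eta$; the two $n\omega_0$ pieces quadratic in $\hat h$ cancel; and the terms involving $\hat u\hat\nabla H_F$ and $H_F\hat\nabla\hat u$ cancel respectively by relabeling indices and by symmetry of $\hat h^{ij}$. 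What remains is precisely $\hat u|\hat h|_{\hat g}^{2}\,\eta$.

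Finally I would assemble everything. Using $\mathcal L\hat u=n\phi-2\hat uH_F+\hat u^{2}|\hat h|_{\hat g}^{2}$ from Lemma \ref{lemma-Lu^} and the formula for $\mathcal L\phi$ just derived, together with $\hat u=\bar u\phi$,
\begin{equation*}
\tfrac{1}{\phi}\mathcal L\hat u=n-2\bar uH_F+\bar u^{2}\phi\,|\hat h|_{\hat g}^{2},\qquad \tfrac{\bar u}{\phi}\mathcal L\phi=\bar u^{2}(\phi-1)|\hat h|_{\hat g}^{2},
\end{equation*}
and their difference collapses to $n-2\bar uH_F+\bar u^{2}|\hat h|_{\hat g}^{2}$; combined with the cross-term already identified, this gives \eqref{equ:Lu-}. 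The principal technical obstacle is the bookkeeping in the $\mathcal L\eta$ computation, specifically justifying the constant-vector analogue of \eqref{equ:pfL3.2-2} and tracking the four independent cancellations that reduce the expression to a single term.
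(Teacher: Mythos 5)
Your proposal is correct and follows essentially the same route as the paper: the paper likewise establishes $\mathcal{L}\bigl(G(\nu_F)(\nu_F,E^F_{n+1})\bigr)=\hat u\,|\hat h|^2_{\hat g}\,G(\nu_F)(\nu_F,E^F_{n+1})$ (its equation \eqref{equ:L<v,E>}) by deriving exactly the constant-vector analogue of \eqref{equ:pfL3.2-2} that you state (its \eqref{equ:pf-lemma3.3-4}, proved via the Codazzi equation and the symmetry relation \eqref{equ:pf-lemma3.3-2}) and combining it with \eqref{equ:ptG(v,E)} and \eqref{equ:pfL3.2-5}, and then concludes by the quotient rule for $\mathcal{L}$ applied to $\hat u/(1+\omega_0 G(\nu_F)(\nu_F,E^F_{n+1}))$, which is algebraically identical to your product-rule rearrangement. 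The final assembly you give matches the paper's computation.
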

\begin{proof}
	For convenience, we use  $R_{,i}=\hat{\nabla}_{\partial_i}R$ to represent the covariant derivative of tensor $R$. In normal coordinates (i.e. $\hat{\nabla}_{\partial_i}\partial_j=0$) of a given point, by Lemma \ref{lem2-1} and \eqref{equ:Q_0}, direct computation yields
\begin{align}\label{equ:pf-lemma3.3-3}
	G(\nu_F)(\nu_F,E^F_{n+1})_{,k}&=G(\nu_F)(\partial_k\nu_F,E^F_{n+1})+Q(\nu_F)(\partial_k\nu_F,\nu_F,E^F_{n+1})\nonumber
	\\
	&=\hat{h}_k^sG(\nu_F)(X_s,E^F_{n+1}),
\end{align}
and
\begin{align}
	&G(\nu_F)(\nu_F,E^F_{n+1})_{,kl}
	\nonumber
	\\
	=&\hat{h}_{k,l}^sG(\nu_F)(X_s,E^F_{n+1})
	+\hat{h}_k^sG(\nu_F)(\partial_l\partial_sX,E^F_{n+1})
	+\hat{h}_k^sQ(\nu_F)(\partial_l\nu_F,X_s,E^F_{n+1})
	\nonumber
	\\
	=&\hat{h}_{ks,l}G(\nu_F)(X^s,E^F_{n+1})
	+
	\hat{h}_k^sG(\nu_F)\(-\hat{h}_{ls}\nu_F+A_{slp}X^p,E^F_{n+1}\)
	+
	\hat{h}_k^s\hat{h}_l^qQ(\nu_F)(X_q,X_s,E^F_{n+1})
	\nonumber
	\\
	=&\left(
	\hat{h}_{kl,s}
	+\hat{h}_l^pA_{psk}
	-\hat{h}_s^pA_{plk}
	\right)G(\nu_F)(X^s,E^F_{n+1})
	-
	\hat{h}_k^s\hat{h}_{ls}G(\nu_F)(\nu_F,E^F_{n+1})
	\nonumber
	\\
	&+
	\hat{h}_k^sA_{slp}G(\nu_F)(X^p,E^F_{n+1})
	+
	\hat{h}_k^s\hat{h}_l^qQ_{qsp}G(\nu_F)(X^p,E^F_{n+1})
	\nonumber
	\\
	=
	&\hat{h}_{kl,s}G(\nu_F)(X^s,E^F_{n+1})	
	-
	\hat{h}_k^s\hat{h}_{ls}G(\nu_F)(\nu_F,E^F_{n+1})
	\nonumber
	\\
	&+\left(
	\hat{h}_l^pA_{psk}
	-\hat{h}_s^pA_{plk}
	+\hat{h}_k^pA_{pls}
	+\hat{h}_k^p\hat{h}_l^qQ_{qps}
	\right)G(\nu_F)(X^s,E^F_{n+1})
    .\label{equ:pf-lemma3.3-1}
\end{align}
Due to \eqref{AA}, we check that
\begin{align} \label{equ:pf-lemma3.3-2}
	A_{psl}+A_{pls}+\hat{h}_p^qQ_{qls}=0.
\end{align}
By combination of \eqref{equ:pf-lemma3.3-3}$\thicksim$\eqref{equ:pf-lemma3.3-2}, and the symmetry of $\hat{g}^{lk}$ and $ \hat{h}^{pl}$, we obtain
 \begin{align}
 	&\hat{\Delta}G(\nu_F)(\nu_F,E^F_{n+1})=\hat{g}^{kl}G(\nu_F)(\nu_F,E^F_{n+1})_{,kl}\nonumber
 	\\
 	=
 	&G(\nu_F)(\hat{\nabla}H_F,E^F_{n+1})	
 	-
 	|\hat{h}|^2_{\hat{g}}G(\nu_F)(\nu_F,E^F_{n+1})
 	-\hat{g}^{kl}\hat{h}_s^pA_{plk}G(\nu_F)(X^s,E^F_{n+1})
 	\nonumber
 	\\
 	&+\left(
 	\hat{h}^{pl}A_{psl}
 	+\hat{h}^{pl}A_{pls}
 	+\hat{h}^{pl}\hat{h}_l^qQ_{qps}
 	\right)G(\nu_F)(X^s,E^F_{n+1})
 	\nonumber
 	\\
 	=
 	&G(\nu_F)(\hat{\nabla}H_F,E^F_{n+1})	
 	-
 	|\hat{h}|^2_{\hat{g}}G(\nu_F)(\nu_F,E^F_{n+1})
 	-\hat{g}^{kl}A_{plk}\hat{\nabla}^pG(\nu_F)(\nu_F,E^F_{n+1}).
 	\label{equ:pf-lemma3.3-4}
 \end{align}
By \eqref{equ:pfL3.2-5}, \eqref{equ:pfL3.2-1}, and \eqref{equ:pf-lemma3.3-3}, we have
\begin{align}
	&G(\nu_F)(\hat{\nabla}f,E^F_{n+1})
		\nonumber
		\\
	=&n\omega_0\hat{h}_k^lG(\nu_F)(\partial_lX,E^F_{n+1})G(\nu_F)(X^k,E^F_{n+1})
	-G(\nu_F)(\hat{\nabla} H_F,E^F_{n+1})
	\hat{ u}\nonumber\\
	&
	-H_F\hat{h}_i^pG(\nu_F)(X,X_p)G(\nu_F)(X_i,E^F_{n+1})
	\nonumber
	\\
	=&n\omega_0G(\nu_F)\(\hat{\nabla} (G(\nu_F)(\nu_F,E^F_{n+1})),E^F_{n+1}\)-G(\nu_F)(\hat{\nabla} H_F,E^F_{n+1})
	\hat{ u}
	\nonumber
	\\
	&-H_FG(\nu_F)\(\hat{\nabla} (G(\nu_F)(\nu_F,E^F_{n+1})),X\). \label{equ:pf-lemma3.3-5}
\end{align}
Putting \eqref{equ:pf-lemma3.3-3}, \eqref{equ:pf-lemma3.3-4}, \eqref{equ:pf-lemma3.3-5} into \eqref{equ:ptG(v,E)} yields
\begin{align*}
	&\partial_tG(\nu_F)(\nu_F,E^F_{n+1})
	\\
	=&-n\omega_0G(\nu_F)\(\hat{\nabla} (G(\nu_F)(\nu_F,E^F_{n+1})),E^F_{n+1}\)+G(\nu_F)(\hat{\nabla} H_F,E^F_{n+1})
	\hat{ u}
	\\
	&+H_FG(\nu_F)\(\hat{\nabla} (G(\nu_F)(\nu_F,E^F_{n+1})),X\)
	+\hat{h}^{kj}G(\nu_F)(T,\partial_kX)G(\nu_F)(E_{n+1}^F,\partial_jX)
	\\
	=&-n\omega_0G(\nu_F)\(\hat{\nabla} (G(\nu_F)(\nu_F,E^F_{n+1})),E^F_{n+1}\)
	+H_FG(\nu_F)\(\hat{\nabla} (G(\nu_F)(\nu_F,E^F_{n+1})),X\)
		\\
	&+
	\hat{ u}	
	\left(
	\hat{\Delta}G(\nu_F)(\nu_F,E^F_{n+1})
	+\hat{g}^{kl}A_{plk}\hat{\nabla}^pG(\nu_F)(\nu_F,E^F_{n+1})
	+|\hat{h}|^2_{\hat{g}}G(\nu_F)(\nu_F,E^F_{n+1})
	\right)
	\\
	&+G(\nu_F)\(\hat{\nabla} (G(\nu_F)(\nu_F,E^F_{n+1})),T\).
\end{align*}
Thus we conclude that
\begin{align}
	\label{equ:L<v,E>}
	\mathcal{L}\(G(\nu_F)(\nu_F,E^F_{n+1})\)=\hat{ u}|\hat{h}|^2_{\hat{g}}G(\nu_F)(\nu_F,E^F_{n+1}).
\end{align}
Form	\eqref{equ:Lu^} and \eqref{equ:L<v,E>}, we obtain
\begin{align*}
	\mathcal{L}\bar{ u}=&\mathcal{L}\frac{\hat{ u}}{1+\omega_0 G(\nu_F)(\nu_F(X),E_{n+1}^F)}
	\\
	=&\frac{\mathcal{L}\hat{ u}}{1+\omega_0 G(\nu_F)(\nu_F(X),E_{n+1}^F)}
	-\frac{\omega_0\hat{ u}\mathcal{L}\( G(\nu_F)(\nu_F(X),E_{n+1}^F)\)}{\(1+\omega_0 G(\nu_F)(\nu_F(X),E_{n+1}^F)\)^2}
	\\
	&-\hat{ u}\left(
	\frac{2\hat{ u}\hat{\nabla}|1+\omega_0 G(\nu_F)(\nu_F(X),E_{n+1}^F)|^2_{\hat{g}}}{\(1+\omega_0 G(\nu_F)(\nu_F(X),E_{n+1}^F)\)^3}
	\right.
	\\
	&\left.
	-
	\frac{2G(\nu_F) \left(\hat{\nabla}\hat{ u},\hat{\nabla}\left(1+\omega_0 G(\nu_F)(\nu_F(X),E_{n+1}^F)\right)\right)}{\(1+\omega_0 G(\nu_F)(\nu_F(X),E_{n+1}^F)\)^2}
	\right)
	\\
	=&n-2\bar{ u}H_F+\bar{ u}^2
	|\hat{h}|_{\hat{g}}^2
	+
	2\bar{ u}G(\nu_F)\(\hat{\nabla}\bar{ u},\hat{\nabla}\left(1+\omega_0G(\nu_F)(\nu_F,E^F_{n+1})\right)\).
\end{align*}
\end{proof}

\begin{lemma}
	\label{lemma-DU-=0}
	For  $X\in \partial\Sigma_t$, we take $\mu_F\in T_X\Sigma$, which satisfies $G(\nu_F)(\xi,\mu_F)=0$ for any $\xi\in T_X(\partial\Sigma)$. Then
		along flow \eqref{equ:flow},
		\begin{align*}
			\hat{\nabla}_{\mu_F}\bar{ u}=0, \quad \text{on}\  \partial\Sigma_t.
		\end{align*}
\end{lemma}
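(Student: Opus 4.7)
The plan is to compute $\hat\nabla\bar u$ explicitly, reduce the claim to showing that $\hat h(\mu_F,Z)$ vanishes at the boundary for a specific tangent vector $Z$, and then read off both the tangentiality of $Z$ and the vanishing of $\hat h(\mu_F,\cdot)$ on $T(\partial\Sigma_t)$ directly from the capillary condition.

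First I would differentiate $\bar u=\hat u/c$, where $c:=1+\omega_0 G(\nu_F)(\nu_F,E^F_{n+1})$. The formulas \eqref{equ:pfL3.2-1} and \eqref{equ:pf-lemma3.3-3} yield $\hat\nabla_i \hat u=\hat h_i^{\,p}G(\nu_F)(X,\partial_p X)$ and $\hat\nabla_i c=\omega_0\hat h_i^{\,p}G(\nu_F)(X_p,E^F_{n+1})$, which share the common factor $\hat h_i^{\,p}$. Setting $Z:=X-\bar u\omega_0 E^F_{n+1}-\bar u\nu_F$ and using $G(\nu_F)(\nu_F,\partial_p X)=0$, I can combine them into
\[
\hat\nabla_i\bar u \;=\; \frac{\hat h_i^{\,p}}{c}\,G(\nu_F)\bigl(X-\bar u\omega_0 E^F_{n+1},\partial_p X\bigr)\;=\;\frac{\hat h_i^{\,p}}{c}\,G(\nu_F)(Z,\partial_p X).
\]
The identity $\bar u c=\hat u$ then gives $G(\nu_F)(Z,\nu_F)=\hat u-\bar u c=0$, so $Z\in T_X\Sigma_t$. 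Expanding $Z=Z^j\partial_j X$ collapses the formula to $\hat\nabla_i\bar u=\hat h_{ij}Z^j/c$, and consequently
\[
\hat\nabla_{\mu_F}\bar u \;=\; \tfrac{1}{c}\,\hat h(\mu_F,Z).
\]

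Next I would exploit the boundary data. At $X\in\partial\Sigma_t$, we have $\<X,E_{n+1}\>=0$ (since $X\in\partial\mathbb R^{n+1}_+$), $\<E^F_{n+1},E_{n+1}\>=1$ by the definition of $E^F_{n+1}$, and the capillary condition $\<\nu_F,E_{n+1}\>=-\omega_0$. Combining these, $\<Z,E_{n+1}\>=0-\bar u\omega_0\cdot 1-\bar u(-\omega_0)=0$, so $Z\in T_X\Sigma_t\cap T(\partial\mathbb R^{n+1}_+)=T_X(\partial\Sigma_t)$.

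It remains to show $\hat h(\mu_F,\xi)=0$ for every $\xi\in T(\partial\Sigma_t)$. Differentiating the capillary identity $\<\nu_F,E_{n+1}\>=-\omega_0$ tangentially along $\partial\Sigma_t$ and invoking the Weingarten formula \eqref{Weingarten} gives $\<S_F(\xi),E_{n+1}\>=0$, so $S_F$ maps $T(\partial\Sigma_t)$ into $T\Sigma_t\cap T(\partial\mathbb R^{n+1}_+)=T(\partial\Sigma_t)$. Because $\hat h$ is symmetric, $S_F$ is self-adjoint with respect to $\hat g$, so its $\hat g$-orthogonal complement $\mathbb R\mu_F$ of $T(\partial\Sigma_t)$ inside $T_X\Sigma_t$ is also $S_F$-invariant. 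Consequently $\mu_F$ is an eigenvector of $S_F$, and for every $\xi\in T(\partial\Sigma_t)$ we get $\hat h(\mu_F,\xi)=\hat g(S_F\mu_F,\xi)=0$. Applying this with $\xi=Z$ produces $\hat\nabla_{\mu_F}\bar u=0$ on $\partial\Sigma_t$, as claimed. The main obstacle is really just the algebraic setup in the first step: recognising the common $\hat h_i^{\,p}$ factor and using $\bar u c=\hat u$ to identify $Z$ as tangent to $\Sigma_t$. After that, both boundary facts -- that $Z$ is tangent to $\partial\Sigma_t$ and that $\mu_F$ is a Weingarten eigenvector -- are one-line consequences of the capillary condition.
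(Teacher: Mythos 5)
Your proof is correct and follows the same overall strategy as the paper's: reduce $\hat{\nabla}_{\mu_F}\bar u$ to $\hat h(\mu_F,\cdot)$ evaluated on a vector tangent to $\partial\Sigma_t$, and observe that differentiating the capillary condition $\<\nu_F,E_{n+1}\>=-\omega_0$ along $\partial\Sigma_t$ forces $\hat h(\mu_F,\cdot)$ to vanish on $T(\partial\Sigma_t)$ (your ``$\mu_F$ is a $\hat g$-self-adjoint eigenvector'' phrasing is exactly the paper's $\hat h_{\alpha n}=0$). Your vector $Z=X-\bar u\omega_0E_{n+1}^F-\bar u\nu_F$ is, up to the positive factor $c=1+\omega_0G(\nu_F)(\nu_F,E_{n+1}^F)$, the paper's $\widehat{Y^\top}$ with $Y=cX-\omega_0\hat u E_{n+1}^F$. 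The one genuine difference is how boundary-tangency of this vector is verified: the paper checks $\<\widehat{Y^\top},\mu\>=0$ by invoking an identity from the proof of Theorem 1.3 in \cite{Jia-Wang-Xia-Zhang2023}, whereas you check $\<Z,E_{n+1}\>=0$ directly from $\<X,E_{n+1}\>=0$, $\<E_{n+1}^F,E_{n+1}\>=1$ and the capillary condition, which is more elementary and self-contained. Both routes are valid; yours avoids the external citation at no cost.
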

\begin{proof}
	Without loss of generality, we assume $G(\nu_F)(\mu_F,\mu_F)=1$.
	We take the orthogonal frame $\{e_i:=\partial_iX\}_{i=1}^n$ on $(\Sigma,\hat{g})$, which satisfies $e_n=\mu_F$ on the boundary of $\Sigma$. Then $e_{\alpha}\in T(\partial\Sigma)\subset\mathbb{R}^n$, $\alpha=1,\cdots,n-1$, which means on $ \partial\Sigma_t$, we have
	\begin{align}\label{equ:<e_a,E>=0}
		\<e_{\alpha},E_{n+1}\>=0, \quad \alpha=1,\cdots,n-1.
		\end{align}
		And since $T(\partial\Sigma)\subset T\Sigma\cap  \mathbb{R}^n$, $T\Sigma\neq 
		 \mathbb{R}^n$, we have $T(\partial\Sigma)= T\Sigma\cap  \mathbb{R}^n$, then
		\begin{align}\label{equ:<e_n,E>=/0}
			\<e_{n},E_{n+1}\>\neq 0.
		\end{align}
Now by $\<\nu_F,E_{n+1}\>=-\omega_0$ on $ \partial\Sigma_t$, we have
\begin{align*}
	0=\<\partial_{\alpha}\nu_F,E_{n+1}\>\overset{\eqref{Weingarten}}{=}\sum_{i=1}^n\<\hat{h}_{\alpha}^ie_i,E_{n+1}\>\overset{\eqref{equ:<e_a,E>=0}}{=}\hat{h}_{\alpha}^n\<e_n,E_{n+1}\>,
\end{align*}
which implies $\hat{h}_{\alpha}^n=0$ by \eqref{equ:<e_n,E>=/0}.
Then we can derive that
\begin{align}
	\label{equ:Dnu^}
	\hat{\nabla}_{\mu_F}\hat{ u}=\partial_n\hat{ u}=G(\nu_F)(e_n,\nu_F)+G(\nu_F)(X,\partial_n\nu_F)=\hat{h}_n^nG(\nu_F)(X,e_n),
\end{align}
and
\begin{align}
	\label{equ:Dn<E,v>}
	\hat{\nabla}_{\mu_F}\left(
	1+\omega_0G(\nu_F)(\nu_F,E^F_{n+1})
	\right)
	=
	\omega_0G(\nu_F)(E^F_{n+1},\partial_n\nu_F)=\omega_0\hat{h}_n^nG(\nu_F)(E^F_{n+1},e_n).
\end{align}
By combination of \eqref{equ:Dnu^} and \eqref{equ:Dn<E,v>}, we obtain
\begin{align*}
	\hat{\nabla}_{\mu_F}\bar{ u}=&\frac{\partial_n\hat{ u}\(1+\omega_0 G(\nu_F)(\nu_F,E_{n+1}^F)\)
	-\hat{ u}\hat{\nabla}_n\(1+\omega_0 G(\nu_F)(\nu_F,E_{n+1}^F)\)}
	{\(1+\omega_0 G(\nu_F)(\nu_F,E_{n+1}^F)\)^2}
	\\
	=&\frac{\hat{h}_n^nG(\nu_F)(X,e_n)\(1+\omega_0 G(\nu_F)(\nu_F,E_{n+1}^F)\)
		-\omega_0\hat{h}_n^nG(\nu_F)(E^F_{n+1},e_n)\hat{ u}}
	{\(1+\omega_0 G(\nu_F)(\nu_F,E_{n+1}^F)\)^2}
	\\
	=& \frac{\hat{h}_n^nG(\nu_F)\(\left(1+\omega_0 G(\nu_F)(\nu_F,E_{n+1}^F)\right)X-\omega_0\hat{ u}E_{n+1}^F,e_n\)}
	{\(1+\omega_0 G(\nu_F)(\nu_F,E_{n+1}^F)\)^2}.
\end{align*}
Let $Y=\(1+\omega_0 G(\nu_F)(\nu_F,E_{n+1}^F)\)X-\omega_0\hat{ u}E_{n+1}^F$, we have $\<Y,\nu\>=\<X,\nu\>$ since \eqref{equ:G(vF,Y)=<v,Y>/F}. Let  $\widehat{Y^{\top}}=Y-G(\nu_F)(Y,\nu_F)\nu_F\in T\Sigma$, we know $G(\nu_F)(Y,e_n)=G(\nu_F)(\widehat{Y^{\top}},e_n)$, since $G(\nu_F)(\nu_F,e_n)=0$. Then we just need to prove the claim that $\widehat{Y^{\top}}\in T(\partial\Sigma)$.

Denote by $\mu$ the unit outward co-normal of $\partial \Sigma\subset (\Sigma,g)$ with respect to the isotropic metric $g$ induced  from Euclidean space $(\mathbb{R}^{n+1},\<\cdot,\cdot\>)$, it follows from
the proof of Theorem 1.3 in \cite{Jia-Wang-Xia-Zhang2023} that on $\partial\Sigma$:
\begin{align*}
	-\langle X, \mu\rangle\left\langle \nu, E_{n+1}^F\right\rangle \omega_0+\langle X, \nu\rangle\left\langle\mu, E_{n+1}^F\right\rangle \omega_0
	=
	F(\nu)\langle X, \mu\rangle-\langle X, \nu\rangle\left\langle \nu_F, \mu\right\rangle,
\end{align*}
which implies
\begin{align*}
F(\nu)\<	\widehat{Y^{\top}},\mu\>=\left\langle \(F(\nu)+\omega_0 \<\nu,E_{n+1}^F\>\)X-\omega_0\<X,\nu\>E_{n+1}^F,\mu\right\rangle -\<Y,\nu\>\<\nu_F,\mu\>=0.
\end{align*}
That means $\widehat{Y^{\top}}\in T(\partial\Sigma)$, thus $\hat{\nabla}_{\mu_F}\bar{ u}=0$.
\end{proof}

\begin{remark}\label{rk3.5}
	(1) $\mu_F$ defined by Lemma \ref{lemma-DU-=0} can be  specifically represented as ${A_F(\nu)\mu }$ on the  boundary of $\Sigma$. That means $\mu_F=A_F(\nu)\mu\in T_X\Sigma$ and satisfies  $G(\nu_F)(A_F(\nu)\mu,Y)=0$ for any $Y\in T(\partial\Sigma)$.
	
	(2) For anisotropic $\omega_0$-capillary hypersurface $\Sigma$, we take the standard orthogonal frame $\{e_i\}_{i=1}^n$ on $(\Sigma,\hat{g})$, which satisfies $e_n=\frac{\mu_F}{|\mu_{{F}}|_{\hat{g}}}=\frac{F(\nu)^{1/2}\cdot A_F(\nu)\mu}{\<A_F(\nu)\mu,\mu\>^{1/2}}$ on the boundary of $\Sigma$. Then on $\partial\Sigma$, we have
	\begin{align}
		\label{equ:han=0}
		\hat{h}_{\alpha n}=0.
	\end{align}
	By rotation of the local frame, we can assume that matrix $(\hat{h}_{ij})_{n\times n}$ is diagonal of a given point on $\partial\Sigma$,  at this point we have
	\begin{align}
		\label{equ:haan}
		\hat{h}_{\alpha\alpha,n}=\left(\frac{\omega_0}{\<E_{n+1},\mu\> F(\nu)|\mu_{{F}}|_{\hat{g}}}
		-
		Q(\nu_{{F}})(e_{\alpha},e_{\alpha},e_n)
		\right)
		\hat{h}_{\alpha\alpha}(\hat{h}_{nn}-\hat{h}_{\alpha\alpha}),
	\end{align}
	for $\alpha\in\{1,\cdots,n-1\}.$
\end{remark}
\begin{proof}
	(1) We just need to check that $G(\nu_F)(A_F(\nu)\mu,Y)=0$ for $Y\in T(\partial\Sigma)$.
	
	Take the orthogonal frame $\{\bar{e}_{\alpha}\}_{\alpha=1}^n$ on $(\Sigma,g)$, which satisfies $\bar{e}_n=\mu$ on the boundary of $\Sigma$,
	then by the proof of \cite{Xia-phd}*{Proposition 1.4} we check that
	\begin{align}
		D^2F({\nu})\mu=&\sum_{{\alpha}=1}^{n}\<D^2F({\nu})\mu,\bar{e}_{\alpha}\>\bar{e}_{\alpha}+\<D^2F({\nu})\mu,\nu\>\nu\nonumber
		\\
		=&\sum_{{\alpha}=1}^{n}\<A_F({\nu})\mu,\bar{e}_{\alpha}\>\bar{e}_{\alpha}\nonumber
		\\
		=&A_F({\nu})\mu\label{equ:remark0}, \quad \text{on the  boundary of $\Sigma$},
	\end{align}
	where we use the fact that $\<D^2F(\nu)\nu,\cdot\>\equiv0$, since the $1$-homogeneity of $F$.
	
	Besides, denote by $\{\epsilon_i\}_{i=1}^{n+1}$ the orthogonal frame on $\mathbb{R}^{n+1}$, and for any $\xi=\xi_i\epsilon_i\in\mathbb{R}^{n+1}$, we have $F(\xi)D_iF^0(DF(\xi))=\xi_i$ (see \cite{Xia-phd}*{Proposition 1.3(ii)}). Taking the partial derivative with respect to $\xi_j$ on both sides of the above equation yields
	\begin{align*}
		D_jF|_{\xi}\cdot D_iF^0|_{DF(\xi)}+F(\xi)D^2_{ik}F^0|_{DF(\xi)}\cdot D^2_{kj}F|_{\xi}=\delta_{ij}.
	\end{align*}
	Take $\xi=\nu$, since $\nu_F=DF(\nu)$, we obtain
	\begin{align}\label{equ:remark1}
		D^2_{ik}F^0|_{\nu_F}\cdot D^2_{kj}F|_{\nu}=-\frac{D_jF|_{\nu}\cdot D_iF^0|_{\nu_F}}{F(\nu)}+\frac{\delta_{ij}}{F(\nu)}.
	\end{align}
	Then for $\mu=\mu_i\epsilon_i\in\mathbb{R}^{n+1}$ and for any $Y=Y_i\epsilon_i\in T\Sigma$, we have
	\begin{align}
		G(\nu_F)(A_F({\nu})\mu,Y)=&\left.D^2\left(\frac{1}{2}(F^0)^2\right)\right|_{\nu_F}(A_F({\nu})\mu,Y)\nonumber
		\\
		=&F^0(\nu_F)D^2F^0|_{\nu_F}(A_F({\nu})\mu,Y)+\<A_F({\nu})\mu, DF^0(\nu_F)\>\cdot \< Y,DF^0(\nu_F)\>
	\nonumber	\\
		=&\<D^2F^0|_{\nu_F}A_F({\nu})\mu,Y\>
	\nonumber	\\
		\overset{\eqref{equ:remark0}}{=}&\<D^2F^0|_{\nu_F}D^2F|_{\nu}\mu,Y\>
		=\sum_{i,j,k=1}^n D^2_{ik}F^0|_{\nu_F}D^2_{kj}F|_{\nu}\mu_jY_i\nonumber
		\\
		\overset{\eqref{equ:remark1}}{=}&-\frac{1}{F(\nu)^2}\<\nu,Y\>\<\nu_{{F}},\mu\>+\frac{1}{F(\nu)}\<\mu,Y\>\nonumber
		\\
		=&\frac{1}{F(\nu)}\<\mu,Y\>, \quad\text{on the boundary of $\Sigma$}, \label{equ:G(muF,Y)=<mu,Y>}
	\end{align}
	where we use $F^0(\nu_{{F}})=F^0(DF(\nu))=1$ and $DF^0(\nu_{{F}})=DF^0(DF(\nu))=\frac{\nu}{F(\nu)}$.
	
	Thus, for $Y\in T(\partial\Sigma)$, we have $  \<\mu,Y\>=0$, which implies  $G(\nu_F)(A_F({\nu})\mu,Y)=0$ by \eqref{equ:G(muF,Y)=<mu,Y>}.
	
	(2) Equation \eqref{equ:han=0} has been derived from the proof of Lemma \ref{lemma-DU-=0}.
	
	 We take $\alpha,\beta,\gamma,\eta\in\{1,\cdots,n-1\}.$ Since $\hat{h}_{\alpha n}=0$ on $\partial \Sigma$,   we have
	\begin{align}
		0=&e_{\beta}\left(\hat{h}(e_n,e_{\alpha})\right)=\hat{h}_{ n\alpha,\beta }
		+\hat{h}(\hat{\nabla}_{e_{\beta}}e_n,e_{\alpha})
		+\hat{h}(\hat{\nabla}_{e_{\beta}}e_{\alpha},e_n)\nonumber
		\\
		=&\hat{h}_{\alpha\beta,n}+\hat{h}_{\beta\gamma}A_{\gamma n \alpha}-\hat{h}_{nn}A_{n \beta \alpha}+\hat{h}\left(D_{e_{\beta}}e_{\alpha}+\hat{h}_{\alpha\beta}\nu_{{F}}-\sum_{k=1}^{n}A_{\alpha\beta k}e_k , e_n\right)\nonumber
		\\
		&+\hat{h}\left(D_{e_{\beta}}e_{n}+\hat{h}_{n\beta}\nu_{{F}}-\sum_{k=1}^{n}A_{n\beta k}e_k , e_{\alpha}\right)\nonumber
		\\
		=&\hat{h}_{\alpha\beta , n}+G(\nu_{{F}})(D_{e_{\beta}}e_{\alpha},e_n)\hat{h}_{nn}+G(\nu_{{F}})(D_{e_{\beta}}e_n,e_{\gamma})\hat{h}_{\alpha\gamma}\nonumber
		\\
		&+A_{n\gamma\alpha}\hat{h}_{\beta\gamma}-A_{n\beta\alpha}\hat{h}_{nn}-A_{\alpha\beta n}\hat{h}_{nn}-A_{n\beta\gamma}\hat{h}_{\gamma\alpha}\nonumber
		\\
		=&\hat{h}_{\alpha\beta,n}+G(\nu_{{F}})(D_{e_{\beta}}e_{\gamma},e_n)\left(\hat{h}_{nn}\delta_{\gamma\alpha}-\hat{h}_{\gamma\alpha}\right)-\hat{h}_{\alpha\gamma}\hat{h}_{\beta\eta}Q_{\eta\gamma n}\nonumber
		\\
		&+A_{n\gamma\alpha}\hat{h}_{\beta\gamma}-A_{n\beta\alpha}\hat{h}_{nn}-A_{\alpha\beta n}\hat{h}_{nn}-A_{n\beta\gamma}\hat{h}_{\gamma\alpha},\label{equ:pf-rk-1}
	\end{align}
	where we use Lemma \ref{lem2-1}  and
	\begin{align*}
		0=&G(\nu_F)(D_{e_{\beta}}e_{\alpha},e_n)+G(\nu_F)(D_{e_{\beta}}e_n,e_{\alpha})+Q(\nu_{{F}})(D_{e_{\beta}}\nu_{{F}},e_{\alpha},e_n)
		\\
		=&G(\nu_F)(D_{e_{\beta}}e_{\alpha},e_n)+G(\nu_F)(D_{e_{\beta}}e_n,e_{\alpha})+\hat{h}_{\eta\beta}Q(\nu_{{F}})(e_{\eta},e_{\alpha},e_n),
	\end{align*}
	since $G(\nu_{{F}})(e_{\alpha},e_n)=0.$
	
	By rotation of local frame, we assume that $\hat{h}_{\alpha\gamma}=\hat{h}_{\alpha\alpha}\delta_{\alpha\gamma}$.
	By  \eqref{equ:pf-lemma3.3-2},  for  $\alpha=\beta$, we can simplify  \eqref{equ:pf-rk-1} as follows,
	\begin{align}\label{equ:pf-rk-2}
		\hat{h}_{\alpha\alpha,n}=-\left(G(\nu_{{F}})(D_{e_{\alpha}}e_{\alpha},e_n)+Q_{\alpha\alpha n}\hat{h}_{\alpha\alpha}\right)\left(\hat{h}_{nn}-\hat{h}_{\alpha\alpha}\right).
	\end{align}
By $e_n=\frac{\mu_{{F}}}{|\mu_{{F}}|_{\hat{g}}}$, 	
$\mu=\frac{E_{n+1}-\<E_{n+1},\nu\>\nu}{\<E_{n+1},\mu\>},
	-\omega_0=\<\nu_{{F}},E_{n+1}\>=F(\nu)\<\nu,E_{n+1}\>+\<\mu,E_{n+1}\>\<\nu_{{F}},\mu\>,$ \eqref{equ:G(vF,Y)=<v,Y>/F}, \eqref{equ:G(muF,Y)=<mu,Y>}, and \eqref{equ:Gauss-formula}, we have
\begin{align}
	&G(\nu_{{F}})(D_{e_{\alpha}}e_{\alpha},e_n)=|\mu_{{F}}|_{\hat{g}}^{-1}G(\nu_{{F}})(D_{e_{\alpha}}e_{\alpha}-G(\nu_{{F}})(D_{e_{\alpha}}e_{\alpha},\nu_{{F}})\nu_{{F}},\mu_{{F}})\nonumber
	\\
	=&\frac{\left\langle D_{e_{\alpha}}e_{\alpha}-G(\nu_{{F}})(D_{e_{\alpha}}e_{\alpha},\nu_{{F}})\nu_{{F}},\mu\right\rangle }{F(\nu)|\mu_{{F}}|_{\hat{g}}}\nonumber
	\\
	=&\left\langle \frac{D_{e_{\alpha}}e_{\alpha}}{F(\nu)|\mu_{{F}}|_{\hat{g}}},\frac{E_{n+1}-\<E_{n+1},\nu\>\nu}{\<E_{n+1},\mu\>}\right\rangle -\frac{\<D_{e_{\alpha}}e_{\alpha},\nu\>\<\nu_{{F}},\mu\>}{F(\nu)^2|\mu_{{F}}|_{\hat{g}}}\nonumber
	\\
	=&-\frac{\omega_0\hat{h}_{\alpha\alpha}}{\<\mu,E_{n+1}\>F(\nu)|\mu_{{F}}|_{\hat{g}}}.\label{equ:pf-rk-3}
\end{align}
	From \eqref{equ:pf-rk-2} and  \eqref{equ:pf-rk-3}, we obtain \eqref{equ:haan}.
\end{proof}

\subsection{Radial graph parametrisation and the short time existence}

We assume that a capillary hypersurface $\Sigma$ is strictly star-shaped with respect to the  origin. Then it can be written as a graph over $\overline{\mathbb{S}_{+}^n}$:
 $$\Sigma=\{ \rho(x)x|x\in \overline{\mathbb{S}_{+}^n} \},$$
where $x:=\left(x_1, \cdots, x_n\right)$ is a local coordinate of $\overline{\mathbb{S}_{+}^n}$.
We denote $\nabla^0$ as the Levi-Civita connection on $\mathbb{S}_{+}^n$ with respect to the standard round metric $\sigma:=g^{\mathbb{S}_{+}^n}, \partial_i:=\partial_{x_i}, \sigma_{i j}:=\sigma\left(\partial_i, \partial_j\right), \rho_i:=\nabla_i^0 \rho$, and $\rho_{i j}:=\nabla_i^0 \nabla_j^0 \rho$,  $\varphi(x):=\log \rho(x)$,  $\rho^i:=\sigma^{i j} \rho_j, \varphi^i:=\sigma^{i j} \varphi_j$ and $v:=\sqrt{1+\left|\nabla^0 \varphi\right|^2}$.
Then  the unit outward normal vector field and support function on $\Sigma$ are respectively given by
\begin{align}\label{equ:v-graph}
	\nu=\frac{1}{v}\left(\partial_\rho-\rho^{-2} \nabla^0 \rho\right)=\frac{1}{v}\left(\partial_\rho-e^{-\varphi} \nabla^0 \varphi\right),
\end{align}
\begin{align}\label{equ:u^-grahp}
	\<X,\nu\>=\<\rho\partial_{\rho},\nu\>=\frac{e^{\varphi}}{v}:=\psi_1(x,\varphi,\nabla^0 \varphi)>0.
\end{align}
The induced metric $g$ and its inverse $g^{-1}$ on $\Sigma$ can be written respectively as
$$
g_{i j}=\rho^2 \sigma_{i j}+\rho_i \rho_j=e^{2 \varphi}\left(\sigma_{i j}+\varphi_i \varphi_j\right),
$$
$$
g^{i j}=\frac{1}{\rho^2}\left(\sigma^{i j}-\frac{\rho^i \rho^j}{\rho^2+\left|\nabla^0 \rho\right|^2}\right)=e^{-2 \varphi}\left(\sigma^{i j}-\frac{\varphi^i \varphi^j}{v^2}\right).
$$
The second fundamental form $h$ on $\Sigma$ is
$$
h_{i j}=\frac{e^{\varphi}}{v}\left(\sigma_{i j}+\varphi_i \varphi_j-\varphi_{i j}\right),
$$
and its Weingarten matrix $\left(h_j^i\right)$ is
$$
h_j^i=g^{i k} h_{k j}=\frac{1}{e^{\varphi} v}\left(\delta_j^i-\left(\sigma^{i k}-\frac{\varphi^i \varphi^k}{v^2}\right) \varphi_{k j}\right) .
$$
We also have  the anisotropic  mean curvature
\begin{align}\label{eqref:HF-graph}
	H_F=&\operatorname{tr}(A_F\mathrm{d}\nu)=\frac{1}{e^{\varphi} v}\left(A_F(\nu)\right)_i^j\left(\delta_j^i-\left(\sigma^{i k}-\frac{\varphi^i \varphi^k}{v^2}\right) \varphi_{k j}\right)\nonumber
	\\
	:=&-a^{ij}(x,\varphi,\nabla^0 \varphi) \varphi_{ij}+\psi_2(x,\varphi,\nabla^0 \varphi),
\end{align}
where $a^{ij}=v^{-1}{A_F(\nu)}_k^je^{\varphi}g^{ik}$ is a positive define matrix.

Moreover, we use the polar coordinate in the half-space.  For $z:=\left(z^{\prime}, z_{n+1}\right) \in$ $\mathbb{R}^n \times[0,+\infty)$ and $x:=(\beta, \xi) \in\left[0, \frac{\pi}{2}\right] \times \mathbb{S}^{n-1}$, we have that
$$
z_{n+1}=\rho \cos \beta, \quad\left|z^{\prime}\right|=\rho \sin \beta .
$$
Then
$$
E_{n+1}=\partial_{z_{n+1}}=\cos \beta \partial_\rho-\frac{\sin \beta}{\rho} \partial_\beta .
$$
In these coordinates the standard Euclidean metric is given by
$$
|d z|^2=d \rho ^2+\rho ^2\left(d \beta^2+\sin ^2 \beta g^{\mathbb{S}^{n-1}}\right) .
$$
It follows that $F(\nu)$ and $\<\nu, E^F_{n+1}\>$ are independent of the second-order derivative of $\varphi$, we can denote that
\begin{align}\label{equ:<v,E>-graph}
nF(\nu)+n\omega_0\left\langle\nu, E^F_{n+1}\right\rangle:=\psi_3(x,\varphi,\nabla^0 \varphi,\omega_0).
\end{align}
Along $\partial \mathbb{S}_{+}^n$, $\partial_\beta$ denotes the unit outward co-normal of $\partial \mathbb{S}_{+}^n$ in $\overline{\mathbb{S}_{+}^n}$, it holds
$$
E_{n+1}=-\frac{1}{\rho} \partial_\beta.
$$
Hence the anisotropic capillary boundary condition can be expressed by
\begin{align}
	\label{equ:w0=<>-graph}
	\omega_0=\langle\nu_F, -E_{n+1}\rangle
	=
\<DF(\nu),	\frac{\partial_{\beta}}{\rho}\>,
\end{align}
where $\nu$ is expressed by \eqref{equ:v-graph}.

Now under the assumption of star-shapedness, by combination of \eqref{equ:u^-grahp}$\thicksim$\eqref{equ:w0=<>-graph} the flow \eqref{equ:flow} can be reduced to a quasilinear parabolic equation with oblique boundary condition for $\varphi(x, t)$,
\begin{equation}
	\left\{
	\begin{aligned}\label{equ:flow-graph}
	\partial_t \varphi  =\frac{vF(\nu)}{e^{\varphi}}f&=a^{ij}(t,x,\varphi,\nabla^0 \varphi
	)\varphi_{i j}+b(\omega_0,t,x,\varphi,\nabla^0 \varphi
	), & & \text { in } \mathbb{S}_{+}^n \times\left[0, T\right),
	\\
	\psi(t,x,\varphi,\nabla^0\varphi) & =
	0, & & \text { on } \partial \mathbb{S}_{+}^n \times\left[0, T\right),
	\\
	\varphi(\cdot, 0) & =\varphi_0(\cdot), & & \text { on } \mathbb{S}_{+}^n,
\end{aligned}\right.
\end{equation}
where 
$ b:=\frac{\psi_3}{\psi_1}-\psi_2,$ $\psi_1, \psi_2,\psi_3 $ are defined by \eqref{equ:u^-grahp}$\thicksim$\eqref{equ:<v,E>-graph} respectively,  $\psi(t,x,\varphi,\nabla^0\varphi):=\<DF(\nu),\frac{\partial_{\beta}}{e^{\varphi}}\>-\omega_0 $,  and $\varphi_0$ is the parameterization radial function of $\Sigma_0$ over $\overline{\mathbb{S}_{+}^n}$.

Next, we check that the oblique boundary condition \eqref{equ:w0=<>-graph} satisfies the non-degeneracy condition (ref. \cite{Nina1992}) $\<\frac{\partial\psi(t,x,\varphi,p)}{\partial p},\partial_{\beta}\> >0 \ (\text{or }<0)$ on $\partial \mathbb{S}_{+}^n \times\left[0, T\right)$, where we denote $p=\nabla^0\varphi$.

We take orthogonal frame $\{e_{i}\}_{i=0}^{n}$ on $\mathbb{R}^{n+1}$ such that $e_0=\partial_{\rho},e_{n}=-\frac{\partial_{\beta}}{\rho}$.
 Since on $\partial\mathbb{S}_+^n$, $e_{n}=E_{n+1}$, denote by $p=\sum_{i=1}^np_ie_i$, we have by \eqref{equ:v-graph}, $\nu=\frac{e_0-\sum_{i=1}^n{\rho^{-1}p_ie_i}}{\left(1+\sum_{i=1}^{n}p_i^2\right)^{\frac{1}{2}}}:=\nu_0e_0+\cdots+\nu_ne_n$. Then
\begin{align*}
	\frac{\partial \nu_0}{\partial p_n}&=-v^{-3}p_n,
	\\
 \frac{\partial \nu_i}{\partial p_n}&=v^{-3}\rho^{-1}p_ip_n, \quad i=1,\cdots,n-1,
 \\
 \frac{\partial \nu_n}{\partial p_n}&=v^{-3}\rho^{-1}p_n^2-v^{-1}\rho^{-1},
\end{align*}
which implies
\begin{align*}
	\<\frac{\partial\psi(t,x,\varphi,p)}{\partial p},e_n\> =&\frac{\partial\psi(t,x,\varphi,p)}{\partial p_n}
	=-\frac{\partial F_n(\nu)}{\partial p_n}
	=-\sum_{i=0}^{n} F_{ni}\frac{\partial \nu_i}{\partial p_n}	
	\\
	=&v^{-2}p_n\sum_{i=0}^n F_{ni}(\nu)\nu_i+v^{-1}\rho^{-1}F_{nn}(\nu)
	\\
	=&v^{-1}\rho^{-1}F_{nn}(\nu),
\end{align*}
where we denote $F_i=\<DF(\nu),e_i\>, F_{ij}=\<D^2F(\nu)e_i,e_j\>$
and in the last equality we use the $1$-homogeneity of $F$.
$\omega_0 \in\left(-F\left(E_{n+1}\right), F\left(-E_{n+1}\right)\right)$ implies that $\nu \neq E_{n+1}$ and $\nu \neq -E_{n+1}$ for $x\in \partial\mathbb{S}_+^n$.
We can decompose vector $E_{n+1}$ into $E_{n+1}=E'+\<E_{n+1},\nu\>\nu$ with $E'\in T_{\nu}\mathbb{S}^n$, then
\begin{align*}
	F_{nn}(\nu)=&\<D^2F(\nu)E_{n+1},E_{n+1}\>=\left\langle D^2F(\nu)(E'+\<E_{n+1},\nu\>\nu),(E'+\<E_{n+1},\nu\>\nu)\right\rangle
	\\
	=&\<D^2F(\nu)E',E'\>+2\<E_{n+1},\nu\>\<D^2F(\nu)\nu,E'\>+\<E_{n+1},\nu\>^2\<D^2F(\nu)\nu,\nu\>
	\\
	=&\<D^2F(\nu)E',E'\>,
\end{align*}
where we used the $1$-homogeneity of $F$ again. By \cite{Xia-phd}*{Proposition 1.4(ii)}, we have $\<D^2F(\nu)E',E'\>>0$, thus $F_{nn}>0$. Then
\begin{align*}
	\left\langle \frac{\partial\psi(t,x,\varphi,p)}{\partial p},-\partial_{\beta}\right\rangle  >0\quad\text{  on }\partial \mathbb{S}_{+}^n \times\left[0, T\right).
\end{align*}
Hence, the short time existence follows.


\subsection{$C^0$ estimate}
We consider a general  parabolic equation with a strictly oblique boundary condition:
\begin{equation}
	\left\{
	\begin{aligned}\label{equ:general-pde}
		\partial_t \varphi  & =\mathcal{F}(t,x,\varphi,\nabla^0 \varphi,
		\nabla^0 \nabla^0 \varphi)
		, & & \text { in } \mathbb{S}_{+}^n \times\left[0, T\right),
		\\
		\psi(t,x,\varphi,\nabla^0\varphi) & =
		0, & & \text { on } \partial \mathbb{S}_{+}^n \times\left[0, T\right),
		\\
		\varphi(\cdot, 0) & =\varphi_0(\cdot), & & \text { on } \mathbb{S}_{+}^n,
	\end{aligned}\right.
\end{equation}
with	$\<\frac{\partial\psi(t,x,\varphi,p)}{\partial p},e_n\> >0$  on $\partial \mathbb{S}_{+}^n \times\left[0, T\right)$ and $\mathcal{F}^{ij}=\frac{\partial^2\mathcal{F}}{\partial \varphi_{i j}}>0$, where we denote $p=\nabla^0 \varphi$ and $e_n$ is the unit outward co-normal of $\partial \mathbb{S}_+^n \subset (\overline{\mathbb{S}_+^n},\sigma,\nabla^0)$.

Similar to \cite{Wang-Weng-2020}*{Proposition 4.2}, we  show the following   avoidable principle:
 \begin{lemma}\label{lem:avoidable}
 	Assume $\varphi(x,t)$ is a solution of equation \eqref{equ:general-pde} and $\tilde{\varphi}(x)$ is a static solution of equation \eqref{equ:general-pde}. If $\varphi(x,0)\leq (\text{resp.}\geq) \tilde{\varphi}(x)$, then $\varphi(x,t)\leq(\text{resp.}\geq) \tilde{\varphi}(x)$ in $\mathbb{S}_{+}^n \times\left[0, T\right)$.
 \end{lemma}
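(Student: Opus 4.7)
The plan is a contradiction argument via the parabolic maximum principle for quasilinear equations with oblique boundary conditions, following the template of \cite{Wang-Weng-2020}*{Proposition 4.2}. I focus on the $\le$ case; the $\ge$ case is symmetric. Set $w(x,t):=\varphi(x,t)-\tilde{\varphi}(x)$, so $w(\cdot,0)\le 0$ by hypothesis. To upgrade non-strict inequalities at extremal points to strict ones, I will introduce the regularization $w_\sigma:=e^{-Kt}w-\sigma(1+t)$ with small $\sigma>0$ and large $K>0$ to be chosen. Assuming for contradiction $w>0$ somewhere, for $\sigma$ small enough $w_\sigma$ still attains a positive maximum at some $(x_0,t_0)\in\overline{\mathbb{S}_+^n}\times(0,T)$.

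If $x_0$ lies in the interior $\mathbb{S}_+^n$, the standard maximum-point inequalities $\partial_t w_\sigma\ge 0$, $\nabla^0 w_\sigma=0$, $\nabla^0\nabla^0 w_\sigma\le 0$ translate into $\partial_t w\ge Kw+\sigma e^{Kt_0}$, $\nabla^0\varphi=\nabla^0\tilde{\varphi}$, and $\nabla^0\nabla^0\varphi\le \nabla^0\nabla^0\tilde{\varphi}$ at $(x_0,t_0)$. Subtracting the identity $\mathcal{F}(t_0,x_0,\tilde{\varphi},\nabla^0\tilde{\varphi},\nabla^0\nabla^0\tilde{\varphi})=0$ (since $\tilde{\varphi}$ is static), applying the mean value theorem in the $(\varphi,p,r)$ arguments, and using the ellipticity $\mathcal{F}^{ij}>0$ to make the Hessian-difference contribution non-positive, one obtains an inequality of the form $Kw(x_0,t_0)+\sigma e^{Kt_0}\le M\,w(x_0,t_0)$, where $M$ is a bound on $|\partial \mathcal{F}/\partial \varphi|$ over the compact set of arguments visited by $\varphi$ and $\tilde{\varphi}$. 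Choosing $K>M$ contradicts $\sigma>0$.

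If instead $x_0\in\partial\mathbb{S}_+^n$, both $\varphi$ and $\tilde{\varphi}$ satisfy $\psi(t_0,x_0,\cdot,\cdot)=0$, so by the mean value theorem along the segment connecting $(\tilde{\varphi},\nabla^0\tilde{\varphi})$ to $(\varphi,\nabla^0\varphi)$,
\begin{equation*}
0=\bar{\psi}_\varphi\,w(x_0,t_0)+\langle \bar{\psi}_p,\nabla^0 w(x_0,t_0)\rangle.
\end{equation*}
At the boundary maximum of $w_\sigma$, the components of $\nabla^0 w$ tangent to $\partial\mathbb{S}_+^n$ vanish, and the outward normal component satisfies $\langle\nabla^0 w,e_n\rangle\ge 0$. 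Combined with the strict obliqueness $\langle\bar{\psi}_p,e_n\rangle>0$, $w(x_0,t_0)>0$, and the fact that $\bar{\psi}_\varphi$ is bounded by smoothness of $\psi$, this identity yields a sign contradiction. To absorb the $\bar{\psi}_\varphi\,w$ term cleanly one may replace the regularization by $w_\sigma:=e^{-Kt}w-\sigma(1+t+x_{n+1})$, so that the $x_{n+1}$-perturbation forces strict negativity in the normal direction on $\partial\mathbb{S}_+^n$.

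The main obstacle is the boundary case: the strict inequality in the outward normal direction cannot be read off directly from the weak maximum condition and must be extracted either via a Hopf-type refinement (treating the linearized operator coming from the mean value theorem) or via an auxiliary boundary-weighted perturbation term in $w_\sigma$. By contrast, the interior step is routine given the ellipticity hypothesis $\mathcal{F}^{ij}>0$ and boundedness of the lower-order derivatives of $\mathcal{F}$ on the range of solutions considered.
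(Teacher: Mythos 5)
Your overall strategy --- comparing $\varphi$ with the static solution via the parabolic maximum principle for $w=\varphi-\tilde\varphi$, with an exponential-in-time weight to absorb the zeroth-order coefficient, plus a separate analysis at the obliquely-constrained boundary --- is the same as the paper's. The paper writes the difference of the two equations as a linear equation $\partial_t w=a^{ij}\nabla^0_{ij}w+b^jw_j+cw$ (fundamental theorem of calculus along the segment joining $\tilde\varphi$ to $\varphi$) and applies the maximum principle to $e^{\lambda t}w$ with $\lambda=-\sup|c|$; your choice $K>M$ is the same device, and your interior step is fine.

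The genuine gap is the boundary case, which you yourself label ``the main obstacle'' but never close. The completion the paper uses is the parabolic Hopf lemma for the linearized equation: if the nonnegative maximum of $e^{\lambda t}w$ occurs at a lateral boundary point with $t_0>0$, Hopf yields the \emph{strict} inequality $\nabla^0_n\varphi>\nabla^0_n\tilde\varphi$ together with equality of the tangential gradients, and then $\psi(t_0,x_0,\varphi,\nabla^0\varphi)=0=\psi(t_0,x_0,\tilde\varphi,\nabla^0\tilde\varphi)$ contradicts the strict monotonicity of $\psi$ in $p_n$ (for the flow at hand $\psi$ depends only on $(x,p)$, so the term $\bar\psi_\varphi\,w$ in your mean-value identity is absent). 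Neither of your two proposed substitutes works as written. First, the perturbation $-\sigma(1+t+x_{n+1})$ has the wrong sign: on $\overline{\mathbb{S}_+^n}$ the outward co-normal $e_n=\partial_\beta$ of $\partial\mathbb{S}_+^n$ points in the direction of \emph{decreasing} $x_{n+1}$ (the paper records $E_{n+1}=-\rho^{-1}\partial_\beta$ there), so $\nabla^0_{e_n}(-\sigma x_{n+1})=+\sigma$ and the maximum condition $\nabla^0_{e_n}w_\sigma\ge0$ only gives $\partial_n w\ge-\sigma e^{Kt}$ --- weaker, not stronger, than the weak inequality you started from. Second, even granting $\partial_n w>0$, your assertion that boundedness of $\bar\psi_\varphi$ ``yields a sign contradiction'' is a non sequitur: the identity gives $\bar\psi_\varphi\,w=-\langle\bar\psi_p,e_n\rangle\,\partial_n w<0$, which is perfectly consistent with $w>0$ whenever $\bar\psi_\varphi<0$; one needs $\psi$ independent of $\varphi$ (as in \eqref{equ:general-pde} for this flow) or a sign condition on $\bar\psi_\varphi$, not mere boundedness. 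So the crux of the lemma --- converting the weak boundary maximum condition into a contradiction with the oblique boundary condition --- remains unproved in your write-up; invoking the Hopf lemma, as the paper does, is the missing step.
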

 \begin{proof}
 	We just check the case $\varphi_0(x)\leq \tilde{\varphi}(x)$.
 Since both $\varphi(x,t)$ and $\tilde{\varphi}(x)$ are solutions of equation \eqref{equ:general-pde}, we know that
 	$$
 	\begin{aligned}
 		\partial_t(\varphi-\tilde{\varphi}) & =\mathcal{F}\((\nabla^0)^2 \varphi, \nabla^0 \varphi, \varphi, x,t\)-\mathcal{F}\((\nabla^0)^2 \tilde{\varphi}, \nabla^0 \tilde{\varphi}, \tilde{\varphi}, x,t\) \\
 		& =a^{i j} \nabla^0_{i j}(\varphi-\tilde{\varphi})+b^j \cdot(\varphi-\tilde{\varphi})_j+c \cdot(\varphi-\tilde{\varphi}),
 	\end{aligned}
 	$$
 	where $a^{i j}:=\int_0^1 \mathcal{F}^{i j}\((\nabla^0)^2(s \varphi+(1-s) \tilde{\varphi}), \nabla^0(s \varphi+(1-s) \tilde{\varphi}), s \varphi+(1-s) \tilde{\varphi}, x,t\) d s$,  	$ b^j:=\int_0^1 \mathcal{F}_{p_j}\((\nabla^0)^2(s \varphi+(1-s) \tilde{\varphi}), \nabla^0(s \varphi+(1-s) \tilde{\varphi}), s \varphi+(1-s) \tilde{\varphi}, x,t\) d s$, and
 	$$
 	c:=\int_0^1 \mathcal{F}_{\varphi}\((\nabla^0)^2(s \varphi+(1-s) \tilde{\varphi}), \nabla^0(s \varphi+(1-s) \tilde{\varphi}), s \varphi+(1-s) \tilde{\varphi}, x,t\)  d s.
 	$$
 	Denote $\lambda:=-\sup _{\mathbb{S}_{+}^n \times[0, T]}|c|$. Applying the maximum principle, we know that $e^{\lambda t}(\varphi-\tilde{\varphi})$ attains its nonnegative maximum value at the parabolic boundary, says $\left(x_0, t_0\right)$. That is,
 	$$
 	e^{\lambda t}(\varphi(x, t)-\tilde{\varphi}(x)) \leq \sup _{\partial \mathbb{S}_{+}^n \times[0, T) \cup \mathbb{S}_{+}^n \times\{0\}}\left\{0, e^{\lambda t}(\varphi(x, t)-\tilde{\varphi}(x))\right\} ,
 	$$
 	with either $x_0 \in \partial \mathbb{S}_{+}^n$ or $t_0=0$. If $x_0 \in \partial \mathbb{S}_{+}^n$, from the Hopf lemma, we have
 	$$
 	(\nabla^0)^{\prime}(\varphi-\tilde{\varphi})\left(x_0, t_0\right)=0,
 	\quad
 	 \nabla^0_n \varphi\left(x_0, t_0\right)>\nabla^0_n \tilde{\varphi}\left(x_0, t_0\right),
 	$$
 	that is, $\left|(\nabla^0)^{\prime} \varphi\right|=\left|(\nabla^0)^{\prime} \tilde{\varphi}\right|:=s$ and $\nabla^0_n \varphi>\nabla^0_n \tilde{\varphi}$ at $\left(x_0, t_0\right)$. Here we denote $(\nabla^0)^{\prime}$ and $\nabla^0_n$ as the tangential and normal part of $\nabla^0$ on $\partial \mathbb{S}_{+}^n, e_n$ is the outward co-normal vector field on $\partial \mathbb{S}_{+}^n\subset \overline{\mathbb{S}_n^+}$. From the boundary condition in \eqref{equ:general-pde} we have
 	$$
 	\psi(t,x,\varphi,\nabla^0\varphi)  =
 	0=\psi(t,x,\tilde{\varphi},\nabla^0\tilde{\varphi}) ,
 	$$
 	a contradiction to the fact that 	$\left\langle \frac{\partial\psi(t,x,\varphi,p)}{\partial p},e_n\right\rangle  =\frac{\partial\psi(t,x,\varphi,p)}{\partial p_n}>0$ (i.e.  function $\psi(t,x,\varphi,p)$ is strictly increasing with respect to $p_n \in \mathbb{R}$) and $\nabla^0_n \varphi>\nabla^0_n \tilde{\varphi}$ at $\left(x_0, t_0\right)$. Hence we have $t_0=0$, which implies that
 	$$
 	e^{\lambda t}\(\varphi(x, t)-\tilde{\varphi}(x)\) \leq \varphi_0\left(x_0\right)-\tilde{\varphi}\left(x_0\right) \leq 0, \quad \text { in } \quad(x, t) \in \mathbb{S}_{+}^n \times[0, T],
 	$$
 	that is
 	$$
 	\varphi(x, t) \leq \tilde{\varphi}(x), \quad \text { in } \quad(x, t) \in \mathbb{S}_{+}^n \times[0, T] .
 	$$
 	The case $\varphi_0(x)\geq \tilde{\varphi}(x)$ is similar.
 	Hence, we  finish the proof of the lemma.
 \end{proof}

Now we assume $T^*>0$ is the maximal time of existence of a solution to \eqref{equ:flow}, in the class of star-shaped hypersurfaces.
We want to get the $C^0$ estimate of $\varphi$ in $\mathbb{S}_{+}^n \times[0, T^*)$.

\begin{proposition}\label{prop:C0}
	For any $t\in [0,T^*)$, along flow \eqref{equ:flow}, it holds
	$$
	\Sigma_t \subset \widehat{\W_{ r_2,\omega_0}} \backslash \widehat{\W_{r_1,\omega_0}} .
	$$
	where $\W_{ r,\omega_0}$ is defined by  \eqref{equ:crwo} and $r_1,r_2$ only depend on $\Sigma_0$ and Wulff shape $\W$.
\end{proposition}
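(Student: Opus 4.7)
The strategy is to apply the avoidance principle (Lemma \ref{lem:avoidable}) by using the $\omega_0$-capillary Wulff shapes themselves as barriers, exploiting the fact that every $\W_{r_0,\omega_0}$ is a stationary solution of flow \eqref{equ:flow}.

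First I would verify that $\W_{r_0,\omega_0}$ is indeed a static solution of the equation \eqref{equ:flow-graph}. On such a capillary Wulff shape, one has the parametrisation $X=r_0\omega_0 E_{n+1}^F+r_0\Psi(\nu)$, hence $\langle X,\nu\rangle = r_0\omega_0\langle E_{n+1}^F,\nu\rangle + r_0 F(\nu)$ and all anisotropic principal curvatures equal $1/r_0$, so $H_1^F = 1/r_0$. Plugging into the definition of $f$ gives
\begin{equation*}
f=\frac{n}{F(\nu)}\bigl(F(\nu)+\omega_0\langle\nu,E_{n+1}^F\rangle-\langle X,\nu\rangle H_1^F\bigr)=0,
\end{equation*}
while the boundary condition \eqref{equ:w0=<>-graph} holds by construction of $\W_{r_0,\omega_0}$. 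Thus the radial function $\rho_r$ of $\W_{r,\omega_0}$ (which is globally star-shaped with respect to the origin because $\W_{r,\omega_0}=r\W_{1,\omega_0}$) is a time-independent solution of \eqref{equ:flow-graph}, fitting exactly the framework of Lemma \ref{lem:avoidable}.

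Next I would choose the radii. Because $\Sigma_0$ is a smooth, compact, star-shaped capillary hypersurface in $\overline{\mathbb{R}^{n+1}_+}$, its radial function $\rho_0(x)$ is bounded between two positive constants, and the scaling $\W_{r,\omega_0}=r\W_{1,\omega_0}$ gives a monotone nested family $\{\widehat{\W_{r,\omega_0}}\}_{r>0}$ exhausting the half-space. Consequently there exist $0<r_1<r_2$, depending only on $\Sigma_0$ and $\W$, such that
\begin{equation*}
\widehat{\W_{r_1,\omega_0}}\subset\widehat{\Sigma_0}\subset\widehat{\W_{r_2,\omega_0}},\qquad\text{i.e.}\qquad \log\rho_{r_1}(x)\le\varphi_0(x)\le\log\rho_{r_2}(x)\ \text{on }\overline{\mathbb{S}^n_+}.
\end{equation*}

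Finally, Lemma \ref{lem:avoidable} applied twice (once with $\tilde\varphi=\log\rho_{r_1}$ from below, once with $\tilde\varphi=\log\rho_{r_2}$ from above) gives $\log\rho_{r_1}(x)\le\varphi(x,t)\le\log\rho_{r_2}(x)$ throughout $\mathbb{S}^n_+\times[0,T^\ast)$, which is exactly the desired enclosure $\Sigma_t\subset\widehat{\W_{r_2,\omega_0}}\setminus\widehat{\W_{r_1,\omega_0}}$. The one non-routine point is making sure that the hypotheses of Lemma \ref{lem:avoidable} apply to our graphical equation: the oblique boundary condition $\psi(t,x,\varphi,\nabla^0\varphi)=0$ has already been shown to satisfy the non-degeneracy condition $\langle\partial_p\psi,-\partial_\beta\rangle>0$ in the short-time existence discussion, and the operator $\mathcal{F}=a^{ij}\varphi_{ij}+b$ is strictly parabolic because $a^{ij}=v^{-1}e^{\varphi}(A_F)_k^j g^{ik}$ is positive definite by \eqref{equ:A_F>0}; hence the comparison applies verbatim.
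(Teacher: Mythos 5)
Your proposal is correct and follows essentially the same route as the paper: identify the capillary Wulff shapes $\W_{r,\omega_0}$ as static, star-shaped solutions of the graphical equation \eqref{equ:flow-graph}, sandwich $\Sigma_0$ between two of them, and conclude by the avoidance principle of Lemma \ref{lem:avoidable}. The only cosmetic differences are that the paper verifies stationarity via the identity \eqref{equ:Wulff-1+w<v,e>-} rather than by computing $f=0$ from the parametrisation, and it exhibits explicit formulas for $r_1,r_2$ in terms of bounds on $F^0(X)$ over $\Sigma_0$ instead of invoking the nested exhaustion $\widehat{\W_{r,\omega_0}}=r\,\widehat{\W_{1,\omega_0}}$; both choices are equivalent.
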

\begin{proof}
	The star-shapedness and compactness of $\Sigma_0$ imply that  there exist some constants  $0<C_1<C_2<\infty$, such that
	$C_1\leq \<X,\nu(X)\>\leq |X|\cdot |\nu(X)|=|X|\leq C_2$. Since $F^0(\cdot)$ is a norm on $\mathbb{R}^{n+1}$,  there exist some constants $0<C_3<C_4<\infty$, such that $C_3\leq F^0(X)\leq C_4$, for $X\in\Sigma_0$.
	
	Let $r_1=\frac{C_3}{2\(1+F^0(\omega_0E_{n+1}^F)\)}$ and $r_2=\frac{2C_4}{1-F^0(-\omega_0E_{n+1}^F)}>0$. For  $X\in\Sigma_0$, we can check that
	$$
	F^0\(X-r_1\omega_0E_{n+1}^F\)
	\geq
	F^0(X)  -r_1F^0(\omega_0E_{n+1}^F)
	\geq
	C_3-r_1F^0(\omega_0E_{n+1}^F)\geq r_1,
	$$
	and
	$$
	F^0\(X-r_2\omega_0E_{n+1}^F\)
	\leq
	F^0(X)  +r_2F^0(-\omega_0E_{n+1}^F)
	\leq
	C_4+r_2F^0(-\omega_0E_{n+1}^F)\leq r_2,
	$$
	which implies
	$$
	\Sigma_0 \subset \widehat{\W_{ r_2,\omega_0}} \backslash \widehat{\W_{r_1,\omega_0}} .
	$$
	On the other hand, by $\nu_F(X)=\frac{X-r\omega_0E_{n+1}^F}{r}$ and  $1=G(\nu_F)(\nu_F,\frac{X}{r}-\omega_0E_{n+1}^F)$ on $ \W_{r,\omega_0} $, one can easily check that
	\begin{align}\label{equ:Wulff-1+w<v,e>-}
		1+\omega_0G(\nu_F)(\nu_F,E^F_{n+1})-\frac{1}{r}G(\nu_F)(X,\nu_F)=0,
		\quad\text{for}\ X\in \W_{r,\omega_0}.
	\end{align}
	Then $ \W_{r_1,\omega_0}$ and $ \W_{r_2,\omega_0}$ are static solutions to flow \eqref{equ:flow}. We denote $\rho_1$, $\rho_2$ and $\rho(\cdot,t)$ as the radial  functions of  $ \W_{r_1,\omega_0}$,  $ \W_{r_2,\omega_0}$, and $\Sigma_t$ respectively. Then $\log \rho_1,\log \rho_2,\log \rho$ satisfy equation \eqref{equ:flow-graph}.   
	By Lemma \ref{lem:avoidable}, we have
	$\log \rho_1\leq \log \rho\leq \log \rho_2$, thus $
	\Sigma_t \subset \widehat{\W_{ r_2,\omega_0}} \backslash \widehat{\W_{r_1,,\omega_0}}
	$.
\end{proof}

\subsection{$C^1$ estimates}
Next we show the star-shapedness is preserved along flow \eqref{equ:flow}, which would imply the uniform gradient estimate of $\varphi$ in \eqref{equ:flow-graph}.

\begin{proposition}\label{prop:c1}
	 Let $\Sigma_0$ be a star-shaped   anisotropic $\omega_0$-capillary hypersurface in $\overline{\mathbb{R}_{+}^{n+1}}$, and $\omega_0 \in\left(-F\left(E_{n+1}\right), F\left(-E_{n+1}\right)\right)$, then there exists $c_0>0$ depending only on $\Sigma_0$ and Wulff shape $\W$, such that
$$
\langle X, \nu(X)\rangle \geq c_0 ,
$$
for all $X \in \Sigma_t$, $t\in[0,T^*)$.
\end{proposition}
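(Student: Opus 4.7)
The plan is to apply the parabolic maximum principle to the anisotropic capillary support function $\bar u$ from \eqref{equ:u}. The hypothesis $\omega_0 \in \left(-F(E_{n+1}),\, F(-E_{n+1})\right)$ ensures that $\ell(\nu) := F(\nu) + \omega_0 \langle \nu, E_{n+1}^F\rangle$ is the (strictly positive) support function of the translated Wulff body $\W + \omega_0 E_{n+1}^F$, hence uniformly bounded below on $\mathbb{S}^n$ by a positive constant $c_\ell$ depending only on $F$ and $\omega_0$. Since $\langle X,\nu\rangle = \bar u \cdot \ell(\nu)$, it will suffice to establish a uniform positive lower bound on $\bar u$.

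The engine of the argument is Lemma \ref{lemma-Lu-}:
\begin{equation*}
  \mathcal{L}\bar u \;=\; n - 2\bar u H_F + \bar u^2 |\hat h|_{\hat g}^2 + 2\bar u\, G(\nu_F)\bigl(\hat\nabla \bar u,\,\hat\nabla(1+\omega_0 G(\nu_F)(\nu_F, E_{n+1}^F))\bigr).
\end{equation*}
The final summand is first-order in $\hat\nabla \bar u$ and can be absorbed into the drift of $\mathcal{L}$, producing a modified parabolic operator $\widetilde{\mathcal{L}}$ with the same principal part. Applying Cauchy--Schwarz to the anisotropic principal curvatures, $|\hat h|_{\hat g}^2 = \sum_i (\kappa_i^F)^2 \ge \bigl(\sum_i \kappa_i^F\bigr)^2/n = H_F^2/n$, yields the completed square
\begin{equation*}
  \widetilde{\mathcal{L}}\bar u \;\ge\; n - 2\bar u H_F + \frac{(\bar u H_F)^2}{n} \;=\; \frac{(n - \bar u H_F)^2}{n} \;\ge\; 0.
\end{equation*}

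I will then apply Hamilton's trick to $m(t) := \min_{\Sigma_t} \bar u(\cdot, t)$. If the minimum at time $t$ is attained at an interior point $X_0$, then $\hat\nabla \bar u(X_0) = 0$ and $\hat\Delta \bar u(X_0) \ge 0$, and substituting these into $\widetilde{\mathcal{L}}\bar u \ge 0$ gives $\partial_t \bar u(X_0, t) \ge 0$, so $m$ cannot decrease in this case. If the minimum is attained only on $\partial\Sigma_t$ at some $X_0$, the Neumann-type identity $\hat\nabla_{\mu_F}\bar u = 0$ from Lemma \ref{lemma-DU-=0}, combined with the Hopf boundary-point lemma applied to $\widetilde{\mathcal{L}}\bar u \ge 0$, provides a contradiction: the Hopf lemma would force $\hat\nabla_{\mu_F}\bar u(X_0) < 0$ under a strict boundary minimum (using Remark \ref{rk3.5} to confirm that $\mu_F = A_F(\nu)\mu$ is strictly transversal to $\partial\Sigma_t$ in the metric $\hat g$). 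Hence the minimum must also be attained in the interior, reducing to the interior case. Therefore $m(t) \ge m(0) > 0$ uniformly in $t$, and $\langle X,\nu\rangle \ge c_0 := c_\ell\, m(0) > 0$ on $\Sigma_t$, with $c_0$ depending only on $\Sigma_0$ and $\W$.

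The main technical obstacle will be the careful implementation of the Hopf boundary-point argument in the anisotropic metric $\hat g$: the operator $\widetilde{\mathcal{L}}$ must be verified to be uniformly parabolic in a neighbourhood of a boundary minimum (using the $C^0$ estimate from Proposition \ref{prop:C0} and the positivity of $\hat u$ on $\Sigma_t$), and the transversal direction used in Hopf must be $\mu_F$ rather than the Euclidean co-normal $\mu$, reflecting the fact that the Neumann-type identity enjoyed by $\bar u$ is with respect to $\mu_F$.
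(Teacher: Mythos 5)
Your proposal is correct and follows essentially the same route as the paper: both rest on the evolution equation of Lemma \ref{lemma-Lu-}, the completion of the square via $|\hat h|_{\hat g}^2\ge H_F^2/n$ to get $\mathcal{L}\bar u\ge 0$ modulo gradient terms, the Neumann condition $\hat\nabla_{\mu_F}\bar u=0$ of Lemma \ref{lemma-DU-=0} with the Hopf lemma to push the minimum of $\bar u$ to $t=0$, and finally the uniform positive lower bound of $F(\nu)+\omega_0\langle\nu,E_{n+1}^F\rangle$ (which the paper derives from the dual Cauchy--Schwarz inequality of \cite{Jia-Wang-Xia-Zhang2023}, equivalent to your observation that it is the support function of the translated Wulff shape containing the origin). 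No gaps.
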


\begin{proof}
	 From \eqref{equ:Lu-} and $|\hat{h}|_{\hat{g}}^2 \geq \frac{H_F^2}{n}$, we see
$$
\begin{aligned}
	\mathcal{L} \bar{u} &
	=n-2\bar{ u}H_F+\bar{ u}^2
	|\hat{h}|_{\hat{g}}^2\quad
	 \bmod \hat{\nabla} \bar{u} \\
	& \geq\left(\frac{\bar{u}H_F}{\sqrt{n}}-\sqrt{n}\right)^2 \geq 0,
\end{aligned}
$$
together with $\hat{\nabla}_{\mu_F} \bar{u}=0$ on $\partial \Sigma_t$, which implies
$$
\bar{u} \geq \min  \bar{u}(\cdot, 0) .
$$
By \cite{Jia-Wang-Xia-Zhang2023}*{Proposition 2.1}, if $\omega_0>0$, we have
$$\<\nu, E^F_{n+1}\>\geq
 -F(\nu)F^0(-E^F_{n+1})
 =
 -F(\nu)F^0\left(\frac{\Psi\left(-E_{n+1}\right)}{F\left(-E_{n+1}\right)}\right)=-\frac{F(\nu)}{F(-E_{n+1})},$$
thus $F(\nu)+\omega_0\langle\nu, E^F_{n+1}\rangle \geq F(\nu)\(
1-\frac{\omega_0}{F(-E_{n+1})}
\)>0$.

If $\omega_0<0$, we have
$$\<\nu, E^F_{n+1}\>\leq
F(\nu)F^0(E^F_{n+1})
=
F(\nu)F^0\(\frac{\Psi\left(E_{n+1}\right)}{F\left(E_{n+1}\right)}\)=\frac{F(\nu)}{F(E_{n+1})},$$
thus $F(\nu)+\omega_0\langle\nu, E^F_{n+1}\rangle \geq F(\nu)\(
1+\frac{\omega_0}{F(E_{n+1})}
\)>0$.

Since $F(\nu)\geq c>0$ is independent of $t$,
we conclude
$$
\langle X, \nu\rangle=\bar{u} \cdot\(F(\nu)+\omega_0\langle\nu, E^F_{n+1}\rangle \) \geq c_0,
$$
for some positive constant $c_0>0$ which is independent of $t$.
\end{proof}

\subsection{Long time existence and convergence}
\begin{proposition}\label{prop:long-time-C^infity}
	If the initial hypersurface $\Sigma_0 \subset \overline{\mathbb{R}_{+}^{n+1}}$ is star-shaped, then flow \eqref{equ:flow} exists for all time. Moreover, it smoothly converges to a uniquely determined $\omega_0$-capillary Wulff shape  $\W_{ r,\omega_0}(E_{n+1}^F)$ given by \eqref{equ:crwo} as $t \rightarrow+\infty$.
\end{proposition}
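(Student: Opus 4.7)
\textbf{Proof plan for Proposition~\ref{prop:long-time-C^infity}.}

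The plan is to combine the uniform $C^0$ and $C^1$ estimates of Propositions~\ref{prop:C0} and~\ref{prop:c1} with a uniform second-order estimate to invoke the standard continuation principle for quasilinear parabolic equations with oblique boundary conditions, and then to use a Lyapunov functional together with the Minkowski formula \eqref{equ:Minkow} to identify the limit as an $\omega_0$-capillary Wulff shape.

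For the $C^2$ estimate, I would apply the parabolic maximum principle to an auxiliary function of the form $w = H_F/(\hat{u}-\alpha)$ for a small constant $\alpha < \min_{\Sigma_0}\hat{u}$; Proposition~\ref{prop:c1} guarantees that $\hat{u}-\alpha$ stays uniformly positive. Computing $\mathcal{L}w$ from Lemma~\ref{lemma-Lu^} together with the evolution equation for $H_F$ (obtained from \eqref{equ:p_tVF}, the Weingarten formula \eqref{Weingarten}, and the anisotropic Codazzi identity \eqref{Codazzi}) yields a differential inequality whose interior maximum can be absorbed by the a~priori constants. Boundary maxima are ruled out via the Hopf lemma: using $\hat{h}_{\alpha n}=0$ and the structural identity \eqref{equ:haan} from Remark~\ref{rk3.5}(2) together with $\hat{\nabla}_{\mu_F}\bar{u}=0$ from Lemma~\ref{lemma-DU-=0}, one shows that $\hat{\nabla}_{\mu_F}w<0$ at any would-be boundary maximum, a contradiction. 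This gives a uniform upper bound for $H_F$ and hence, combined with the $C^1$ bound, for all anisotropic principal curvatures. Once $\|\hat{h}\|_{C^0}$ is controlled, equation \eqref{equ:flow-graph} becomes uniformly parabolic with smooth coefficients, and Krylov--Safonov theory in the interior together with the corresponding oblique boundary regularity theory yields uniform $C^{2,\alpha}$ estimates; Schauder iteration then gives uniform $C^{k,\alpha}$ bounds for every $k$, so the continuation principle produces $T^*=+\infty$.

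For convergence, I would first observe that $\Vol(\widehat{\Sigma}_t)=\Vol(\widehat{\Sigma}_0)$ is preserved: indeed $\frac{d}{dt}\Vol(\widehat{\Sigma}_t)=\int_{\Sigma_t} f\,d\mu_F$, which vanishes by the Minkowski formula \eqref{equ:Minkow} with $k=0$ and the explicit form of $f$ in \eqref{equ:pfL3.2-4}. I would then exploit the monotonicity of the anisotropic capillary area $\mathcal{V}_{1,\omega_0}(\Sigma_t)$, to be derived in Section~\ref{sec 4}, which satisfies $\tfrac{d}{dt}\mathcal{V}_{1,\omega_0}(\Sigma_t)\le 0$ with equality iff $f\equiv 0$; by \eqref{equ:Wulff-1+w<v,e>-} this identifies equality cases precisely as $\omega_0$-capillary Wulff shapes. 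Integrating this in time and using the uniform $C^\infty$ bounds of the previous step, any sequence $t_k\to\infty$ admits a smoothly convergent subsequence whose limit $\Sigma_\infty$ satisfies $f\equiv 0$ and hence equals $\W_{r,\omega_0}$ with $r$ uniquely determined by the volume constraint $\Vol(\widehat{\W_{r,\omega_0}})=\Vol(\widehat{\Sigma}_0)$. Uniqueness of the limit promotes subsequential to full convergence; a linearisation-and-spectral-gap argument at the Wulff shape then upgrades it to smooth, indeed exponential, convergence.

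The main obstacle is the boundary second-order estimate: the Codazzi identity \eqref{Codazzi} is twisted by the anisotropy tensor $A$, and the capillary condition is genuinely oblique rather than Neumann, so isotropic or free-boundary arguments do not transfer verbatim. Closing the boundary maximum principle for $w$ relies decisively on the explicit form of $\hat{h}_{\alpha\alpha,n}$ in \eqref{equ:haan}, and it is here that the sign information coming from $\omega_0\in(-F(E_{n+1}),F(-E_{n+1}))$ enters. I expect this step, rather than the convergence analysis, to be the technically heaviest part of the argument.
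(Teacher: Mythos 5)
There are two genuine gaps, one on each half of the argument.

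\textbf{Long-time existence.} Your plan hinges on a boundary $C^2$ (curvature) estimate, but no such estimate is needed here, and the one you sketch would not close. The scalar equation \eqref{equ:flow-graph} is \emph{quasilinear}: the leading coefficients $a^{ij}(t,x,\varphi,\nabla^0\varphi)$ depend only on $\varphi$ and its first derivatives. Once Propositions~\ref{prop:C0} and~\ref{prop:c1} give uniform $C^1$ bounds, the equation is uniformly parabolic with controlled coefficients and the standard quasilinear theory with strictly oblique boundary condition (Lieberman, Ladyzhenskaya--Solonnikov--Uraltseva) already yields uniform $C^{2,\alpha}$ and then $C^\infty$ estimates by bootstrapping; this is exactly the route the paper takes, and no maximum principle for $H_F$ enters the existence proof at all. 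Moreover your specific auxiliary function $w=H_F/(\hat u-\alpha)$ is problematic at the boundary: it is the \emph{capillary} support function $\bar u$, not $\hat u$, whose conormal derivative vanishes on $\partial\Sigma_t$ (Lemma~\ref{lemma-DU-=0}), so the Hopf-lemma step for $w$ does not close as stated. Finally, for merely star-shaped hypersurfaces an upper bound on $H_F$ does not bound all anisotropic principal curvatures, so the inference ``bound on $H_F$ $\Rightarrow$ uniform parabolicity'' is broken in any case. The fix is simply to delete this step.

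\textbf{Identification of the limit.} The claim that $\tfrac{d}{dt}\mathcal{V}_{1,\omega_0}=0$ forces $f\equiv 0$ is false. By \eqref{equ:lemma4.3-2} the equality case is $\hat h=\tfrac1n H_F\hat g$, i.e.\ the hypersurface is anisotropically umbilic, hence a cap of a Wulff shape satisfying the capillary boundary condition; but such a cap may be centered at $r\omega_0E^F_{n+1}+b$ with $b\in\partial\mathbb{R}^{n+1}_+$, $b\neq0$, and on such a translate one computes $f=-\tfrac{n}{r}G(\nu_F)(b,\nu_F)\not\equiv0$ while $\int_{\Sigma}H_Ff\,d\mu_F=\tfrac{n}{r}\int_\Sigma f\,d\mu_F=0$. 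So the Lyapunov argument only shows that subsequential limits are (possibly translated) capillary Wulff caps; it does not show they are stationary, and it does not determine the center. Pinning down the center, i.e.\ showing $E_\infty=E^F_{n+1}$, is the substantive part of the convergence proof: the paper does this by showing that the radius $r_{\max}(t)$ of the smallest circumscribed capillary Wulff shape $\W_{r,\omega_0}(E^F_{n+1})$ is monotone and must converge to $r_0$, via a Hamilton-trick differential inequality at the touching point. Your ``uniqueness of the limit promotes subsequential to full convergence'' presupposes exactly what must be proved, and a linearisation/spectral-gap argument would have to contend with the kernel directions generated by translations along $\partial\mathbb{R}^{n+1}_+$, which is not addressed.
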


\begin{proof}
	By Proposition \ref{prop:C0}, we have a uniform bound for $\varphi$. From Proposition \ref{prop:c1}, we have a uniform bound for $v$, and hence a bound for $\nabla^0 \varphi$. Therefore, $\varphi$ is uniformly bounded in $C^1\left(\overline{\mathbb{S}_{+}^n }\times\left[0, T^*\right)\right)$ and the scalar equation in \eqref{equ:flow-graph} is uniformly parabolic. From the standard quasi-linear parabolic theory with a strictly oblique boundary condition theory (cf. \cite{2-pde}*{Chapter 13}, \cite{3-pde}), we conclude the uniform $C^{\infty}$ estimates and the long-time existence.

Next, we show the hypersurfaces  converge to a capillary Wulff shape.

From Lemma \ref{lemma-evolution} in next section,
we know that the volume of the enclosed domain $\widehat{\Sigma}$ is a fixed constant along the flow \eqref{equ:flow}. On the other hand, by \eqref{equ:lemma4.3-2},
\begin{align*}
	\partial_t \mathcal{V}_{1,\omega_0}(\Sigma_t)
	=-\frac{n}{(n+1)(n-1)}\int_{\Sigma}
	\left|\hat{h}-\frac{1}{n} H_F \hat{g}\right|_{\hat{g}}^2
	\hat{u}
	\mathrm{~d}\mu_F.
\end{align*}
Integrating both sides over $[0, \infty)$, we have
$$
\int_0^{\infty} \int_{\Sigma}
\left|\hat{h}-\frac{1}{n} H_F \hat{g}\right|_{\hat{g}}^2
\hat{u}
\mathrm{~d}\mu_F
 \leq \frac{(n-1)(n+1)}{n}\mathcal{V}_{1,\omega_0}(\Sigma_0)<\infty .
$$
As in \cite{Xia-phd}*{page 123}, applying the regularity estimate, the interpolation inequality, and the anisotropic Codazzi equation \eqref{Codazzi}, we have that
\begin{align}
	& \kappa_i^F-\kappa_j^F \rightarrow 0, \quad \text { uniformly as } t \rightarrow \infty, i \neq j \text {, } \nonumber\\
	& H_F-n \kappa_0 \rightarrow 0, \quad\text { uniformly as } t \rightarrow \infty \text {, }
	&\label{equ:pf-converge-3}
\end{align}
for some constant $\kappa_0>0$. Therefore, the regularity estimate implies that there exists a subsequence of times $t_k \rightarrow \infty$ such that $\Sigma_{t_k}$ converges in $C^{\infty}$ to a limit hypersurface $\Sigma_{\infty}$, which is a capillary Wulff shape ${\W_{r_0,\omega_0}}(E_{\infty})$ with $r_0=1 / \kappa_0$.
The radius $r_0$ can be determined using the preserving of the enclosed volume along the flow \eqref{equ:flow}, thus $r_0$ is independent of the choice of subsequent of $t$. 

 At last, we show that any limit of a convergent subsequence is uniquely determined, which implies the flow smoothly converges to a unique capillary Wulff shape. We shall use the similar argument as in \cite{Wang-Weng-Xia,Scheuer-Wang-Xia-2022}
to show  that $E_{\infty}=E_{n+1}^F$.

Denote by $r(\cdot, t)$ the radius of the unique $\omega_0$-capillary Wulff shape $\W_{r(\cdot, t),\omega_0}(E_{n+1}^F)$  passing through the point $X(\cdot, t)\in \Sigma_t$. Due to the $C^0$ estimate, i.e. Proposition \ref{prop:C0}, we know
$$
C_5\leq r_{\max }(t):=\max r(\cdot, t)=r\left(\xi_t, t\right)\leq C_6,
$$
where $C_5,C_6$ are independent of $t$, and $r_{\max}(t)$ is non-increasing with respect to $t$. Hence the limit $\lim\limits_{t \rightarrow+\infty} r_{\max }(t)$ exists. Besides, from $\Sigma_{t_1}\subset \W_{r_{\max}( t_1),\omega_0}(E_{n+1}^F)\subset \W_{r_{\max}( t),\omega_0}(E_{n+1}^F) $ for $\forall t_1\geq t$, we have  ${\W_{r_0,\omega_0}}(E_{\infty})\subset \W_{r_{\max}( t),\omega_0}(E_{n+1}^F)$, which implies $r_{\max}(t)\geq r_0$.
Next we claim that
\begin{align}\label{equ:pf-converge-1}
	\lim\limits_{t \rightarrow+\infty} r_{\max }(t)=r_0 .
\end{align}
We prove this claim by contradiction. Suppose \eqref{equ:pf-converge-1} is not true, then there exists $\varepsilon>0$ such that
\begin{align}\label{equ:pf-converge-5}
	r_{\max }(t)>r_0+\varepsilon, \text { for } t \text { large enough. }
\end{align}
By definition, $r(\cdot, t)$ satisfies
$$F^0\(X-r\omega_0E_{n+1}^F\)=r,$$
that is
\begin{align*}
	1&=(F^0(z))^2=G(z)(z,z),
\end{align*}
where we denote $z=\frac{X}{r}-\omega_0E^F_{n+1}$.
Hence
\begin{align*}
	0=&\partial_tG(z)(z,z)=2G(z)(\partial_t z,z)+Q(z)(\partial_t z,z,z)\nonumber
	\\
	=&2G(z)\(-r^{-2}\partial_trX+r^{-1}\partial_tX,z\),
\end{align*}
then
\begin{align}\label{equ:pf-converge-2}
	G(z)(X,z)\partial_tr=r G(z)(\partial_tX,z).
\end{align}
At $\left(\xi_t, t\right)$, since $\Sigma_t$ is tangential to $\W_{r, \omega_0}(E_{n+1}^F)$ at $X\left(\xi_t, t\right)$, we have
$$
{\nu_F}^{\Sigma_t}\left(\xi_t, t\right)={\nu_F}^{\W_{r, \omega_0}(E_{n+1}^F)}\left(\xi_t, t\right)=\frac{X-r \omega_0 E_{n+1}^F}{r} =z(\xi_t, t),
$$
where ${\nu_F}^{\Sigma_t}$ denotes the anisotropic Gauss map of $\Sigma_t$.  Thus we have the conclusion that $G(z)(X,z)|_{(\xi_t, t)}=\hat{ u}|_{(\xi_t, t)}$.

Since  $\W_{r_{\max }, \omega_0}(E_{n+1}^F)$ is the static solution to \eqref{equ:flow} and $\Sigma_t=X(\cdot, t)$ is tangential to $\W_{r_{\max }, \omega_0}(E_{n+1}^F)$ at $X\left(\xi_t, t\right)$, we see from \eqref{equ:Wulff-1+w<v,e>-}
\begin{align*}
\left.\frac{n
		+n\omega_0G(\nu_F)(\nu_F,E^F_{n+1})}{\hat{u}}
\right|_{X\left(\xi_t, t\right)}
=\left.\frac{n
		+n\omega_0G(\nu_F)(\nu_F,E^F_{n+1})}{\hat{u}}
	\right|_{\W_{r_{\max }, \omega_0}(E_{n+1}^F)}
	=\frac{n}{r_{\max }(t)} .
\end{align*}
Combining with \eqref{equ:pfL3.2-4},  \eqref{equ:pf-converge-2} becomes
\begin{align}\label{equ:pf-converge-4}
	\left. \partial_t r\right|_{\left(\xi_t, t\right)}=\left.\left(\frac{n
	+n\omega_0G(\nu_F)(\nu_F,E^F_{n+1})}{\hat{u}}
-H_F\right)\right|_{(\xi_t,t)}r_{\max }
=\left(
n-H_Fr_{\max}(t)
\right).
\end{align}
 By \eqref{equ:pf-converge-3}, there exist $T_0>0$ large enough, and a constant $C>0$ such that
$$
0<C<H_F
\quad\text{and}\quad
\frac{n}{H_F}-r_0<\frac{\varepsilon}{2},\quad \text{for}\ t>T_0,
$$
and hence
$$
H_F\left(\frac{n}{H_F}-r_{\max }(t)\right)=H_F\left(\frac{n}{H_F}-r_0+r_0-r_{\max }(t)\right)\overset{\eqref{equ:pf-converge-5}}{<}-\frac{\varepsilon}{2}C,
$$
for all $t>T_0$. Putting it into \eqref{equ:pf-converge-4}, we see
$$
\left. \partial_t r\right|_{\left(\xi_t, t\right)}<-\frac{\varepsilon}{2}C,
$$
for all $t>T_0$. By adopting Hamilton's trick, we conclude  that there exists some $C>0$ such that for almost every $t$,
$$
\frac{d}{d t} r_{\max } \leq-\frac{C}{2} \varepsilon.
$$
This is a contradiction to the fact that $ r_{\max }\geq C_5$, and hence claim \eqref{equ:pf-converge-1} is true. Similarly, we can obtain that
$$
\lim\limits_{t \rightarrow+\infty} r_{\min }(t)=r_0.
$$
 This implies that any limit of a convergent subsequence is the  $\omega_0$-capillary Wulff shape $\W_{r_0,\omega_0}(E_{n+1}^F)$. This completes the proof of Theorem \ref{thm:flow}.
\end{proof}

\section{Monotonic quantities and capillary isoperimetric inequality}\label{sec 4}
The aim of this section is to prove Theorem \ref{thm:iso-neq}.
First, we introduce the  following first variation formulas for capillary hypersurface.
\begin{lemma}[\cite{Jost-1988,Koiso2023,Guo-Xia}]
	\label{lemma-evolution-1}
	Let $\Sigma_t \left(t \in J:=\left(-t_0, t_0\right)\right)$ be a smooth variation of $\Sigma_0$ with  ${\partial\Sigma_t}\subset \mathbb{R}^n= \partial\overline{\mathbb{R}_{+}^{n+1}} $. 
	Set
	$$
	\delta X:=\left.\frac{\partial X_t}{\partial t}\right|_{t=0}.
	$$
	Then
	\begin{align}\label{equ:lemma4.1-1}
		\left.\frac{\partial}{\partial_t}\right|_{t=0} \mathcal{V}_{0,\omega_0}(\Sigma_t)
		=\left.\frac{\partial}{\partial_t}\right|_{t=0}
		|\widehat{\Sigma_t}|
		=\int_{\Sigma_0}\<\delta X,\nu\>  \mathrm{~d}\mu_g,
	\end{align}
	\begin{align}\label{equ:lemma4.1-2}
		\left.\frac{\partial}{\partial_t}\right|_{t=0} \mathcal{V}_{1,\omega_0}(\Sigma_t)=\frac{1}{n+1}\left\{\int_{\Sigma_0}H_F
		\<\delta X,\nu\>  \mathrm{~d}\mu_g
		+\int_{\partial\Sigma_0}\<\delta X,\mathcal{R}(\mathcal{P}(\Psi(\nu)))+\omega_0\bar{\nu}\>\mathrm{~d}s\right\},
	\end{align}
	where $\bar{\nu}$  is the unit outward co-normal of $\partial \Sigma\subset \mathbb{R}^n=\partial\overline{\mathbb{R}^{n+1}_+}$, $\mathrm{~d}s$ is the $(n-1)$-dimensional volume form of $\partial \Sigma$ induced by $X$, $\mathcal{R}$ is the $\pi/2$-rotation on the $(E_{n+1}, \nu)$-plane, and $\mathcal{P}$ is the projection from $\mathbb{R}^{n+1}$ into the $(E_{n+1}, \nu)$-plane.
\end{lemma}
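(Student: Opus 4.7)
The plan is to prove the two identities by direct computation of the time derivatives, using the divergence theorem for (4.1) and the anisotropic (Cahn–Hoffman) first variation formalism for (4.2), in each case tracking carefully the contributions coming from $\partial\Sigma_t\subset\partial\mathbb{R}_+^{n+1}$. Both identities are the standard first variation formulas for the enclosed Euclidean volume and the capillary anisotropic area, adapted to the constrained setting where the boundary is pinned to the supporting hyperplane.

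For identity \eqref{equ:lemma4.1-1}, first extend the variation vector $\delta X$ to a vector field $Y$ on $\overline{\mathbb{R}^{n+1}_+}$ whose restriction to $\widehat{\partial\Sigma_0}$ is tangent to $\partial\mathbb{R}^{n+1}_+$; this is possible because $\widehat{\partial\Sigma_t}$ remains in the hyperplane throughout the variation. By the Reynolds transport theorem applied to the family of domains $\widehat{\Sigma_t}$, one obtains
\[
\frac{d}{dt}\Big|_{t=0}|\widehat{\Sigma_t}|=\int_{\partial\widehat{\Sigma_0}}\langle Y,N\rangle\,dA,
\]
where $N$ is the outward unit normal. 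On the $\Sigma_0$ portion this contributes $\int_{\Sigma_0}\langle \delta X,\nu\rangle\,d\mu_g$; on the $\widehat{\partial\Sigma_0}$ portion the outward normal is $-E_{n+1}$ but $\langle Y,E_{n+1}\rangle=0$ by construction, so that piece vanishes, yielding \eqref{equ:lemma4.1-1}.

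For identity \eqref{equ:lemma4.1-2}, I would split $\mathcal{V}_{1,\omega_0}(\Sigma_t)=\tfrac{1}{n+1}\bigl(|\Sigma_t|_F+\omega_0|\widehat{\partial\Sigma_t}|\bigr)$ and handle the two summands separately. The second term is the $n$-dimensional Lebesgue area of a time-varying domain in $\mathbb{R}^n$; since $\delta X|_{\partial\Sigma_0}\in T\partial\mathbb{R}^{n+1}_+$, the same Reynolds argument gives $\tfrac{d}{dt}|_{t=0}|\widehat{\partial\Sigma_t}|=\int_{\partial\Sigma_0}\langle\delta X,\bar\nu\rangle\,ds$. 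For the anisotropic area $|\Sigma_t|_F=\int_{\Sigma_t}F(\nu)\,d\mu_g$, differentiate under the integral and reorganize using $\Psi=DF$: the interior pieces combine through a standard divergence identity to produce $H_F\langle\delta X,\nu\rangle$ (the anisotropic mean-curvature functional), while integration by parts generates a boundary integral of the form $\int_{\partial\Sigma_0}\langle\delta X,\xi\rangle\,ds$ for some vector $\xi$ built from $\Psi(\nu)$ at $\partial\Sigma_0$.

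The main obstacle is identifying $\xi=\mathcal{R}(\mathcal{P}(\Psi(\nu)))$. The key geometric fact is that the normal bundle of $\partial\Sigma_0\subset\mathbb{R}^{n+1}$ is precisely the $(E_{n+1},\nu)$-plane (since tangent vectors to $\partial\Sigma_0$ are perpendicular to both $E_{n+1}$ and $\nu$), so the boundary integral only sees the projection $\mathcal{P}(\Psi(\nu))$. Writing $\Psi(\nu)=F(\nu)\nu+\nabla^{\mathbb S}F(\nu)$ and tracking the Stokes calculation in an adapted frame $\{\nu,\mu\}$ of this $2$-plane, one finds that the flux through $\partial\Sigma_0$ couples $\delta X$ to the $\pi/2$-rotation of $\mathcal{P}(\Psi(\nu))$ inside that plane—the rotation converts the direction along which flux is measured (normal $\mu$ to $\partial\Sigma_0$ inside $\Sigma_0$) into the direction along which the boundary is displaced. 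Combining this with the $\omega_0\bar\nu$ contribution from $\omega_0|\widehat{\partial\Sigma_t}|$ produces exactly \eqref{equ:lemma4.1-2}; alternatively one may appeal directly to the anisotropic first variation formula established in \cite{Koiso2023,Guo-Xia} and specialize to the capillary constraint.
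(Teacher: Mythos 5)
The paper never proves this lemma: it is imported verbatim from \cite{Jost-1988,Koiso2023,Guo-Xia} and used as a black box in the proof of Lemma 4.2. Your direct derivation is therefore a different (more self-contained) route, and it is essentially correct. For \eqref{equ:lemma4.1-1} the transport-theorem argument is exactly right: the flat portion $\widehat{\partial\Sigma_t}$ of $\partial\widehat{\Sigma_t}$ has vanishing normal velocity because it stays inside $\{x_{n+1}=0\}$, so only $\Sigma_t$ contributes. For \eqref{equ:lemma4.1-2} your decomposition and the Reynolds treatment of $\omega_0|\widehat{\partial\Sigma_t}|$ are also correct; the only thin spot is the identification of the boundary vector $\xi$ with $\mathcal{R}(\mathcal{P}(\Psi(\nu)))$, which you describe qualitatively but do not compute. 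It can be closed in two lines with the paper's own machinery: by Lemma \ref{lemma2.5} one gets $\partial_t\bigl(F(\nu)\,d\mu_g\bigr)=\bigl[H_F\hat f+\operatorname{div}\bigl(F(\nu)\widehat T-\hat f\,\nabla^{\mathbb S}F(\nu)\bigr)\bigr]d\mu_g$ with $\hat f=\langle\delta X,\nu\rangle$, and the flux of the divergence term through $\partial\Sigma_0$ (co-normal $\mu$ of $\partial\Sigma_0\subset\Sigma_0$) equals $F(\nu)\langle\delta X,\mu\rangle-\langle\delta X,\nu\rangle\langle\Psi(\nu),\mu\rangle=\bigl\langle\delta X,\ \langle\Psi(\nu),\nu\rangle\mu-\langle\Psi(\nu),\mu\rangle\nu\bigr\rangle$; since $\{\nu,\mu\}$ is an orthonormal basis of the normal plane of $\partial\Sigma_0$ (which coincides with the $(E_{n+1},\nu)$-plane when $\Sigma_0$ meets the hyperplane transversally, an implicit hypothesis you should state), this vector is precisely the $\pi/2$-rotation of $\mathcal{P}(\Psi(\nu))$ with the orientation $\mathcal{R}(E_{n+1})=\bar\nu$ used later in the paper. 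With that step made explicit your argument is a complete proof; what it buys over the paper's citation is a verification that the formula holds for a general Wulff shape in the half-space setting rather than only in the settings of the quoted references.
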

	Along flow, we  take the first variation point by point for $t$, then derive the following evolution equations:
\begin{lemma}
	\label{lemma-evolution-2}
	Along flow $\partial_tX=f\nu_F+T$ with $T\in T_X\Sigma_t$ and  ${\partial\Sigma_t}\subset\mathbb{R}^n= \partial\overline{\mathbb{R}_{+}^{n+1}} $, we have
	\begin{align}\label{equ:lemma4.2-1}
		\partial_t \mathcal{V}_{0,\omega_0}(\Sigma_t)=\int_{\Sigma}f \mathrm{~d}\mu_F.
	\end{align}
	If we assume   that   $\<\nu_F(X),E_{n+1}\>=-\omega_0$ for $X\in\partial\Sigma_t$, then we have
	\begin{align}\label{equ:lemma4.2-2}
		\partial_t \mathcal{V}_{1,\omega_0}(\Sigma_t)=\frac{1}{n+1}\int_{\Sigma}H_Ff \mathrm{~d}\mu_F.
	\end{align}
\end{lemma}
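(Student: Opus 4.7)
The plan is to deduce both evolution identities directly from the first variation formulas in Lemma \ref{lemma-evolution-1} by substituting $\delta X=f\nu_F+T$. The work splits into an easy bulk computation (which handles \eqref{equ:lemma4.2-1} in one line and accounts for the interior integrand of \eqref{equ:lemma4.2-2}) and a boundary computation, which is the only genuine step: showing that the boundary contribution in \eqref{equ:lemma4.1-2} vanishes when the anisotropic capillary condition $\langle\Psi(\nu),E_{n+1}\rangle=-\omega_0$ holds on $\partial\Sigma_t$.

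For the bulk integrand, I use the two simple identities $\langle T,\nu\rangle=0$ (since $T\in T_X\Sigma_t$) and $\langle\nu_F,\nu\rangle=\langle F(\nu)\nu+\nabla^{\mathbb{S}} F(\nu),\nu\rangle=F(\nu)$ (since $\nabla^{\mathbb{S}} F(\nu)\in T_\nu\mathbb{S}^n\perp\nu$). Hence $\langle\delta X,\nu\rangle=fF(\nu)$, so that $\langle\delta X,\nu\rangle\,\mathrm{d}\mu_g=f\,\mathrm{d}\mu_F$. Plugging this into \eqref{equ:lemma4.1-1} gives \eqref{equ:lemma4.2-1} immediately; plugging into \eqref{equ:lemma4.1-2} turns its bulk term into $\frac{1}{n+1}\int_\Sigma H_F f\,\mathrm{d}\mu_F$, which is what is needed for \eqref{equ:lemma4.2-2}.

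The main (and only) obstacle is the boundary term $\frac{1}{n+1}\int_{\partial\Sigma_0}\langle\delta X,\mathcal{R}(\mathcal{P}(\Psi(\nu)))+\omega_0\bar\nu\rangle\,\mathrm{d}s$ in \eqref{equ:lemma4.1-2}. My strategy is to show that the vector $W:=\mathcal{R}(\mathcal{P}(\Psi(\nu)))+\omega_0\bar\nu$ is parallel to $E_{n+1}$ along $\partial\Sigma_t$, while $\delta X\perp E_{n+1}$ there; their pointwise pairing therefore vanishes. For the latter, note that since $\partial\Sigma_t\subset\partial\mathbb{R}^{n+1}_+$ is preserved along the flow, the tangential term $T$ is chosen precisely so that $\partial_tX=f\nu_F+T$ has no $E_{n+1}$-component on $\partial\Sigma_t$, hence $\langle\delta X,E_{n+1}\rangle=0$. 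For the former, I work in the two-dimensional plane $\Pi=\mathrm{span}\{E_{n+1},\nu\}=\mathrm{span}\{E_{n+1},\bar\nu\}=\mathrm{span}\{\nu,\mu\}$ (the orthogonal complement of $T(\partial\Sigma_t)$ inside $\mathbb{R}^{n+1}$). In the orthonormal basis $\{E_{n+1},\bar\nu\}$ of $\Pi$, the rotation $\mathcal{R}$ (normalised by $\mathcal{R}(\nu)=\mu$, cf.\ the isotropic case $F\equiv 1$ where $\Psi(\nu)=\nu$) acts as $E_{n+1}\mapsto\bar\nu$, $\bar\nu\mapsto -E_{n+1}$. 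Decompose
\begin{equation*}
\mathcal{P}(\Psi(\nu))=\langle\Psi(\nu),E_{n+1}\rangle E_{n+1}+\langle\Psi(\nu),\bar\nu\rangle\bar\nu=-\omega_0 E_{n+1}+\beta\,\bar\nu,
\end{equation*}
where $\beta:=\langle\Psi(\nu),\bar\nu\rangle$ and we used the capillary condition. Applying $\mathcal{R}$ yields $\mathcal{R}(\mathcal{P}(\Psi(\nu)))=-\omega_0\bar\nu-\beta E_{n+1}$, and therefore $W=-\beta E_{n+1}$, which is exactly parallel to $E_{n+1}$. Combined with $\langle\delta X,E_{n+1}\rangle=0$ on $\partial\Sigma_t$, the boundary integrand vanishes pointwise, completing the derivation of \eqref{equ:lemma4.2-2}.
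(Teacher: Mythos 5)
Your proposal is correct and follows essentially the same route as the paper: the bulk term is handled by $\langle f\nu_F+T,\nu\rangle=fF(\nu)$, and the boundary term is killed by decomposing $\mathcal{P}(\Psi(\nu))$ in the basis $\{E_{n+1},\bar\nu\}$, applying $\mathcal{R}(E_{n+1})=\bar\nu$, $\mathcal{R}(\bar\nu)=-E_{n+1}$ together with the capillary condition, and using that $\partial_tX$ has no $E_{n+1}$-component along $\partial\Sigma_t$. The only cosmetic difference is that you first simplify the vector $\mathcal{R}(\mathcal{P}(\Psi(\nu)))+\omega_0\bar\nu$ to $-\langle\Psi(\nu),\bar\nu\rangle E_{n+1}$ before pairing with $\delta X$, whereas the paper pairs term by term; the content is identical.
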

\begin{proof}
	(1) Since $\<f\nu_F+T,\nu\>=fF(\nu)$ and $\mathrm{~d}\mu_F=F(\nu)\mathrm{~d}\mu_g $,
	 \eqref{equ:lemma4.1-1} implies \eqref{equ:lemma4.2-1} directly.
	
	(2) On $\partial\Sigma_t$,
	we know $\bar{\nu}\in (E_{n+1},\nu )$-plane. Since $\<\bar{\nu},E_{n+1}\>=0$, $\mathcal{P}(\Psi(\nu))=\mathcal{P}(\nu_F)$ can  be expressed by $\mathcal{P}(\nu_F)=\<\nu_F,E_{n+1}\>E_{n+1}+\<\nu_F,\bar{\nu}\>\bar{\nu}$. By the definition of $\mathcal{R}$, $\mathcal{R}(E_{n+1})=\bar{\nu}$, $\mathcal{R}(\bar{\nu})=-E_{n+1}$, thus
	$$\mathcal{R}(\mathcal{P}(\nu_F))=\<\nu_F,E_{n+1}\>\bar{\nu}-\<\nu_F,\bar{\nu}\>E_{n+1},$$
	which implies
	$$\<\partial_tX,\mathcal{R}(\mathcal{P}(\Psi(\nu)))\>= \<\partial_tX, -\omega_0\bar{\nu}-\<\nu_F,\bar{\nu}\>E_{n+1}\>
	=-\omega_0\<\partial_tX, \bar{\nu}\>,\quad \text{on}\ \partial\Sigma_t.$$
	Then \eqref{equ:lemma4.2-2} follows form \eqref{equ:lemma4.1-2}.
\end{proof}

\begin{lemma}\label{lemma-evolution}
Along flow \eqref{equ:flow}, the enclosed volume $\mathcal{V}_{0,\omega_0}({\Sigma_t})$ is preserved, the anisotropic capillary area  $\mathcal{V}_{1,\omega_0}({\Sigma_t})$ is non-increasing, and $\mathcal{V}_{1,\omega_0}({\Sigma_t})$ is a constant function if and only if  $\Sigma_t$ is an $\omega_0$-capillary Wulff shape.
\end{lemma}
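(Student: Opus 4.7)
The plan is to combine the evolution equations of Lemma~\ref{lemma-evolution-2} with the anisotropic Minkowski identity of Lemma~\ref{Thm1.2}, applied at $k=0$ and $k=1$, and then exploit the pointwise Newton--Maclaurin inequality $(H_1^F)^2 \geq H_2^F$ to obtain sign and rigidity.

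For the volume $\mathcal{V}_{0,\omega_0}$, I would start from \eqref{equ:lemma4.2-1}, substitute the explicit expression \eqref{equ:pfL3.2-4} of $f=n[1+\omega_0 G(\nu_F)(\nu_F,E_{n+1}^F)] - \hat{u}H_F$, and recognize that (using $H_0^F=1$ and $H_F = nH_1^F$) the resulting integrand is exactly $n$ times the integrand of \eqref{equ:Minkow} at $k=0$. Lemma~\ref{Thm1.2} then gives $\partial_t\mathcal{V}_{0,\omega_0}(\Sigma_t)=0$.

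For the capillary area, I would begin from \eqref{equ:lemma4.2-2}, expand $H_F f$, and convert the cross term via the $k=1$ Minkowski identity
$\int_{\Sigma_t} H_F[1+\omega_0 G(\nu_F)(\nu_F,E_{n+1}^F)]\,\mathrm{d}\mu_F = n\int_{\Sigma_t}\hat{u}H_2^F\,\mathrm{d}\mu_F$, which yields
\[
\partial_t \mathcal{V}_{1,\omega_0}(\Sigma_t) = \frac{n^2}{n+1}\int_{\Sigma_t}\hat{u}\bigl[H_2^F - (H_1^F)^2\bigr]\,\mathrm{d}\mu_F = -\frac{n}{(n+1)(n-1)}\int_{\Sigma_t}\hat{u}\left|\hat{h}-\tfrac{1}{n}H_F\hat{g}\right|_{\hat{g}}^2\mathrm{d}\mu_F,
\]
where I invoke the elementary identity $(H_1^F)^2-H_2^F = \tfrac{1}{n(n-1)}(|\hat{h}|_{\hat{g}}^2 - H_F^2/n) = \tfrac{1}{n(n-1)}|\hat{h}-\tfrac{H_F}{n}\hat{g}|_{\hat{g}}^2$, which is just Cauchy--Schwarz on the anisotropic principal curvatures. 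The positivity $\hat{u}>0$ is inherited from the preserved star-shapedness (Proposition~\ref{prop:c1}) together with the estimate $F(\nu)+\omega_0\langle\nu,E_{n+1}^F\rangle>0$ established therein.

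Hence the integrand is pointwise nonpositive and $\mathcal{V}_{1,\omega_0}$ is non-increasing. In the equality case, $\partial_t\mathcal{V}_{1,\omega_0}\equiv 0$ forces $\hat{h}_{ij}=\tfrac{1}{n}H_F\hat{g}_{ij}$ everywhere, i.e.\ all anisotropic principal curvatures coincide; this umbilicity condition characterizes pieces of a Wulff shape, and combined with the capillary boundary condition \eqref{equ:w0-capillary} preserved by the flow, it forces $\Sigma_t$ to be an $\omega_0$-capillary Wulff shape. There is no essential obstacle here: the only bookkeeping care required is tracking the factors of $n$ when passing between $H_F$, $H_1^F$, and $H_2^F$, and verifying that the star-shapedness assumption of Proposition~\ref{prop:c1} legitimately supplies the strict positivity of $\hat{u}$ throughout the flow.
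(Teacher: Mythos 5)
Your proposal is correct and follows essentially the same route as the paper: the $k=0$ and $k=1$ cases of the Minkowski formula \eqref{equ:Minkow} applied to the variational formulas of Lemma \ref{lemma-evolution-2}, followed by the identity $(H_1^F)^2-H_2^F=\tfrac{1}{n(n-1)}\bigl|\hat{h}-\tfrac{1}{n}H_F\hat{g}\bigr|_{\hat{g}}^2$ and $\hat{u}>0$ from star-shapedness to get the sign and the umbilicity rigidity. The constants and the use of Proposition \ref{prop:c1} all check out.
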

\begin{proof}
 In terms of anisotropic capillary Minkowski Formula \eqref{equ:Minkow}, we obtain by \eqref{equ:lemma4.2-1}, \eqref{equ:lemma4.2-2} and \eqref{equ:pfL3.2-4} that
 \begin{align*}
 	\partial_t \mathcal{V}_{0,\omega_0}(\Sigma_t)=\frac{1}{n+1}\int_{\Sigma}
 	\(
 	n
 	+n\omega_0G(\nu_F)(\nu_F,E^F_{n+1})
 	-\hat{u}H_F
 	\)
 	 \mathrm{~d}\mu_F=0, 
 \end{align*}
and
\begin{align}
	\partial_t \mathcal{V}_{1,\omega_0}(\Sigma_t)
	=&\frac{1}{n+1}\int_{\Sigma}H_F
	\(
	n
	+n\omega_0G(\nu_F)(\nu_F,E^F_{n+1})
	-\hat{u}H_F
	\)
	\mathrm{~d}\mu_F\nonumber
	\\
	=&\frac{1}{n+1}\int_{\Sigma}
	n^2\(
	H_2^F-(H_1^F)^2
	\)\hat{u}
	\mathrm{~d}\mu_F\nonumber
	\\
	=&\frac{1}{n+1}\int_{\Sigma}
	\(
	\frac{2 n}{n-1} \sigma_2\left(\kappa^F\right)-H_F^2
	\)\hat{u}
	\mathrm{~d}\mu_F\nonumber
	\\
	=&-\frac{n}{(n+1)(n-1)}\int_{\Sigma}
	\left|\hat{h}-\frac{1}{n} H_F \hat{g}\right|_{\hat{g}}^2
	\hat{u}
	\mathrm{~d}\mu_F
	\leq 0,\label{equ:lemma4.3-2}
\end{align}
and the equality holds above if and only if  $\Sigma$ is an $\omega_0$-capillary Wulff shape.
Then we complete the proof.
\end{proof}

\begin{proof}[Proof of  Theorem \ref{thm:iso-neq}]
	Taking flow \eqref{equ:flow} with the initial hypersurface $\Sigma$,  from Theorem \ref{thm:flow}, we know it converges to an $\omega_0$-capillary Wulff shape ${\W_{r_0,\omega_0}}$. And by Lemma \ref{lemma-evolution} and $\W_{ r_0,\omega_0}=\{X\in\overline{\mathbb{R}_+^{n+1}}:F^0(X-r_0\omega_0E_{n+1}^F)=r_0\}=\{r_0X\in\overline{\mathbb{R}_+^{n+1}}:F^0(r_0X-r_0\omega_0E_{n+1}^F)=r_0\}=\{r_0X\in\overline{\mathbb{R}_+^{n+1}}:F^0(X-\omega_0E_{n+1}^F)=1\}=r_0\W_{1,\omega_0}$,
	we derive
	\begin{align*}
		 \mathcal{V}_{0,\omega_0}(\Sigma)= |\widehat{\W_{ r_0,\omega_0}}|=\int_{\W_{ r_0,\omega_0}} \<X,\nu\> d\mu_g=r_0^{n+1}|\widehat{\W_{1,\omega_0}}|,
	\end{align*}
	and
	\begin{align*}
		 \mathcal{V}_{1,\omega_0}(\Sigma)&\geq \mathcal{V}_{1,\omega_0}(\W_{ r_0,\omega_0})
		 =\frac{1}{n+1}\left(
		 \int_{\W_{ r_0,\omega_0}}
		 F(\nu)
		 \mathrm{~d}\mu_g
		 +\omega_0|\widehat{\partial \W_{ r_0,\omega_0}}|
		 \right)
		 \\
		 &=\frac{1}{n+1}\left(
		 r_0^n\int_{\W_{1,\omega_0}}
		 F(\nu)
		 \mathrm{~d}\mu_g
		 +r_0^n\omega_0|\widehat{\partial \W_{1,\omega_0}}|
		 \right)
		 \\
		 &=r_0^n\mathcal{V}_{1,\omega_0}(\W_{1,\omega_0}).
	\end{align*}
	Thus
	\begin{align*}
		\mathcal{V}_{1,\omega_0}(\Sigma)\geq
		r_0^n\mathcal{V}_{1,\omega_0}(\W_{1,\omega_0})
		=\left(\frac{\mathcal{V}_{0,\omega_0}(\Sigma)}{\mathcal{V}_{0,\omega_0}(\W_{1,\omega_0})}
		\right)
		^{\frac{n}{n+1}}\mathcal{V}_{1,\omega_0}(\W_{1,\omega_0}),
	\end{align*}
	where equality holds above if and only if equality holds in \eqref{equ:lemma4.3-2}, that means $\Sigma$ is an $\omega_0$-capillary Wulff shape.
	 	Then we complete the proof.
\end{proof}

\section{Proof of convexity preservation}\label{sec 5}In this section, we shall prove Theorem \ref{thm:convex preservation}.
Firstly, we derive some evolution equations to show the upper and lower bounds of anisotropic mean curvature $H_F$.
\subsection{The uniform  bound of $H_F$ along flow}

\begin{lemma}\label{lemma-evolution-HF}
	Along the general flow $\partial_tX=f\nu_F+T$ with $T\in T_X\Sigma_t$, it holds that
	\begin{align}
		\partial_t\hat{g}_{ij}=&2f\hat{h}_{ij}+G(\nu_{{F}})(\partial_iT,X_j)+G(\nu_{{F}})(\partial_jT,X_i)-\hat{\nabla}^pfQ_{ijp}+\hat{h}^{kp}\hat{g}(T,X_k)Q_{ijp},\label{equ:ptg}
		\\
		\partial_t\hat{h}_{ij}=&f\hat{h_i^k}\hat{h}_{jk}-\hat{\nabla}_i\hat{\nabla}_jf+A_{jip}\hat{\nabla}^pf
		+\hat{h}_j^kG(\nu_{{F}})(\partial_iT,X_k)+\hat{h}_i^kG(\nu_{{F}})(\partial_jT,X_k)\nonumber
		\\
		&+
		\hat{g}(T,X^k)\left(\hat{h}_{ij,k}+\hat{h}_j^pA_{pki}+\hat{h}_i^pA_{pjk}+\hat{h}_i^q\hat{h}_j^pQ_{kpq}+\hat{h}_j^q\hat{h}_k^pQ_{ipq}
		\right),\label{equ:pth1}
		\\
		\partial_tH_F=&-f|\hat{h}|^2_{\hat{g}}-\hat{\Delta}f-\hat{g}^{ik}A_{pik}\hat{\nabla}^pf+\hat{\nabla}_TH_F\label{equ:ptHF}.
	\end{align}
\end{lemma}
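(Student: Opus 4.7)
The plan is to prove the three evolution equations in order, by direct differentiation of the defining expressions $\hat g_{ij}=G(\nu_F)(X_i,X_j)$, $\hat h_{ij}=G(\nu_F)(\partial_i\nu_F,X_j)$, and $H_F=\hat g^{ij}\hat h_{ij}$. The three tools I will use throughout are: Lemma \ref{lemma-evolution-vf} for $\partial_t\nu_F$, the anisotropic Gauss--Weingarten formulas of Lemma \ref{lem2-1}, and the fundamental product rule for the $\nu_F$-dependent ambient metric
\[
\partial_t\bigl(G(\nu_F)(V,W)\bigr)=G(\nu_F)(\partial_t V,W)+G(\nu_F)(V,\partial_t W)+Q(\nu_F)(\partial_t\nu_F,V,W),
\]
together with its spatial analogue obtained by replacing $\partial_t$ with $\partial_k$. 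Computing in normal coordinates at a point should keep the $A$- and $Q$-terms transparent.

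For \eqref{equ:ptg}, I apply the product rule to $\hat g_{ij}=G(\nu_F)(X_i,X_j)$ and substitute $\partial_t X_i=\partial_i(f\nu_F+T)=(\partial_i f)\nu_F+f\,\partial_i\nu_F+\partial_i T$. The Weingarten formula $\partial_i\nu_F=\hat h_i^k X_k$ together with $G(\nu_F)(\nu_F,X_j)=0$ kills the $(\partial_i f)\nu_F$ pieces and collapses the $f\,\partial_i\nu_F$ pieces into $2f\hat h_{ij}$. The $Q$-correction equals $Q(\nu_F)(\partial_t\nu_F,X_i,X_j)$; inserting the expression \eqref{equ:p_tVF} for $\partial_t\nu_F$ and using $Q(\nu_F)(\nu_F,\cdot,\cdot)=0$ gives exactly the remaining terms $-\hat\nabla^p f\,Q_{ijp}+\hat h^{kp}\hat g(T,X_k)Q_{ijp}$ of \eqref{equ:ptg}.

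For \eqref{equ:pth1}, the cleanest starting point is $\hat h_{ij}=-G(\nu_F)(\partial_i\partial_j X,\nu_F)$, which follows from the Gauss formula \eqref{equ:Gauss-formula} since the tangential part is $G(\nu_F)$-orthogonal to $\nu_F$. Differentiating in $t$ and using $\partial_t\partial_i\partial_j X=\partial_i\partial_j(f\nu_F+T)$, I expand every Euclidean second derivative via the Gauss--Weingarten formulas, which converts them into $\hat\nabla$-derivatives plus the correct $A$- and $Q$-corrections. The term $f\hat h_i^k\hat h_{jk}$ comes from the Weingarten contraction of $f\,\partial_i\partial_j\nu_F$; the Hessian contribution reads $-\hat\nabla_i\hat\nabla_j f+A_{jip}\hat\nabla^p f$ after reinterpreting $\partial_i\partial_j f$ through the Gauss formula; the $T$-terms follow from distributing $\partial_i\partial_j T$; and the remaining $Q$-correction, after substituting \eqref{equ:p_tVF} and applying the identity $A_{psl}+A_{pls}+\hat h_p^q Q_{qls}=0$ (equation \eqref{equ:pf-lemma3.3-2}), collapses into the listed form.

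For \eqref{equ:ptHF}, I differentiate $H_F=\hat g^{ij}\hat h_{ij}$ using $\partial_t\hat g^{ij}=-\hat g^{ik}\hat g^{jl}\partial_t\hat g_{kl}$ and combine the two previous evolutions. The $f$-pieces add to $-2f|\hat h|_{\hat g}^2+f|\hat h|_{\hat g}^2=-f|\hat h|_{\hat g}^2$; the Hessian contribution traces to $-\hat\Delta f-\hat g^{ik}A_{pik}\hat\nabla^p f$; and all $T$-dependent pieces must be reorganized. Here I invoke the Codazzi equation \eqref{Codazzi} to rewrite $\hat g^{ij}\hat h_{ij,k}$ as $\hat\nabla_k H_F$ plus an $A$-correction, which then cancels the residual $A$-terms inherited from $\partial_t\hat g_{ij}$ and $\partial_t\hat h_{ij}$ by virtue of \eqref{equ:pf-lemma3.3-2} and the symmetries of $Q$; what survives is precisely $\hat\nabla_T H_F$.

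The main obstacle is not conceptual but combinatorial: many $A$- and $Q$-correction terms appear initially, and only after systematic application of the algebraic identity \eqref{equ:pf-lemma3.3-2}, the total symmetry of $Q$, and the Codazzi equation do they pair up and cancel. Carrying the computation out in normal coordinates at a chosen point, and grouping terms by whether they carry $f$, $\hat\nabla f$, or $T$, should make every cancellation visible.
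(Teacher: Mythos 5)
Your proposal is correct and follows essentially the same route as the paper: direct differentiation of $\hat g_{ij}$, $\hat h_{ij}$ and $H_F$ via the $Q$-corrected product rule for $G(\nu_F)$, substitution of \eqref{equ:p_tVF}, the anisotropic Gauss--Weingarten formulas, and the identity \eqref{equ:pf-lemma3.3-2}; your alternative starting point $\hat h_{ij}=-G(\nu_F)(\partial_i\partial_jX,\nu_F)$ is equivalent to the paper's $G(\nu_F)(\partial_iX,\partial_j\nu_F)$ by the Gauss formula. One minor remark: once \eqref{equ:pth1} is in hand, tracing $\hat g(T,X^k)\hat h_{ij,k}$ with $\hat g^{ij}$ yields $\hat\nabla_TH_F$ immediately since $\hat\nabla\hat g=0$, so the Codazzi equation is not needed at that final step --- it is used earlier (as in the paper's \eqref{equ:pf-pth3}) to produce the $\hat h_{ij,k}$ term inside \eqref{equ:pth1}.
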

\begin{proof}
	 In
	normal coordinates (i.e. $\hat{\nabla}_{\partial_i}\partial_j=0$) at a given point, by Lemma \ref{lem2-1} and \eqref{equ:p_tVF}, we compute
	\begin{align*}
		\partial_t\hat{g}_{ij}=&\partial_tG(\nu_{{F}})(\partial_iX,\partial_jX)
		\\
		=
		&G(\nu_{{F}})(\partial_i(\partial_tX),X_j)+G(\nu_{{F}})(X_i,\partial_j(\partial_tX))+Q(\nu_{{F}})(\partial_t\nu_{{F}},X_i,X_j)
		\\
		=
		&G(\nu_{{F}})\(\partial_if\nu_{{F}}+f\partial_i\nu_{{F}}+\partial_iT,X_j\)+G(\nu_{{F}})\(X_i,\partial_jf\nu_{{F}}+f\partial_j\nu_{{F}}+\partial_jT\)
		\\
		&+Q(\nu_{{F}})\(-\hat{\nabla}^pfX_p+\hat{h}^{kp}\hat{g}(T,X_k)X_p,X_i,X_j\)
		\\
		=&f\hat{h}_i^k\hat{g}(X_k,X_j)+G(\nu_{{F}})(\partial_iT,X_j)+	f\hat{h}_j^k\hat{g}(X_k,X_i)+G(\nu_{{F}})(\partial_jT,X_i)
		\\
		&+\left(
		-\hat{\nabla}^pf+\hat{h}^{kp}\hat{g}(T,X_k)
		\right)Q_{ijp}
		\\
		=&2f\hat{h}_{ij}+G(\nu_{{F}})(\partial_iT,X_j)+G(\nu_{{F}})(\partial_jT,X_i)-\hat{\nabla}^pfQ_{ijp}+\hat{h}^{kp}\hat{g}(T,X_k)Q_{ijp}.
	\end{align*}
	This proves \eqref{equ:ptg}.

 To prove \eqref{equ:pth1}, similarly in
normal coordinates at a given point, by Lemma \ref{lem2-1} and \eqref{equ:p_tVF} again, we have
\begin{align}
		\partial_t\hat{h}_{ij}=&\partial_tG(\nu_{{F}})(\partial_iX,\partial_j\nu_{{F}})\nonumber
		\\
		=&G(\nu_{{F}})\(\partial_i(\partial_tX),\partial_j\nu_{{F}}\)+G(\nu_{{F}})\(X_i,\partial_j(\partial_t\nu_{{F}})\)+Q(\nu_{{F}})\(X_i,\partial_j\nu_{{F}},\partial_t\nu_{{F}}\)\nonumber
		\\
		=&G(\nu_{{F}})\(\partial_if \nu_{{F}}+f\partial_i\nu_{{F}}+\partial_iT,\hat{h}_j^kX_k\)\nonumber
		\\
		&+G(\nu_{{F}})\(X_i,\partial_j(-\hat{\nabla}^pfX_p+\hat{h}^{kp}\hat{g}(X_k,T)X_p)\)\nonumber
		\\
		&+Q(\nu_{{F}})\(X_i,\hat{h}_j^qX_q,-\hat{\nabla}^pfX_p+\hat{h}^{kp}\hat{g}(X_k,T)X_p\)\nonumber
		\\
		=&f\hat{h_i^k}\hat{h}_{jk}+\hat{h}_j^kG(\nu_{{F}})(\partial_iT,X_k)
		+\hat{h}_j^q\(-\hat{\nabla}^pf+\hat{h}^{kp}\hat{g}(T,X_k)\)Q_{ipq}+(II)_{ij},
		\label{equ:pf-pth1}
\end{align}
where we denote
\begin{align*}
	(II)_{ij}=G(\nu_{{F}})(X_i,(I)_j)=G(\nu_{{F}})\(X_i,\partial_j(-\hat{\nabla}^pfX_p+\hat{h}^{kp}\hat{g}(X_k,T)X_p)\).
\end{align*}
Since $G(\nu_{{F}})(X_i,\nu_{{F}})=0$, we can eliminate the anisotropic normal  part of $(I)_j$, and calculate $(I)_j$ as
\begin{align*}
	(I)_j=&-\partial_j\(\hat{\nabla}^pf\)X_p+\partial_j\(\hat{h}^{kp}\hat{g}(X_k,T)\)X_p+\(-\hat{\nabla}^pf+\hat{h}^{kp}\hat{g}(T,X_k)\)A_{jpl}X^l
	\\
	=&-\hat{\nabla}_j\hat{\nabla}^pfX_p+\hat{\nabla}_j\hat{h}^{kp}\hat{g}(X_k,T)X_p
	+\left(-\hat{\nabla}^lf+\hat{h}^{kl}\hat{g}(T,X_k) \right)A_{jlp}X^p
	\\
	&+\hat{h}^{kp}\(A_{kjl}\hat{g}(X^l,T)+Q(\nu_{{F}})(X_k,T,\hat{h}_j^lX_l)+G(\nu_{{F}})(X_k,\partial_jT)\)X_p,
\end{align*}
then
\begin{align}
	(II)_{ij}=&-\hat{\nabla}_j\hat{\nabla}_if+\hat{\nabla}_j\hat{h}_i^k\hat{g}(X_k,T)-\hat{\nabla}^pfA_{jpi}+\hat{h}^{kp}\hat{g}(X_k,T)A_{jpi}\nonumber
	\\
	&+\hat{h}_i^k\left(
	A_{kjl}\hat{g}(X^l,T)+G(\nu_{{F}})(X_k,\partial_jT)+\hat{h}_j^lQ_{kql}\hat{g}(T,X^q)
	\right).\label{equ:pf-pth2}
\end{align}
By Codazzi equation \eqref{Codazzi} and Lemma \ref{lem2-1}, we have
\begin{align}\label{equ:pf-pth3}
	&\left(	\hat{h}^{kp}A_{jpi}+\hat{h}_j^q\hat{h}^{kp}Q_{ipq}+\hat{\nabla}_j\hat{h}_i^k\right)\hat{g}(T,X_k)+\hat{h}_i^kA_{kjl}\hat{g}(T,X^l)+\hat{h}_i^k\hat{h}_j^lQ_{kql}\hat{g}(T,X^q)
	\nonumber
	\\
	=&\hat{g}(T,X^k)\left(
	\hat{h}_{ij,k}+\hat{h}_j^pA_{pki}+\hat{h}_i^pA_{pjk}+\hat{h}_i^q\hat{h}_j^pQ_{qkp}+\hat{h}_j^q\hat{h}_k^pQ_{ipq}
	\right).
\end{align}
Combining with \eqref{equ:pf-pth1}$\thicksim$\eqref{equ:pf-pth3}, we obtain \eqref{equ:pth1} by \eqref{equ:pf-lemma3.3-2}.

For the last evolution \eqref{equ:ptHF},  we see by Lemma \ref{lem2-1}, \eqref{equ:ptg} and \eqref{equ:pth1} that
\begin{align}
	\partial_t\hat{h}_i^j =&\partial_t\hat{g}^{jk}\hat{h}_{ki}+\hat{g}^{jk}\partial_t\hat{h}_{ik}
	=-\hat{g}^{jp}\hat{g}^{kq}\hat{h}_{ki}\partial	_t\hat{g}_{pq}+\hat{g}^{jk}\partial_t\hat{h}_{ik}\nonumber
	\\
	=&-\hat{g}^{jp}\hat{g}^{kq}\hat{h}_{ki}\left(
	2f\hat{h}_{pq}+G(\nu_{{F}})(\partial_pT,X_q)+G(\nu_{{F}})(\partial_qT,X_p)+\hat{h}^{ml}\hat{g}(T,X_m)Q_{pql}\right.
	\nonumber
	\\
	&\left.-\hat{\nabla}^lfQ_{pql}	
	\right)
	+\hat{g}^{jk}\left(
	f\hat{h_i^l}\hat{h}_{kl}-\hat{\nabla}_i\hat{\nabla}_kf+\hat{h}_i^lG(\nu_{{F}})(\partial_kT,X_l)
	+\hat{h}_k^lG(\nu_{{F}})(\partial_iT,X_l)
	\right.\nonumber
	\\
	&
	\left.+A_{kip}\hat{\nabla}^pf
	+\hat{g}(T,X^l)(\hat{h}_{ik,l}+\hat{h}_k^pA_{pli}+\hat{h}_i^pA_{pkl}+\hat{h}_i^q\hat{h}_k^pQ_{lpq}+\hat{h}_k^q\hat{h}_l^pQ_{ipq}
	)
	\right)\nonumber
	\\
	=&-f\hat{h}_i^k\hat{h}_k^j-\hat{\nabla}_i\hat{\nabla}^jf-\hat{g}^{jk}A_{pik}\hat{\nabla}^pf+\hat{g}(T,X^l)\hat{h}^j_{i,l}+(III)_i^j\nonumber
	\\
	&
	+\hat{g}^{jp}\left(\hat{h}_p^lG(\nu_{{F}})(\partial_iT,X_l)+\hat{h}_i^lG(\nu_{{F}})(\partial_pT,X_l)\right)
	\nonumber
	\\
	&-\hat{g}^{jp}\hat{g}^{kq}\hat{h}_{ki}\(
	G(\nu_{{F}})(\partial_pT,X_q)+G(\nu_{{F}})(\partial_qT,X_p)\), \label{equ:pf-ptHF1}
\end{align}
where
\begin{align*}
	(III)_i^j=\hat{g}(T,X^l)\hat{g}^{jk}\left(
	-\hat{h}_l^pQ_{kpq}\hat{h}_{i}^q
	+\hat{h}_k^pA_{pli}+\hat{h}_i^pA_{pkl}+\hat{h}_i^q\hat{h}_k^pQ_{lpq}+\hat{h}_k^q\hat{h}_l^pQ_{ipq}
	\right).
\end{align*}
By \eqref{equ:pf-lemma3.3-2}, 
we have $\sum_{i=1}^{n}(III)_i^i=0$. Combining with \eqref{equ:pf-ptHF1} and $H_F=\sum_{i=1}^{n}\hat{h}_i^i$, we can obtain \eqref{equ:ptHF}.
\end{proof}
We still use the operator $\mathcal{L}$ defined by  \eqref{equ:L-operator}, to calculate the further evolution equation.
\begin{lemma}
	Along flow \eqref{equ:flow}, the anisotropic mean curvature $H_F$ satisfies
	\begin{align}\label{equ:LHF}
		\mathcal{L}H_F=2G(\nu_{{F}})(\hat{\nabla
		}H_F,\hat{\nabla}\hat{ u})+H_F^2-n|\hat{h}|^2_{\hat{g}}.
	\end{align}
\end{lemma}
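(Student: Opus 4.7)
The plan is to start from the evolution equation \eqref{equ:ptHF} and substitute the explicit form \eqref{equ:pfL3.2-4} of $f$, then expand the elliptic terms $\hat{\Delta}f + \hat{g}^{ik}A_{pik}\hat{\nabla}^p f$ using the Leibniz rule and two prior identities: the formula \eqref{equ:pfL3.2-2} for $(\hat{\Delta}+\hat{g}^{ik}A_{pik}\hat{\nabla}^p)\hat{u}$, and the formula \eqref{equ:pf-lemma3.3-4} for $\hat{\Delta}G(\nu_F)(\nu_F,E^F_{n+1})$, which together give
\[
(\hat{\Delta}+\hat{g}^{ik}A_{pik}\hat{\nabla}^p)G(\nu_F)(\nu_F,E_{n+1}^F)=G(\nu_F)(\hat{\nabla}H_F,E_{n+1}^F)-|\hat{h}|_{\hat{g}}^2\,G(\nu_F)(\nu_F,E_{n+1}^F).
\]

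Writing $w:=G(\nu_F)(\nu_F,E_{n+1}^F)$ and using the product rule
\[
(\hat{\Delta}+\hat{g}^{ik}A_{pik}\hat{\nabla}^p)(\hat{u}H_F)=\hat{u}(\hat{\Delta}H_F+\hat{g}^{ik}A_{pik}\hat{\nabla}^pH_F)+H_F(\hat{\Delta}\hat{u}+\hat{g}^{ik}A_{pik}\hat{\nabla}^p\hat{u})+2G(\nu_F)(\hat{\nabla}\hat{u},\hat{\nabla}H_F),
\]
together with $f=n+n\omega_0 w-\hat{u}H_F$, I would assemble $\hat{\Delta}f+\hat{g}^{ik}A_{pik}\hat{\nabla}^pf$ explicitly. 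Substituting back into \eqref{equ:ptHF} and noting that $\hat{\nabla}_T H_F=G(\nu_F)(T,\hat{\nabla}H_F)$, the term $f|\hat{h}|_{\hat{g}}^2$ combines with $-n\omega_0 w|\hat{h}|_{\hat{g}}^2$ and $H_F\hat{u}|\hat{h}|_{\hat{g}}^2$ from the two expansions so that, after cancellation, the $\omega_0 w|\hat{h}|_{\hat{g}}^2$ terms and the $\hat{u}H_F|\hat{h}|_{\hat{g}}^2$ terms drop out.

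What remains on the right-hand side are precisely the four expected terms: $-n|\hat{h}|_{\hat{g}}^2$ (from $-n|\hat{h}|_{\hat{g}}^2$ in $-f|\hat{h}|_{\hat{g}}^2$), $H_F^2$ (from the $-H_F$ contribution inside \eqref{equ:pfL3.2-2}), the drift terms $H_FG(\nu_F)(X,\hat{\nabla}H_F)-n\omega_0 G(\nu_F)(E_{n+1}^F,\hat{\nabla}H_F)+G(\nu_F)(T,\hat{\nabla}H_F)$ which reassemble into $G(\nu_F)(T+H_FX-n\omega_0E_{n+1}^F,\hat{\nabla}H_F)$ and thus combine with $\hat{u}(\hat{\Delta}+\hat{g}^{ik}A_{pik}\hat{\nabla}^p)H_F$ to form exactly $\mathcal{L}H_F$, and finally the cross-term $2G(\nu_F)(\hat{\nabla}\hat{u},\hat{\nabla}H_F)$ coming from the product rule.

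The calculation is essentially bookkeeping and the only mild obstacle is keeping track of the signs and ensuring that all the $|\hat{h}|_{\hat{g}}^2$-coefficients balance correctly; the key conceptual input is that the two auxiliary evolution identities for $\hat{u}$ and $w$ already contain precisely the $|\hat{h}|_{\hat{g}}^2$ contributions needed to cancel the nonlinear $|\hat{h}|_{\hat{g}}^2 f$ term. No new geometric input beyond Lemmas~\ref{lemma-Lu^}–\ref{lemma-Lu-} and the evolution \eqref{equ:ptHF} is required.
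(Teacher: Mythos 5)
Your proposal is correct and follows essentially the same route as the paper's proof: substitute the explicit form \eqref{equ:pfL3.2-4} of $f$ into \eqref{equ:ptHF}, expand $\hat{\Delta}f+\hat{g}^{ik}A_{pik}\hat{\nabla}^pf$ via the Leibniz rule together with \eqref{equ:pfL3.2-2}, \eqref{equ:pf-lemma3.3-4} and \eqref{equ:pfL3.2-5}, and verify that the $|\hat{h}|^2_{\hat{g}}$-coefficients cancel so that the drift terms reassemble into $\mathcal{L}H_F$. The bookkeeping you describe matches the paper's computation (the $H_F^2$ term in fact arises from the $+H_F$ on the right of \eqref{equ:pfL3.2-2} multiplied by $H_F$, a sign you state slightly loosely, but the assembled result is correct).
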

\begin{proof}
	Putting \eqref{equ:pfL3.2-4}, \eqref{equ:pf-lemma3.3-4}, and \eqref{equ:pfL3.2-5} into \eqref{equ:ptHF} yields
	\begin{align*}
		\partial_tH_F=&-\(n+n\omega_0G(\nu_{{F}})(\nu_{{F}},E_{n+1}^F)-\hat{u}H_F\) |\hat{h}|^2_{\hat{g}}+\hat{u}\hat{\Delta}H_F+H_F\hat{\Delta}\hat{u}
		\\
		&+2G(\nu_{{F}})( \hat{\nabla}H_F,\hat{\nabla}\hat{u})
		-n\omega_0\left(G(\nu_F)(\hat{\nabla}H_F,E^F_{n+1})	
		-
		|\hat{h}|^2_{\hat{g}}G(\nu_F)(\nu_F,E^F_{n+1})\right.
		\\
		&\left.-\hat{g}^{kl}A_{plk}\hat{\nabla}^pG(\nu_F)(\nu_F,E^F_{n+1})\right)
		-\hat{g}^{ik}A_{pik}\left(
		n\omega_0\hat{h}^{pl}G(\nu_F)(\partial_lX,E^F_{n+1})\right.
		\\
		&\left.-\hat{\nabla}^pH_F\hat{ u}-\hat{\nabla}^p\hat{ u}H_F
		\right)+\hat{\nabla}_TH_F
		\\
		\overset{\eqref{equ:pfL3.2-2}}{=}&
		-n|\hat{h}|^2_{\hat{g}}
		+G(\nu_{{F}})\(-n\omega_0E_{n+1}^F+T+H_F X,\hat{\nabla}H_F\)-\hat{g}^{ik}A_{pik}H_F\hat{\nabla}^p\hat{u}
		\\
		&+H_F^2+\hat{u}\hat{\Delta}H_F
		+2G(\nu_{{F}})(\hat{\nabla}H_F,\hat{\nabla}\hat{u})+\hat{g}^{ik}A_{pik}\hat{\nabla}^p(\hat{u}H_F)
		\\
		=&\hat{u}\left(\hat{\Delta}H_F+\hat{g}^{ik}A_{pik}\hat{\nabla}^pH_F\right)+G(\nu_{{F}})(T+H_FX-n\omega_0E_{n+1}^F ,\hat{\nabla}H_F)
		\\
		&+2G(\nu_{{F}})(\hat{\nabla}H_F,\hat{\nabla}\hat{u})+H_F^2-n|\hat{h}|^2_{\hat{g}}.
	\end{align*}
	This is equation \eqref{equ:LHF}.
\end{proof}
\begin{lemma}\label{lem:DuH_F}
		For  $X\in \partial\Sigma_t$, we take $\mu_F\in T_X\Sigma$, which satisfies $G(\nu_F)(\xi,\mu_F)=0$ for any $\xi\in T_X(\partial\Sigma)$. Then
	along flow \eqref{equ:flow}, we have
	\begin{align*}
		\hat{\nabla}_{\mu_F}H_F=0, \quad \text{on}\  \partial\Sigma_t.
	\end{align*}
\end{lemma}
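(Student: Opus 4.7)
The plan is to prove $\hat{\nabla}_{\mu_F}H_F=0$ on $\partial\Sigma_t$ by a direct boundary computation, completely parallel in spirit to the proof of Lemma \ref{lemma-DU-=0} but pushed one derivative further. I would work at a fixed boundary point $X\in\partial\Sigma_t$ in the adapted orthonormal frame $\{e_i\}_{i=1}^n$ of Remark \ref{rk3.5}(2): $e_n=\mu_F/|\mu_F|_{\hat g}$, $e_\alpha\in T(\partial\Sigma_t)$ for $\alpha<n$, and $(\hat h_{ij})$ is diagonal. Along $\partial\Sigma_t$ one then has the capillary-boundary identity \eqref{equ:han=0}, $\hat h_{\alpha n}\equiv 0$, and the explicit formula \eqref{equ:haan} for $\hat h_{\alpha\alpha,n}$.

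First I would write
\[
|\mu_F|_{\hat g}^{-1}\,\hat{\nabla}_{\mu_F}H_F \;=\; \hat{\nabla}_{e_n}H_F \;=\; \sum_{\alpha=1}^{n-1}\hat h_{\alpha\alpha,n}+\hat h_{nn,n},
\]
and split the task into (i) summing the already-computed $\hat h_{\alpha\alpha,n}$ from \eqref{equ:haan}, and (ii) extracting $\hat h_{nn,n}$ in a compatible form. For step (ii), the natural device is the Codazzi equation \eqref{Codazzi}: taking $k=n,\, j=\alpha,\, i=\alpha$ gives a relation of the form $\hat{\nabla}_n \hat h_{\alpha\alpha}+\hat h_\alpha^{\ell}A_{\ell n\alpha}=\hat{\nabla}_\alpha \hat h_{\alpha n}+\hat h_n^{\ell}A_{\ell\alpha\alpha}$, where $\hat{\nabla}_\alpha \hat h_{\alpha n}$ can be rewritten using the vanishing of $\hat h_{\alpha n}$ on $\partial\Sigma_t$ and the second fundamental form of the boundary inside $\Sigma_t$. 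A complementary Codazzi choice with $k=\alpha,\, j=n,\, i=n$ yields $\hat h_{nn,\alpha}$-type data that, paired with tangential differentiation of $\hat h_{\alpha n}\equiv 0$ along $\partial\Sigma_t$, expresses $\hat h_{nn,n}$ in terms of products of $\hat h$, $Q$, and the boundary geometry.

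After these substitutions, the sum $\sum_\alpha \hat h_{\alpha\alpha,n}+\hat h_{nn,n}$ becomes a trilinear combination of $\hat h_{ii}$, $Q(\nu_F)(e_i,e_j,e_k)$, and the boundary quantities $\langle E_{n+1},\mu\rangle$, $F(\nu)$, $|\mu_F|_{\hat g}$. I then reduce it using: the total symmetry of $Q$; the contraction identity \eqref{equ:pf-lemma3.3-2} (which gives $A_{psl}+A_{pls}+\hat h_p^q Q_{qls}=0$); and the explicit form of $A$ in \eqref{AA}. The key structural cancellation should be that the $\omega_0$-piece produced by \eqref{equ:haan} is exactly compensated by the $\hat h_n^{\ell}A_{\ell\alpha\alpha}$ contributions from Codazzi, using that the capillary relation $-\omega_0=\langle\nu_F,E_{n+1}\rangle$ together with \eqref{equ:G(muF,Y)=<mu,Y>} ties the factor $\omega_0/(\langle E_{n+1},\mu\rangle F(\nu)|\mu_F|_{\hat g})$ precisely to $G(\nu_F)(D_{e_\alpha}e_\alpha,e_n)$ (see \eqref{equ:pf-rk-3}).

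The main obstacle I expect is bookkeeping: there are many $Q$- and $A$-terms that look different but must cancel pairwise, and one must be careful to use the correct version of Codazzi (with the $A$-correction) rather than the naive Euclidean one, and to keep track of the distinction between the connection on $(\Sigma_t,\hat g)$ and its restriction to $\partial\Sigma_t$. A secondary cleanliness point is choosing the Codazzi contraction order so that the tangential derivatives of $\hat h_{\alpha n}\equiv 0$ cancel instantly rather than producing boundary-shape curvature terms that must later be eliminated. Once the cancellations are organized as in the proof of \eqref{equ:haan} in Remark \ref{rk3.5}, the remaining expression for $\hat{\nabla}_{\mu_F}H_F$ reduces to $0$ with no further hypothesis, so, in particular, Condition \ref{condition} is not needed for this lemma.
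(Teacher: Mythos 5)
Your strategy cannot close, for two reasons: the one term that matters is out of reach of your tools, and, more fundamentally, the statement is not a consequence of the static boundary geometry you propose to use. Writing $\hat{\nabla}_{e_n}H_F=\sum_{\alpha<n}\hat h_{\alpha\alpha,n}+\hat h_{nn,n}$, the identity \eqref{equ:haan} does control $\sum_\alpha\hat h_{\alpha\alpha,n}$, but $\hat h_{nn,n}$ is a free third-order jet of the hypersurface at the boundary: the Codazzi equation \eqref{Codazzi} is an antisymmetrization in two of its three slots, so it only relates $\hat\nabla_n\hat h_{\alpha\beta}$ to $\hat\nabla_\alpha\hat h_{n\beta}$ (and permutations); no choice of indices produces $\hat\nabla_n\hat h_{nn}$. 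That this is not a bookkeeping issue is shown by a counterexample: in the isotropic free-boundary case ($\W=\mathbb{S}^n$, $\omega_0=0$) the surface $\{y=1+z^3\}\cap\{z\geq 0\}$ in $\overline{\mathbb{R}^{3}_+}$ meets $\{z=0\}$ orthogonally, satisfies \eqref{equ:han=0} and \eqref{equ:haan}, yet $H=6z(1+9z^4)^{-3/2}$ gives $\nabla_\mu H=-\partial_zH|_{z=0}=-6\neq 0$. Hence no combination of the capillary condition, Codazzi, the symmetries of $Q$ and $A$, and \eqref{equ:haan} can yield the lemma "with no further hypothesis."

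The missing ingredient is the flow itself. The paper first uses the definition of the speed to write $H_F=\frac{n}{\bar u}-\frac{\hat f}{\langle X,\nu\rangle}$ with $\hat f=fF(\nu)=\langle\partial_tX,\nu\rangle$, disposes of the first term by Lemma \ref{lemma-DU-=0}, and then handles the second via \eqref{equ:DT}, $\langle A_F(\nu)\nabla\hat f,\mu\rangle=\langle S_F\mu,\mu\rangle\langle\partial_tX,\mu\rangle$, which comes from differentiating the preserved boundary condition $\langle\nu_F,E_{n+1}\rangle=-\omega_0$ in time, combined with the boundary identity for $\nabla\langle X,\nu\rangle$ from \cite{Guo-Xia} and the fact that $\langle\partial_tX,E_{n+1}\rangle=0$ on $\partial\Sigma_t$. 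In other words, $\hat\nabla_{\mu_F}H_F=0$ encodes the first-order compatibility condition of the parabolic boundary-value problem: it holds on every time slice of a solution precisely because the capillary condition persists in $t$, but it fails on a generic capillary hypersurface. Any correct proof must therefore invoke the time derivative of the boundary condition, which your argument never does. (Your closing remark that Condition \ref{condition} is not needed is correct, but for this reason rather than the one you give.)
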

\begin{proof}
	Since $f=nF(\nu)^{-1}\left(
	F(\nu)+\omega_0\<\nu,E^F_{n+1}\>-\<X,\nu\>H^F_1
	\right)$, we know
	\begin{align}
		H_F=\frac{n}{\bar{ u}}-\frac{\hat{f}}{\<X,\nu\>},
	\end{align}
	where $\hat{f}=fF(\nu)=\<\partial_tX,\nu\>$. On the boundary of $\Sigma_t$, from Lemma \ref{lemma-DU-=0}, we have $\hat{\nabla}_{\mu_F}\bar{u}
	=0$, then we only need to check $\hat{\nabla}_{\mu_F}\left(\frac{\hat{f}}{\<x,\nu\>}\right)
	=0$.
	
	From Remark \ref{rk3.5} (1), we can take $$\mu_F=A_F(\nu)\mu.$$
	From \cite{Guo-Xia}*{Proposition 4.4}, we know that along anisotropic capillary hypersurface $\Sigma_t$,
	\begin{align*}
		& \left\langle A_F(\nu) \nabla\langle X, \nu\rangle, \mu\right\rangle=q_F\langle X, \nu\rangle,\quad\text{for\ } X\in\partial\Sigma_t, 
	\end{align*}
	where $\nabla$ is  the Levi-Civita connection  on $(\Sigma,g)$, and $q_F$ is defined by \cite{Guo-Xia}*{Eq. (4.1)} as
	$$
	q_F=-\frac{\<\nu,E_{n+1}\>}{\<\mu,E_{n+1}\>}\<S_F\mu,\mu\>.
	$$
	On the other hand, we have the formula \eqref{equ:DT} in Section \ref{sec 6}, that is
	\begin{align}
		\< A_F(\nu)\nabla \hat{f},\mu\>	=\<S_F\mu,\mu\> \<\partial_tX,\mu\>.
	\end{align}
	Then, we can calculate that
	\begin{align*}
		\hat{\nabla}_{\mu_F}\left(\frac{\hat{f}}{\<X,\nu\>}\right)=&{\nabla}_{\mu_F}\left(\frac{\hat{f}}{\<X,\nu\>}\right)=\left\langle {\mu_F},\nabla\left(\frac{\hat{f}}{\<X,\nu\>}\right)\right\rangle
		\\
		=&\<X,\nu\>^{-2}\<A_F(\nu)\mu,\<X,\nu\>\nabla\hat{f}-\hat{f}\nabla\<X,\nu\>\>
		\\
		=&\<X,\nu\>^{-2}\left(
		\<X,\nu\>\<S_F\mu,\mu\>\<\partial_tX,\mu\>-q_F\<X,\nu\>\hat{f}
		\right)
		\\
		=&\<X,\nu\>^{-1}\left\langle \partial_tX,\mu+\frac{\<\nu,E_{n+1}\>}{\<\mu,E_{n+1}\>}\nu\right\rangle \<S_F\mu,\mu\>
		\\
		=&\<X,\nu\>^{-1}{\<\mu,E_{n+1}\>}^{-1}\<\partial_tX,E_{n+1}\>\<S_F\mu,\mu\>
		\\
		=&0,
	\end{align*}
	where we use  the symmetry of $A_F(\nu)$, $\hat{f}=\<\partial_tX,\nu\>$, and  $\partial_tX\in\partial\mathbb{R}^{n+1}_+$ (since the restriction of $X(\cdot, t)$ on $\partial M$ is contained in $\partial\mathbb{R}^{n+1}_+$).
	Then we complete the proof.
\end{proof}

\begin{proposition}\label{prop:H_F<c}
	If $\Sigma_t$ solves flow \eqref{equ:flow}, then the anisotropic mean curvature is uniformly bounded from above, that is,
	$$
	H_F(p, t) \leq \max H_F(\cdot, 0), \quad \forall(p, t) \in M \times\left[0, \infty\right) .
	$$
\end{proposition}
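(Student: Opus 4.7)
The plan is to apply a parabolic maximum principle to the evolution equation \eqref{equ:LHF}, combined with the Neumann-type boundary identity in Lemma~\ref{lem:DuH_F}. Rewriting \eqref{equ:LHF} in the form
\begin{align*}
\partial_t H_F \;=\; \hat{u}\,\hat{\Delta} H_F + \bigl\langle B,\hat{\nabla} H_F\bigr\rangle_{\hat{g}} + H_F^2 - n|\hat{h}|^2_{\hat{g}},
\end{align*}
where the drift vector field $B$ collects the first-order terms $\hat{u}\hat{g}^{ik}A_{pik}\hat{\nabla}^p$ coming from $\mathcal{L}$, the term $G(\nu_F)(T+H_F X-n\omega_0 E^F_{n+1},\,\cdot\,)$, and the extra $2G(\nu_F)(\hat{\nabla}\hat{u},\,\cdot\,)$ on the right-hand side of \eqref{equ:LHF}. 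The reaction term is non-positive by Cauchy--Schwarz, since $|\hat{h}|^2_{\hat{g}} \ge H_F^2/n$. Hence
\begin{align*}
\partial_t H_F - \hat{u}\,\hat{\Delta} H_F - \bigl\langle B,\hat{\nabla} H_F\bigr\rangle_{\hat{g}} \;\le\; 0,
\end{align*}
so $H_F$ is a subsolution of a linear uniformly parabolic equation (recall $\hat{u}\ge c_0>0$ by Proposition~\ref{prop:c1}, modulo the factor $F(\nu)$).

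Next I would carry out the maximum principle argument. Fix $T_0\in(0,T^*)$ and suppose $H_F$ attains its maximum on $M\times[0,T_0]$ at $(p_0,t_0)$ with $t_0>0$. If $p_0\in\operatorname{int}(M)$, the standard parabolic maximum principle gives $\partial_t H_F\ge 0$, $\hat{\nabla} H_F=0$, and $\hat{\Delta} H_F\le 0$ at $(p_0,t_0)$, so the differential inequality forces $H_F(p_0,t_0)\le H_F(p_0,0)\le\max H_F(\cdot,0)$, and the statement follows. The case $p_0\in\partial M$ is where the boundary condition enters. The Hopf boundary-point lemma, applied to the uniformly parabolic operator above at a boundary maximum of $H_F$, yields $\hat{\nabla}_{\mu_F}H_F(p_0,t_0)>0$, where $\mu_F$ is the outward $\hat{g}$-conormal direction characterised in Lemma~\ref{lemma-DU-=0} and Remark~\ref{rk3.5}(1). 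This strictly contradicts Lemma~\ref{lem:DuH_F}, which asserts $\hat{\nabla}_{\mu_F}H_F=0$ on $\partial\Sigma_t$. Therefore the maximum cannot be attained on the boundary at a positive time, and combining both cases gives
\begin{align*}
H_F(p,t)\;\le\;\max_M H_F(\cdot,0),\qquad\forall\,(p,t)\in M\times[0,T^*).
\end{align*}

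The main obstacle is the boundary case: one must ensure that $\mu_F$ really is the outward co-normal relative to the metric $\hat{g}$, so that the Hopf lemma's sign conclusion can be read against the prescribed Neumann-type boundary condition. This is precisely the content of Remark~\ref{rk3.5}(1), which identifies $\mu_F=A_F(\nu)\mu$ and shows $G(\nu_F)(\mu_F,\xi)=0$ for all $\xi\in T(\partial\Sigma)$; once one observes that $\mu$ is the Euclidean outward co-normal and $A_F(\nu)$ is positive definite, $\mu_F$ inherits the correct outward direction. A secondary technical point is to confirm that the linear coefficients in the operator above (notably $\hat{u}$ and $B$) are uniformly bounded on $[0,T^*)$, which follows from the $C^1$ estimates of Section~\ref{sec 3}; this guarantees uniform parabolicity so the Hopf lemma is applicable.
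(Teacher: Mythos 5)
Your proposal is correct and follows exactly the paper's argument: the evolution equation \eqref{equ:LHF} together with $n|\hat{h}|_{\hat{g}}^2\ge H_F^2$ gives $\mathcal{L}H_F\le 0$ modulo gradient terms, and the Neumann condition $\hat{\nabla}_{\mu_F}H_F=0$ from Lemma \ref{lem:DuH_F} rules out a boundary maximum via the Hopf lemma. The paper states this in one line; you have merely filled in the standard details.
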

\begin{proof}
	 By \eqref{equ:LHF} and $n|\hat{h}|_{\hat{g}}^2 \geq H_F^2$, we have
$$
\mathcal{L} H_F \leq 0, \quad \bmod\  \hat{\nabla} H_F,
$$
combining with $\hat{\nabla}_{\mu_F} H_F=0$ on $\partial \Sigma$,  the conclusion follows directly from the maximum principle.
\end{proof}
In the rest of this section, we assume the  initial hypersurface is  strictly convex, then the solution
$\Sigma_t$ are  strictly convex hypersurfaces for $t\in[0,T_0)$ for a constant $T_0>0$.
The lower bound of $H_F$ will be derived with strictly convex solution
$\Sigma_t$ in $t\in[0,T_0)$.
\begin{proposition}\label{prop:H>c}
	 If $\Sigma_t$ solves flow \eqref{equ:flow} with initial hypersurface $\Sigma_0$ being a strictly  convex 
	  anisotropic  $\omega_0$-capillary hypersurface in $\overline{\mathbb{R}_{+}^{n+1}}$, then
$$
H_F(p, t) \geq C, \quad \forall(p, t) \in M \times\left[0, T_0\right],
$$
where the positive constant $C$ depends on the initial hypersurface $\Sigma_0$ and Wulff shape $\W$.
\end{proposition}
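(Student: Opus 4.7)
\noindent The plan is to prove the bound via the auxiliary scalar
\begin{equation*}
    \Phi := n - H_F\bar u,
\end{equation*}
which vanishes identically on any $\omega_0$-capillary Wulff shape and measures deviation from the limiting geometry; indeed \eqref{equ:pfL3.2-4} reads $f = \beta\Phi$ with $\beta := 1+\omega_0 G(\nu_F)(\nu_F, E_{n+1}^F)$. Since the $C^0$ estimate (Proposition \ref{prop:C0}) together with the positive lower bound on $\beta$ established in the proof of Proposition \ref{prop:c1} yields an upper bound $\bar u \leq \bar u_{\max}$ depending only on $\Sigma_0$ and $\W$, any uniform upper bound of the form $\Phi \leq n-\eta$ with $\eta>0$ would force $H_F \geq \eta/\bar u_{\max} > 0$, which is what we need.

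\noindent First I would derive the evolution of $\Phi$. Applying the product rule $\mathcal{L}(uv) = v\mathcal{L}u + u\mathcal{L}v - 2\hat u\, G(\nu_F)(\hat\nabla u, \hat\nabla v)$ to $u=H_F$ and $v=\bar u$ and substituting \eqref{equ:LHF} and \eqref{equ:Lu-}, the quadratic-in-$H_F$ terms combine into $(n-\bar u H_F)(H_F - \bar u|\hat h|_{\hat g}^2)$, while the three gradient contributions collapse via $\hat u = \bar u\beta$ (hence $\hat\nabla\hat u = \beta\hat\nabla\bar u + \bar u\hat\nabla\beta$) into a single drift. The expected identity is
\begin{equation*}
    \mathcal{L}\Phi = -\Phi\bigl(H_F - \bar u\,|\hat h|_{\hat g}^2\bigr) + 2\bar u\, G(\nu_F)\bigl(\hat\nabla\Phi, \hat\nabla\beta\bigr).
\end{equation*}
On $\partial\Sigma_t$, combining $\hat\nabla_{\mu_F}\bar u = 0$ from Lemma \ref{lemma-DU-=0} with $\hat\nabla_{\mu_F}H_F = 0$ from Lemma \ref{lem:DuH_F} gives $\hat\nabla_{\mu_F}\Phi = 0$, so $\Phi$ satisfies a homogeneous Neumann-type boundary condition.

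\noindent The core argument is a scalar maximum principle. Let $(p_0,t_0)$ realize $\max \Phi$ on $M\times[0,T_0]$. If $t_0=0$, then $\Phi\le\Phi(\cdot,0)_{\max} \le n-c_0$ for some $c_0>0$, because strict convexity and compactness of $\Sigma_0$ force $H_F(\cdot,0)\bar u(\cdot,0)\ge c_0$. If instead $t_0>0$, the Neumann condition (handled by local reflection near boundary points) ensures $\hat\nabla\Phi(p_0,t_0)=0$ and $\mathcal{L}\Phi(p_0,t_0)\ge 0$, so the drift term drops out; should $\Phi(p_0,t_0)>0$, this forces $H_F \le \bar u|\hat h|_{\hat g}^2$ at $(p_0,t_0)$. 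Since $\Sigma_{t_0}$ is strictly convex on $[0,T_0]$, each $\kappa_i^F>0$, whence $|\hat h|_{\hat g}^2 = \sum_i (\kappa_i^F)^2 \le \bigl(\sum_i \kappa_i^F\bigr)^2 = H_F^2$. Inserting this gives $H_F \le \bar u H_F^2$, i.e.\ $\bar u H_F\ge 1$, so $\Phi(p_0,t_0)\le n-1$. Collecting cases, $\Phi \le \max(\Phi(\cdot,0)_{\max}, n-1) < n$ everywhere on $M\times[0,T_0]$, which by the first paragraph yields the desired lower bound on $H_F$ depending only on $\Sigma_0$ and $\W$.

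\noindent The main obstacle I anticipate is the boundary analysis: although the interior maximum principle is straightforward, when the spatial maximum is realized on $\partial M$ one must leverage the homogeneous Neumann condition $\hat\nabla_{\mu_F}\Phi = 0$ to conclude $\mathcal{L}\Phi(p_0,t_0)\ge 0$, for instance via a local reflection argument or a Hopf-type boundary point lemma. A secondary bookkeeping obstacle is the clean cancellation of $G(\nu_F)(\hat\nabla H_F,\hat\nabla\hat u)$, $G(\nu_F)(\hat\nabla H_F,\hat\nabla\bar u)$, and $G(\nu_F)(\hat\nabla\bar u,\hat\nabla\beta)$ into the single drift $2\bar u\, G(\nu_F)(\hat\nabla\Phi,\hat\nabla\beta)$; this rests crucially on the identity $\hat u = \bar u\beta$.
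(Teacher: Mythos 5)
Your proposal is correct and follows essentially the same route as the paper: the paper applies the minimum principle to $P=H_F\bar u$ (with the boundary condition $\hat\nabla_{\mu_F}P=0$ from Lemmas \ref{lemma-DU-=0} and \ref{lem:DuH_F}, the evolution identity $\mathcal{L}P=\Phi(H_F-\bar u|\hat h|_{\hat g}^2)+2\bar u\,\hat g(\hat\nabla\beta,\hat\nabla P)$, and convexity to force $\bar u H_F\geq c$ at an interior critical point), which is exactly your maximum principle for $\Phi=n-P$ up to a sign. The only cosmetic differences are that the paper rules out boundary extrema via the Hopf lemma rather than reflection and uses the slightly weaker bound $|\hat h|_{\hat g}^2\leq nH_F^2$; both variants close the argument identically.
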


 \begin{proof}
 Define the function
$$
P:=H_F \bar{u} .
$$
 By Lemma \ref{lem:DuH_F} and \ref{lemma-DU-=0}, we see that
\begin{align}
	\hat{\nabla}_{\mu_F} P=0, \quad \text { on } \partial M .\label{equ:w-x-3.20}
\end{align}
Using \eqref{equ:LHF} and \eqref{equ:Lu-}, we have
\begin{align}
	\mathcal{L} P & =\bar{u} \mathcal{L} H_F+H_F \mathcal{L} \bar{u}-2 \hat{u}G(\nu_{{F}})(\hat{\nabla} \bar{u}, \hat{\nabla} H_F)\nonumber \\
	& =\bar{u}^2 H_F|\hat{h}|_{\hat{g}}^2-\bar{u} H_F^2-n \bar{u}|\hat{h}|_{\hat{g}}^2+n H_F+2 \bar{u}\hat{g}\left(\hat{\nabla}(1+\omega_0 G(\nu_{{F}})(\nu_F, E_{n+1}^F)), \hat{\nabla} P \right).\label{equ:w-x-3.21}
\end{align}
From \eqref{equ:w-x-3.20} and the Hopf Lemma, if $P$ attains the minimum value at $t=0$, then the conclusion follows directly by combination with the uniform bound of $\bar{u}$ (from $\bar{ u}\leq C|X|$ and $C^0$ estimate). Therefore, we assume that $P$ attains the minimum value at some interior point $p_0 \in \operatorname{int}(M)$ for some $t_0>0$. At $(p_0, t_0)$, we have
$$
\hat{\nabla} P=0, \quad \mathcal{L} P \leq 0 .
$$
Putting it into \eqref{equ:w-x-3.21} yields
\begin{align}\label{equ:w-x3.22}
	\left(\frac{n}{H_F}-\bar{u}\right) \bar{u}|\hat{h}|_{\hat{g}}^2+\bar{u} H_F \geq n .
\end{align}
If $\bar{u} H_F \geq n$ at $p_0$, then we are done. Assume now that $\bar{u} H_F<n$ at $p_0$,  since $\Sigma_t$ is convex for $t\in [0,T_0]$, we have $|\hat{h}|_{\hat{g}}^2 \leq n H_F^2$, putting it into  \eqref{equ:w-x3.22}, we can see
$$
\bar{u} H_F\left(p_0, t\right) \geq c,\quad t\in [0,T_0],
$$
for some positive constant $c$, which only depends on $n$. This also yields the desired estimate.
Then we complete the proof.
 \end{proof}

 \subsection{Reparameterize the flow by the translated Wulff shape $\widetilde{\W}$}\label{subsec 5.2}
 For a given Wulff shape $\W$ (with respect to $F$) and   a constant  $\omega_0 \in(-F(E_{n+1}), F(-E_{n+1}))$, we can define a new Wulff shape $\widetilde{\W}:=\rm \mathcal{T}(\W)\in\mathbb{R}^{n+1}$, where $\mathcal{T}$ is a translation  transformation on $\mathbb{R}^{n+1}$ defined by
 $$\mathcal{T}(x)=x+\omega_0E^F_{n+1},\quad \forall x\in\mathbb{R}^{n+1}.$$
 Since $\omega_0 \in(-F(E_{n+1}), F(-E_{n+1}))$, we have $F^0(O-\omega_0E^{F}_{n+1})=F^0(-\omega_0E^{F}_{n+1})<1$, then the translated Wulff shape $\widetilde{\W}=\{x\in\mathbb{R}^{n+1}:F^0(x-\omega_0E^{F}_{n+1})=1\}$ still encloses the origin $O$ in its interior.
 \begin{figure}[h]
 	\centering
 	\label{fig:2}
 	\begin{tikzpicture}[x=0.75pt,y=0.75pt,yscale=-1,xscale=1]
 		
 		\draw   (96.2,78.6) .. controls (125.2,67.6) and (229,37) .. (260.2,61.6) .. controls (291.4,86.2) and (326.4,138.2) .. (284.2,158.6) .. controls (242,179) and (178,158) .. (135.2,141.6) .. controls (92.4,125.2) and (67.2,89.6) .. (96.2,78.6) -- cycle ;
 		\draw  [dash pattern={on 4.5pt off 4.5pt}]  (220,21) -- (355.2,152.6) ;
 		\draw  [dash pattern={on 4.5pt off 4.5pt}]  (174,45) -- (310.2,180.6) ;
 		\draw    (224,95) -- (241.59,77.41) ;
 		\draw [shift={(243,76)}, rotate = 135] [fill={rgb, 255:red, 0; green, 0; blue, 0 }  ][line width=0.08]  [draw opacity=0] (12,-3) -- (0,0) -- (12,3) -- cycle    ;
 		\draw    (270,71) -- (287.59,53.41) ;
 		\draw [shift={(289,52)}, rotate = 135] [fill={rgb, 255:red, 0; green, 0; blue, 0 }  ][line width=0.08]  [draw opacity=0] (12,-3) -- (0,0) -- (12,3) -- cycle    ;
 		\draw [color={rgb, 255:red, 208; green, 2; blue, 27 }  ,draw opacity=1 ]   (224,95) -- (268.23,71.93) ;
 		\draw [shift={(270,71)}, rotate = 152.45] [fill={rgb, 255:red, 208; green, 2; blue, 27 }  ,fill opacity=1 ][line width=0.08]  [draw opacity=0] (12,-3) -- (0,0) -- (12,3) -- cycle    ;
 		\draw    (232.2,145.6) -- (224.32,96.97) ;
 		\draw [shift={(224,95)}, rotate = 80.79] [fill={rgb, 255:red, 0; green, 0; blue, 0 }  ][line width=0.08]  [draw opacity=0] (12,-3) -- (0,0) -- (12,3) -- cycle    ;
 		\draw    (232.2,145.6) -- (269.1,72.78) ;
 		\draw [shift={(270,71)}, rotate = 116.87] [fill={rgb, 255:red, 0; green, 0; blue, 0 }  ][line width=0.08]  [draw opacity=0] (12,-3) -- (0,0) -- (12,3) -- cycle    ;
 		\draw   (52.6,105.3) .. controls (81.6,94.3) and (185.4,63.7) .. (216.6,88.3) .. controls (247.8,112.9) and (282.8,164.9) .. (240.6,185.3) .. controls (198.4,205.7) and (134.4,184.7) .. (91.6,168.3) .. controls (48.8,151.9) and (23.6,116.3) .. (52.6,105.3) -- cycle ;
 		\draw  [dash pattern={on 4.5pt off 4.5pt}]  (232.2,145.6) -- (290.2,91.6) ;
 		\draw   (232.2,145.6) .. controls (235.47,149.93) and (238.77,149.96) .. (242.1,146.69) -- (269.99,119.31) .. controls (274.74,114.64) and (278.75,113.96) .. (282.02,117.29) .. controls (278.75,113.96) and (279.5,109.96) .. (284.26,105.29)(282.11,107.39) -- (288.11,101.5) .. controls (293.44,98.23) and (293.47,94.93) .. (291.2,91.6) ;
 		
 		\draw (215,110) node [anchor=north west][inner sep=0.75pt]   [align=left] {$z$};
 		\draw (244,93) node [anchor=north west][inner sep=0.75pt]   [align=left] {$\tilde{z}$};
 	\draw (32,91) node [anchor=north west][inner sep=0.75pt]   [align=left] {$\W$};
 	\draw (60,35) node [anchor=north west][inner sep=0.75pt]   [align=left] {$\widetilde{\W}=\W+\omega_0E_{n+1}^F$};
 	\draw (226,70) node [anchor=north west][inner sep=0.75pt]   [align=left] {$x$};
 	\draw (282,40) node [anchor=north west][inner sep=0.75pt]   [align=left] {$x$};
 	\draw (280,65) node [anchor=north west][inner sep=0.75pt]   [align=left] {\color{red}$\omega_0E_{n+1}^F$};
 	\draw (221,146) node [anchor=north west][inner sep=0.75pt]   [align=left] {$O$};
 	\draw (270,120) node [anchor=north west][inner sep=0.75pt]   [align=left] {$\widetilde{F}(x)$};
 	\draw (295,178) node [anchor=north west][inner sep=0.75pt]   [align=left] {$T_z\W=T_x\mathbb{S}^n$};
 	\draw (343,154) node [anchor=north west][inner sep=0.75pt]   [align=left] {$T_{\tilde{z}}\widetilde{\W}=T_x\mathbb{S}^n$};

 \end{tikzpicture}
 \caption{$\W$ and $\widetilde{\W}$} \label{fig:0}
 \end{figure} \\

 For any $z\in\W$, we have $\tilde{z}:=z+\omega_0E_{n+1}^F\in \widetilde{\W}$. If $x$ is the unit outward normal vector of $\W\subset\mathbb{R}^{n+1}$ at point $z$, then $x$ is also a unit outward normal vector of $\widetilde{\W}\subset\mathbb{R}^{n+1}$ at point $\tilde{z}$,  since $Tz\W=T_{\tilde{z}}\widetilde{\W}$ (see Fig. \ref{fig:0}). Then
 the support function $\widetilde{F}$ of $\widetilde{\W}$ satisfies
 \begin{align*}
    \widetilde{F}(x)&=\<\tilde{z},x\>=\<z,x\>+\<x,\omega_0E_{n+1}^F\>
    \\
    &=F(x)+\omega_0\<E_{n+1}^F,x\>, \quad\forall x\in \mathbb{S}^n.
 \end{align*}
  We extend it to a $1$-homogeneous function on $\mathbb{R}^{n+1}$ as
 $$\widetilde{F}(X)=F(X)+\omega_0\<E_{n+1}^F,X\>, \quad\forall X\in \mathbb{R}^{n+1}.$$
 Then $D\widetilde{F}=DF+\omega_0E_{n+1}^F, D^2\widetilde{F}=D^2F$.
 Obviously the translation map does not change the principal curvature radii of $\W$ and $\widetilde{\W}$, i.e., ${\widetilde{A_F}}(x):=D^2\widetilde{F}|_{T_x\mathbb{S}^n}(x)=D^2F|_{T_x\mathbb{S}^n}(x)=A_F(x)$, for $x\in\mathbb{S}^n$.
 We denote $\widetilde{F}^{0}$ as the  dual Minkowski norm of $\widetilde{F}$. The new metric $\widetilde{G}$ and $(0,3)$-tensor $\widetilde{Q}$ with respect to $\widetilde{F}^0$ are respectively constructed by \eqref{equ:G} and \eqref{equ:Q} in Section \ref{subsec 2.3}. The relationships between $G$ and $\widetilde{G}$, $Q$ and $\widetilde{Q}$  can be found in Appendix \ref{Appendix} (Proposition \ref{prop:Q-Q}).

We use a tilde $(\ \widetilde{}\ )$ to indicate the related anisotropic geometric quantities with respect to the new Wulff shape $\widetilde{\W}$ in this subsection.  For  $X\in\Sigma$, denote  anisotropic Gauss map with respect to $\widetilde{\W}$ as: $$\widetilde{\nu_{{F}}}(X)=D\widetilde{F}(\nu(X))=DF(\nu)+\omega_0E_{n+1}^F=\mathcal{T}(\nu_{{F}}(X)).$$
Anisotropic support function with respect to $\widetilde{\W}$ is:
\begin{align}\label{equ:SF-Translation invariant}
	\widetilde{u}(X)=\frac{\<X,\nu\>}{\widetilde{F}(\nu)}=\frac{\<X,\nu\>}{F(\nu)+\omega_0\<E_{n+1}^F,\nu\>}=\bar{ u}(X).
\end{align}
Anisotropic Weingarten matrix with respect to $\widetilde{\W}$ is:
$$\widetilde{S_F}(X)=\widetilde{A_F}\operatorname{d}\nu={A_F}\operatorname{d}\nu=S_F(X),$$
that means the anisotropic principal curvatures (or principal curvature radii) of $\Sigma$ with respect to $\W$  are the same as anisotropic principal curvatures (or principal curvature radii) of $\Sigma$ with respect to $\widetilde{\W}$, i.e.,  $\widetilde{\kappa^F}(X)=\kappa^F(X), \tilde{\tau}(X)=\tau(X)$.

On the boundary of  $\omega_0$-capillary hypersurface, since $$\<\widetilde{\nu_F},-E_{n+1}\>=\<{\nu_F}+\omega_0E_{n+1}^F,-E_{n+1}\>=\<{\nu_F},-E_{n+1}\>-\omega_0=0,$$ we have:
\begin{proposition}
	The anisotropic $\omega_0$-capillary boundary problem \eqref{equ:flow} for Wulff shape $\W$ can be naturally solved based on the results of the corresponding free boundary problem for the translated Wulff shape $\widetilde{\W}$.
\end{proposition}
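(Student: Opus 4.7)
The plan is to verify that, as evolutions of the same family of hypersurfaces $\Sigma_t\subset\overline{\mathbb{R}_+^{n+1}}$, the anisotropic $\omega_0$-capillary flow \eqref{equ:flow} for $(F,\omega_0)$ coincides term-by-term with the anisotropic free-boundary flow for $(\widetilde{F},0)$, so that every a priori estimate, long-time existence, convergence, convexity-preservation, monotonicity, and geometric-inequality statement for one problem transfers verbatim to the other. Since $F^0(-\omega_0 E_{n+1}^F)<1$, the translated Wulff shape $\widetilde{\W}$ still encloses the origin and $\widetilde{F}$ is a Minkowski norm, so the free-boundary problem for $(\widetilde{F},0)$ fits the general framework of Section \ref{sec 2}. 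The verification relies only on the algebraic identities already collected in Subsection \ref{subsec 5.2}: $\widetilde{F}(\cdot)=F(\cdot)+\omega_0\langle E_{n+1}^F,\cdot\rangle$, $\widetilde{\nu_F}=\nu_F+\omega_0 E_{n+1}^F$, $\widetilde{A_F}=A_F$ (whence $\widetilde{H_F}=H_F$), and $\widetilde{u}=\bar u$.

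For the boundary condition, I would compute
\begin{align*}
\langle \widetilde{\nu_F},-E_{n+1}\rangle
=\langle \nu_F,-E_{n+1}\rangle-\omega_0\langle E_{n+1}^F,E_{n+1}\rangle
=\langle \nu_F,-E_{n+1}\rangle-\omega_0,
\end{align*}
so \eqref{equ:w0-capillary} is equivalent to $\langle \widetilde{\nu_F},-E_{n+1}\rangle=0$, the free-boundary condition for $\widetilde{\W}$. For the interior evolution, the scalar normal speed $\langle\partial_tX,\nu\rangle = fF(\nu)$ of \eqref{equ:flow} satisfies
\begin{align*}
fF(\nu)
&= nF(\nu)+n\omega_0\langle\nu,E_{n+1}^F\rangle-\langle X,\nu\rangle H_F \\
&= n\widetilde{F}(\nu)-\langle X,\nu\rangle \widetilde{H_F},
\end{align*}
which is precisely $\widetilde{f}\,\widetilde{F}(\nu)$, the normal speed of the $(\widetilde{F},0)$ flow. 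Because the tangential vector $T$ in \eqref{equ:flow} is determined only by the requirement that the boundary condition persist in time, and the two boundary conditions coincide, the corresponding tangential corrections automatically match.

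The cleanest route, which bypasses any ambiguity about the tangential gauge, is to pass to the scalar radial-graph reformulation \eqref{equ:flow-graph}: the second-order coefficient $a^{ij}$ depends only on $A_F(\nu)=\widetilde{A_F}(\nu)$; the zero-order source $b(\omega_0,\cdot)$ equals its $(\widetilde{F},0)$ analogue $b(0,\cdot)$ by the identity above; and the oblique boundary condition $\langle DF(\nu),\partial_\beta/\rho\rangle=\omega_0$ of \eqref{equ:w0=<>-graph} rewrites, via $D\widetilde{F}=DF+\omega_0 E_{n+1}^F$ together with $\partial_\beta/\rho=-E_{n+1}$ on $\partial\mathbb{S}_+^n$, as $\langle D\widetilde{F}(\nu),\partial_\beta/\rho\rangle=0$. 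I anticipate no genuine obstacle here: this proposition is essentially the packaging of Subsection \ref{subsec 5.2}. Consequently, by Remark \ref{rk1}(4), Condition \ref{condition} translates under $\mathcal{T}$ into the analogous inequality for $\widetilde{Q}$ on $\widetilde{\W}\cap\{x_{n+1}=0\}$, and Theorems \ref{thm:flow}, \ref{thm:convex preservation}, \ref{thm:iso-neq}, and \ref{thm:AF-neq} for $(F,\omega_0)$ all reduce to their $(\widetilde{F},0)$ free-boundary counterparts.
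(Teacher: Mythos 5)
Your proposal is correct and follows essentially the same route as the paper: the statement is proved there by exactly the boundary computation $\langle\widetilde{\nu_F},-E_{n+1}\rangle=\langle\nu_F,-E_{n+1}\rangle-\omega_0=0$ together with the identities $\widetilde{F}=F+\omega_0\langle E_{n+1}^F,\cdot\rangle$, $\widetilde{A_F}=A_F$, $\widetilde{u}=\bar{u}$ collected in Subsection \ref{subsec 5.2}, and the matching of normal speeds is precisely what underlies the paper's passage from \eqref{equ:flow-inv-Gauss} to the free-boundary scalar equation \eqref{equ:flow-inv-Trans-Gauss}.
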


  Let $\Sigma$ be a strictly convex anisotropic  $\omega_0$-capillary hypersurface, we know  $\widetilde{\W}$ is also a strictly convex hypersurface, then $\widetilde{\nu_{{F}}}$ is an everywhere nondegenerate diffeomorphism. We use it to reparameterize $\Sigma$:
 $$X:\widetilde{\W}\rightarrow\Sigma,\quad X(\tilde{z})=X(\widetilde{\nu_F}^{-1}(\tilde{z})),\  \tilde{z}\in \widetilde{\W}.$$
For convenience, denote by $s=\hat{ u}\circ\nu_{{F}}^{-1},\tilde{s}=\tilde{u}\circ\widetilde{\nu_F}^{-1}$, then from \eqref{equ:SF-Translation invariant} we have
  \begin{align}\label{equ:s-trandlation}
  \tilde{s}\circ \mathcal{T}(z)=\frac{s(z)}{1+G(z)(z,E_{n+1}^F)}
  , \quad z\in\W.
  \end{align}
 If we still write $\kappa^F=\kappa^F\circ \nu_{{F}}^{-1},\widetilde{\kappa^F}=\widetilde{\kappa^F}\circ \widetilde{\nu_{{F}}}^{-1},\tau=\tau\circ\nu_{{F}}^{-1}$, and $\tilde{\tau}=\tilde{\tau}\circ\widetilde{\nu_{{F}}}^{-1}$, then
  $$\widetilde{\kappa^F}\circ \mathcal{T}=\kappa^F, \quad\tilde{\tau}\circ \mathcal{T}=\tau.$$
 We denote $\tilde{s}_i=\tilde{\nabla}_{\tilde{e}_i}s,\, \tilde{s}_{ij}=\tilde{\nabla}_{\tilde{e}_i}\tilde{\nabla}_{\tilde{e}_j}s,\, \tilde{Q}_{ijk}=\tilde{Q}(\tilde{z})(\tilde{e}_i,\tilde{e}_j,\tilde{e}_k)$, where $\tilde{\nabla}$ is the covariant derivative on $\widetilde{\W}$ with respect to metric $\tilde{g}=\widetilde{G}(\tilde{z})|_{T_{\tilde{z}}\widetilde{\W}}$, and $\{\tilde{e}_i\}_{i=1}^n$ are standard orthogonal frame of $(\widetilde{\W},\tilde{g},\tilde{\nabla})$.
 From \cite{Xia-2017-convex,Xia13}, the anisotropic principal curvatures at $X(\tilde{z})\in\Sigma$ are the reciprocals of  the eigenvalues of
 \begin{align}\label{equ:tau_ij}
 	\tilde{\tau}_{ij}(\tilde{z}):=\tilde{s}_{ij}-\frac{1}{2}\tilde{Q}_{ijk}\tilde{s}_k+\delta_{ij}\tilde{s}>0, \quad \tilde{z}\in\widetilde{\W}.
 \end{align}
 Let  \begin{align}\label{equ:psi-T-def}
 \Phi([\tilde{\tau}_{ij}])=\phi(\tilde{\tau})=n\frac{\sigma_{n}(\tilde{\tau})}{\sigma_{n-1}(\tilde{\tau})}.
 \end{align}
 Since $\tau(z)=\tilde{\tau}\circ\mathcal{T}(z)$, we have
 \begin{align}\label{equ:psi-T}
 	\Phi([\tilde{\tau}_{ij}(\tilde{z})])=\phi({\tilde{\tau}}(\tilde{z}))=\phi({\tau}({z}))=\frac{1}{H_1^F}(z),
 \end{align} where ${z}=\mathcal{T}^{-1}(\tilde{z})\in{\W}$.

 Denote by $\dot{\Phi}^{k \ell}=\frac{\partial\Phi}{\partial\tilde{\tau}_{k \ell}},\ \ddot{\Phi}^{k \ell, p q}=\frac{\partial^2\Phi}{\partial\tilde{\tau}_{k \ell}\partial\tilde{\tau}_{pq}},\dot{\phi}^k=\frac{\partial \phi}{\partial \tilde{\tau}_k}$, and $\ddot{\phi}^{k\ell}=\frac{\partial^2 \phi}{\partial \tilde{\tau}_{k}\partial\tilde{\tau}_{\ell}}.$
We review the following basic properties \cite{Andrews,Wei-Xiong2022} on the symmetric functions $\Phi$ and $\phi$, which will be used  later.
  \begin{lemma} \label{lem:w-x-lem-5.3}
 	The function
 	$
 	\phi(\tilde{\tau})
 	$ defined by \eqref{equ:psi-T-def}
 	is concave with respect to $\tilde{\tau}$, and satisfies
 	\begin{align}\label{equ:pf-5.6}
 		\dot{\Phi}^{k \ell} \tilde{g}_{k \ell}=\sum_k \dot{\phi}^k \geq 1,
 	\end{align}
 	and
 	$$
 	\left(\ddot{\phi}^{i j}\right) \leq 0, \quad\left(\dot{\phi}^k(\tilde{\tau})-\dot{\phi}^{\ell}(\tilde{\tau})\right)\left(\tilde{\tau}_k-\tilde{\tau}_{\ell}\right) \leq 0, \quad \forall k \neq \ell.
 	$$
 \end{lemma}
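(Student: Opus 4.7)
The plan is to reduce everything to the observation that $\phi$ is, up to a factor, the harmonic mean of the principal radii, and then to read off each of the four assertions from Cauchy--Schwarz.

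First I would rewrite
$$
\phi(\tilde\tau)\;=\;n\,\frac{\sigma_n(\tilde\tau)}{\sigma_{n-1}(\tilde\tau)}\;=\;\frac{n}{\sum_{i=1}^n \tilde\tau_i^{-1}},
$$
which is valid on the positive cone $\{\tilde\tau_i>0\}$ relevant here (the strict convexity of $\Sigma$ guarantees $\tilde\tau_i>0$). Setting $g:=\sum_i \tilde\tau_i^{-1}$, a direct differentiation gives $\dot\phi^k=n\tilde\tau_k^{-2}/g^2>0$. The identity $\dot\Phi^{k\ell}\tilde g_{k\ell}=\sum_k\dot\phi^k$ is then standard symmetric-function calculus: since $\Phi$ depends only on the eigenvalues of $(\tilde\tau_{ij})$ with respect to $\tilde g$, at a point where $\tilde\tau_{ij}$ is diagonal in an orthonormal frame of $\tilde g$ one has $\dot\Phi^{k\ell}=\dot\phi^k\delta^{k\ell}$, so tracing against $\tilde g_{k\ell}=\delta_{k\ell}$ yields $\sum_k \dot\phi^k$ (cf. the references in Andrews).

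Next I would establish the three inequalities. The bound $\sum_k \dot\phi^k\ge 1$ becomes $n\sum_k \tilde\tau_k^{-2}\ge g^2=(\sum_k \tilde\tau_k^{-1})^2$, which is Cauchy--Schwarz applied to $(1,\dots,1)$ and $(\tilde\tau_1^{-1},\dots,\tilde\tau_n^{-1})$, with equality precisely when all $\tilde\tau_k$ are equal. The ordering inequality follows from the factorization
$$
\dot\phi^k-\dot\phi^\ell\;=\;\frac{n}{g^2}\bigl(\tilde\tau_k^{-2}-\tilde\tau_\ell^{-2}\bigr)\;=\;-\,\frac{n(\tilde\tau_k+\tilde\tau_\ell)}{g^2\,\tilde\tau_k^{2}\tilde\tau_\ell^{2}}\,(\tilde\tau_k-\tilde\tau_\ell),
$$
so $(\dot\phi^k-\dot\phi^\ell)(\tilde\tau_k-\tilde\tau_\ell)\le 0$ on the positive cone. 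Finally, a second differentiation gives
$$
\sum_{k,\ell}\ddot\phi^{k\ell}v_kv_\ell\;=\;\frac{2n}{g^3}\Bigl[\Bigl(\sum_k v_k\tilde\tau_k^{-2}\Bigr)^2-g\sum_k v_k^2\tilde\tau_k^{-3}\Bigr],
$$
so concavity reduces to
$$
\Bigl(\sum_k v_k\tilde\tau_k^{-2}\Bigr)^2\;\le\;\Bigl(\sum_k \tilde\tau_k^{-1}\Bigr)\Bigl(\sum_k v_k^2\tilde\tau_k^{-3}\Bigr),
$$
which is Cauchy--Schwarz with $a_k=\tilde\tau_k^{-1/2}$, $b_k=v_k\tilde\tau_k^{-3/2}$.

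There is really no obstacle in this lemma beyond noticing the harmonic-mean identity; each assertion then becomes a one-line Cauchy--Schwarz or an elementary factorization. The only mildly subtle point is the passage between the eigenvalue derivatives $\dot\phi^k$ of $\phi$ and the matrix derivatives $\dot\Phi^{k\ell}$ of $\Phi$, which is routine and already implicit in the formulation of the lemma.
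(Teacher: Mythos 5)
Your proof is correct. The paper does not prove this lemma at all --- it simply cites Andrews and Wei--Xiong for these ``basic properties'' --- and your argument is exactly the standard one behind those citations: writing $\phi=n\big(\sum_k\tilde\tau_k^{-1}\big)^{-1}$ as a harmonic mean and reducing each of the three inequalities to Cauchy--Schwarz or an elementary factorization, with the passage from $\dot\phi^k$ to $\dot\Phi^{k\ell}$ handled by diagonalization (which is the content of the adjacent Lemma \ref{lem:w-x-lem-5.2}). All the computations check out, including the second-derivative formula $\sum_{k,\ell}\ddot\phi^{k\ell}v_kv_\ell=\frac{2n}{g^3}\big[\big(\sum_k v_k\tilde\tau_k^{-2}\big)^2-g\sum_k v_k^2\tilde\tau_k^{-3}\big]$, so nothing further is needed.
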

\begin{lemma}\label{lem:w-x-lem-5.2}
	 Let $\operatorname{Sym}(n)$ be the set of $n \times n$ symmetric matrices, and let $\Phi(A)=\phi(\tau)$, where $\phi$ is a smooth symmetric function and $A$ is in $\operatorname{Sym}(n)$ with eigenvalues $\tau=\left(\tau_1, \cdots, \tau_n\right)$. If $A$ is diagonal, the first derivatives of $\Phi(A)$ and $\phi(\tau)$ with respect to their arguments are related by the following equation
 $$
 \dot{\Phi}^{i j}(A)=\dot{\phi}^i(\tau) \delta_i^j.
 $$
 The second derivative of $\Phi$ in the direction $B \in \operatorname{Sym}(n)$ satisfies
 \begin{align}\label{equ:pf-5.5}
 	\ddot{\Phi}^{i j, k \ell}(A) B_{i j} B_{k \ell}=\ddot{\phi}^{k \ell}(\tau) B_{k k} B_{\ell \ell}+2 \sum_{k<\ell} \frac{\dot{\phi}^k(\tau)-\dot{\phi}^{\ell}(\tau)}{\tau_k-\tau_{\ell}}\left(B_{k \ell}\right)^2 .
 \end{align}
 This formula makes sense as a limit in the case of any repeated values of $\tau_k$.
\end{lemma}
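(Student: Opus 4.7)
The plan is to exploit the $O(n)$-invariance $\Phi(OAO^T)=\phi(\tau)$ for every orthogonal $O$, and then reduce both assertions to first- and second-order Rayleigh--Schr\"odinger perturbation theory for the eigenvalues of a symmetric matrix. Throughout, I would evaluate at the fixed diagonal $A=\mathrm{diag}(\tau_1,\dots,\tau_n)$ and differentiate along the straight line $A+tB$ (or along $A+sE^{(ij)}$, where $E^{(ij)}$ has $1$ at positions $(i,j)$ and $(j,i)$ when $i\neq j$, and $1$ at $(i,i)$ when $i=j$).

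For the first-derivative formula, along $s\mapsto A+sE^{(ii)}$ the eigenvalues shift simply as $\tau_k(s)=\tau_k+s\delta_{ki}$, giving $\dot{\Phi}^{ii}(A)=\dot{\phi}^i(\tau)$. Along $s\mapsto A+sE^{(ij)}$ with $i\neq j$, standard perturbation theory yields $\tau_k(s)=\tau_k+O(s^2)$ for every $k$, so $\dot{\Phi}^{ij}(A)=0$. Combining these gives $\dot{\Phi}^{ij}(A)=\dot{\phi}^i(\tau)\delta^j_i$.

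For the second-derivative formula, I would compute $\frac{d^2}{dt^2}\big|_{t=0}\Phi(A+tB)$ in two ways. Direct differentiation of $\Phi$ returns $\ddot{\Phi}^{ij,k\ell}(A)B_{ij}B_{k\ell}$. On the other hand, when the $\tau_i$ are pairwise distinct, the eigenvalue expansion
\[
\tau_i(t)=\tau_i+tB_{ii}+t^2\sum_{j\neq i}\frac{B_{ij}^2}{\tau_i-\tau_j}+O(t^3)
\]
combined with the chain rule gives
\[
\frac{d^2}{dt^2}\bigg|_{t=0}\!\phi(\tau(t))=\sum_{k,\ell}\ddot{\phi}^{k\ell}(\tau)B_{kk}B_{\ell\ell}+2\sum_i\dot{\phi}^i(\tau)\sum_{j\neq i}\frac{B_{ij}^2}{\tau_i-\tau_j}.
\]
Pairing the terms $(i,j)$ with $(j,i)$ in the off-diagonal sum and using $B_{ij}=B_{ji}$ converts the last sum into $2\sum_{k<\ell}\frac{\dot{\phi}^k(\tau)-\dot{\phi}^\ell(\tau)}{\tau_k-\tau_\ell}B_{k\ell}^2$, which is precisely \eqref{equ:pf-5.5}.

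The main obstacle is that the quadratic eigenvalue expansion breaks down at coincident eigenvalues, so a priori the identity only holds on the open dense set of matrices with simple spectrum. To extend to degenerate $\tau$, I would argue by continuity: the symmetry of $\phi$ forces $\dot{\phi}^k(\tau)=\dot{\phi}^\ell(\tau)$ whenever $\tau_k=\tau_\ell$, so L'H\^opital yields $\lim_{\tau_\ell\to\tau_k}\frac{\dot{\phi}^k-\dot{\phi}^\ell}{\tau_k-\tau_\ell}=\ddot{\phi}^{kk}(\tau)-\ddot{\phi}^{k\ell}(\tau)$, and both sides of \eqref{equ:pf-5.5} extend continuously in $A$. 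Density of the simple-spectrum matrices then finishes the proof, which is the classical argument of Andrews.
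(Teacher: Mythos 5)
Your proof is correct and is precisely the classical perturbation-theoretic argument that the paper relies on: the paper states Lemma \ref{lem:w-x-lem-5.2} without proof, citing \cite{Andrews,Wei-Xiong2022}, and those references establish it exactly as you do (first- and second-order eigenvalue perturbation on the set of matrices with simple spectrum, followed by extension by continuity using the symmetry of $\phi$). The only cosmetic caveat is that your estimate $\tau_k(s)=\tau_k+O(s^2)$ for off-diagonal perturbations itself requires simple spectrum (for a repeated eigenvalue the shift is $\pm s$), but this degenerate case is absorbed by the same density-and-continuity argument you already invoke at the end.
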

 For anisotropic $\omega_0$-capillary hypersurface $\Sigma$,  $\nu_{{F}}(\partial\Sigma)=\{\nu_{{F}}\in\W|\<\nu_{{F}},-E_{n+1}\>=\omega_0\}=\W\cap\{x_{n+1}=-\omega_0\}$ and $\nu_{{F}}(\Sigma)=\W\cap\{x_{n+1}\geq-\omega_0\}$.
 Since  $\<X,E_{n+1}\>=0$ for $X\in\partial\Sigma\subset\mathbb{R}^n$, and   $X(\nu_{{F}}^{-1}(z))=s(z)z+\hat{\nabla}^0 s(z)$ from \cite{Xia13}*{(3.3)},  here $\hat{\nabla}^0$ is the  Levi-Civita connection of $G(z)|_{T_z\W}$ on $\W$, we have $0=\<X(\nu_{{F}}^{-1}(z)),E_{n+1}\>=\<s(z)z+\hat{\nabla}^0 s(z),E_{n+1}\>=-\omega_0 s(z)+\<\hat{\nabla}^0 s(z),E_{n+1}\>$ for $z\in\W\cap\{x_{n+1}=-\omega_0\}$.

  Similar to \cite{Xia-2017-convex,Wei-Xiong2022}, by \eqref{equ:psi-T}, we have $\partial_t s=G(z)(z,\partial_t X)=f(z,t)=n+n\omega_0G(z)(E_{n+1}^F,z)-\frac{ns(z,t)}{\phi(\tau[s(z,t)])}$ for $z\in\W\cap\{x_{n+1}\geq-\omega_0\}$. Thus, when $t\in [0,T_0)$, $\Sigma_t$ are strictly convex, the  flow equation \eqref{equ:flow} can be written as
 \begin{equation}\label{equ:flow-inv-Gauss}
 	\renewcommand{\arraystretch}{1.5}
 	\left\{
 	\begin{array}{ll}
 		\partial_ts(z,t)=n+n\omega_0G(z)(E_{n+1}^F,z)-\frac{ns(z,t)}{\phi(\tau[s(z,t)])}, & \text{in}\ (\W\cap\{x_{n+1}\geq-\omega_0\})\times[0,T_0),
 		\\
 		\<\hat{\nabla}^0 s(z,t),E_{n+1}\>=\omega_0\cdot s(z,t), & \text{on}\ (\W\cap\{x_{n+1}=-\omega_0\})\times [0,T_0),
 		\\
 		s(\cdot,0)=s_0(\cdot) &\text{in}\ \W\cap\{x_{n+1}\geq-\omega_0\},
 	\end{array}	
 	\right.
 \end{equation}
 where
  $s_0$ is the anisotropic support function of initial hypersurface with respect to $\W$.

   Similarly, we have $\widetilde{\nu_{{F}}}(\Sigma)=\widetilde{\W}\cap\{x_{n+1}\geq0\}$ and  $\widetilde{\nu_{{F}}}(\partial\Sigma)=\W\cap\{x_{n+1}=0\}$ by $\<\widetilde{\nu_{{F}}}(X),E_{n+1}\>=0 $ on $\partial\Sigma$.
  Since  $\<X,E_{n+1}\>=0$ for $X\in\partial\Sigma\subset\mathbb{R}^n$, and   $X(\widetilde{\nu_{{F}}}^{-1}(\tilde{z}))=\tilde{s}(\tilde{z})\tilde{z}+\tilde{\nabla} \tilde{s}(\tilde{z})$ from \cite{Xia13}*{(3.3)}, we have $0=\<X(\widetilde{\nu_{{F}}}^{-1}(\tilde{z})),E_{n+1}\>=\<\tilde{s}(\tilde{z}) \tilde{z}+\tilde{\nabla} \tilde{s},E_{n+1}\>=\<\tilde{\nabla} s,E_{n+1}\>$ for $\tilde{z}\in\widetilde{\W}\cap\{x_{n+1}=0\}$.

 By  \eqref{equ:s-trandlation}, \eqref{equ:psi-T}, and the fact that $1+\omega_0G(z)(E_{n+1}^F,z)>0$  is independent of time $t$, we can rewrite flow equation \eqref{equ:flow-inv-Gauss} as
 \begin{equation}\label{equ:flow-inv-Trans-Gauss}
 	\renewcommand{\arraystretch}{1.5}
	\left\{
	\begin{array}{ll}
		\partial_t\tilde{s}(\tilde{z},t)=n-\frac{n\tilde{s}(\tilde{z},t)}{\Phi(\tilde{\tau}_{ij}[\tilde{s}(\tilde{z},t)])}, & \text{in}\ (\widetilde{\W}\cap\overline{\mathbb{R}_+^{n+1}})\times[0,T_0),
		\\
		\<\tilde{\nabla} \tilde{s}(\tilde{z},t),E_{n+1}\>=0, & \text{on}\ (\widetilde{\W}\cap\{x_{n+1}=0\})\times [0,T_0),
		\\
		\tilde{s}(\cdot,0)=\tilde{s}_0(\cdot) &\text{in}\  \widetilde{\W}\cap\overline{\mathbb{R}_+^{n+1}},
	\end{array}	
	\right.
\end{equation}
 where $\tilde{s}_0$ is the anisotropic support function of initial hypersurface with respect to $\widetilde{\W}$. 

In the above, we have used the translated Wulff shape $\widetilde{\W}$ to rewrite flow \eqref{equ:flow}  as scalar parabolic equation \eqref{equ:flow-inv-Trans-Gauss} for $t\in[0,T_0)$, which is similar to the equation (5.8) in \cite{Wei-Xiong2022} without considering the boundary.
This is the key point  of this section.
For equation \eqref{equ:flow-inv-Trans-Gauss}, we can follow the method in \cite{Wei-Xiong2022}*{Proposition 6.2} to estimate  the positive lower bound of anisotropic principal curvature internally.

 Next we need some evolution equation along the flow \eqref{equ:flow-inv-Trans-Gauss} in $\widetilde{\W}\cap\overline{\mathbb{R}_+^{n+1}}$.
 \begin{lemma}\label{lem:evolution-3}
 	For $\tilde{z}\in\widetilde{\W}\cap\overline{\mathbb{R}_+^{n+1}}$, we have the following  evolution equations  of the anisotropic support function $\tilde{s}$ and the tensor $\tilde{\tau}_{i j}[\tilde{s}]$  along the flow \eqref{equ:flow-inv-Trans-Gauss},
\begin{align}\label{equ:evo-s}
	 &\frac{1}{n}\frac{\partial}{\partial t} \tilde{s}-\tilde{s} \Phi^{-2} \dot{\Phi}^{k \ell}\left(\tilde{\nabla}_k \tilde{\nabla}_{\ell} \tilde{s}-\frac{1}{2} \tilde{Q}_{k \ell p} \tilde{\nabla}_p \tilde{s}\right)
 =\left(1-\tilde{s} \Phi^{-1}\right)^2
 +\tilde{s}^2 \Phi^{-2}\left(\sum_k \dot{\phi}^k-1\right) ,
\end{align}
and
\begin{align}
	& \quad \frac{1}{n}\frac{\partial}{\partial t} \tilde{\tau}_{i j}-\tilde{s} \Phi^{-2} \dot{\Phi}^{k \ell}\left(\tilde{\nabla}_k \tilde{\nabla}_{\ell} \tilde{\tau}_{i j}-\frac{1}{2} \tilde{Q}_{k \ell p} \tilde{\nabla}_p \tilde{\tau}_{i j}\right)\nonumber \\
	& =\tilde{s} \Phi^{-2} \ddot{\Phi}^{k \ell, p q} \tilde{\nabla}_i \tilde{\tau}_{k \ell} \tilde{\nabla}_j \tilde{\tau}_{p q}+\tilde{s} \Phi^{-2} \dot{\Phi} * \tilde{Q} * \tilde{Q} * \tilde{\tau}+\tilde{s} \Phi^{-2} \dot{\Phi} * \tilde{\nabla} \tilde{Q} * \tilde{\tau}\nonumber \\
	& \quad-\left(\Phi^{-1}+\tilde{s} \Phi^{-2} \dot{\Phi}^{k \ell} \tilde{g}_{k \ell}\right) \tilde{\tau}_{i j}+\tilde{g}_{i j}\left(1+\tilde{s} \Phi^{-1}\right)\nonumber \\
	& \quad+2 \Phi^{-2} \tilde{\nabla}_i \tilde{s} \tilde{\nabla}_j \Phi-2 \tilde{s} \Phi^{-3} \tilde{\nabla}_i \Phi \tilde{\nabla}_j \Phi ,\label{equ:evo-tau_ij}
\end{align}
where $* $  denotes contraction of tensors using the metric $\tilde{g}$ on $\widetilde{\W}$.
 \end{lemma}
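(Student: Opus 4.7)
The plan is to derive \eqref{equ:evo-s} by direct substitution exploiting the $1$-homogeneity of $\Phi$, and then to obtain \eqref{equ:evo-tau_ij} by differentiating $\tilde{\tau}_{ij}[\tilde{s}]$ in $t$, expanding via the chain rule, and commuting covariant derivatives on $(\widetilde{\W},\tilde{g})$ using the Gauss equation \eqref{equ:R=} applied to $\widetilde{\W}$.

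For \eqref{equ:evo-s}, the definition \eqref{equ:tau_ij} gives $\tilde{\nabla}_k\tilde{\nabla}_\ell\tilde{s}-\tfrac12\tilde{Q}_{k\ell p}\tilde{\nabla}_p\tilde{s}=\tilde{\tau}_{k\ell}-\tilde{s}\,\tilde{g}_{k\ell}$. Contracting with $\dot{\Phi}^{k\ell}$ and using that $\Phi=n\sigma_n/\sigma_{n-1}$ is degree-$1$ homogeneous in $\tilde{\tau}$ (so $\dot{\Phi}^{k\ell}\tilde{\tau}_{k\ell}=\Phi$ by Euler's identity) together with Lemma \ref{lem:w-x-lem-5.2} yields
$$\dot{\Phi}^{k\ell}\bigl(\tilde{\nabla}_k\tilde{\nabla}_\ell\tilde{s}-\tfrac12\tilde{Q}_{k\ell p}\tilde{\nabla}_p\tilde{s}\bigr)=\Phi-\tilde{s}\sum_k\dot{\phi}^k.$$
Multiplying by $\tilde{s}\Phi^{-2}$ and combining with $\tfrac1n\partial_t\tilde{s}=1-\tilde{s}\Phi^{-1}$ from \eqref{equ:flow-inv-Trans-Gauss}, the left-hand side of \eqref{equ:evo-s} equals $1-2\tilde{s}\Phi^{-1}+\tilde{s}^2\Phi^{-2}\sum_k\dot{\phi}^k$, which regroups as $(1-\tilde{s}\Phi^{-1})^2+\tilde{s}^2\Phi^{-2}(\sum_k\dot{\phi}^k-1)$.

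For \eqref{equ:evo-tau_ij}, since $\tilde{Q}$ and $\tilde{g}$ on $\widetilde{\W}$ are $t$-independent, differentiating \eqref{equ:tau_ij} gives
$$\partial_t\tilde{\tau}_{ij}=\tilde{\nabla}_i\tilde{\nabla}_j(\partial_t\tilde{s})-\tfrac12\tilde{Q}_{ijk}\tilde{\nabla}_k(\partial_t\tilde{s})+\tilde{g}_{ij}\partial_t\tilde{s}.$$
Substituting $\partial_t\tilde{s}=n(1-\tilde{s}\Phi^{-1})$, the principal contribution comes from $n\tilde{s}\Phi^{-2}\tilde{\nabla}_i\tilde{\nabla}_j\Phi=n\tilde{s}\Phi^{-2}(\dot{\Phi}^{k\ell}\tilde{\nabla}_i\tilde{\nabla}_j\tilde{\tau}_{k\ell}+\ddot{\Phi}^{k\ell,pq}\tilde{\nabla}_i\tilde{\tau}_{k\ell}\tilde{\nabla}_j\tilde{\tau}_{pq})$. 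I would then apply the Ricci identity on $(\widetilde{\W},\tilde{g})$ twice to exchange $\dot{\Phi}^{k\ell}\tilde{\nabla}_i\tilde{\nabla}_j\tilde{\tau}_{k\ell}$ for $\dot{\Phi}^{k\ell}\tilde{\nabla}_k\tilde{\nabla}_\ell\tilde{\tau}_{ij}$ modulo Riemann curvature contractions; by the Gauss equation \eqref{equ:R=} applied to $\widetilde{\W}$ (whose anisotropic principal curvatures equal $1$) these become quadratic in $\tilde{Q}$ and aggregate into the schematic term $\tilde{s}\Phi^{-2}\dot{\Phi}\ast\tilde{Q}\ast\tilde{Q}\ast\tilde{\tau}$, while moving $\tilde{\nabla}$'s past $\tilde{Q}_{ijk}$ and using \eqref{equ:Qjikl-syms} produces $\tilde{s}\Phi^{-2}\dot{\Phi}\ast\tilde{\nabla}\tilde{Q}\ast\tilde{\tau}$. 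Finally, substituting $\tilde{\nabla}_i\tilde{\nabla}_j\tilde{s}=\tilde{\tau}_{ij}-\tilde{s}\,\tilde{g}_{ij}+\tfrac12\tilde{Q}_{ijk}\tilde{\nabla}_k\tilde{s}$ from \eqref{equ:tau_ij} inside the $-n\Phi^{-1}\tilde{\nabla}_i\tilde{\nabla}_j\tilde{s}$ piece assembles the coefficient $-(\Phi^{-1}+\tilde{s}\Phi^{-2}\dot{\Phi}^{k\ell}\tilde{g}_{k\ell})\tilde{\tau}_{ij}$, while $\tilde{g}_{ij}\partial_t\tilde{s}$ together with the leftover $-\tilde{s}\tilde{g}_{ij}$ assembles to $\tilde{g}_{ij}(1+\tilde{s}\Phi^{-1})$; the surviving cross-terms are exactly $2\Phi^{-2}\tilde{\nabla}_i\tilde{s}\,\tilde{\nabla}_j\Phi-2\tilde{s}\Phi^{-3}\tilde{\nabla}_i\Phi\,\tilde{\nabla}_j\Phi$.

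The main obstacle is the bookkeeping when commuting $\tilde{\nabla}_i\tilde{\nabla}_j\tilde{\tau}_{k\ell}$ into $\tilde{\nabla}_k\tilde{\nabla}_\ell\tilde{\tau}_{ij}$: each commutator spawns a Riemann-curvature factor that, via the Gauss equation for $\widetilde{\W}$, becomes a product of two $\tilde{Q}$'s contracted with $\tilde{\tau}$, while the $\tfrac12\tilde{Q}_{ijk}$ coefficient in the definition of $\tilde{\tau}$ contributes $\tilde{\nabla}\tilde{Q}$-terms under further differentiation. Packaging them into the schematic expressions $\dot{\Phi}\ast\tilde{Q}\ast\tilde{Q}\ast\tilde{\tau}$ and $\dot{\Phi}\ast\tilde{\nabla}\tilde{Q}\ast\tilde{\tau}$ is purely combinatorial but must be done without losing precision, since these terms are retained and only later controlled via boundedness of $\tilde{Q}$ and $\tilde{\nabla}\tilde{Q}$ on the compact translated Wulff shape in the subsequent maximum-principle argument for the positive lower bound of the anisotropic principal curvatures.
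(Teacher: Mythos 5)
Your proposal is correct and follows essentially the same route as the paper, whose proof simply defers to the computations of Wei--Xiong (Lemmas 5.4 and 5.6 there): Euler's identity for the degree-one homogeneous $\Phi$ gives \eqref{equ:evo-s}, and differentiating \eqref{equ:tau_ij} in $t$ plus a Simons-type commutation of $\dot{\Phi}^{k\ell}\tilde{\nabla}_i\tilde{\nabla}_j\tilde{\tau}_{k\ell}$ into $\dot{\Phi}^{k\ell}\tilde{\nabla}_k\tilde{\nabla}_\ell\tilde{\tau}_{ij}$ via the Ricci identity and the Gauss equation \eqref{equ:R=} gives \eqref{equ:evo-tau_ij}. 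One bookkeeping caution: the Gauss equation of $\widetilde{\W}$ contains the constant-curvature part $\tilde{g}_{ik}\tilde{g}_{j\ell}-\tilde{g}_{i\ell}\tilde{g}_{jk}$ in addition to the $\tilde{Q}*\tilde{Q}$ part, and it is this part of the commutator (contracted with $\dot{\Phi}^{k\ell}$ and using Euler's identity) that supplies the terms $\Phi\tilde{g}_{ij}-\dot{\Phi}^{k\ell}\tilde{g}_{k\ell}\tilde{\tau}_{ij}$, rather than the substitution of \eqref{equ:tau_ij} into $-\Phi^{-1}\tilde{\nabla}_i\tilde{\nabla}_j\tilde{s}$ as your final assembly suggests.
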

 \begin{proof}
 The proof of these equations are same as \cite{Wei-Xiong2022}, more details can be found in \cite{Wei-Xiong2022}*{Lemma 5.4 and 5.6}, respectively.
\end{proof}

 \begin{lemma}
 	\label{lemma-Ds=0}
 	For  $\tilde{z}\in\partial (\widetilde{\W}\cap\overline{\mathbb{R}_+^{n+1}}) $, we take standard orthogonal frame $\{\tilde{e}_i\}_{i=1}^n\in T_{\tilde{z}}\widetilde{\W}$ with respect to $\tilde{g}$, which satisfies $\tilde{e}_{\alpha}\in T_{\tilde{z}}(\partial (\widetilde{\W}\cap\overline{\mathbb{R}_+^{n+1}})) $ for $\alpha=1,\cdots,n-1$, and $\tilde{g}(\tilde{e}_{\alpha},\tilde{e}_n)=0$ for any $\tilde{e}_{\alpha}$. Then
 	along flow \eqref{equ:flow-inv-Trans-Gauss}, we have
 	\begin{align}\label{equ:Dns}
 		\tilde{\nabla}_{\tilde{e}_n}\tilde{s}=0, \quad\text{on\ } \partial (\widetilde{\W}\cap\overline{\mathbb{R}_+^{n+1}}),
 	\end{align}
 	\begin{align}\label{equ:Dnpsi}
 		\tilde{\nabla}_{\tilde{e}_n}{\Phi}=0, \quad\text{on\ } \partial (\widetilde{\W}\cap\overline{\mathbb{R}_+^{n+1}}),
 	\end{align}
 	and
 		\begin{align}
 		\tilde{\nabla}_{\tilde{e}_n}\tilde{\tau}_{\alpha\alpha}=\sum_{\gamma=1}^{n-1}\tilde{Q}(\tilde{z})(\tilde{e}_{\alpha},\tilde{e}_{\gamma},\tilde{e}_{n})(\tilde{\tau}_{\alpha\gamma}-\tilde{\tau}_{nn}\delta_{\alpha\gamma}), \quad\text{on\ } \partial (\widetilde{\W}\cap\overline{\mathbb{R}_+^{n+1}}).\label{equ:taan}
 	\end{align}
 \end{lemma}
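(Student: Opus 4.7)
The plan is to verify the three identities in succession, each using the previous one.

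\textbf{Proof of \eqref{equ:Dns}.} The boundary condition in \eqref{equ:flow-inv-Trans-Gauss} reads $\langle \tilde{\nabla}\tilde{s},E_{n+1}\rangle=0$ on $\widetilde{\W}\cap\{x_{n+1}=0\}$. Since $\tilde{\nabla}\tilde{s}\in T_{\tilde{z}}\widetilde{\W}$ is (Euclidean) perpendicular to $E_{n+1}$, it lies in $T_{\tilde{z}}\widetilde{\W}\cap E_{n+1}^{\perp}=T_{\tilde{z}}\bigl(\widetilde{\W}\cap\{x_{n+1}=0\}\bigr)=\mathrm{span}\{\tilde{e}_1,\dots,\tilde{e}_{n-1}\}$, where the frame was chosen precisely so that $\tilde{e}_\alpha\in T_{\tilde{z}}(\partial(\widetilde{\W}\cap\overline{\mathbb{R}_+^{n+1}}))$ for $\alpha\le n-1$ and $\tilde{g}(\tilde{e}_\alpha,\tilde{e}_n)=0$. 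Consequently $\tilde{\nabla}_{\tilde{e}_n}\tilde{s}=\tilde{g}(\tilde{\nabla}\tilde{s},\tilde{e}_n)=0$.

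\textbf{Proof of \eqref{equ:Dnpsi}.} Since \eqref{equ:Dns} holds on the boundary for \emph{every} $t\in[0,T_0)$, commuting the time and spatial derivatives gives $\tilde{\nabla}_{\tilde{e}_n}(\partial_t\tilde{s})=\partial_t(\tilde{\nabla}_{\tilde{e}_n}\tilde{s})=0$. Applying $\tilde{\nabla}_{\tilde{e}_n}$ to the first line of \eqref{equ:flow-inv-Trans-Gauss} and invoking \eqref{equ:Dns} once more,
\begin{align*}
0=\tilde{\nabla}_{\tilde{e}_n}\!\Bigl(n-\tfrac{n\tilde{s}}{\Phi}\Bigr)=-\frac{n\,\tilde{\nabla}_{\tilde{e}_n}\tilde{s}}{\Phi}+\frac{n\tilde{s}\,\tilde{\nabla}_{\tilde{e}_n}\Phi}{\Phi^{2}}=\frac{n\tilde{s}\,\tilde{\nabla}_{\tilde{e}_n}\Phi}{\Phi^{2}}.
\end{align*}
As $\tilde{s}>0$ and $\Phi>0$, this forces $\tilde{\nabla}_{\tilde{e}_n}\Phi=0$.

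\textbf{Proof of \eqref{equ:taan}.} Differentiating the defining formula \eqref{equ:tau_ij} (with $i=j=\alpha$ in an orthonormal frame) along $\tilde{e}_n$ yields
\begin{align*}
\tilde{\nabla}_{\tilde{e}_n}\tilde{\tau}_{\alpha\alpha}=\tilde{\nabla}_n\tilde{\nabla}_\alpha\tilde{\nabla}_\alpha\tilde{s}+\tilde{\nabla}_n\tilde{s}-\tfrac{1}{2}(\tilde{\nabla}_n\tilde{Q}_{\alpha\alpha k})\tilde{\nabla}_k\tilde{s}-\tfrac{1}{2}\tilde{Q}_{\alpha\alpha k}\tilde{\nabla}_n\tilde{\nabla}_k\tilde{s}.
\end{align*}
On the boundary I intend to use three ingredients: (i) $\tilde{\nabla}_{\tilde{e}_n}\tilde{s}=0$ holds \emph{identically} on $\partial(\widetilde{\W}\cap\overline{\mathbb{R}_+^{n+1}})$, so all of its tangential derivatives in directions $\tilde{e}_\gamma$ ($\gamma\le n-1$) vanish there; (ii) the Hessian of $\tilde{s}$ is symmetric, permitting the commutation $\tilde{\nabla}_n\tilde{\nabla}_\alpha\tilde{s}=\tilde{\nabla}_\alpha\tilde{\nabla}_n\tilde{s}$; (iii) the Ricci identity on $(\widetilde{\W},\tilde{g})$ together with the Gauss equation analogous to \eqref{equ:R=} converts the third-order term $\tilde{\nabla}_n\tilde{\nabla}_\alpha\tilde{\nabla}_\alpha\tilde{s}$ into Hessian terms modulo $\tilde{Q}$-curvature corrections, which in turn rewrite as $\tilde{\tau}$-terms via \eqref{equ:tau_ij}. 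Choosing the tangential frame so that $\tilde{\tau}_{ij}$ is diagonal at the point, and applying the Codazzi-type identity for $\tilde{\tau}_{ij}$ on $\widetilde{\W}$ (the dual of \eqref{Codazzi} obtained by pulling back through $\widetilde{\nu_F}$), all surviving $\tilde{Q}$-contractions should collect precisely into $\sum_{\gamma=1}^{n-1}\tilde{Q}(\tilde{z})(\tilde{e}_\alpha,\tilde{e}_\gamma,\tilde{e}_n)(\tilde{\tau}_{\alpha\gamma}-\tilde{\tau}_{nn}\delta_{\alpha\gamma})$, which is precisely \eqref{equ:taan}.

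The main obstacle is the third step: faithfully tracking every $\tilde{Q}$-term produced when third-order covariant derivatives are commuted via Ricci, when the $-\tfrac{1}{2}\tilde{Q}_{\alpha\alpha k}$-contraction is expanded, and when the boundary second-fundamental-form contributions (Christoffel corrections coming from $\tilde{\nabla}_{\tilde{e}_\alpha}\tilde{e}_n$) are handled, then verifying that every term not appearing on the right of \eqref{equ:taan} either vanishes owing to \eqref{equ:Dns}--\eqref{equ:Dnpsi} or is absorbed by the dual Codazzi identity. This computation is the intrinsic, inverse-Gauss-map counterpart of the boundary computation \eqref{equ:haan}, but performed purely on $(\widetilde{\W},\tilde{g})$ in terms of the support function $\tilde{s}$ rather than in the $X$-parametrization of $\Sigma$.
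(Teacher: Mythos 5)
Your arguments for \eqref{equ:Dns} and \eqref{equ:Dnpsi} are correct. For \eqref{equ:Dns} you invoke the boundary condition $\<\tilde\nabla\tilde s,E_{n+1}\>=0$ of \eqref{equ:flow-inv-Trans-Gauss} directly, which is slightly more economical than the paper's route (the paper recomputes $\tilde\nabla_{\tilde e_n}\tilde s=\tilde G(\tilde z)(X,\tilde e_n)$ from $X=\tilde s\tilde z+\tilde\nabla\tilde s$ and the duality relation between $D^2\widetilde F^0$ and $D^2\widetilde F$); for \eqref{equ:Dnpsi} your argument is identical to the paper's.

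The proof of \eqref{equ:taan}, however, is not a proof but a programme, and as written it has a genuine gap. You propose to differentiate \eqref{equ:tau_ij} in the direction $\tilde e_n$ and then "track every $\tilde Q$-term," but you never establish the key boundary identity $\tilde\tau_{\alpha n}=0$ on $\partial(\widetilde\W\cap\overline{\mathbb{R}^{n+1}_+})$, which is what actually drives the computation. The paper obtains it from $\tilde e_\alpha(X)=\tilde\tau_{\alpha j}\tilde e_j$ together with $\<X,E_{n+1}\>=0$ and $\<\tilde e_n,E_{n+1}\>\neq 0$, and then derives \eqref{equ:taan} by differentiating $\tilde\tau(\tilde e_\alpha,\tilde e_n)=0$ \emph{tangentially}, converting $\tilde\tau_{\alpha n,\beta}$ into $\tilde\tau_{\alpha\beta,n}$ via the Ricci identity, \eqref{equ:Qjikl-syms} and \eqref{equ:R=}, and finally showing that the residual connection term $\tilde G(\tilde z)(\tilde e_\alpha\tilde e_\gamma(\tilde z),\tilde e_n)$ vanishes by an explicit computation analogous to \eqref{equ:pfLem-Dns-2}. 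Your ingredient (i) — that tangential derivatives of the scalar $\tilde g(\tilde\nabla\tilde s,\tilde e_n)$ vanish — only yields $\tilde s_{;\gamma n}=-\tilde g(\tilde\nabla\tilde s,\tilde\nabla_{\tilde e_\gamma}\tilde e_n)$, which is not the same as $\tilde\tau_{\gamma n}=0$ until the connection coefficients $\tilde\nabla_{\tilde e_\gamma}\tilde e_n$ are computed; these are exactly the "Christoffel corrections" you defer, and they do not vanish term by term — their cancellation against the $\tilde Q$-contractions is the whole content of \eqref{equ:pf-lemma-taan-1}–\eqref{equ:pf-lemma-taan-6}. Since \eqref{equ:taan} is the identity that the Hopf-lemma boundary argument in the proof of Theorem \ref{thm:convex preservation} rests on, leaving it at the level of "all surviving contractions should collect into the right-hand side" is not acceptable; the computation must be carried out.
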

 \begin{proof}  
 	By the proof of \cite{Xia13}*{Proposition 3.2}, we have \begin{align}\label{equ:X=siei+sz}
 		X({\tilde{z}})=\tilde{\nabla}_{\tilde{e}_i}\tilde{s}\cdot \tilde{e}_i({\tilde{z}})+\tilde{s}(\tilde{z})\cdot \tilde{z},
 	\end{align} and \begin{align}\label{equ:e_ie_j(z)}
 	\tilde{e}_i(\tilde{z})=\tilde{e}_i,\ \tilde{e}_i(\tilde{e}_j)=\tilde{\nabla}_{\tilde{e}_i}\tilde{e}_j-\frac{1}{2}{\tilde{Q}}_{ijk}\tilde{e}_k-\tilde{g}_{ij}{\tilde{z}}.
 	\end{align}
From \eqref{equ:A.G} (see  Proposition \ref{prop:Q-Q} in Appendix  \ref{Appendix}) and  Remark \ref{rk3.5} (1), 
we can take $\tilde{e}_n(\tilde{z})=\lambda^{-1}\cdot(A_F(\nu)\mu)(\tilde{z})$ with $\lambda=|(A_F(\nu)\mu)(\tilde{z})|_{\tilde{g}}$. 
  Here we still denote $\nu(\tilde{z})=\nu(X(\tilde{z}))$ and $\mu(\tilde{z})=\mu(X(\tilde{z}))$, which are also the normal vector and co-normal of $\widetilde{\W}\subset\mathbb{R}^{n+1}$ and $(\widetilde{\W}\cap\{ x_{n+1}=0\})\subset\widetilde{\W}$, respectively.

 	On $\partial (\widetilde{\W}\cap\mathbb{R}_+^{n+1})$, for any $t$, we have
 	\begin{align}\label{equ:pfLem-Dns-1}
 		\tilde{\nabla}_{\tilde{e}_n}\tilde{s}=\tilde{e}_n(\tilde{G}(\tilde{z})(X(\tilde{z}),\tilde{z}))=G(\tilde{z})(X(\tilde{z}),\tilde{e}_n).
 	\end{align}
 	Similar to the proof of Remark \ref{rk3.5} (1), since $\lambda\cdot \tilde{e}_{n}(\tilde{z})=(\widetilde{A_F}(\nu)\mu)(\tilde{z})=D^2\widetilde{F}|_{\nu(\tilde{z})}\mu(\tilde{z})$ and $D^2_{ik}\widetilde{F}^0|_{\tilde{z}}\cdot D^2_{kj}\widetilde{F}|_{\nu}=-\frac{D_j\widetilde{F}|_{\nu}\cdot D_i\widetilde{F}^0|_{\tilde{z}}}{\widetilde{F}(\nu)}+\frac{\delta_{ij}}{\widetilde{F}(\nu)},$ we have
 \begin{align}
 &	\tilde{G}(\tilde{z})(X(\tilde{z}),\lambda\cdot\tilde{e}_n)=\tilde{G}(\tilde{z})(X(\tilde{z})-\tilde{s}\tilde{z},\lambda\cdot\tilde{e}_n)
 \nonumber\\
 	=&\<D^2\widetilde{F}^0|_{\tilde{z}}D^2\widetilde{F}|_{\nu(\tilde{z})}\mu(\tilde{z}),X(\tilde{z})-\tilde{s}\tilde{z}\>\nonumber
 	\\
 	=&\frac{\<X(\tilde{z})-\tilde{s}\tilde{z},\mu(\tilde{z})\>}{\widetilde{F}(\nu)}=\<\frac{X(\tilde{z})-\tilde{s}\tilde{z}}{\widetilde{F}(\nu)},\frac{E_{n+1}-\<E_{n+1},\nu\>\nu}{\<\mu(\tilde{z}),E_{n+1}\>}\>\nonumber\\
 	=&\<\frac{X(\tilde{z})-\tilde{s}\tilde{z}}{\widetilde{F}(\nu)},\frac{E_{n+1}}{\<\mu(\tilde{z}),E_{n+1}\>}\>=0,\label{equ:pfLem-Dns-2}
 \end{align}
 where we use $X(\tilde{z})\in\partial\mathbb{R}^{n+1}_+$ and $\<\tilde{z},E_{n+1}\>=0$ for $\tilde{z}\in \widetilde{\W}\cap(\partial\mathbb{R}^{n+1}_+)$.
 	Combining with \eqref{equ:pfLem-Dns-1} and \eqref{equ:pfLem-Dns-2}, we obtain \eqref{equ:Dns}.\\
 	
 Write $\bar{f}(\tilde{z},t)=\partial_t\tilde{s}(\tilde{z},t)=n-\frac{n\tilde{s}(\tilde{z},t)}{\Phi([\tilde{\tau}_{ij}](\tilde{z},t))}$. We have by \eqref{equ:Dns} that $\tilde{e}_n(\bar{f})=\tilde{e}_n(\partial_t\tilde{s})=\partial_t(\tilde{e}_n(\tilde{s}))=0$ on $\partial (\widetilde{\W}\cap\overline{\mathbb{R}_+^{n+1}})$, for any $t$. Then \eqref{equ:Dnpsi} holds since $\Phi=\frac{n\tilde{s}}{n-\bar{f}}$.\\

 Assume $\alpha,\beta,\gamma\in\{1,\cdots,n-1\}$ and $i,j,k,l\in\{1,\cdots,n\}$. On $\partial (\widetilde{\W}\cap\overline{\mathbb{R}_+^{n+1}})$, for any $t$, by $\tilde{e}_i(X)=(\tilde{e}_i\tilde{e}_j\tilde{s}-\frac{1}{2}\tilde{Q}_{ijk}\tilde{\nabla}_{\tilde{e}_k}\tilde{s}+\delta_{ij}\tilde{s})\tilde{e}_j+\tilde{\nabla}_{\tilde{e}_j}\tilde{s}\cdot\tilde{\nabla}_{\tilde{e}_i}\tilde{e}_j=(\tilde{\nabla}_{\tilde{e}_i}\tilde{\nabla}_{\tilde{e}_j}\tilde{s}-\frac{1}{2}\tilde{Q}_{ijk}\tilde{\nabla}_{\tilde{e}_k}\tilde{s}+{\delta}_{ij}\tilde{s})\tilde{e}_j=\tilde{\tau}_{ij}\tilde{e}_j$ and $\<X,E_{n+1}\>=0$, we have
 	$$0=\<\tilde{e}_{\alpha}X,E_{n+1}\>=\tilde{\tau}_{\alpha j}\<\tilde{e}_{j},E_{n+1}\>.$$
 	Since $\<\tilde{e}_{\alpha},E_{n+1}\>=0,\<\tilde{e}_{n},E_{n+1}\>\neq0$, we obtain   $\tilde{\tau}_{\alpha n}=0$.
 	It follows that
 	\begin{align}
 		0=\tilde{e}_{\beta}(\tilde{\tau}(\tilde{e}_{\alpha},\tilde{e}_{n}))=\tilde{\tau}_{\alpha n,\beta}+\tilde{\tau}(\tilde{\nabla}_{\tilde{e}_{\beta}}\tilde{e}_{\alpha},\tilde{e}_n)+\tilde{\tau}(\tilde{e}_{\alpha},\tilde{\nabla}_{\tilde{e}_{\beta}}\tilde{e}_n),\label{equ:pf-lemma-taan-1}
 	\end{align}
 	and
 	\begin{align}
 		\tilde{\tau}_{\alpha n,\beta}=&\tilde{\nabla}_{\tilde{e}_{\beta}}\tilde{\tau}_{\alpha n}=(\tilde{s}_{\alpha, n}-\frac{1}{2}\tilde{Q}_{\alpha n k}\tilde{s}_k+\delta_{\alpha n}\tilde{s})_{,\beta}
 		\nonumber
 		\\
 		=&\tilde{s}_{\alpha, n \beta}-\frac{1}{2}\tilde{Q}_{\alpha n k,\beta}\tilde{s}_k-\frac{1}{2}\tilde{Q}_{\alpha n k}\tilde{s}_{k,\beta}+\delta_{\alpha n}\tilde{s}_{\beta}
 		\nonumber
 		\\
 		=&\tilde{s}_{\alpha,\beta n}+\left(
 		\delta_{ln}\delta_{\alpha \beta}-\delta_{l \beta}\delta_{\alpha n}+\frac{1}{4}\tilde{Q}_{\alpha n k}\tilde{Q}_{l \beta k}-\frac{1}{4}\tilde{Q}_{lnk}\tilde{Q}_{\alpha\beta k}
 		\right)\tilde{s}_l\nonumber
 		\\
 		&-\frac{1}{2}\tilde{Q}_{\alpha n k,\beta}\tilde{s}_k-\frac{1}{2}\tilde{Q}_{\alpha n k}\tilde{s}_{k,\beta}+\delta_{\alpha n}\tilde{s}_{\beta}\nonumber
 		\\
 		=&\tilde{\tau}_{\alpha\beta,n}+\frac{1}{2}\tilde{Q}_{\alpha \beta l}\tilde{s}_{l,n}+\frac{1}{2}\tilde{Q}_{\alpha\beta l,n}\tilde{s}_l-\frac{1}{2}\tilde{Q}_{\alpha n k,\beta}\tilde{s}_k-\frac{1}{2}\tilde{Q}_{\alpha n k}\tilde{s}_{k,\beta}\nonumber
 		\\
 		&+\left(
 	\frac{1}{4}\tilde{Q}_{\alpha n k}\tilde{Q}_{l \beta k}-\frac{1}{4}\tilde{Q}_{lnk}\tilde{Q}_{\alpha\beta k}
 		\right)\tilde{s}_l\nonumber
 		\\
 		=&\tilde{\tau}_{\alpha \beta, n}+\frac{1}{2}\tilde{Q}_{\alpha \beta n}(\tilde{\tau}_{nn}-\tilde{s})-\frac{1}{2}\tilde{Q}_{\alpha\gamma n}(\tilde{\tau}_{\beta\gamma}-\delta_{\beta\gamma}\tilde{s})\nonumber
 		\\
 		=&\tilde{\tau}_{\alpha \beta, n}+\frac{1}{2}\tilde{Q}_{\alpha \beta n}\tilde{\tau}_{nn}-\frac{1}{2}\tilde{Q}_{\alpha\gamma n}\tilde{\tau}_{\beta\gamma},\label{equ:pf-lemma-taan-2}
 	\end{align}
 	where  we use the Ricci identity, \eqref{equ:Qjikl-syms}, and \eqref{equ:R=}.
 	
 	We calculate that
 	\begin{align}
 		\tilde{\tau}(\tilde{\nabla}_{\tilde{e}_{\beta}}\tilde{e}_{\alpha},\tilde{e}_n)=\tilde{g}(\tilde{\nabla}_{\tilde{e}_{\beta}}\tilde{e}_{\alpha},\tilde{e}_n)\tilde{\tau}_{nn}
 		=
 		\tilde{G}(\tilde{z})({\tilde{e}_{\beta}}(\tilde{e}_{\alpha}),\tilde{e}_n)\tilde{\tau}_{nn}+\frac{1}{2}\tilde{Q}_{\alpha\beta n}\tilde{\tau}_{nn},\label{equ:pf-lemma-taan-3}
 	\end{align}
 	and
 	\begin{align}
 		\tilde{\tau}(\tilde{e}_{\alpha},\tilde{\nabla}_{\tilde{e}_{\beta}}\tilde{e}_n)
 		=&\tilde{g}(\tilde{e}_{\gamma},\tilde{\nabla}_{\tilde{e}_{\beta}}\tilde{e}_n)\tilde{\tau}_{\gamma \alpha } 	
 		=-\tilde{g}(\tilde{\nabla}_{\tilde{e}_{\beta}}\tilde{e}_{\gamma},\tilde{e}_n)\tilde{\tau}_{\gamma \alpha }\nonumber
 		\\
 		=&-\tilde{G}(\tilde{z})(\tilde{e}_n,\tilde{e}_{\beta}(\tilde{e}_{\gamma}))\tilde{\tau}_{\gamma \alpha}-\frac{1}{2}\tilde{Q}_{\beta n \gamma}\tilde{\tau}_{\gamma \alpha},\label{equ:pf-lemma-taan-4}
 	\end{align}
 	where we use $\tilde{\tau}_{\alpha n}=0 		$ and \eqref{equ:e_ie_j(z)}.
 	
 	Combination of \eqref{equ:pf-lemma-taan-1}$\thicksim$\eqref{equ:pf-lemma-taan-4} gives
 	\begin{align}\label{equ:pf-lemma-taan-5}
 		\tilde{\tau}_{\alpha\alpha,n}=\sum_{\gamma=1}^{n-1}\left(\tilde{G}(\tilde{z})(\tilde{e}_{\alpha}\tilde{e}_{\gamma}(\tilde{z}),\tilde{e}_{n})+\tilde{Q}_{\alpha\gamma n}\right)(\tilde{\tau}_{\alpha\gamma}-\tilde{\tau}_{nn}\delta_{\alpha\gamma}).
 	\end{align}
 	Similar to \eqref{equ:pfLem-Dns-2} we can calculate that
 	\begin{align}
 		&\tilde{G}(\tilde{z})(\tilde{e}_{\alpha}\tilde{e}_{\gamma}(\tilde{z}),(A_F(\nu){\mu})(\tilde{z}))
 		=\frac{\<\tilde{e}_{\alpha}\tilde{e}_{\gamma}(\tilde{z})-\tilde{G}(\tilde{z})(\tilde{e}_{\alpha}\tilde{e}_{\gamma}(\tilde{z}),\tilde{z})\tilde{z},\mu(\tilde{z})\>}{\widetilde{F}(\nu)}\nonumber
 		\\
 			=&\left\langle \frac{\tilde{e}_{\alpha}\tilde{e}_{\gamma}(\tilde{z})-\tilde{G}(\tilde{z})(\tilde{e}_{\alpha}\tilde{e}_{\gamma}(\tilde{z}),\tilde{z})\tilde{z}
 			}{\widetilde{F}(\nu)},\frac{E_{n+1}}{\<\mu(\tilde{z}),E_{n+1}\>}\right\rangle
 			=\frac{\<\tilde{e}_{\alpha}(\tilde{e}_{\gamma}),E_{n+1}\>}{\<\mu(\tilde{z}),E_{n+1}\>\widetilde{F}(\nu)}\nonumber
 			\\
 			=&-\frac{\<\tilde{e}_{\alpha}(E_{n+1}),\tilde{e}_{\gamma}\>}{\<\mu(\tilde{z}),E_{n+1}\>\widetilde{F}(\nu)}=0.\label{equ:pf-lemma-taan-6}
 	\end{align}
 	Combining with \eqref{equ:pf-lemma-taan-5} and \eqref{equ:pf-lemma-taan-6}, we obtain \eqref{equ:taan}.
 \end{proof}
  We remark that, since the  direction $\tilde{e}_n$ has been fixed, we cannot directly calculate at normal coordinates in the above proof.

 Besides, we need the relationship between $\tilde{Q}$ and $Q$. More details can be seen in Appendix \ref{Appendix}. Now we list a  spacial case which will be used later.  
 \begin{lemma}\label{lem:Q-translate}
 	For any $\tilde{z}\in\partial(\widetilde{\W}\cap\overline{\mathbb{R}_+^{n+1}}) $, we take ${e}_n=A_F(\nu)\mu$, which is perpendicular to the tangent plane $T_{\tilde{z}}(\partial (\widetilde{\W}\cap\overline{\mathbb{R}_+^{n+1}}))$ with respect to $\tilde{g}$.
 	For any  $\tilde{e}_{\alpha}\in T_{\tilde{z}}(\partial (\widetilde{\W}\cap\overline{\mathbb{R}_+^{n+1}})) $,  we have
 	\begin{align}\label{Q-translate}
 		\tilde{Q}(\tilde{z})(\tilde{e}_{\alpha},\tilde{e}_{\alpha},e_n)=\frac{1}{1+\omega_0G(z)(z,E_{n+1}^F)}\left(
 		Q(z)(\tilde{e}_{\alpha},\tilde{e}_{\alpha},e_n)
 		-\frac{\omega_0G(z)(\tilde{e}_{\alpha},\tilde{e}_{\alpha})}{F(\nu)\<\mu,E_{n+1}\>}
 		\right),
 	\end{align}
 	where  $z=\mathcal{T}^{-1}(\tilde{z})$.
 \end{lemma}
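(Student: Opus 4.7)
The plan is to derive \eqref{Q-translate} by specializing the general transformation rule between $\widetilde{Q}$ and $Q$ (Proposition~\ref{prop:Q-Q} in Appendix~\ref{Appendix}) to the boundary point $\tilde{z}$ and the specific vectors $\tilde{e}_\alpha \in T_{\tilde{z}}(\widetilde{\W}\cap\{x_{n+1}=0\})$ and $e_n = A_F(\nu)\mu$.

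First, I would record the scalar identity that accounts for the denominator. By the $1$-homogeneity of $F^0$ and the defining relation $F^0(\tilde{z} - \omega_0 E_{n+1}^F) = 1$ for $\tilde{z} \in \widetilde{\W}$, rescaling yields the implicit formula $F^0(\tilde{z} - \widetilde{F}^0(\tilde{z})\,\omega_0 E_{n+1}^F) = \widetilde{F}^0(\tilde{z})$ on all of $\mathbb{R}^{n+1}\setminus\{0\}$. Differentiating once at $\tilde{z}\in\widetilde{\W}$ gives
\[
D\widetilde{F}^0(\tilde{z}) \;=\; \frac{F(\nu)}{\widetilde{F}(\nu)}\, DF^0(z),
\]
and combining with the Euler identity $G(z)(z,V) = F^0(z)\,DF^0(z)\cdot V = \langle \nu, V\rangle/F(\nu)$ produces
\[
1 + \omega_0\, G(z)(z,E_{n+1}^F) \;=\; \frac{\widetilde{F}(\nu)}{F(\nu)},
\]
which is precisely the reciprocal of the prefactor in \eqref{Q-translate}. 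Differentiating the implicit relation twice and three times then produces the $\widetilde{G}\leftrightarrow G$ and $\widetilde{Q}\leftrightarrow Q$ relations of Proposition~\ref{prop:Q-Q}.

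Next, I would substitute $V_1 = V_2 = \tilde{e}_\alpha$ and $W = e_n$ into that general formula and simplify using three structural observations. First, since $\tilde{e}_\alpha$ is tangent to the hyperplane $\{x_{n+1}=0\}$, one has $\langle \tilde{e}_\alpha, E_{n+1}\rangle = 0$; together with $\langle E_{n+1}^F, E_{n+1}\rangle = 1$, this annihilates every term in which $\tilde{e}_\alpha$ is paired with $E_{n+1}^F$ only through its $E_{n+1}$-component. Second, the homogeneity identities $G(z)(z,z)=1$, $G(z)(z,V)=\langle \nu,V\rangle/F(\nu)$, and $Q(z)(z,\cdot,\cdot)=0$ from Section~\ref{subsec 2.3} kill the contractions of $z$ against $Q$ and reduce those against $G$ to explicit scalars. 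Third, on $\partial\Sigma$ one has the orthogonal decomposition $E_{n+1} = \langle E_{n+1},\nu\rangle\nu + \langle E_{n+1},\mu\rangle\mu$, which, combined with $\langle D^2F(\nu)\mu,\nu\rangle = 0$ from the $1$-homogeneity of $F$, converts the surviving $E_{n+1}^F$-coupling into a single scalar factor $1/(F(\nu)\langle \mu,E_{n+1}\rangle)$; the quadratic residual on $\tilde{e}_\alpha$ is $G(z)(\tilde{e}_\alpha,\tilde{e}_\alpha)$, producing exactly the subtracted term $\omega_0\,G(z)(\tilde{e}_\alpha,\tilde{e}_\alpha)/(F(\nu)\langle\mu,E_{n+1}\rangle)$ in \eqref{Q-translate}.

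I expect the main obstacle to be the bookkeeping for the three-fold differentiation of the implicit relation: one must correctly track how each power of $\widetilde{F}(\nu)/F(\nu)$ is absorbed among $D\widetilde{F}^0$, $D^2\widetilde{F}^0$, $D^3\widetilde{F}^0$, and identify which of the mixed terms survive after contracting with $E_{n+1}^F$, $\tilde{e}_\alpha$, and $A_F(\nu)\mu$. Once Proposition~\ref{prop:Q-Q} is organized cleanly, the specialization step for this lemma reduces to algebra driven by the three simplifications above and the scalar identity $1 + \omega_0 G(z)(z,E_{n+1}^F) = \widetilde{F}(\nu)/F(\nu)$.
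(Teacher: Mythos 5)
Your overall route is the same as the paper's: both specialize the general transformation rule of Proposition \ref{prop:Q-Q} to $X=Y=\tilde e_\alpha$, $Z=e_n=A_F(\nu)\mu$, and both reduce the problem to evaluating the three correction terms $G(z)(Z,Y)G(z)(X,\eta)+G(z)(X,Y)G(z)(Z,\eta)+G(z)(X,Z)G(z)(Y,\eta)$ with $\eta=\omega_0E_{n+1}^F$. Your scalar identity $1+\omega_0G(z)(z,E_{n+1}^F)=\widetilde F(\nu)/F(\nu)$ is correct and is a clean way to package the prefactor. However, your first ``structural observation'' is based on a false premise. The two correction terms that must vanish each contain the factor $G(z)(e_n,\tilde e_\alpha)$ \emph{and} the factor $G(z)(\tilde e_\alpha,E_{n+1}^F)$, and you propose to kill them through the second factor, arguing from $\<\tilde e_\alpha,E_{n+1}\>=0$ and $\<E_{n+1}^F,E_{n+1}\>=1$. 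But the pairing here is taken with the anisotropic metric $G(z)=D^2(\tfrac12(F^0)^2)(z)$, not the Euclidean one: for $\tilde e_\alpha\in T_z\W$ one has $G(z)(\tilde e_\alpha,E_{n+1}^F)=\<D^2F^0(z)\tilde e_\alpha,E_{n+1}^F\>$, which has no reason to vanish for a general Wulff shape (it does in the isotropic case $\W=\mathbb S^n$, which is presumably what suggested the claim). The cancellation actually comes from the \emph{other} factor: $G(z)(A_F(\nu)\mu,\tilde e_\alpha)=F(\nu)^{-1}\<\mu,\tilde e_\alpha\>=0$ by \eqref{equ:G(muF,Y)=<mu,Y>} and Remark \ref{rk3.5}(1) — i.e., exactly the $\tilde g$-orthogonality of $e_n$ to $T_{\tilde z}(\partial(\widetilde\W\cap\overline{\mathbb R^{n+1}_+}))$ that is stated in the lemma but never invoked in your argument. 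The conclusion is recoverable, but the mechanism you give would not survive scrutiny.

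A second, smaller point: for the powers of the prefactor to match, the surviving coupling must be
$G(z)(A_F(\nu)\mu,E_{n+1}^F)=\bigl(1+\omega_0G(z)(z,E_{n+1}^F)\bigr)/\bigl(F(\nu)\<\mu,E_{n+1}\>\bigr)$,
not merely $1/(F(\nu)\<\mu,E_{n+1}\>)$; the extra numerator is needed to cancel one of the two factors of $1+G(z)(z,\eta)$ in the denominator of the second term of \eqref{equ:A.Qaan} and leave the single factor appearing in \eqref{Q-translate}. The paper obtains this by splitting $E_{n+1}^F$ into its $G(z)$-normal and tangential parts at $z$, applying \eqref{equ:G(muF,Y)=<mu,Y>} to the tangential part, and then using $\mu=\bigl(E_{n+1}-\<E_{n+1},\nu\>\nu\bigr)/\<E_{n+1},\mu\>$ together with $-\omega_0=\<z,E_{n+1}\>=F(\nu)\<\nu,E_{n+1}\>+\<\mu,E_{n+1}\>\<z,\mu\>$. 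Your sketch of this step points in the right direction but should make the appearance of the factor $1+\omega_0G(z)(z,E_{n+1}^F)$ explicit.
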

 \begin{proof}
 	Take $\eta=\omega_0E_{n+1}^F,X=Y=\tilde{e}_{\alpha},Z=e_n=A_F(\nu)\mu$ in Proposition \ref{prop:Q-Q}, we have $G(z)(Z,X)=G(z)(Z,Y)=0$ by Remark $\ref{rk3.5}$.
 	We know that $1+G(z)(z,\eta)>0$, since $\omega_0\in (-F(E_{n+1}),F(-E_{n+1}))$ and \cite{Jia-Wang-Xia-Zhang2023}*{Proposition 3.2}. 	
 	Similar to \eqref{equ:pfLem-Dns-2} or \eqref{equ:pf-lemma-taan-6}, we can calculate that
 	\begin{align*}
 		&G(z)(E_{n+1}^F,A_F(\nu)\mu)=G(z)(E_{n+1}^F-G(z)(E_{n+1}^F,z)z,A_F(\nu)\mu)\nonumber
 		\\
 		=&\frac{\<E_{n+1}^F,\mu\>-G(z)(E_{n+1}^F,z)\<z,\mu\>}{F(\nu)}\nonumber
 		\\
 		=&\frac{\<E_{n+1}^F,E_{n+1}\>}{F(\nu)\<E_{n+1},\mu\>}-\frac{\<E_{n+1},\nu\>\<\nu,E_{n+1}^F\>}{F(\nu)\<E_{n+1},\mu\>}
 		-\frac{\<E_{n+1}^F,\nu\>\<z,\mu\>}{F(\nu)^2}\nonumber
 		\\
 		=&\frac{1}{F(\nu)\<E_{n+1},\mu\>}
 		+\frac{\omega_0}{F(\nu)^2\<E_{n+1},\mu\>}\<\nu,E_{n+1}^F\>
 		\\
 		=&\frac{1+\omega_0G(z)(z,E_{n+1}^F)}{F(\nu)\<E_{n+1},\mu\>},
 	\end{align*}
 	where we use
\begin{align*}
	\mu=\frac{E_{n+1}-\<E_{n+1},\nu\>\nu}{\<E_{n+1},\mu\>},
\end{align*}
and
\begin{align*}
	-\omega_0=\<z,E_{n+1}\>=F(\nu)\<\nu,E_{n+1}\>+\<\mu,E_{n+1}\>\<z,\mu\>.
\end{align*}
Then from \eqref{equ:A.Qaan}, we have
\begin{align*}
	\tilde{Q}(\tilde{z})(\tilde{e}_{\alpha},\tilde{e}_{\alpha},e_n)=&\frac{Q(z)(\tilde{e}_{\alpha},\tilde{e}_{\alpha},e_n)}{1+\omega_0G(z)(z,E_{n+1}^F)}-\frac{G(z)(\tilde{e}_{\alpha},\tilde{e}_{\alpha})G(z)(e_n,\omega_0E_{n+1}^F)+0+0}{\left(1+\omega_0G(z)(z,E_{n+1}^F)\right)^2}
	\\
	=&\frac{1}{1+\omega_0G(z)(z,E_{n+1}^F)}\left(Q(z)(\tilde{e}_{\alpha},\tilde{e}_{\alpha},e_n)-\frac{\omega_0G(z)(\tilde{e}_{\alpha},\tilde{e}_{\alpha})}{F(\nu)\<E_{n+1},\mu\>}\right).
\end{align*}
This completes the proof.
\end{proof}
At the end of this section, we shall prove that flow \eqref{equ:flow} preserves the convexity by contradiction if the initial hypersurface is strictly convex. 
\begin{proof}[Proof of Theorem \ref{thm:convex preservation}]
	
	We assume that the solution $\Sigma_t$ of flow \eqref{equ:flow} is only strictly convex in a maximum finite time interval $[0,T_0)$ for a constant $T_0>0$, i.e. $\Sigma_t$ is strictly convex for $t\in[0,T_0)$ and the minimum  principal curvature  of $\Sigma_{T_0}$ is $0$. Then for $t\in[0,T_0)$, $H_F$ is uniformly bounded from below by Proposition \ref{prop:H>c}, and flow \eqref{equ:flow} is equivalent to \eqref{equ:flow-inv-Trans-Gauss}.
	
   In order to estimate the lower bound of $\kappa^F$, it suffices to estimate the upper bound of  of $\tau$. Since $\tilde{\tau}\circ\mathcal{T}=\tau$, we first need to prove the uniform upper bound of $\tilde{\tau}$ along the flow \eqref{equ:flow-inv-Trans-Gauss} for $t\in[0,T_0)$.

   By the proof of Proposition \ref{prop:c1}, we know that $\bar{u}$ has the uniform positive lower bound,   then $\tilde{s}(\tilde{z},t)\geq\frac{1}{C}>0$, for  $\tilde{z}\in\widetilde{\W}\cap\overline{\mathbb{R}^{n+1}_+}$ and  positive constant $C$ only depends on initial hypersurface  $\Sigma_0$, Wulff shape $\W$ (with respect to $F$), and constant $\omega_0$.  By Proposition \ref{prop:C0} and Cauchy-Schwartz inequality, we have $\tilde{s}(\tilde{z})=\tilde{g}(X,\tilde{z})=\frac{\<X,\tilde{z}\>}{\widetilde{F}(\tilde{z})}\leq\frac{|X|\cdot|\tilde{z}|}{\widetilde{F}(\tilde{z})}\leq C.$ From Proposition \ref{prop:H_F<c},  Proposition \ref{prop:H>c} and \eqref{equ:psi-T}, we have $0<\frac{1}{C}\leq\Phi\leq C$. It is easy to check that $\sum_k\dot{\phi}^k=\sum_{k} n^{-1}\phi^{2}{\tilde{\tau}_k}^{-2}\leq n^{-1}\phi^{2}(\sum_k{\tilde{\tau}_k}^{-1})^{2}=n$, since $\phi=n(\sum_k\tilde{\tau}_k^{-1})^{-1}$. Combining with \eqref{equ:pf-5.6}, we have $1\leq\sum_k\dot{\phi}^k\leq n$. Since the $C^0$ estimate in Proposition \ref{prop:C0} and    $\tilde{G}(\tilde{z})(\cdot,\cdot)=\frac{{G}({z})(\cdot,\cdot)}{1+\omega_0G(z)(z,E_{n+1}^F)}$ from Appendix  \ref{Appendix} (Proposition \ref{prop:Q-Q}),  we have $G(\tilde{z})(X(\tilde{z}),X(\tilde{z}))\leq C$, 
   that means $\tilde{s}^2+|\tilde{\nabla}\tilde{s}|^2_{\tilde{g}}\leq C$ by \eqref{equ:X=siei+sz}, then
   we have $|\tilde{\nabla}\tilde{s}|_{\tilde{g}}\leq C$.

  We  choose the standard orthogonal frame $\{\tilde{e}_i\}_{i=1}^n$  such that $\left(\tilde{\tau}_{i j}\right)$ is diagonal at the point we are considering.  Suppose that $\tilde{e}_1$ is the direction where the largest eigenvalue of $\left(\tilde{\tau}_{i j}\right)$ occurs. 
   Then the  evolution equation  \eqref{equ:evo-tau_ij} gives
   \begin{align}
  	& \frac{1}{n}\frac{\partial}{\partial t} \tilde{\tau}_{11}-\tilde{s} \Phi^{-2}\left(\dot{\Phi}^{k \ell} \tilde{\nabla}_k \tilde{\nabla}_{\ell} \tilde{\tau}_{11}-\frac{1}{2} \dot{\Phi}^{k \ell} \tilde{Q}_{k \ell p} \tilde{\nabla}_p \tilde{\tau}_{11}\right) \nonumber
  	\\
  	\leq & \tilde{s} \Phi^{-2} \ddot{\Phi}^{k \ell, p q} \tilde{\nabla}_1 \tilde{\tau}_{k \ell} \tilde{\nabla}_1 \tilde{\tau}_{p q}+C_0 \tilde{s} \Phi^{-2} \dot{\Phi}^{k \ell} \tilde{g}_{k \ell} \tilde{\tau}_{11}\nonumber \\
  	& -\left(\Phi^{-1}+\tilde{s} \Phi^{-2} \dot{\Phi}^{k \ell} \tilde{g}_{k \ell}\right) \tilde{\tau}_{11}+\left(1+\tilde{s} \Phi^{-1}\right)\nonumber
  	 \\
  	& -2 \tilde{s} \Phi^{-3}\left(\tilde{\nabla}_1 \Phi-\frac{1}{2} \Phi \tilde{\nabla}_1 \log \tilde{s}\right)^2+\frac{1}{2 \tilde{s} \Phi}\left|\tilde{\nabla}_1 \tilde{s}\right|_{\tilde{g}}^2\nonumber
  	\\
  	&\leq   C \ddot{\Phi}^{k \ell, p q} \tilde{\nabla}_1 \tilde{\tau}_{k \ell} \tilde{\nabla}_1 \tilde{\tau}_{p q}+C \tilde{\tau}_{11}+C,\label{equ:pf-convex5.37}
  \end{align}
    where $C,C_0$ are positive constants,  $C$ only depends on initial hypersurface $\Sigma_0$, Wulff shape $\W$ (with respect to $F$), and constant $\omega_0$. $C_0$ depends on $\tilde{Q}$ and $\tilde{\nabla}\tilde{Q}$,   $\tilde{Q}$ and $\tilde{\nabla}\tilde{Q}$ only depend on Wulff shape $\W$ (with respect to $F$) and constant $\omega_0$.
   In  order to deal with the `bad' term $C\tilde{\tau}_{11}$, we consider the auxiliary function
  \begin{align}\label{equ:w-x-6.8}
  	 {W}(z, t)=\log \zeta(z, t)-a\cdot  \tilde{s}(z, t), \quad \text{on\ } \left(\widetilde{\W}\cap\overline{\mathbb{R}^{n+1}_+} \right)\times[0, T_0),
  \end{align}
   where $a>0$ is positive constant to be determined later, and $
   \zeta=\sup \left\{\tilde{\tau}_{i j} \xi^i \xi^j:|\xi|_{\tilde{g}}=1\right\}
   $.

  We consider a point $\left(z_1, t_1\right)$ where a new maximum of the function ${W}$ is achieved, i.e., ${W}\left(z_1, t_1\right)=\max _{\left(\widetilde{\W}\cap\overline{\mathbb{R}^{n+1}_+} \right)\times\left[0, t_1\right]} {W}$.

 (1) If  $z_1$ is on the boundary of $ \widetilde{\W}\cap\overline{\mathbb{R}^{n+1}_+} $, by rotating  the  orthogonal frame $\{\tilde{e}_i\}_{i=1}^n$, we assume the $n$-th direction $\tilde{e}_n$ is the same as in Lemma \ref{lemma-Ds=0} (pointing outward of $ (\widetilde{\W}\cap\overline{\mathbb{R}_+^{n+1}})\subset\widetilde{\W}$), 
 and $\tilde{\tau}_{ij}$ is still diagonal at $(z_1,t_1)$ since $\tilde{\tau}_{\alpha n}=0$ (for $\alpha\neq n$). Then we have either $\xi=\tilde{e}_{n}$ or $\xi=\tilde{e}_{\alpha}$, for some $\alpha\in\{1,\cdots,n-1\}$.
\begin{itemize}
	\item  Case 1: If $\xi=\tilde{e}_{\alpha}$, for some $\alpha\in\{1,\cdots,n-1\}.$ At $\left(z_1, t_1\right)$ we have $\zeta=\tilde{\tau}_{\alpha\alpha}$, and $\tilde{\tau}_{\alpha\alpha}\geq\tilde{\tau}_{nn}$. Then by Lemma \ref{lemma-Ds=0}, we have
	\begin{align*}
		\tilde{\nabla}_{\tilde{e}_n}W|_{(z_1,t_1)}=\tilde{\tau}_{\alpha\alpha}^{-1}\tilde{\nabla}_{\tilde{e}_n}\tilde{\tau}_{\alpha\alpha}=\tilde{\tau}_{\alpha\alpha}^{-1}\tilde{Q}_{\alpha\alpha n}(\tilde{\tau}_{\alpha\alpha}-\tilde{\tau}_{nn})\leq 0,
	\end{align*}
	  where we use Condition \ref{condition}, and Lemma \ref{lem:Q-translate}.
	  \item  Case 2: If $\xi=\tilde{e}_{n}$. At $\left(z_1, t_1\right)$ we have $\zeta=\tilde{\tau}_{nn}$,   and $\tilde{\tau}_{\alpha\alpha}\leq\tilde{\tau}_{nn}$, for any $\alpha\in\{1,\cdots,n-1\}$.
	  By \eqref{equ:Dnpsi}, we have
	  $$0=\tilde{\nabla}_{\tilde{e}_n}\Phi=\dot{\Phi}^{nn}\tilde{\tau}_{nn,n}+\sum_{\alpha=1}^{n-1}\dot{\Phi}^{\alpha\alpha}
	  \tilde{\tau}_{\alpha\alpha,n}.$$
	  Then by Lemma \ref{lemma-Ds=0},
	  \begin{align*}
	  	\tilde{\nabla}_{\tilde{e}_n}W|_{(z_1,t_1)}
	  	=\tilde{\tau}_{nn}^{-1}\tilde{\nabla}_{\tilde{e}_n}\tilde{\tau}_{nn}=-\tilde{\tau}_{nn}^{-1}\sum_{\alpha=1}^{n-1}\frac{\dot{\Phi}^{\alpha\alpha}}{\dot{\Phi}^{nn}}\tilde{Q}_{\alpha\alpha n}(\tilde{\tau}_{\alpha\alpha}-\tilde{\tau}_{nn})\leq 0,
	  \end{align*}
	  where we use Condition \ref{condition},   Lemma \ref{lem:Q-translate}, and $[\dot{\Phi}^{ij}]>0$.
\end{itemize}

    This contradicts the Hopf boundary Lemma.
    So ${W}$ attain its maximum value at some interior point.

  (2)  For $z_1\in\operatorname{int} (\widetilde{\W}\cap\overline{\mathbb{R}^{n+1}_+} )$,
   we have  ${W}\left(z_1, t_1\right)=\max _{\widetilde{\W} \times\left[0, t_1\right]} {W}$ and ${W}(z, t)\leq{W}\left(z_1, t_1\right)$ for $t<t_1$. By rotation of local orthonormal frame, we assume that $\xi=\tilde{e}_1$ and $\left(\tilde{\tau}_{i j}\right)=\operatorname{diag}\left(\tilde{\tau}_1, \cdots, \tilde{\tau}_n\right)$ is diagonal at $\left(z_1, t_1\right)$. Firstly, at $\left(z_1, t_1\right)$ we know $\zeta=\tilde{\tau}_{11}$,  $\tilde{\tau}_{11}=\tilde{\tau}_1$ is the largest eigenvalue of $\left(\tilde{\tau}_{i j}\right)$ at the point $\left(z_1, t_1\right)$, and
   \begin{align}
   0\leq	\frac{\partial}{\partial t} {W} & =\frac{1}{\tilde{\tau}_{11}} \frac{\partial}{\partial t} \tilde{\tau}_{11}-a \frac{\partial}{\partial t} \tilde{s}, \label{equ:ptW}
   \\
   0=	\tilde{\nabla}_{\ell} {W} & =\frac{1}{\tilde{\tau}_{11}} \tilde{\nabla}_{\ell} \tilde{\tau}_{11}-a \tilde{\nabla}_{\ell} \tilde{s},
   	\label{equ:DW}
   	\\
   0\geq 	\tilde{\nabla}_k \tilde{\nabla}_{\ell} {W} & =\frac{1}{\tilde{\tau}_{11}} \tilde{\nabla}_k \tilde{\nabla}_{\ell} \tilde{\tau}_{11}-a \tilde{\nabla}_k \tilde{\nabla}_{\ell} \tilde{s}-\frac{1}{\left(\tilde{\tau}_{11}\right)^2} \tilde{\nabla}_k \tilde{\tau}_{11} \tilde{\nabla}_{\ell} \tilde{\tau}_{11} .\label{equ:DDW}
   \end{align}
  From \eqref{equ:evo-s},  \eqref{equ:pf-convex5.37}, and \eqref{equ:ptW}$ \thicksim$\eqref{equ:DDW}, we have
   \begin{align}
    0\leq	& \frac{1}{n}\frac{\partial}{\partial t} {W}-\tilde{s} \Phi^{-2} \dot{\Phi}^{k \ell}\left(\tilde{\nabla}_k \tilde{\nabla}_{\ell} {W}-\frac{1}{2} \tilde{Q}_{k \ell p} \tilde{\nabla}_p {W}\right) \nonumber\\
   	= & \frac{1}{\tilde{\tau}_{11}}\left(\frac{1}{n}\frac{\partial}{\partial t} \tilde{\tau}_{11}-\tilde{s} \Phi^{-2} \dot{\Phi}^{k \ell}\left(\tilde{\nabla}_k \tilde{\nabla}_{\ell} \tilde{\tau}_{11}-\frac{1}{2} \tilde{Q}_{k \ell p} \tilde{\nabla}_p \tilde{\tau}_{11}\right)\right)\nonumber \\
   	& -a\left(\frac{1}{n}\frac{\partial}{\partial t} \tilde{s}-\tilde{s} \Phi^{-2} \dot{\Phi}^{k \ell}\left(\tilde{\nabla}_k \tilde{\nabla}_{\ell} \tilde{s}-\frac{1}{2} \tilde{Q}_{k \ell p} \tilde{\nabla}_p \tilde{s}\right)\right) \nonumber
   \\
   	& +\frac{1}{\left(\tilde{\tau}_{11}\right)^2} \tilde{s} \Phi^{-2} \dot{\Phi}^{k \ell} \tilde{\nabla}_k \tilde{\tau}_{11} \tilde{\nabla}_{\ell} \tilde{\tau}_{11} \nonumber
   	\\
   	\leq & \frac{1}{\tilde{\tau}_{11}}\left(C\ddot{\Phi}^{k \ell, p q} \tilde{\nabla}_1 \tilde{\tau}_{k \ell} \tilde{\nabla}_1 \tilde{\tau}_{p q}+ C \tilde{\tau}_{11}+C\right) \nonumber
   	\\
   	& -a C\left(\sum_k \dot{\phi}^k-1\right)+\frac{C}{\left(\tilde{\tau}_{11}\right)^2}  \dot{\Phi}^{k \ell} \tilde{\nabla}_k \tilde{\tau}_{11} \tilde{\nabla}_{\ell} \tilde{\tau}_{11} \nonumber
   	\\
   	\leq & C\left(\frac{1}{\tilde{\tau}_{11}} \ddot{\Phi}^{k \ell, p q} \tilde{\nabla}_1 \tilde{\tau}_{k \ell} \tilde{\nabla}_1 \tilde{\tau}_{p q}+\frac{1}{\left(\tilde{\tau}_{11}\right)^2} \dot{\Phi}^{k \ell} \tilde{\nabla}_k \tilde{\tau}_{11} \tilde{\nabla}_{\ell} \tilde{\tau}_{11}\right) \nonumber\\
   	& -a C\left(\sum_k \dot{\phi}^k-1\right) +C +\frac{C}{\tilde{\tau}_{11}},\label{equ:w-x-6.9}
   \end{align}
   where the  constants $C$  depend on $F$, $\omega_0$ and the initial hypersurface $\Sigma_0$. 
   Similar to \cite{Wei-Xiong2022}, 
   by Lemma \ref{lem:w-x-lem-5.3} and \ref{lem:w-x-lem-5.2}, we have
   \begin{align}
   	\ddot{\Phi}^{k \ell, p q} \tilde{\nabla}_1 \tilde{\tau}_{k \ell} \tilde{\nabla}_1 \tilde{\tau}_{p q} & =\ddot{\phi}^{k \ell} \tilde{\nabla}_1 \tilde{\tau}_{k k} \tilde{\nabla}_1 \tilde{\tau}_{\ell \ell}+2 \sum_{k>\ell} \frac{\dot{\phi}^k-\dot{\phi}^{\ell}}{\tilde{\tau}_k-\tilde{\tau}_{\ell}}\left(\tilde{\nabla}_1 \tilde{\tau}_{k \ell}\right)^2\nonumber \\
   	& \leq 2 \sum_{k>1} \frac{\dot{\phi}^k-\dot{\phi}^1}{\tilde{\tau}_k-\tilde{\tau}_1}\left(\tilde{\nabla}_1 \tilde{\tau}_{k 1}\right)^2 \nonumber\\
   	& \leq-2 \sum_{k>1} \frac{1}{\tilde{\tau}_1}\left(\dot{\phi}^k-\dot{\phi}^1\right)\left(\tilde{\nabla}_1 \tilde{\tau}_{k 1}\right)^2 .\label{equ:w-x-6.10}
   \end{align}
   From \cite{Xia-2017-convex}*{(4.9)} or  \cite{Wei-Xiong2022}*{(5.2)}, we have
  \begin{align}
  	 \left(\tilde{\nabla}_1 \tilde{\tau}_{k 1}\right)^2
   \geq \frac{1}{2}\left(\tilde{\nabla}_k \tilde{\tau}_{11}\right)^2-C\left(\tilde{\tau}_{11}\right)^2 .\label{equ:w-x-6.11}
  \end{align}
   Combining with \eqref{equ:DW}, \eqref{equ:w-x-6.10},  and \eqref{equ:w-x-6.11}, we have   
   $$
   \begin{aligned}
   	& \frac{1}{\tilde{\tau}_{11}} \ddot{\Phi}^{k \ell, p q} \tilde{\nabla}_1 \tilde{\tau}_{k \ell} \tilde{\nabla}_1 \tilde{\tau}_{p q}+\frac{1}{\left(\tilde{\tau}_{11}\right)^2} \sum_k \dot{\phi}^k\left(\tilde{\nabla}_k \tilde{\tau}_{11}\right)^2 \\
   	\leq & -2 \sum_{k>1} \frac{1}{\tilde{\tau}_{11}^2}\left(\dot{\phi}^k-\dot{\phi}^1\right)\left(\frac{1}{2}\left(\tilde{\nabla}_k \tilde{\tau}_{11}\right)^2-C\left(\tilde{\tau}_{11}\right)^2\right)+\frac{1}{\left(\tilde{\tau}_{11}\right)^2} \sum_k \dot{\phi}^k\left(\tilde{\nabla}_k \tilde{\tau}_{11}\right)^2 \\
   	= & \dot{\phi}^1 \tilde{\tau}_{11}^{-2} \sum_k\left(\tilde{\nabla}_k \tilde{\tau}_{11}\right)^2+2 C \sum_{k>1}\left(\dot{\phi}^k-\dot{\phi}^1\right) \\
   	= & \dot{\phi}^1 \sum_k\left(a \tilde{\nabla}_k \tilde{s}\right)^2+2 C \sum_{k}\dot{\phi}^k
   	\\
   	\leq  & a^2C\dot{\phi }^1+C,
   \end{aligned}
   $$
   at $\left(z_1, t_1\right)$. 
   Putting it into \eqref{equ:w-x-6.9} yields
   \begin{align}\label{equ:w-x-6.13}
   	0 \leq & C \dot{\phi}^1a^2+C
   	 -aC\left(\sum_k \dot{\phi}^k-1\right) +\frac{C}{\tilde{\tau}_{11}},
   \end{align}
   at $\left(z_1, t_1\right)$.
   Now, we apply an observation in \cite{Bo-Shi-Sui,Xia2017,Wei-Xiong2022} which relies on the property that $\phi$ is increasing and concave in $\Gamma_{+}=\{(\lambda_1,\cdots,\lambda_n)\in\mathbb{R}^n|\lambda_i>0,\forall i=1,\cdots,n\}$, and vanishes on $\partial \Gamma_{+}$. Precisely, for $\phi=\left(n\sigma_n / \sigma_{n-1}\right)$ we have the following proposition.

    \begin{proposition}\label{Prop:w-x-prop6.3}
    	(Lemma 2.2 in \cite{Bo-Shi-Sui}). Let $K \subset \Gamma_{+}$ be a compact set and $\gamma>0$. Then there exists a constant $\varepsilon>0$ depending only on $K$ and $\gamma$ such that for ${\tau} \in \Gamma_{+}$  and $\varsigma \in K$ satisfying $\left|\nu_{\tau}-\nu_\varsigma\right| \geq \gamma$, we have
   $$
   \sum_k \dot{\phi}^k(\tau) \varsigma_k-\phi(\varsigma) \geq \varepsilon\left(\sum_k \dot{\phi}^k(\tau)+1\right),
   $$
   where $\nu_\tau$ denotes the unit normal vector of the level set $\left\{x \in \Gamma_{+}: \phi(x)=\phi(\tau)\right\}$ at the point $\tau$, i.e. $\nu_\tau=\bar{D} \phi(\tau) /|\bar{D} \phi(\tau)|$, where $\bar{D}$ denotes the standard gradient on $\mathbb{R}^n$.
    \end{proposition}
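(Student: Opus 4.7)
The plan is to proceed by contradiction, exploit the degree-zero homogeneity in $\tau$ of every quantity in the inequality to reduce to a compactness argument, and then invoke the strict concavity of $\phi$ on $\Gamma_+$. Suppose the conclusion fails: there exist sequences $\tau^{(m)}\in\Gamma_+$ and $\varsigma^{(m)}\in K$ with $|\nu_{\tau^{(m)}}-\nu_{\varsigma^{(m)}}|\ge\gamma$ and
\[
\sum_k\dot\phi^k(\tau^{(m)})\varsigma^{(m)}_k-\phi(\varsigma^{(m)})<\tfrac{1}{m}\bigl(\sum_k\dot\phi^k(\tau^{(m)})+1\bigr).
\]
Since $\phi$ is homogeneous of degree one, both $\dot\phi^k$ and $\nu_\tau$ are homogeneous of degree zero in $\tau$, so we may rescale to $|\tau^{(m)}|=1$. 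The explicit identity $\dot\phi^k(\tau)=\phi(\tau)^2/(n\tau_k^2)$ combined with the Cauchy--Schwarz bound $n^{-1}\bigl(\sum_k\tau_k^{-1}\bigr)^2\le\sum_k\tau_k^{-2}\le\bigl(\sum_k\tau_k^{-1}\bigr)^2$ yields $1\le\sum_k\dot\phi^k(\tau)\le n$ throughout $\Gamma_+$; hence the denominator in the failed inequality is bounded and it simplifies to $\sum_k\dot\phi^k(\tau^{(m)})\varsigma^{(m)}_k-\phi(\varsigma^{(m)})\to 0$.

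Passing to subsequences we may arrange $\varsigma^{(m)}\to\varsigma^\ast\in K$, $\tau^{(m)}\to\tau^\ast\in\overline{\Gamma_+}\cap S^{n-1}$, all ratios of coordinates of $\tau^{(m)}$ stabilize, and $\dot\phi^k(\tau^{(m)})\to a_k\ge 0$ for each $k$. In the interior case $\tau^\ast\in\Gamma_+$, continuity gives $a_k=\dot\phi^k(\tau^\ast)$ and the limit $\sum_k\dot\phi^k(\tau^\ast)\varsigma^\ast_k=\phi(\varsigma^\ast)$. Combined with Euler's identity $\sum_k\dot\phi^k(\tau^\ast)\tau^\ast_k=\phi(\tau^\ast)$, the affine function $\ell(x)=\sum_k\dot\phi^k(\tau^\ast)x_k$ supports the convex superlevel set $\{\phi\ge\phi(\varsigma^\ast)\}$ both at $\varsigma^\ast$ and at the scaled point $(\phi(\varsigma^\ast)/\phi(\tau^\ast))\tau^\ast$. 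Strict concavity of $\phi=n(\sum_k\tau_k^{-1})^{-1}$, inherited from the strict convexity of $\tau\mapsto\sum_k\tau_k^{-1}$, forces the contact locus of any supporting hyperplane to be a single ray, so $\varsigma^\ast$ and $\tau^\ast$ are parallel and $\nu_{\tau^\ast}=\nu_{\varsigma^\ast}$, contradicting $|\nu_{\tau^\ast}-\nu_{\varsigma^\ast}|\ge\gamma$.

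The main obstacle is the boundary case $\tau^\ast\in\partial\Gamma_+$. Let $I=\{i:\tau^\ast_i=0\}\ne\emptyset$. For $k\notin I$ the formula $\dot\phi^k(\tau^{(m)})=\phi(\tau^{(m)})^2/(n(\tau^{(m)}_k)^2)$ forces $a_k=0$, since $\phi$ vanishes on $\partial\Gamma_+$ while $\tau^\ast_k>0$. Hence the limit gradient $(a_k)$ is supported on $I$ and $\nu^\ast=\lim\nu_{\tau^{(m)}}$ lies on the coordinate face $\{x_k=0:k\notin I\}$ of $S^{n-1}$, whereas $\nu_{\varsigma^\ast}$, being proportional to $((\varsigma^\ast_k)^{-2})$ with $\varsigma^\ast\in K$ compactly contained in $\Gamma_+$, has every coordinate bounded below by a constant $c(K)>0$. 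The limiting tangent-line inequality $\sum_{k\in I}a_k\varsigma^\ast_k\ge\phi(\varsigma^\ast)$ corresponds to a supporting hyperplane of $\{\phi\ge\phi(\varsigma^\ast)\}$ whose contact set in $\overline{\Gamma_+}$ escapes to infinity along the face $\{x_k=0:k\notin I\}$; since $\varsigma^\ast\in\Gamma_+$ has every coordinate positive it cannot be a contact point, so the inequality is strict with a quantitative gap controlled by the distance from $\varsigma^\ast$ to that face, and hence only by $K$. This contradicts the vanishing of the defect.

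The delicate step is making the boundary strict inequality uniform in how $\tau^{(m)}$ degenerates towards $\partial\Gamma_+$; the a priori bound $\sum_k\dot\phi^k\le n$, the vanishing of $\phi$ on $\partial\Gamma_+$, and the compact separation of $K$ from $\partial\Gamma_+$ together produce the uniform $\varepsilon=\varepsilon(K,\gamma)$.
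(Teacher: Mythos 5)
The paper does not prove this proposition at all: it is quoted verbatim as Lemma 2.2 of Guan--Shi--Sui \cite{Bo-Shi-Sui} and used as a black box, so there is no in-paper proof to compare against. Your argument is therefore necessarily an independent one, and it is essentially correct. It is, however, narrower than the cited lemma: Guan--Shi--Sui prove the statement for a general symmetric concave $\phi$ that is positive in $\Gamma_{+}$ and vanishes on $\partial\Gamma_{+}$, whereas you lean repeatedly on the explicit formula $\dot{\phi}^k(\tau)=\phi(\tau)^2/(n\tau_k^2)$, which is special to $\phi=n\sigma_n/\sigma_{n-1}$. That is acceptable here, since this is the only $\phi$ the paper uses. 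Your interior case is clean once one checks (as you implicitly do) that the Hessian of $\phi$ is negative definite transverse to the radial direction; indeed $v^{\top}\bar{D}^2\phi\, v=2n g^{-3}\bigl(\sum_j\tau_j^{-2}v_j\bigr)^2-2ng^{-2}\sum_j\tau_j^{-3}v_j^2\leq 0$ with $g=\sum_j\tau_j^{-1}$, and equality forces $v\parallel\tau$ by Cauchy--Schwarz, so equality in the supporting-hyperplane inequality does force $\varsigma^{\ast}\parallel\tau^{\ast}$ and hence $\nu_{\tau^{\ast}}=\nu_{\varsigma^{\ast}}$. The one place your write-up is impressionistic is the boundary case: the talk of a contact set "escaping to infinity along a face" and a "gap controlled by the distance to the face" is not an argument. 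The clean version is shorter than what you wrote: the limit functional $L(x)=\sum_{k\in I}a_k x_k$ satisfies $L\geq\phi$ on $\Gamma_{+}$ (as a pointwise limit of the supporting functionals $L_m$), so if $L(\varsigma^{\ast})=\phi(\varsigma^{\ast})$ at the interior point $\varsigma^{\ast}$ then $L-\phi$ attains an interior minimum there and $\bar{D}L=\bar{D}\phi(\varsigma^{\ast})$; but $\dot{\phi}^k(\varsigma^{\ast})>0$ for every $k$ while $a_k=0$ for $k\notin I$, and $I$ is a proper subset since $|\tau^{\ast}|=1$. This is a contradiction, and notably the angle hypothesis is not even needed in the boundary case. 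With that substitution your proof is complete; the reduction via $1\leq\sum_k\dot{\phi}^k\leq n$ and the degree-zero homogeneity normalization are both correct and are the same elementary facts the paper records elsewhere (Lemma 5.7 and the proof of Theorem 1.2).
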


   Let $\varsigma=(r, \cdots, r) \in \Gamma_{+}$, $\tau=\tilde{\tau} \in \Gamma_{+}$ where $r$ is the lower bound of the anisotropic capillary support function $\tilde{s}(z, t)$, and let $K=\{\varsigma\}$ in Proposition \ref{Prop:w-x-prop6.3}. There exists a small constant $\gamma>0$ such that $\nu_\varsigma-2 \gamma(1, \cdots, 1) \in \Gamma_{+}$. We have two cases.

    Case (i): If the anisotropic principal curvature radii  $\tilde{\tau}$ at the point $\left(z_1, t_1\right)$ satisfy $\left|\nu_{\tilde{\tau}}-\nu_\varsigma\right| \geq \gamma$, Proposition \ref{Prop:w-x-prop6.3} implies that
   $$
   \sum_k \dot{\phi}^k(\tilde{\tau})-1 \geq \frac{\varepsilon}{r}\left(\sum_k \dot{\phi}^k(\tilde{\tau})+1\right) \geq C\varepsilon.
   $$
   Putting it into \eqref{equ:w-x-6.13} yields
   \begin{align}\label{equ:pf-convex-1}
   	 a \varepsilon C
   	\leq  C \dot{\phi}^1 a^2+	 C +\frac{C}{\tilde{\tau}_{11}}.
   \end{align}
   We assume $\tilde{\tau}_{11}\geq1$ (otherwise, $\tilde{\tau}_{11}\leq1$ which can complete the proof directly), and choose constant $a>0$ large such that
   $$
   \begin{aligned}
   	\frac{1}{2}a \varepsilon C
   	\geq  C +\frac{C}{\tilde{\tau}_{11}}.
   \end{aligned}
   $$
   Putting it into \eqref{equ:pf-convex-1} yields
   $$
   \dot{\phi}^1\geq \frac{C\varepsilon}{2a}.
   $$
   Note that the constants $a$  and $\varepsilon$ independent of $t$, then it
    gives a lower bound on $\dot{\phi}^1$.

   Case (ii): If the anisotropic principal radii  $\tilde{\tau}$ at the point $\left(z_1, t_1\right)$ satisfy $\left|\nu_{\tilde{\tau}}-\nu_\varsigma\right| \leq \gamma$, it follows from $\nu_\varsigma-2 \gamma(1, \cdots, 1) \in \Gamma_{+}$ that  $\nu_{\tilde{\tau}}-\gamma(1, \cdots, 1) \in \Gamma_{+}$ and so   $\dot{\phi}^i \geq C^{\prime}\left(\sum_k\left(\dot{\phi}^k\right)^2\right)^{1 / 2} \geq C'' \sum_k \dot{\phi}^k\geq C$ at ${\tilde{\tau}}$ for some constants $C^{\prime}, C'', C$ and all $i=1, \cdots, n$.

   So we have  $\dot\phi^1\geq C$ for both Case (i) and Case (ii), then
   $$
   \Phi=\sum_k \dot{\phi}^k {\tilde{\tau}}_k \geq \dot{\phi}^1 {\tilde{\tau}}_{11} \geq C {\tilde{\tau}}_{11} .
   $$
   This implies that ${\tilde{\tau}}_{11}$ is bounded from above, since $\Phi\leq C$.

   Therefore, we can choose a large constant $a$ in the definition \eqref{equ:w-x-6.8} of ${W}(z, t)$ such that in both the above two cases, the largest anisotropic principal curvature radius  $\tilde{\tau}_1$ has a uniform upper bound at $\left(z_1, t_1\right)$. By the definition of ${W}(z, t)$ together with  $\frac{1}{C}\leq \tilde{s}\leq C$, we conclude that $\tilde{\tau}_1$ is bounded from above uniformly along the flow
 \eqref{equ:flow-inv-Trans-Gauss} in $t\in[0,T_0)$.
 Then we know $\kappa^F$ has a uniform positive lower bound along the flow \eqref{equ:flow} in $t\in[0,T_0)$, since flow \eqref{equ:flow} is equivalent to \eqref{equ:flow-inv-Trans-Gauss} together with the condition $\tilde{\tau}_{ij}(\tilde{s})>0$ (since \cite{Xia13}*{Proposition 3.2}).  Thus $\kappa^F(T_0)\geq C>0$, this contradicts our assumption of the maximization of $T_0$. Thus we derive the lower bound of $\k_F$ in all time and this proof is completed.

\end{proof}
\section{General quermassintegrals and Alexandrov-Fenchel inequalities}\label{sec 6}
In this section, we define some quermassintegrals for anisotropic capillary hypersurfaces in the half-space, calculate their first variational formulas, and then show the monotonicity of this quermassintegrals along flow under some convexity conditions. Therefore,  Theorem \ref{thm:AF-neq} will be proved.

For the given  Wulff shape $\W\subset\mathbb{R}^{n+1}$ determined by  $F$, and for the constant $\omega_0\in(-F(E_{n+1}),F(-E_{n+1}))$, we denote
 $$\overline{\W}=\widetilde{\W}\cap\{x_{n+1}=0\},$$
  as an $(n-1)$-dimensional Wulff shape in $\mathbb{R}^n$  with support function $\bar{F}$.

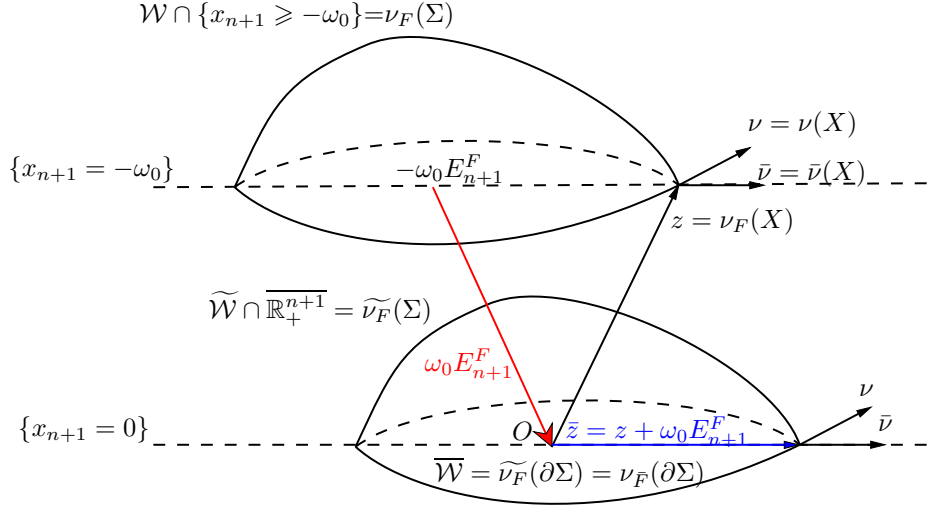
\begin{figure}[h]
	\begin{tikzpicture}[x=0.75pt,y=0.75pt,yscale=-1,xscale=1]
		
		\draw  [dash pattern={on 4.5pt off 4.5pt}]  (129.6,170) -- (520.6,168) ;
		\draw  [dash pattern={on 4.5pt off 4.5pt}]  (129.6,300) -- (520.6,300) ;
		\draw    (170.6,170) .. controls (189.6,129) and (194.6,116) .. (237.6,98) .. controls (280.6,80) and (380.6,129) .. (394.6,169) ;
		\draw    (170.6,170) .. controls (185.03,183.03) and (212.88,194.33) .. (248.62,197.75) .. controls (289.68,201.67) and (341.14,195.2) .. (394.6,169) ;
		\draw  [dash pattern={on 4.5pt off 4.5pt}]  (170.6,170) .. controls (210.6,140) and (356.6,138) .. (394.6,169) ;
		\draw    (394.6,169) -- (435.6,169) ;
		\draw [shift={(437.6,169)}, rotate = 180] [fill={rgb, 255:red, 0; green, 0; blue, 0 }  ][line width=0.08]  [draw opacity=0] (7.2,-1.8) -- (0,0) -- (7.2,1.8) -- cycle    ;
		\draw    (394.6,169) -- (428.83,150.93) ;
		\draw [shift={(430.6,150)}, rotate = 152.18] [fill={rgb, 255:red, 0; green, 0; blue, 0 }  ][line width=0.08]  [draw opacity=0] (7.2,-1.8) -- (0,0) -- (7.2,1.8) -- cycle    ;
		\draw    (231.6,301) .. controls (250.6,260) and (255.6,247) .. (298.6,229) .. controls (341.6,211) and (441.6,260) .. (455.6,300) ;
		\draw    (231.6,301) .. controls (246.03,314.03) and (273.88,325.33) .. (309.62,328.75) .. controls (350.68,332.67) and (402.14,326.2) .. (455.6,300) ;
		\draw  [dash pattern={on 4.5pt off 4.5pt}]  (231.6,301) .. controls (271.6,271) and (417.6,269) .. (455.6,300) ;
		\draw    (455.6,300) -- (496.6,300) ;
		\draw [shift={(498.6,300)}, rotate = 180] [fill={rgb, 255:red, 0; green, 0; blue, 0 }  ][line width=0.08]  [draw opacity=0] (7.2,-1.8) -- (0,0) -- (7.2,1.8) -- cycle    ;
		\draw    (455.6,300) -- (489.83,281.93) ;
		\draw [shift={(491.6,281)}, rotate = 152.18] [fill={rgb, 255:red, 0; green, 0; blue, 0 }  ][line width=0.08]  [draw opacity=0] (7.2,-1.8) -- (0,0) -- (7.2,1.8) -- cycle    ;
		\draw [color={rgb, 255:red, 255; green, 1; blue, 1 }  ,draw opacity=1 ]   (270.6,170) -- (329.35,298.27) ;
		\draw [shift={(330.6,301)}, rotate = 245.39] [fill={rgb, 255:red, 255; green, 1; blue, 1 }  ,fill opacity=1 ][line width=0.08]  [draw opacity=0] (10.72,-5.15) -- (0,0) -- (10.72,5.15) -- (7.12,0) -- cycle    ;
		\draw    (330.6,301) -- (393.73,170.8) ;
		\draw [shift={(394.6,169)}, rotate = 115.87] [fill={rgb, 255:red, 0; green, 0; blue, 0 }  ][line width=0.08]  [draw opacity=0] (7.2,-1.8) -- (0,0) -- (7.2,1.8) -- cycle    ;
		\draw [color={rgb, 255:red, 0; green, 0; blue, 255 }  ,draw opacity=1 ]    (330.6,300) -- (453.6,300.02) ;
		\draw [shift={(455,300)}, rotate = 179.54] [fill={rgb, 255:red, 0; green, 0; blue, 255 }  ][line width=0.08]  [draw opacity=0] (7.2,-1.8) -- (0,0) -- (7.2,1.8) -- cycle    ;
		
		\draw (428,130) node [anchor=north west][inner sep=0.75pt]   [align=left] {$\nu=\nu(X)$};
		\draw (432.6,155) node [anchor=north west][inner sep=0.75pt]   [align=left] {$\bar{\nu}=\bar{\nu}(X)$};
		\draw (250,152) node [anchor=north west][inner sep=0.75pt]   [align=left] {$-\omega_0E_{n+1}^F$};
		\draw (484,268) node [anchor=north west][inner sep=0.75pt]   [align=left] {$\nu$};
		\draw (493.6,284) node [anchor=north west][inner sep=0.75pt]   [align=left] {$\bar{\nu}$};
		\draw (309,287) node [anchor=north west][inner sep=0.75pt]   [align=left] {$O$};
		\draw (265,250) node [anchor=north west][inner sep=0.75pt]   [align=left] {\color{red}{$\omega_0E_{n+1}^F$}};
		\draw (270,305) node [anchor=north west][inner sep=0.75pt]   [align=left] {$\overline{\W}=\widetilde{\nu_{{F}}}(\partial\Sigma)=\nu_{\bar{F}}(\partial\Sigma)$};
		\draw (389,179) node [anchor=north west][inner sep=0.75pt]   [align=left] {$z=\nu_{F}(X)$};
		\draw (336,284) node [anchor=north west][inner sep=0.75pt]   [align=left] {\color{blue}{$\bar{z}=z+\omega_0E_{n+1}^F$}};
		\draw (121,75) node [anchor=north west][inner sep=0.75pt]   [align=left] {$\W\cap\{x_{n+1}\geq-\omega_0\}$=$\nu_{F}(\Sigma)$};text8
		\draw (156,220) node [anchor=north west][inner sep=0.75pt]   [align=left] {$\widetilde{\W}\cap\overline{\mathbb{R}^{n+1}_+}=\widetilde{\nu_{F}}(\Sigma)$};
		\draw (60,284) node [anchor=north west][inner sep=0.75pt]   [align=left] {$\{x_{n+1}=0\}$};
		\draw (55,153) node [anchor=north west][inner sep=0.75pt]   [align=left] {$\{x_{n+1}=-\omega_0\}$};
		\end{tikzpicture}
	\caption{The relationship between $z$ and $\bar{z}$}\label{fig:1}
\end{figure}
We denote $\nu_{\bar{F}}$, $S_{\bar{F}}$ and $H_k^{\bar{F}}(0\leq k\leq n-1,\,H_0^{\bar{F}}=1)$ as the anisotropic unit outward normal,  anisotropic Weingarten map and normalized anisotropic $k$-mean curvature of $\partial\Sigma\subset\mathbb{R}^n$ with respect to Wulff shape $\overline{\W}$.
For $X\in\partial\Sigma$,
 $\<\widetilde{\nu_F}(X),E_{n+1}\>=0$  implies that $\widetilde{\nu_F}(\partial\Sigma)=\overline{\W}$. We know $T_X\Sigma=T_{\nu_F(X)}\W=T_{\widetilde{\nu_{{F}}}(X)}\widetilde{\W}$, then $T_X(\partial\Sigma)=T_X\Sigma\cap\mathbb{R}^n=T_{\widetilde{\nu_F}(X)}\widetilde{\W}\cap\mathbb{R}^n=T_{\widetilde{\nu_F}(X)}\overline{\W}$. Thus at point $\bar{z}=\widetilde{\nu_{{F}}}(X)\in\overline{\W}\subset\mathbb{R}^n$,  $\bar{\nu}(X)$ is also the 
 unit co-normal of 
   $\overline{\W}\subset \mathbb{R}^n$. 
 Then for $X\in\partial\Sigma\subset\mathbb{R}^n$, we can check that (see Fig. \ref{fig:1})
  \begin{align*}
 	\nu_{\bar{F}}(X)=\bar{z}=\widetilde{\nu_{{F}}}(X)=\nu_{{F}}(X)+\omega_0E_{n+1}^F,
 \end{align*}
   and
    \begin{align}
   	F(\nu)&=\<\nu_{{F}},\nu\>=\<\bar{z},\<\nu,\bar{\nu}\>\bar{\nu}+\<\nu,E_{n+1}\>E_{n+1}\> \>-\<\omega_0E_{n+1}^F,\nu\>\nonumber
   \\
   &=\bar{F}(\bar{\nu})\<\nu,\bar{\nu}\>-\omega_0\<E_{n+1}^F,\nu\>.\label{equ:barF(v)}
   \end{align} 

   For $Y,Z\in T(\partial\Sigma)$, we have
  \begin{align}
  	\<S_{\bar{F}}Z,Y\>=&\<\bar{D}_Z\nu_{\bar{F}},Y\>=
  	\<D_Z\nu_{\bar{F}},Y\>=\<D_Z(\omega_0E_{n+1}^F+\nu_F),Y\>\nonumber
  	\\
  	=&\<D_Z\nu_{{F}},Y\>=\<S_{{F}}Z,Y\>.\label{equ:SFab=SF-ab}
  \end{align}
   Besides, from \cite{Arxiv}*{Lemma 3.1} or \cite{Guo-Xia}*{Proposition 4.1}, we know
  \begin{align}\label{equ:SFau=0}
  	\<S_FY,\mu\>=0, \quad Y\in T(\partial\Sigma).
  \end{align}
  \begin{definition}
    	For $1\leq k\leq n$,  the anisotropic capillary quermassintegrals  is defined by
  	\begin{align}\label{equ:Vk}
  		\mathcal{V}_{k+1,\omega_0}(\Sigma)=
  		\frac{1}{n+1}
  		\left(
  		\int_{\Sigma}H^F_kF(\nu)d\mu_g
  		+\frac{\omega_0}{n}
  		\int_{\partial\Sigma} H_{k-1}^{\bar{F}}\bar{F}(\bar{\nu})ds
  		\right).
  	\end{align}
  \end{definition}

We remark that, if $\W=\mathbb{S}^n$, then $F(\mathbb{S}^n)=1,H_k^F=H_k,\nu_{{F}}=\Psi(\nu)=\nu,E_{n+1}^F=E_{n+1},\cos\theta=-\omega_0,\<\nu,\bar{\nu}\>=\sin\theta,\bar{F}(\mathbb{S}^{n-1})=\sin\theta, \overline{\W}=\sin\theta\cdot\mathbb{S}^{n-1},S_{\bar{F}}=\sin\theta\bar{h}$,   and $ H^{\bar{F}}_{k-1}=\sin^{k-1}\theta H^{\partial\Sigma}_{k-1}$, where $H_k$ is normalized $k$-mean curvature of $(\Sigma,g)\subset(\mathbb{R}^{n+1}, \<\cdot,\cdot\>)$, $\bar{h}$ and $H^{\partial\Sigma}_{k-1}$ are the Weingarten map and  normalized $k$-mean curvature of $\partial\Sigma\subset\mathbb{R}^n$, and $\theta$ is the contact angle defined by \eqref{equ:def-theta}. Thus we have
	\begin{align*}
	\mathcal{V}_{k+1,\omega_0}(\Sigma)=
	\frac{1}{n+1}
	\left(
	\int_{\Sigma}H_kd\mu_g
	-\frac{\cos\theta\sin^k\theta}{n}
	\int_{\partial\Sigma} H_{k-1}^{\partial\Sigma}ds
	\right), \quad k=1,\cdots,n,
\end{align*}
which matches the definition of  isotropic quermassintegrals \cite{Wang-Weng-Xia}*{eq.(1.8)} for  capillary hypersurfaces in the half-space.

The quermassintegrals defined above is suitable, since they satisfy the following variational formulas:
\begin{theorem}\label{Thm:p_t-Vk}
	 Let $\Sigma_t \subset \overline{\mathbb{R}}_{+}^{n+1}$ be a family of smooth anisotropic $\omega_0$-capillary hypersurfaces supported by $\partial \mathbb{R}_{+}^{n+1}$ with
	 $\omega_0 \in\left(-F(E_{n+1}),F(-E_{n+1})\right)$, given by the embedding $X(\cdot, t): \Sigma \rightarrow \overline{\mathbb{R}}_{+}^{n+1}$, satisfying
\begin{align}\label{equ:p_tX=fv}
	\partial_t X=f \nu_F+T,
\end{align}
for a smooth function $f$ and vector field $T\in T\Sigma_t$. Then for $-1 \leq k \leq n$,
\begin{align}
	\frac{d}{d t} \mathcal{V}_{k+1, \omega_0}\left({\Sigma_t}\right)=\frac{n-k}{n+1} \int_{\Sigma_t} f H^F_{k+1} d \mu_F.
\label{equ:Thm5}
\end{align}
\end{theorem}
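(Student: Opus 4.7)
The plan is to differentiate both terms appearing in the definition \eqref{equ:Vk} of $\mathcal{V}_{k+1,\omega_0}$ separately, use the anisotropic divergence identities \eqref{equ:lemma2.1.1} to integrate by parts on $\Sigma_t$, and then verify that the resulting boundary terms from the interior calculation match exactly the derivative of the boundary correction $\tfrac{\omega_0}{n}\int_{\partial\Sigma_t}H_{k-1}^{\bar F}\bar F(\bar\nu)\,ds$. Since $\<\partial_tX,\nu\>=fF(\nu)$ and since the tangential field $T$ merely reparametrizes $\Sigma_t$ (and hence contributes nothing to either intrinsic integral), I may reduce to the purely normal variation $\partial_tX=f\nu_F$; however, the capillary boundary condition forces $\partial_tX|_{\partial\Sigma_t}$ to remain in $\partial\mathbb{R}^{n+1}_+$, so the tangential-to-$\Sigma_t$ component at the boundary is determined by $f$ and the geometry of $\overline{\W}$, and cannot be neglected in the boundary matching.

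For the interior contribution, differentiating $\int_{\Sigma}H_k^FF(\nu)\,d\mu_g$ produces terms from the evolutions of $H_k^F$, of $F(\nu)$, and of $d\mu_g$; these I would compute via the Newton-tensor formalism $T_k=P_kA_F$ together with the evolution equations implicit in Lemma \ref{lemma-evolution-HF} and the anisotropic Codazzi equation \eqref{Codazzi}. Integrating by parts using both identities in \eqref{equ:lemma2.1.1} (applied to $P_k$ and to $P_{k-1}$), as in the closed-hypersurface calculation of \cite{Wei-Xiong2022}, should yield an identity of the form
\begin{equation*}
\frac{d}{dt}\int_{\Sigma}H_k^F\,d\mu_F \;=\;(n-k)\int_\Sigma f H_{k+1}^F\,d\mu_F \;+\;\int_{\partial\Sigma}\mathcal B_k\,ds,
\end{equation*}
where $\mathcal B_k$ is a boundary expression involving $T_k$, the interior co-normal $\mu$, and the restriction of the velocity $\partial_tX$.

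For the boundary contribution, since $\partial\Sigma_t$ is a closed $(n-1)$-dimensional hypersurface in $\mathbb{R}^n=\partial\mathbb{R}^{n+1}_+$, and $\bar F$ is the support function of $\overline{\W}=\widetilde{\W}\cap\{x_{n+1}=0\}$, the closed-hypersurface version of the anisotropic variational formula (applied inside $\mathbb{R}^n$) gives
\begin{equation*}
\frac{d}{dt}\int_{\partial\Sigma}H_{k-1}^{\bar F}\bar F(\bar\nu)\,ds \;=\;(n-k-1)\int_{\partial\Sigma}\<\partial_tX,\bar\nu\>H_k^{\bar F}\bar F(\bar\nu)\,ds \;+\; \text{lower-order}\,,
\end{equation*}
with no remaining boundary term (as $\partial(\partial\Sigma)=\emptyset$). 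Using \eqref{equ:barF(v)} and \eqref{equ:SFab=SF-ab} to convert $\bar F$-quantities into restrictions of $F$-quantities, and using \eqref{equ:SFau=0} which makes $\mu$ an eigendirection of $S_F$ along $\partial\Sigma$, this can be rewritten in terms of $F$-curvature data and the interior co-normal.

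The main obstacle, and the technical heart of the argument, is the precise identification of $\mathcal B_k$ as a specific multiple of $\int_{\partial\Sigma}\<\partial_tX,\bar\nu\>H_k^{\bar F}\bar F(\bar\nu)\,ds$; the delicate point is that the coefficient must emerge as exactly $\omega_0/n$ so that the boundary contributions cancel in $\frac{d}{dt}\mathcal V_{k+1,\omega_0}$. This factor of $\omega_0$ originates from the capillary identity $\<\nu_F,E_{n+1}\>=-\omega_0$, inserted into the computation via \eqref{equ:barF(v)} in the form $F(\nu)=\bar F(\bar\nu)\<\nu,\bar\nu\>-\omega_0\<E_{n+1}^F,\nu\>$; the factor $1/n$ arises from relating $\mathrm{tr}(T_{k-1})$ in the ambient and in the boundary setting. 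Once this cancellation is verified, only the bulk term $\tfrac{n-k}{n+1}\int_\Sigma fH_{k+1}^F\,d\mu_F$ remains, proving \eqref{equ:Thm5}. A sanity check against the known case $k=0$ in \eqref{equ:lemma4.2-2}, which was established in Lemma \ref{lemma-evolution-2}, is automatic, since then $H_{-1}^{\bar F}$ is absent and the formula reduces to $\tfrac{n}{n+1}\int fH_1^F\,d\mu_F=\tfrac{1}{n+1}\int fH_F\,d\mu_F$.
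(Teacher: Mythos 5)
Your overall strategy coincides with the paper's: differentiate the bulk term $\int_\Sigma H_k^F F(\nu)\,d\mu_g$ and the boundary term $\frac{\omega_0}{n}\int_{\partial\Sigma}H^{\bar F}_{k-1}\bar F(\bar\nu)\,ds$ separately, integrate by parts via the $P_k$-divergence identities \eqref{equ:lemma2.1.1}, and show that the resulting boundary contributions cancel. However, you have left unproved exactly the step you yourself call ``the technical heart,'' and as stated your plan contains two concrete errors there. First, the boundary expression produced by the interior integration by parts is $\big\langle F(\nu)\sigma_k\widehat T-\hat f P_k\nabla^{\mathbb S}F(\nu)-F(\nu)T_{k-1}(\nabla\hat f),\mu\big\rangle$, and to control the third piece you need the identity $\langle A_F(\nu)\nabla\hat f,\mu\rangle=\langle S_F\mu,\mu\rangle\langle\partial_tX,\mu\rangle$ (equation \eqref{equ:DT}), which is obtained by differentiating the capillary condition $\langle \nu_F,E_{n+1}\rangle=-\omega_0$ in time along $\partial\Sigma_t$; your proposal never produces this relation, and without it the term $T_{k-1}(\nabla\hat f)$ cannot be converted into curvature data. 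One then needs the recursion $\langle P_k\mu,\mu\rangle=\sigma_k-\langle P_{k-1}\mu,\mu\rangle\langle S_F\mu,\mu\rangle$ together with $\langle P_k\mu,\mu\rangle=\sigma_k(S_{\bar F})$ (equation \eqref{equ:pfthm5-13}) and the rotation identity $\mathcal R(\mathcal P(\nu_F))=\langle\nu_F,E_{n+1}\rangle\bar\nu-\langle\nu_F,\bar\nu\rangle E_{n+1}$ to collapse the three boundary integrands into $-\omega_0\,\sigma_k(S_{\bar F})\langle\partial_tX,\bar\nu\rangle$; it is here, via $\langle\nu_F,E_{n+1}\rangle=-\omega_0$ and $\langle\partial_tX,E_{n+1}\rangle=0$, that the factor $\omega_0$ appears, not through \eqref{equ:barF(v)} as you suggest.

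Second, your stated variational formula for the boundary integral is wrong. Since $\partial\Sigma_t$ is a closed hypersurface in $\mathbb R^n$, the correct (unnormalized) formula is
\begin{equation*}
\frac{d}{dt}\int_{\partial\Sigma_t}\sigma_{k-1}(S_{\bar F})\,\bar F(\bar\nu)\,ds
= k\int_{\partial\Sigma_t}\sigma_{k}(S_{\bar F})\,\langle\partial_tX,\bar\nu\rangle\,ds ,
\end{equation*}
with no residual terms and no extra factor of $\bar F(\bar\nu)$ on the right; in normalized form the coefficient is $n-k$, not $n-k-1$. The cancellation with the interior boundary term then rests on the arithmetic identity $\frac{k}{n\binom{n-1}{k-1}}=\frac{1}{\binom{n}{k}}$, which is the precise content of your heuristic about ``relating $\operatorname{tr}(T_{k-1})$ in the ambient and boundary settings.'' With the wrong coefficient and the spurious $\bar F(\bar\nu)$ the claimed cancellation would fail, so these points must be corrected before the argument closes. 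Finally, note that your opening reduction to a purely normal variation is not legitimate on a manifold with boundary: the tangential component $T|_{\partial\Sigma_t}=\frac{f\omega_0}{\langle\mu,E_{n+1}\rangle}\mu$ is forced by the constraint $\partial\Sigma_t\subset\partial\mathbb R^{n+1}_+$ and enters the boundary terms essentially; you half-acknowledge this, but the computation must carry $T$ throughout rather than discard it.
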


Before proving Theorem \ref{Thm:p_t-Vk}, we remark that if $\Sigma_t \subset \overline{\mathbb{R}}_{+}^{n+1}$ is a family of smooth anisotropic $\omega_0$-capillary hypersurfaces evolving by \eqref{equ:p_tX=fv}, the tangential component $T$ of $\partial_t X$ must satisfy
$$
\left.T\right|_{\partial \Sigma_t}=f \frac{\omega_0}{\<\mu,E_{n+1}\>} \mu+\overline{T},
$$
where $\overline{T} \in T\left(\partial \Sigma_t\right)$, and $\<E_{n+1},\mu\>\neq 0$ since 
$\omega_0\in \left(-F(E_{n+1}),F(-E_{n+1})\right)$.
 In fact, the restriction of $X(\cdot, t)$ on $\partial \Sigma$ is contained in $\mathbb{R}^n$ and hence,
$$
f \nu_F+\left.T\right|_{\partial \Sigma_t}=\left.\partial_t X\right|_{\partial \Sigma} \in T \mathbb{R}^n .
$$
Assume $T=\lambda\mu+\overline{T}$, then $\overline{T}=T-\lambda\mu=(T+f\nu_{{F}})-(f\nu_{{F}}+\lambda\mu)\in T\mathbb{R}^n$, thus $f\nu_{{F}}+\lambda\mu\in T\mathbb{R}^n $, that is
\begin{align*}
	0=\<f\nu_{{F}}+\lambda\mu,E_{n+1}\>=-f\omega_0+\lambda\<\mu,E_{n+1}\>,
\end{align*}
which implies that $\lambda=f\frac{\omega_0}{\<\mu,E_{n+1}\>}$.
 For simplicity, in the following, we always assume that
\begin{align}\label{equ:T=}
	\left.T\right|_{\partial \Sigma_t}= \frac{f\omega_0}{\<\mu,E_{n+1}\>} \mu.
\end{align}
Flow \eqref{equ:p_tX=fv} can also be written as \begin{align}\label{equ:ptX=fv+T}
	\partial_tX=\hat{f}\nu+\widehat{T},
\end{align}
with $\hat{f}=fF(\nu)$ and $\widehat{T}=f\nabla^{\mathbb{S}} {F}(\nu)+T$.

\begin{lemma}[\cite{HL08}]\label{lemma2.5}
	Under the evolution \eqref{equ:ptX=fv+T}, we have
	\begin{align}
		\frac{\partial }{\partial t} \nu_t=&-\nabla\hat{f}+\mathrm{d}\nu_t(\widehat{T}),	\label{equ:ptv}
		\\[2pt]
		\frac{\p }{\p t} (d \mu_g)=&\(\operatorname{div} \widehat{T}+n H \hat{f}\) d \mu_g, \nonumber
			\\[2pt]
		\sigma_r^{\prime}(t)=&-\operatorname{div}\(T_{r-1}(\nabla \hat{f})\)-\hat{f}\left\langle T_{r-1} \circ \mathrm{d} \nu_t, \mathrm{~d} \nu_t\right\rangle+\left\langle\nabla \sigma_r, \widehat{T}\right\rangle, \nonumber
			\\[2pt]
		\left(F\left(\nu_t\right)\right)^{\prime}=&\left\langle\nabla^{\mathbb{S}} F, \nu_t^{\prime}\right\rangle, \nonumber
	\end{align}
	where  $nH=\operatorname{tr}(\mathrm{d}\nu)$. 
\end{lemma}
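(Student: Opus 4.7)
The plan is to derive the four identities sequentially, since the first feeds into the remaining three. All computations are performed in a local chart $\{x^i\}$ of $\Sigma_t$ with coordinate frame $\{\partial_iX\}$ completed to a frame of $\mathbb{R}^{n+1}$ by $\nu_t$.

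First I would establish \eqref{equ:ptv}. Differentiating $|\nu_t|^2=1$ in $t$ gives $\partial_t\nu_t\in T\Sigma_t$, so it suffices to compute its tangential components. Differentiating $\langle\nu_t,\partial_iX\rangle=0$ in $t$, using $\langle\nu,\widehat T\rangle=0$ to convert $\langle\nu,\partial_i\widehat T\rangle$ into $-\langle\partial_i\nu,\widehat T\rangle$, together with the Weingarten formula $\partial_i\nu=\mathrm d\nu(\partial_iX)$ and the symmetry of the second fundamental form, yields $\langle\partial_t\nu_t,\partial_iX\rangle=-\partial_i\hat f+\langle \mathrm d\nu(\widehat T),\partial_iX\rangle$; raising the index with $g^{ij}$ gives the formula. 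The identity $(F(\nu_t))'=\langle\nabla^{\mathbb S}F,\nu_t'\rangle$ then follows immediately from the chain rule, since $\partial_t\nu_t\in T_{\nu_t}\mathbb S^n$ and the radial part of $DF$ annihilates any vector tangent to $\mathbb S^n$.

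Next I would expand $\partial_tg_{ij}=\langle\partial_i\partial_tX,\partial_jX\rangle+\langle\partial_iX,\partial_j\partial_tX\rangle$. The $\hat f\nu$--piece contributes $2\hat f\,h_{ij}$ via Weingarten, while the $\widehat T$--piece contributes $\nabla_i\widehat T_j+\nabla_j\widehat T_i$ after absorbing the normal component of $\partial_i\widehat T$ into a second-fundamental-form term and noticing it cancels against its symmetric partner. Tracing with $g^{ij}$ and using $\partial_t(d\mu_g)=\tfrac12 g^{ij}\partial_tg_{ij}\,d\mu_g$ together with $g^{ij}h_{ij}=\operatorname{tr}(\mathrm d\nu)=nH$ produces the second identity.

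The technically delicate identity is the one for $\sigma_r'(t)$. My strategy is to reduce first to the normal variation $\partial_tX=\hat f\nu$: a purely tangential re-parameterization of $\Sigma_t$ contributes exactly $\langle\nabla\sigma_r,\widehat T\rangle$ to the material derivative of any scalar, which accounts for the last term. Under the normal variation, compute the time derivative of $S_F=A_F(\nu)\circ \mathrm d\nu$ from $\partial_t\nu=-\nabla\hat f$ and $\partial_th_i^{\,j}=\nabla_i\nabla^j\hat f+\hat f(h^2)_i^{\,j}$, together with the pointwise correction $\partial_tA_F(\nu)=DA_F(\nu)\cdot(-\nabla\hat f)$. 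Then write $\partial_t\sigma_r=\operatorname{tr}(P_{r-1}\,\partial_tS_F)$ and contract with $T_{r-1}=P_{r-1}A_F$; the Codazzi identity \eqref{Codazzi} and the anisotropic divergence formulas in Lemma~\ref{lem2-1} (in particular \eqref{equ:lemma2.1.1}) convert the Hessian contribution into $-\operatorname{div}(T_{r-1}(\nabla\hat f))$, while the quadratic curvature piece collapses to $-\hat f\langle T_{r-1}\circ \mathrm d\nu_t,\mathrm d\nu_t\rangle$.

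The main obstacle is the book-keeping inside this last step: because $A_F$ depends on $\nu$, every time-derivative and every tangential covariant derivative of a quantity involving $S_F$ picks up extra $A_{ijk}$--terms from the anisotropic Gauss--Weingarten relations. Checking that these corrections cancel — equivalently, that the anisotropic Newton tensor $T_{r-1}$ is divergence-free in the sense established in \cite{HL08} — is the algebraic heart of the computation; once this cancellation is in place, the stated divergence-plus-quadratic form drops out, completing the proof.
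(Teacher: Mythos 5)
The paper offers no proof of this lemma (it is quoted directly from \cite{HL08}), and your outline reproduces the standard argument of that reference: the first, second and fourth identities by direct differentiation of $|\nu|^2=1$, $\langle\nu,\partial_iX\rangle=0$ and $g_{ij}$, and the $\sigma_r'$ identity by splitting off the tangential reparameterization term $\langle\nabla\sigma_r,\widehat T\rangle$ and reducing the normal-variation computation to the divergence-free property of the anisotropic Newton tensor $T_{r-1}$, which is indeed the load-bearing fact in \cite{HL08}. The only point to check is a sign: with the paper's convention $\partial_i\nu=h_i^{\,k}\partial_kX$ one gets $\partial_t h_i^{\,j}=-\nabla_i\nabla^j\hat f-\hat f\,(h^2)_i^{\,j}$ under the normal variation, and it is these minus signs that produce the two minus signs in the stated formula for $\sigma_r'$.
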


On $\partial\Sigma$, by \eqref{equ:w0-capillary}, \eqref{equ:ptv},  $E_{n+1}=\<E_{n+1},\mu\>\mu+\<E_{n+1},\nu\>\nu$, $\<D^2F(\nu)\nu,\cdot\>\equiv0$, $\nabla F(\nu)=\mathrm{d}\nu\nabla^{\mathbb{S}}F\circ\nu$, \eqref{equ:SFau=0},  and \eqref{equ:T=}, we have
\begin{align*}
	0=&\<\partial_t\nu_{{F}},E_{n+1}\>
	=\<A_F(\nu)\partial_t\nu,\<E_{n+1},\mu\>\mu\>
     \\
	=&\<E_{n+1},\mu\>\< -A_F(\nu)\nabla \hat{f}+ A_F(\nu)(\mathrm{d}\nu_t(\widehat{T}))  ,\mu\>
	\\
	=&-\< A_F(\nu)\nabla (fF(\nu)),\<E_{n+1},\mu\>\mu\>
	+
	\<S_F(f\nabla^{\mathbb{S}} {F}(\nu)+T) ,\<E_{n+1},\mu\>\mu\>
	\\=&-F(\nu)\<E_{n+1},\mu\>\< A_F(\nu)\nabla f,\mu\>
	+
	\frac{\omega_0 f }{\<\mu,E_{n+1}\>}\<E_{n+1},\mu\>\<S_F\mu  ,\mu\>,
\end{align*}
which implies
\begin{align}\label{equ:DT}
\< A_F(\nu)\nabla \hat{f},\mu\>	=\<S_F(\widehat{T}),\mu\>=\<S_F\mu,\mu\> \<\widehat{T},\mu\>=\<S_F\mu,\mu\> \<\partial_tX,\mu\>,
\end{align}
and
\begin{align*}
	\< A_F(\nu)\nabla f,\mu\>
	=
	\frac{\omega_0 f }{F(\nu)\<\mu,E_{n+1}\>}\<S_F\mu  ,\mu\>.
\end{align*}

\begin{proof}[Proof of Theorem \ref{Thm:p_t-Vk}]
	The case $k=-1$ and case $k=0$ are  from Lemma \ref{lemma-evolution-2}, thus we just show the cases $k=1,\cdots,n.$
		By Lemma \ref{lemma2.5}, Along flow \eqref{equ:ptX=fv+T} we have
	\begin{align}
		\frac{\partial }{\partial t}
		\left(	
		\int_{\Sigma_t}
		\sigma_{k}F(\nu)
		d \mu_g
		\right)		
		=&\int_{\Sigma_t}
		\<\nabla^{\mathbb{S}}F(\nu),\partial_t\nu\>\sigma_k
		+\partial_t\sigma_kF(\nu)
		d \mu_g
		+
		\int_{\Sigma_t}
		\sigma_kF(\nu)
		\partial_t(d \mu_g)
		\nonumber
		\\
		=&\int_{\Sigma_t}
       \sigma_k
		\left\{
		-\< \nabla^{\mathbb{S}} F\circ \nu ,  \nabla \hat{f} \> + \<\mathrm{d}\nu \nabla^{\mathbb{S}} F(\nu),\widehat{T} \>
		\right\}
		\nonumber
		\\
		+&F(\nu)
		\left\{
		-\operatorname{div}(T_{k-1}(\nabla \hat{f}))-\hat{f}\left\langle T_{k-1} \circ \mathrm{d} \nu, \mathrm{~d} \nu\right\rangle+\left\langle\nabla \sigma_k, \widehat{T}\right\rangle
		\right\}	
		\nonumber
		\\
		+&F(\nu)\sigma_{k}
		\left\{
		nH\hat{f}
		+
		\operatorname{div}\widehat{T}
		\right\}
		d \mu_g.
		\label{equ:pfthm5-1}
	\end{align}
	By divergence formula, we know
	\begin{align}
		\sigma_k\<\mathrm{d}\nu \nabla^{\mathbb{S}} F(\nu),\widehat{T} \>
		+F(\nu)\left\langle\nabla \sigma_k, \widehat{T}\right\rangle
		+F(\nu)\sigma_k	\operatorname{div}\widehat{T}
		=
		\div (F(\nu)\sigma_k\widehat{T}),
	\label{equ:pfthm5-2}
	\end{align}
		and
	\begin{align}
		-\<\nabla\hat{f},\sigma_k\nabla^{\mathbb{S}}F(\nu)\>=\hat{f}\div(\sigma_k\nabla^{\mathbb{S}}F(\nu))-\div(\hat{f}\sigma_k\nabla^{\mathbb{S}}F(\nu)).\label{equ:pfthm5-4}
	\end{align}
	Moreover, using  the symmetry of $T_{k-1} $, we see
	\begin{align}
	&-\div (\hat{f}T_{k-1}(\nabla F(\nu)))=-\hat{f}\div(T_{k-1}\nabla F(\nu))-\<\nabla \hat{f},T_{k-1}(\nabla F(\nu))\>\nonumber
	\\
	=&-\hat{f}\div(T_{k-1}\nabla F(\nu))-\div(F(\nu)T_{k-1}(\nabla \hat{f}))+F(\nu)\div(T_{k-1}(\nabla \hat{f})).\label{equ:pfthm5-3}
	\end{align}
	Combining with \eqref{equ:pfthm5-4}, \eqref{equ:pfthm5-3},  and $P_k=\sigma_k I-T_{k-1} \mathrm{~d} \nu$, we deduce
	\begin{align}
		&-\sigma_k\<\nabla\hat{f},\nabla^{\mathbb{S}}F(\nu)\>-F(\nu)\div\(T_{k-1}(\nabla \hat{f})\)
		\nonumber
		\\
		=&\div \(\hat{f}T_{k-1}(\nabla F(\nu))\)-\hat{f}\div\(T_{k-1}\nabla F(\nu)\)-\div\(F(\nu)T_{k-1}(\nabla \hat{f})\)
		\nonumber
		\\
		&+\hat{f}\div\(\sigma_k\nabla^{\mathbb{S}}F(\nu)\)-\div\(\hat{f}\sigma_k\nabla^{\mathbb{S}}F(\nu)\)
		\nonumber
		\\
		=&\hat{f}\div\(P_k\nabla^{\mathbb{S}}F(\nu)\)-\div\(\hat{f}P_k\nabla^{\mathbb{S}}F(\nu)\)-\div\(F(\nu)T_{k-1}(\nabla \hat{f})\),\label{equ:pfthm5-5}
	\end{align}
	and
	\begin{align}
		nH\hat{f}F(\nu)\sigma_{k}-F(\nu)\hat{f}\left\langle T_{k-1} \circ \mathrm{d} \nu, \mathrm{~d} \nu\right\rangle=F(\nu)\tr (P_k\circ\mathrm{d}\nu)\hat{f}.	\label{equ:pfthm5-6}
	\end{align}
	By \eqref{equ:pfthm5-5}, \eqref{equ:pfthm5-6}, and \eqref{equ:lemma2.1.1}, we can calculate that
	\begin{align}
		&-\sigma_k\<\nabla\hat{f},\nabla^{\mathbb{S}}F(\nu)\>-F(\nu)\div\(T_{k-1}(\nabla \hat{f})\)
		+nH\hat{f}F(\nu)\sigma_{k}-F(\nu)\hat{f}\left\langle T_{k-1} \circ \mathrm{d} \nu, \mathrm{~d} \nu\right\rangle\nonumber
		\\
		=&\hat{f}\div\(P_k\nabla^{\mathbb{S}}F(\nu)\)-\div\(\hat{f}P_k\nabla^{\mathbb{S}}F(\nu)\)-\div\(F(\nu)T_{k-1}(\nabla \hat{f})\)+F(\nu)\tr \(P_k\circ\mathrm{d}\nu\)\hat{f}\nonumber
		\\
		=&(k+1)\hat{f}\sigma_k-\div\(\hat{f}P_k\nabla^{\mathbb{S}}F(\nu)\)-\div\(F(\nu)T_{k-1}(\nabla \hat{f})\).
		\label{equ:pfthm5-7}
	\end{align}
	Putting \eqref{equ:pfthm5-2} and  \eqref{equ:pfthm5-7} into  \eqref{equ:pfthm5-1} yields
	\begin{align}
		\frac{\partial }{\partial t}
		\left(	
		\int_{\Sigma_t}
		\sigma_{k}F(\nu)
		d \mu_g
		\right)		
		=&
		\int_{\partial\Sigma_t}\<F(\nu)\sigma_k\widehat{T} - \hat{f}P_k\nabla^{\mathbb{S}}F(\nu)  -  F(\nu)T_{k-1}(\nabla \hat{f}),\mu\>
		ds\nonumber
		\\
		&+\int_{\Sigma_t}
		(k+1)\sigma_{k+1}\hat{f}
		d \mu_g.	\label{equ:pfthm5-8}
	\end{align}
	 Next, on  $\partial\Sigma_t$, from \eqref{equ:SFau=0} we know $\<P_kY,\mu\>=0$ for any $Y\in T(\partial\Sigma)$. Then  we have
	\begin{align}\label{equ:pfthm5-9}
		\<F(\nu)\sigma_k\widehat{T} ,\mu\>=F(\nu)\sigma_k\<\widehat{T}+\hat{f}\nu ,\mu\>=F(\nu)\sigma_k\<\partial_tX ,\mu\>=\sigma_k\<\partial_tX ,\mu\>\<\nu_F,\nu\>,
	\end{align}
	and
	\begin{align}
		\<- \hat{f}P_k\nabla^{\mathbb{S}}F(\nu),\mu\>=-\hat{f}\< P_k\mu,\mu\> \<\nabla^{\mathbb{S}}F(\nu),\mu \>=-\<\partial_tX,\nu\>\< P_k\mu,\mu\> \<\nu_F ,\mu \>.\label{equ:pfthm5-10}
	\end{align}
	By \eqref{equ:T_k=P_kAF}, \eqref{equ:DT}, we can derive
	\begin{align}
		\< -F(\nu)T_{k-1}(\nabla \hat{f}),\mu\>&=-F(\nu)\< P_{k-1}\mu,\mu\>\<A_F(\nu)\nabla \hat{f},\mu\>\nonumber
		\\
		&=-\< P_{k-1}\mu,\mu\> \<S_F\mu,\mu\> \<\partial_tX,\mu\>\<\nu_F,\nu\>.\label{equ:pfthm5-11}
	\end{align}
	By \eqref{equ:pfthm5-9}$\thicksim$\eqref{equ:pfthm5-11} and $\<P_k\mu,\mu\>=\sigma_k-\<P_{k-1}\mu,\mu\> \<S_F\mu,\mu\>$, we deduce
	\begin{align}
		&\<F(\nu)\sigma_k\widehat{T} - \hat{f}P_k\nabla^{\mathbb{S}}F(\nu)  -  F(\nu)T_{k-1}(\nabla \hat{f}),\mu\>\nonumber
        \\
		=&\<P_k\mu,\mu\> \<\partial_tX,\<\nu_{{F}},\nu\>\mu-\<\nu_{{F}},\mu\>\nu\>\nonumber
		\\
		=&\<P_k\mu,\mu\> \<\partial_tX,\mathcal{R}(\mathcal{P}(\nu_{{F}}))\>\nonumber
         \\
		=&\<P_k\mu,\mu\> \<\partial_tX,\<\nu_{{F}},E_{n+1}\>\bar{\nu}-\<\nu_{{F}},\bar{\nu}\>E_{n+1}\>\nonumber
		\\
		=&-\omega_0\<P_k\mu,\mu\> \<\partial_tX,\bar{\nu}\>.\label{equ:pfthm5-12}
	\end{align}
	By \eqref{equ:SFab=SF-ab} and \eqref{equ:SFau=0}, we can check that by Mathematical induction method
	\begin{align}
		\label{equ:pfthm5-13}
		\<P_k\mu,\mu\>=\sigma_{k}(S_{\bar{F}}), \quad k=1,\cdots,n,
	\end{align}
	since $\<P_k\mu,\mu\>=\sigma_k(S_F)-\<P_{k-1}S_F\mu,\mu\>=\sigma_k(S_F)-\<P_{k-1}\mu,\mu\>\<S_F\mu,\mu\>$.
		Combining with \eqref{equ:pfthm5-8}, \eqref{equ:pfthm5-12}, and \eqref{equ:pfthm5-13}, we have
	\begin{align}
		\frac{\partial }{\partial t}
		\left(	
		\int_{\Sigma_t}
		\sigma_{k}F(\nu)
		d \mu_g
		\right)		
		=\int_{\Sigma_t}
		(k+1)\sigma_{k+1}\hat{f}
		d \mu_g
		-\omega_0\int_{\partial\Sigma_t}
		\sigma_{k}(S_{\bar{F}})\<\partial_tX,\bar{\nu}\>
		ds
		.	\label{equ:pfthm5-14}
	\end{align}
			On the other hand, since $\partial\Sigma$ is a closed hypersurface in $\mathbb{R}^n$, by variational formula in \cite{HL08}*{Theorem 3.3}, we have
	\begin{align}
		\frac{\partial }{\partial t}
		\left(	
		\int_{\partial\Sigma_t}
		\sigma_{k-1}(S_{\bar{F}})\bar{F}(\bar{\nu})
		d \mu_s
		\right)		
		=k\int_{\partial\Sigma_t}
		\sigma_{k}(S_{\bar{F}})\<\partial_tX,\bar{\nu}\>
		d s
		.	\label{equ:pfthm5-15}
	\end{align}
	Combining with $\hat{f}=fF(\nu)$,   $\frac{k}{n\binom{n-1}{k-1}}=\frac{1}{\binom{n}{k}}$,  \eqref{equ:pfthm5-14}, and \eqref{equ:pfthm5-15}, we obtain
	$$	\frac{d}{d t} \mathcal{V}_{k+1, \omega_0}\left({\Sigma_t}\right)=\frac{k+1}{(n+1)\binom{n}{k}} \int_{\Sigma_t} f \sigma_{k+1} d \mu_F,\quad k=1,\cdots,n.$$
   Since $\sigma_{n+1}=0$, and 	$\frac{k+1}{\binom{n}{k}}=\frac{n-k}{\binom{n}{k+1}}$ for $1\leq k<n$, we derive
	 \eqref{equ:Thm5} for $k=1,\cdots,n$.
\end{proof}

Then we can derive  monotonicity results as follows:
\begin{theorem}\label{Thm6}
	If $\Sigma_t$ is a smooth strictly anisotropic $k$-convex (i.e. $H^F_k>0$, $H_k^F$ defined by \eqref{equ:Hk}), star-shaped solution to the  flow \eqref{equ:flow}, then  $\mathcal{V}_{k, \omega_0}\left({\Sigma_t}\right)$ (defined by \eqref{equ:Vk}) is non-increasing for $1 \leq k < n$, and $\mathcal{V}_{k,\omega_0}({\Sigma_t})$ is a constant function if and only if  $\Sigma_t$ is an $\omega_0$-capillary Wulff shape.
\end{theorem}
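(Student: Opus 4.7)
The plan is to combine three ingredients already assembled in the paper: the first variation formula \eqref{equ:Thm5}, the anisotropic capillary Minkowski identity \eqref{equ:Minkow}, and the Newton--Maclaurin inequalities for $\kappa^F=(\kappa_1^F,\dots,\kappa_n^F)$. Shifting the index by one in Theorem \ref{Thm:p_t-Vk} and recalling the speed $f=n(1+\omega_0 G(\nu_F)(\nu_F,E_{n+1}^F))-\hat{u}H_F$ from \eqref{equ:pfL3.2-4}, one obtains the starting identity
\[
\frac{d}{dt}\mathcal{V}_{k,\omega_0}(\Sigma_t)=\frac{n-k+1}{n+1}\int_{\Sigma_t} f\, H_k^F\, d\mu_F.
\]

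Next I would apply the Minkowski formula \eqref{equ:Minkow} at index $k$ (valid since $1\le k\le n-1$) to convert $n\int H_k^F(1+\omega_0 G(\nu_F)(\nu_F,E_{n+1}^F))\, d\mu_F$ into $n\int H_{k+1}^F\hat{u}\, d\mu_F$, and combine this with $H_F=nH_1^F$ to collapse the above integral to
\[
\frac{d}{dt}\mathcal{V}_{k,\omega_0}(\Sigma_t)=\frac{n(n-k+1)}{n+1}\int_{\Sigma_t}\bigl(H_{k+1}^F-H_1^F H_k^F\bigr)\hat{u}\, d\mu_F.
\]
Star-shapedness of $\Sigma_t$, preserved along the flow by Proposition \ref{prop:c1}, gives $\hat{u}>0$, while strict $k$-convexity ensures $H_j^F>0$ for $0\le j\le k$. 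The classical Newton inequality $(H_j^F)^2\ge H_{j-1}^F H_{j+1}^F$ (valid for any real-rooted symmetric polynomial of $\kappa^F$) then iterates to
\[
\frac{H_{k+1}^F}{H_k^F}\le\frac{H_k^F}{H_{k-1}^F}\le\cdots\le\frac{H_1^F}{H_0^F}=H_1^F,
\]
so $H_{k+1}^F\le H_1^F H_k^F$ pointwise, the integrand is non-positive, and $\mathcal{V}_{k,\omega_0}(\Sigma_t)$ is non-increasing.

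For the rigidity direction, $\mathcal{V}_{k,\omega_0}(\Sigma_t)$ being constant in $t$ forces the non-positive integrand to vanish pointwise for each $t$; equality throughout the Newton--Maclaurin chain then forces $\kappa_1^F=\cdots=\kappa_n^F$ on $\Sigma_t$, i.e.\ $\Sigma_t$ is totally anisotropically umbilical, and a compact anisotropically umbilical hypersurface in $\overline{\mathbb{R}^{n+1}_+}$ must be (a piece of) a Wulff shape, which together with the $\omega_0$-capillary boundary condition pins down an $\omega_0$-capillary Wulff shape. Since all the heavy analytic work (the variational formula, the Minkowski identity, preservation of star-shapedness, and positivity of $\hat{u}$) is already at hand, the proof reduces to this single calculation; the main point of care is matching the boundary contribution in the definition \eqref{equ:Vk} of $\mathcal{V}_{k,\omega_0}$ with the boundary terms that appear when deriving \eqref{equ:Thm5}, but this is already absorbed via \eqref{equ:pfthm5-15}, so nothing beyond the steps above is needed.
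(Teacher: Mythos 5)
Your proposal is correct and follows essentially the same route as the paper: both start from the variational formula \eqref{equ:Thm5} with the index shifted, and both combine the Minkowski formula \eqref{equ:Minkow} with the Newton--MacLaurin inequality $H_1^F H_k^F\geq H_{k+1}^F$ and the positivity $\hat{u}>0$; you merely apply the Minkowski identity before the inequality whereas the paper applies it after, which changes nothing. The rigidity discussion (equality forces anisotropic umbilicity, hence a capillary Wulff shape) also matches the paper's equality-case assertion.
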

\begin{proof}
	Using Theorem \ref{Thm:p_t-Vk}, and anisotropic capillary Minkowski Formula \eqref{equ:Minkow}, we see
	\begin{align}
		\partial_t \mathcal{V}_{k, \omega_0}\left(\widehat{\Sigma_t}\right) & =\frac{n+1-k}{n+1} \int_{\Sigma_t} f H^F_k d \mu_F\nonumber \\
		& =\frac{n+1-k}{n+1} \int_{\Sigma_t}\(n H^F_k\left(1+\omega_0G(\nu_F)(\nu_F, E^F_{n+1})\right)-H_F H^F_k\hat{ u}\) d \mu_F\nonumber \\
		& \leq \frac{n(n+1-k)}{n+1} \int_{\Sigma_t}\(H^F_k\left(1+\omega_0G(\nu_F)(\nu_F, E^F_{n+1})\right)-H^F_{k+1}\hat{ u}\) d \mu_F \nonumber\\
		& =0,\quad \text{for\ } 1\leq k<n, \label{equ:ptVk}
	\end{align}
	where we have used the Newton-MacLaurin inequality $H^F_1 H^F_k \geq H^F_{k+1}$ and $\hat{ u}>0$ for strictly anisotropic $k$-convex, star-shape hypersurfaces,  and  equality holds above if and only if  $\Sigma$ is an $\omega_0$-capillary Wulff shape.  
\end{proof}

 We finally arrive at proving    the generalized Alexandrov-Fenchel inequalities for anisotropic capillary hypersurfaces in the half-space.
\begin{proof}[Proof of  Theorem \ref{thm:AF-neq}]
	Taking flow \eqref{equ:flow} with the initial hypersurface $\Sigma$,  from Theorem \ref{thm:flow}, we know it converges to an $\omega_0$-capillary Wulff shape ${\W_{r_0,\omega_0}}$. And by Lemma \ref{lemma-evolution}
	we can see
	\begin{align*}
		\mathcal{V}_{0,\omega_0}(\Sigma)= |\widehat{\W_{ r_0,\omega_0}}|=\int_{\W_{ r_0,\omega_0}} \<X,\nu\> d\mu_g=r_0^{n+1}|\widehat{\W_{1,\omega_0}}|,
	\end{align*}
and	we can use Theorem \ref{Thm6} to derive following inequalities, since the convexity is preserved along  the flow by Theorem \ref{thm:convex preservation},
	\begin{align*}
		&\mathcal{V}_{k,\omega_0}(\Sigma)\geq \mathcal{V}_{k,\omega_0}(\W_{ r_0,\omega_0})\\
		=&\frac{1}{n+1}
		\left(
		\int_{\W_{ r_0,\omega_0}}H^F_{k-1}F(\nu)d\mu_g
		+\frac{\omega_0}{n}
		\int_{\partial \W_{ r_0,\omega_0}} H_{k-2}^{\bar{F}}\bar{F}(\bar{\nu})ds
		\right)
		\\
		=&\frac{1}{n+1}
		\left(
		r_0^{n-(k-1)}\int_{\W_{1,\omega_0}}H^F_{k-1}F(\nu)d\mu_g
		+\frac{\omega_0}{n}
		r_0^{(n-1)-(k-2)}\int_{\partial \W_{1,\omega_0}} H_{k-2}^{\bar{F}}\bar{F}(\bar{\nu})ds
		\right)
		\\
=&r_0^{n+1-k}\mathcal{V}_{k,\omega_0}(\W_{1,\omega_0}), \quad \text{for} \ k=1,\cdots,n-1.
	\end{align*}
	Thus
	\begin{align*}
		\mathcal{V}_{k,\omega_0}(\Sigma)\geq
		r_0^{n+1-k}\mathcal{V}_{k,\omega_0}(\W_{1,\omega_0})
		=\left(\frac{\mathcal{V}_{0,\omega_0}(\Sigma)}{\mathcal{V}_{0,\omega_0}(\W_{1,\omega_0})}
		\right)
		^{\frac{n+1-k}{n+1}}\mathcal{V}_{k,\omega_0}(\W_{1,\omega_0}),
	\end{align*}
	where $ k=1,\cdots,n-1,$ and equality holds above if and only if equality holds in \eqref{equ:ptVk}, that means $\Sigma$ is an $\omega_0$-capillary Wulff shape.
	Then we complete the proof.
\end{proof}

\section{The Alexandrov-Fenchel inequalities  for capillary convex bodies}\label{sec 7}
In this section, we list some definitions and properties of capillary convex bodies geometry (see \cite{Xia-arxiv}),  and derive  Lemma \ref{lemma:7.1} and  \ref{lemma:7.2}, then prove  Corollary \ref{cor:7.3}.
\begin{definition}[\cite{Xia-arxiv}]\label{def:capillary-condex-body}
	For $\theta \in(0, \pi)$, we say that $\Sigma$ is a capillary hypersurface (with constant contact angle $\theta$) in $\overline{\mathbb{R}_{+}^{n+1}}$ if
$$
\langle\nu, E_{n+1}\rangle=\cos \theta, \quad \text { along } \partial \Sigma .
$$
We call $\widehat{\Sigma}$ a capillary convex body if $\widehat{\Sigma}$ is a convex body (a compact convex set with nonempty interior) in $\mathbb{R}^{n+1}$ and $\Sigma$ is a capillary hypersurface in $\overline{\mathbb{R}_{+}^{n+1}}$. The class of capillary convex bodies in $\overline{\mathbb{R}_{+}^{n+1}}$ is denoted by $\mathcal{K}_\theta$.
\end{definition}
We denote
$$
\mathcal{C}_{\theta}:=\left\{\xi \in \overline{\mathbb{R}_{+}^{n+1}}: | \xi+\cos \theta E_{n+1} \mid=1\right\}.
$$
It's easy to check that $\widehat{\mathcal{C}_{\theta}}\in \mathcal{K}_\theta$.
In the following, we denote $\sigma$  and ${\rm d} \mu_{\sigma}$ the round metric and its associated volume form on $\mathcal{C}_\theta$ respectively. And $\nabla^{\sigma}$ is the Levi-Civita connection of $\sigma$ on $\mathcal{C}_\theta$.

\begin{definition}[\cite{Convex-book,Xia-arxiv}]
The mixed discriminant $Q:\left(\mathbb{R}^{n \times n}\right)^n \rightarrow \mathbb{R}$ is defined by	
	\begin{align}
		\operatorname{det}\left(\lambda_1 A_1+\cdots+\lambda_m A_m\right)=\sum_{i_1, \cdots, i_n=1}^m \lambda_{i_1} \cdots \lambda_{i_n} Q\left(A_{i_1}, \cdots, A_{i_n}\right),\label{equ:QA}
	\end{align}
	for $m \in \mathbb{N}, \lambda_1, \cdots, \lambda_m \geq 0$ and the real symmetric matrices $A_1, \cdots, A_m \in \mathbb{R}^{n \times n}$. If $\left(A_k\right)_{i j}$ denotes the $(i, j)$-element of the matrix $A_k$, then
	$$
	Q\left(A_1, \cdots, A_n\right)=\frac{1}{n!} \sum_{i_1, \cdots, i_n, j_1, \cdots, j_n} \delta_{j_1 \cdots j_n}^{i_1 \cdots i_n}\left(A_1\right)_{i_1 j_1} \cdots\left(A_n\right)_{i_n j_n}.
	$$
\end{definition}
	From \eqref{equ:QA}, we can check that for any invertible matrix $B\in \mathbb{R}^{n \times n}$, we have
	\begin{align}\label{equ:Q(AB)=Q(A)det(B)}
		Q\left(A_1B, \cdots, A_nB\right)=Q\left(A_1, \cdots, A_n\right)\det(B).
	\end{align}

	\begin{definition}[\cite{Xia-arxiv}]
		For any $f \in C^2\left(\mathcal{C}_\theta\right)$, we denote $A[f]:=\nabla^{\sigma}\nabla^{\sigma} f+f \sigma$. A multi-variable function $V:\left(C^2\left(\mathcal{C}_\theta\right)\right)^{n+1} \rightarrow \mathbb{R}$ is defined by	
	$$
	V\left(f_1, \cdots, f_{n+1}\right)=\frac{1}{n+1} \int_{\mathcal{C}_\theta} f_1 Q\left(A\left[f_2\right], \cdots, A\left[f_{n+1}\right]\right) d \mu_{\sigma} ,
	$$	
	for $f_i \in C^2\left(\mathcal{C}_\theta\right), 1 \leq i \leq n+1$.
	\end{definition}
	
Mei, Wang, Weng, and Xia \cite{Xia-arxiv}  introduced the Minkowski sum $K:=\sum_{i=1}^m \lambda_i K_i$  for $K_1, \cdots, K_m \in \mathcal{K}_\theta$, and proved that $V$ is a symmetric and multi-linear function, moreover
the mixed volume $V\left(K_{i_1}, \cdots, K_{i_{n+1}}\right)$ defined by
$$
|K|=\sum_{i_1, \cdots, i_{n+1}=1}^m \lambda_{i_1} \cdots \lambda_{i_{n+1}} V\left(K_{i_1}, \cdots, K_{i_{n+1}}\right),
$$
for capillary convex bodies in $\mathcal{K}_\theta$ can be written as an integral over $\mathcal{C}_\theta$ by using the capillary support function.
\begin{proposition}[\cite{Xia-arxiv}*{Proposition 2.12}]\label{prop7.3}
	Let $K_1, \cdots, K_m \in \mathcal{K}_\theta$ and $h_i$ be the capillary support function of $K_i$ (where $h_i(\xi)=u_i(\nu^{-1}(\xi+\cos\theta E_{n+1}))$, for $\xi\in\mathcal{C}_{\theta}$, and $u_i$ is the support function of $K_i$). Then we have
\begin{align}\label{equ:VKKKLLL}
	V\left(K_{i_1}, \cdots, K_{i_{n+1}}\right)&=V\left(h_{i_1}, \cdots, h_{i_{n+1}}\right)
	\\
	&=\frac{1}{n+1} \int_{\mathcal{C}_\theta} h_{i_1} Q\left(A\left[h_{i_2}\right], \cdots, A\left[h_{i_{n+1}}\right]\right) d \mu_{\sigma} .\nonumber
\end{align}
\end{proposition}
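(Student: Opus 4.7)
The plan is to derive the integral representation in three steps: first establish the single-body case, then exploit additivity of the capillary support function under Minkowski sum, and finally read off the mixed volume by matching coefficients in a polynomial expansion.

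First I would prove the one-body volume formula
\[
|K| \;=\; \frac{1}{n+1}\int_{\mathcal{C}_\theta} h\,\det\!\bigl(A[h]\bigr)\,\mathrm{d}\mu_\sigma,
\qquad K\in\mathcal{K}_\theta,
\]
where $h$ is the capillary support function of $K$. The idea is to parametrise $\Sigma=\partial K\cap\mathbb{R}^{n+1}_+$ via the inverse Gauss map $\nu^{-1}:\mathcal{C}_\theta\to\Sigma$ induced by the reparametrisation $\xi\mapsto \nu^{-1}(\xi+\cos\theta E_{n+1})$; under this map the Weingarten tensor pulls back to $A[h]=\nabla^\sigma\nabla^\sigma h+h\sigma$, and $\det A[h]\,\mathrm{d}\mu_\sigma$ is the pulled-back area element times the principal-curvature-radius determinant. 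The usual identity $|K|=\tfrac{1}{n+1}\int_\Sigma\langle X,\nu\rangle\,\mathrm{d}\mu_g$ then transforms to the claimed formula. The only subtlety is the capillary boundary $\partial K\cap\partial\mathbb{R}^{n+1}_+$: because the flat disc part contributes zero to $\int \langle X,\nu\rangle$ on $\partial\mathbb{R}^{n+1}_+$ (since there $\langle X,\nu\rangle=0$), the full boundary integral really is just the integral over $\Sigma$, matching the integral over $\mathcal{C}_\theta$.

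Next I would use the capillary Minkowski sum theory developed in \cite{Xia-arxiv}: the key linearity property is that if $K=\sum_{i=1}^m\lambda_i K_i$ in $\mathcal{K}_\theta$ (with $\lambda_i\ge 0$), then the capillary support function satisfies $h_K=\sum_i\lambda_i h_i$ on $\mathcal{C}_\theta$. Granted this, substitute into the one-body formula:
\[
\Bigl|\sum_i\lambda_i K_i\Bigr|
=\frac{1}{n+1}\int_{\mathcal{C}_\theta}\Bigl(\sum_i\lambda_i h_i\Bigr)\det\!\Bigl(\sum_j\lambda_j A[h_j]\Bigr)\,\mathrm{d}\mu_\sigma.
\]
By the defining expansion \eqref{equ:QA} of the mixed discriminant,
\[
\det\!\Bigl(\sum_j\lambda_j A[h_j]\Bigr)=\sum_{j_1,\dots,j_n}\lambda_{j_1}\cdots\lambda_{j_n}\,Q\bigl(A[h_{j_1}],\dots,A[h_{j_n}]\bigr),
\]
so the volume is a polynomial in $(\lambda_1,\dots,\lambda_m)$ of degree $n+1$ with coefficient of $\lambda_{i_1}\cdots\lambda_{i_{n+1}}$ equal to $\tfrac{1}{n+1}\int_{\mathcal{C}_\theta}h_{i_1}Q(A[h_{i_2}],\dots,A[h_{i_{n+1}}])\,\mathrm{d}\mu_\sigma$, after symmetrising over the $n+1$ positions.

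Finally I would compare with the definitional polynomial expansion $|K|=\sum\lambda_{i_1}\cdots\lambda_{i_{n+1}}V(K_{i_1},\dots,K_{i_{n+1}})$. Since $V$ is symmetric in its entries and the multilinear functional $V(h_{i_1},\dots,h_{i_{n+1}})$ is likewise symmetric (this symmetry requires an integration-by-parts argument on $\mathcal{C}_\theta$ using the self-adjointness of $f\mapsto A[f]$, which produces matching boundary terms on $\partial\mathcal{C}_\theta$ that cancel thanks to the capillary angle), the coefficients of equal monomials coincide, yielding the identity $V(K_{i_1},\dots,K_{i_{n+1}})=V(h_{i_1},\dots,h_{i_{n+1}})$.

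The hardest step will be the symmetry of $V(h_{i_1},\dots,h_{i_{n+1}})$ on $\mathcal{C}_\theta$: in the closed case (on $\mathbb{S}^n$) it follows immediately from integration by parts, but on the spherical cap $\mathcal{C}_\theta$ boundary contributions appear along $\partial\mathcal{C}_\theta$, and one must verify that the capillary boundary behaviour of each $h_i$ (inherited from the contact-angle condition $\langle\nu,E_{n+1}\rangle=\cos\theta$) forces these boundary terms to vanish or to cancel pairwise. A secondary technical point is the linearity $h_{\sum\lambda_iK_i}=\sum\lambda_ih_i$ for the capillary Minkowski sum, which rests on the precise definition adopted in \cite{Xia-arxiv} and should be cited from there rather than rederived.
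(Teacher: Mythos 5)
This proposition is not proved in the paper at all: it is quoted verbatim from \cite{Xia-arxiv}*{Proposition 2.12}, and the surrounding text only records that Mei--Wang--Weng--Xia introduced the capillary Minkowski sum, proved $V$ symmetric and multilinear, and established this integral representation. So there is no in-paper proof to compare against; your proposal has to be judged on its own. On that basis your plan is the standard and correct one, and almost certainly the route taken in the cited reference: (i) the one-body formula $|K|=\frac{1}{n+1}\int_{\mathcal{C}_\theta}h\det(A[h])\,\mathrm{d}\mu_\sigma$ follows from $|K|=\frac{1}{n+1}\int_{\partial K}\langle X,\nu\rangle$, the vanishing of $\langle X,\nu\rangle$ on the flat part of $\partial K$, and the change of variables $\mathrm{d}\mu_\sigma=\det(A[h])^{-1}\mathrm{d}\mu_g$ together with $B^{-1}g=A[h]$ (both of which this paper itself uses in the proof of Lemma 7.7); (ii) additivity $h_{\sum\lambda_iK_i}=\sum\lambda_ih_i$ is essentially the definition of the Minkowski sum in $\mathcal{K}_\theta$ adopted in \cite{Xia-arxiv}, so citing it is appropriate; (iii) the mixed-discriminant expansion and coefficient matching are routine. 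You have also correctly isolated the genuinely nontrivial ingredient, namely the full symmetry of $(f_1,\dots,f_{n+1})\mapsto\int_{\mathcal{C}_\theta}f_1Q(A[f_2],\dots,A[f_{n+1}])\,\mathrm{d}\mu_\sigma$, which on the cap $\mathcal{C}_\theta$ requires checking that the boundary terms from integration by parts along $\partial\mathcal{C}_\theta$ cancel; this uses the Robin-type boundary condition satisfied by capillary support functions (the analogue of $\hat{\nabla}_{\mu_F}\bar{u}=0$ in Lemma 3.4 of this paper), and is precisely the content of the symmetry lemma in \cite{Xia-arxiv}. Two minor points to tighten if you were to write this out in full: the one-body formula as stated needs $A[h]>0$, i.e.\ strict convexity, so general bodies in $\mathcal{K}_\theta$ require an approximation step; and the coefficient-matching argument only identifies the \emph{symmetrized} coefficients, which is why step (iii) cannot be closed before the symmetry in (the deferred) step is established.
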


\begin{lemma}\label{lemma:7.1}
	If $\W_{1,\omega_0}=(\W+\omega_0E_{n+1}^F)\cap\overline{\mathbb{R}^{n+1}_+}$ (also defined by \eqref{equ:crwo}) is a capillary hypersurface with constant contact angle $\theta$ (i.e., $\widehat{\W_{1,\omega_0}}\in \mathcal{K}_{\theta}$) for $\theta\in (0,\pi)$, then we have
	\begin{itemize}
		\item[ (i) ] Condition \ref{condition}  is equivalent to $\theta\in(0,\frac{\pi}{2}]$;
		\item[(ii)] An anisotropic $\omega_0$-capillary hypersurface $\Sigma$ is also a capillary hypersurface with constant contact angle $\theta$.
	\end{itemize}
\end{lemma}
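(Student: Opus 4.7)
The plan is to handle (ii) by a short definitional argument and to tackle (i) by first deriving, from the constant contact angle, the structural identity $A_F(\nu)\mu=\rho\mu$, and then computing the sign of the relevant component of $\tilde Q$.

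For (ii), take $X\in\partial\Sigma$. The anisotropic $\omega_0$-capillary condition \eqref{equ:w0-capillary} places $\widetilde{\nu_F}(X)=\Psi(\nu(X))+\omega_0 E_{n+1}^F$ in $\widetilde{\W}\cap\{x_{n+1}=0\}=\partial\W_{1,\omega_0}$, and by construction of the Cahn-Hoffman map $\widetilde{\Psi}$, the vector $\nu(X)$ is the outward Euclidean unit normal of $\widetilde{\W}$ at that point. The hypothesis that $\W_{1,\omega_0}$ is isotropic $\theta$-capillary then immediately gives $\langle\nu(X),E_{n+1}\rangle=\cos\theta$ for every $X\in\partial\Sigma$.

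For (i), by Remark \ref{rk1}(4) it suffices to decide the sign of $\tilde Q(\tilde z)(Y,Y,A_F(\nu)\mu)$ for $\tilde z\in\partial\W_{1,\omega_0}$ and $Y\in T_{\tilde z}(\partial\W_{1,\omega_0})$. Since $\widetilde{\nu_F}|_{\widetilde{\W}}=\mathrm{id}$ and $A_{\widetilde F}=A_F$, the isotropic shape operator of $\widetilde{\W}$ satisfies $\mathrm{d}\nu=A_F(\nu)^{-1}$. Differentiating $\langle\nu,E_{n+1}\rangle=\cos\theta$ tangentially along $\partial\W_{1,\omega_0}$ and using the decomposition $E_{n+1}=\cos\theta\,\nu+\langle E_{n+1},\mu\rangle\mu$ (valid because $T(\partial\W_{1,\omega_0})$ is horizontal), one obtains $\langle A_F^{-1}\mu,Y\rangle=0$ for every $Y\in T(\partial\W_{1,\omega_0})$. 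Self-adjointness of $A_F$ then forces $A_F(\nu)\mu=\rho\mu$ with some $\rho>0$, and inserting this into Lemma \ref{lem:Q-translate} collapses Condition \ref{condition} to a scalar comparison of $Q(z)(Y,Y,\mu)$ with $G(z)(Y,Y)$.

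To convert that scalar inequality into $\cos\theta\geq 0$, I plan to differentiate the boundary identity $\langle\widetilde{\Psi}(\nu),E_{n+1}\rangle=0$ twice in a horizontal direction $Y\in T(\partial\W_{1,\omega_0})$. Expanding $\widetilde{\Psi}(\nu)=F(\nu)\nu+\nabla^{\mathbb{S}}F(\nu)+\omega_0 E_{n+1}^F$, applying the Gauss formula \eqref{equ:Gauss-formula} to the Wulff shape $\W$ itself (on which $\hat h=\hat g$ and $A_{ijk}=-\tfrac12 Q_{ijk}$), and exploiting the eigen-relation $A_F\mu=\rho\mu$ should yield $Q(z)(Y,Y,\mu)$ as $G(z)(Y,Y)$ times a factor proportional to $\cos\theta$. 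Plugging back, Condition \ref{condition} then reduces to $\cos\theta\,G(z)(Y,Y)\geq 0$, i.e.\ $\theta\in(0,\pi/2]$; as a consistency check, $\W=\mathbb{S}^n$ gives $Q\equiv 0$ and $\omega_0=-\cos\theta$, recovering Remark \ref{rk1}(2). The hard part will be this second-order differentiation --- a Codazzi-type calculation analogous to the derivation of \eqref{equ:haan}, now specialised to $\W$ viewed as an anisotropic $\omega_0$-capillary hypersurface with all anisotropic curvatures equal to $1$. Sign bookkeeping, in particular $\langle\mu,E_{n+1}\rangle=-\sin\theta<0$ and positivity of $1+\omega_0\,G(z)(z,E_{n+1}^F)$, will need care.
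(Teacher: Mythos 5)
Part (ii) of your proposal and the first half of your argument for (i) are sound and essentially agree with the paper. In particular, deriving $A_F(\nu)\mu=\rho\mu$ by differentiating $\langle\nu,E_{n+1}\rangle=\cos\theta$ once along $\partial\W_{1,\omega_0}$ and using $\mathrm{d}\nu=A_F(\nu)^{-1}$ on the Wulff shape is a clean equivalent of the paper's identity $\tilde{G}(\tilde z)(e_\alpha,\mu)=0$ (see \eqref{equ:pf-lemma7.1-5}), and it even avoids the case split the paper incurs by squaring the contact-angle relation. The reduction of Condition \ref{condition} via Lemma \ref{lem:Q-translate} to the sign of $\tilde{Q}(\tilde z)(Y,Y,\mu)$ is also correct.

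The gap is in your final step, which is exactly where the equivalence with $\theta\in(0,\pi/2]$ has to be proved. On $\widetilde{\W}$ the anisotropic Gauss map is the identity, so along $\partial\W_{1,\omega_0}$ the identity $\langle\widetilde{\Psi}(\nu),E_{n+1}\rangle=0$ is nothing but $\langle\tilde z,E_{n+1}\rangle=0$, i.e.\ the statement that the boundary lies in the hyperplane. Differentiating it twice tangentially and inserting the Gauss formula (with $\hat h=\hat g$ and $A=-\tfrac12 Q$) yields, after discarding the terms that pair to zero with $E_{n+1}$, the single relation $\tilde{Q}(\tilde z)(Y,Y,e_n)=2\,\tilde{\mathrm{II}}^{\partial}(Y,Y)$, where $\tilde{\mathrm{II}}^{\partial}$ is the second fundamental form of $\partial\W_{1,\omega_0}$ inside $(\widetilde{\W},\tilde g)$. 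This is one equation in two unknowns: $\tilde{\mathrm{II}}^{\partial}(Y,Y)$ is genuine second-order boundary data that is not determined by the eigenrelation $A_F\mu=\rho\mu$, so the sign of $\tilde{Q}(Y,Y,\mu)$ cannot be read off. The missing input is the constant-contact-angle condition used at \emph{second} order. The paper supplies it by differentiating $(\tilde F^0_{n+1})^2=\cos^2\theta\,|D\tilde F^0|^2$ twice (\eqref{equ:pf-lemma7.1-4}): there the third derivative $\tilde F^0_{n+1,\alpha\beta}$, paired against $E_{n+1}=-\sin\theta\,\mu+\cos\theta\,\nu$, isolates $-\sin\theta\,\tilde{Q}(\tilde z)(e_\alpha,e_\beta,\mu)$ (the $\nu$-component of $\tilde Q$ vanishes by \eqref{equ:Q_0}), while the remaining terms are squares of $\tilde G$-components whose positivity produces precisely the factor $-\cot\theta$ times a positive quantity in \eqref{equ:pf-lemma7.1-10}. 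To complete your argument you must differentiate $\langle\mathrm{d}\nu(Y),E_{n+1}\rangle=0$ once more (or use the paper's squared relation) and combine it with your hyperplane identity to eliminate $\tilde{\mathrm{II}}^{\partial}$; as it stands the key sign computation is not established.
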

\begin{proof}
	(i) From Lemma \ref{lem:Q-translate}, we know that the Wulff shape $\W$ and $\omega_0$ satisfies Condition \ref{condition} if and only if for new Wulff shape $\widetilde{\W}=\W+\omega_0E_{n+1}^F$ (with respect to $\tilde{F}^0$), 
	 \begin{align}\label{equ:pf-lemma7.1-0}
		\tilde{Q}(\tilde{z})(e_{\alpha},e_{\alpha},e_n)\leq 0, \quad\text{for}\  {\tilde{z}}\in\partial \W_{1,\omega_0},
	\end{align}
	with standard orthogonal frame  
	$\{e_{\alpha}\}_{\alpha=1}^{n-1}\in T_{\tilde{z}}{\left(\partial \W_{1,\omega_0}\right)}$ with respect to $\tilde{g}$, and
	$e_n=\frac{A_F(\nu)\mu}{\left|A_F(\nu)\mu\right|_{\tilde{g}}}=\frac{A_F(\nu)\mu}{\left(\tilde{G}(\tilde{z})(A_F(\nu)\mu,A_F(\nu)\mu)\right)^{\frac{1}{2}}}$ for ${\tilde{z}}\in\partial \W_{1,\omega_0}$, where  $\nu$ and $\mu$ are the unit outward normal of $\partial \W_{1,\omega_0}\subset\overline{\mathbb{R}^{n+1}_+}$ and $ \partial \W_{1,\omega_0}\subset  \W_{1,\omega_0}$.
	We check that the inequality \eqref{equ:pf-lemma7.1-0} holds if and only if $\theta\in(0,\frac{\pi}{2}]$.

	 Since $\widehat{\W_{1,\omega_0}}\in \mathcal{K}_{\theta}$ for $\theta\in (0,\pi)$, we conclude that
	\begin{align}\label{equ:pf-lemma7.1-1}
		\<\nu({\tilde{z}}),E_{n+1}\>=\cos\theta, \quad \text{for}\ {\tilde{z}}\in \partial \W_{1,\omega_0}.
	\end{align}
	By \eqref{equ:crwo} we have $\W_{1,\omega_0}\in\widetilde{\W}$, then we see  $\W_{1,\omega_0}=\{{\tilde{z}}\in\overline{\mathbb{R}^{n+1}_+}:\tilde{F}^0({\tilde{z}})=1\}$, which implies
	\begin{align}\label{equ:pf-lemma7.1-2}
		\nu({\tilde{z}})=\frac{D\tilde{F}^0(\tilde{z})}{\<D\tilde{F}^0(\tilde{z}),D\tilde{F}^0(\tilde{z})\>^{\frac{1}{2}}}:=\frac{D\tilde{F}^0(\tilde{z})}{\left|D\tilde{F}^0(\tilde{z})\right|},
		 \quad \text{for}\ {\tilde{z}}\in   \W_{1,\omega_0}.
	\end{align}
	Taking $\{\epsilon_i\}_{i=1}^{n+1}$ the standard orthogonal frame on $\mathbb{R}^{n+1}$ with $\epsilon_{n+1}=E_{n+1}$, denote  that  $\tilde{F}^0_i=D_{\epsilon_i}\tilde{F}^0(\tilde{z}),\tilde{F}^0_{i,\alpha}=D_{e_{\alpha}}D_{\epsilon_i}\tilde{F}^0(\tilde{z}), \tilde{F}^0_{i,\alpha\beta}=D_{e_{\beta}}D_{e_{\alpha}}D_{\epsilon_i}\tilde{F}^0(\tilde{z})$ for $i=1,\cdots,n+1$ and $\alpha,\beta=1,\cdots,n-1$. Combining with \eqref{equ:pf-lemma7.1-1} and \eqref{equ:pf-lemma7.1-2}, we have
	\begin{align*}
		\left(\tilde{F}^0_{n+1}\right)^2=\cos^2\theta\sum_{i=1}^{n+1}\left(\tilde{F}^0_{i}\right)^2, \quad\text{for}\ {\tilde{z}}\in\partial \W_{1,\omega_0}.
	\end{align*}
	Taking the covariant derivative $D_{e_{\alpha}}$  on both sides of the above equation yields
	\begin{align}\label{equ:pf-lemma7.1-3}
		\tilde{F}^0_{n+1}\tilde{F}^0_{n+1,\alpha}=\cos^2\theta \sum_{i=1}^{n+1}\tilde{F}^0_{i}\tilde{F}^0_{i,\alpha},\quad\forall \alpha=1,\cdots,n=1,\, {\tilde{z}}\in\partial \W_{1,\omega_0}.
	\end{align}
	Taking the covariant derivative $D_{e_{\beta}}$  again, for any $\alpha,\beta\in\{1,\cdots,n-1\}$ and ${\tilde{z}}\in\partial \W_{1,\omega_0}$, we have
	\begin{align}\label{equ:pf-lemma7.1-4}
		\tilde{F}^0_{n+1,\beta}\tilde{F}^0_{n+1,\alpha}+\tilde{F}^0_{n+1}\tilde{F}^0_{n+1,\alpha\beta}=\cos^2\theta \sum_{i=1}^{n+1}\tilde{F}^0_{i,\beta}\tilde{F}^0_{i,\alpha}+\cos^2\theta \sum_{i=1}^{n+1}\tilde{F}^0_{i}\tilde{F}^0_{i,\alpha\beta}.
	\end{align}
	From \eqref{equ:pf-lemma7.1-1}, \eqref{equ:pf-lemma7.1-2},  \eqref{equ:G}, and the fact that $\<D\tilde{F}^0(\tilde{z}),e_{\alpha}\>=\left|D\tilde{F}^0(\tilde{z})\right|\cdot\<\nu,e_{\alpha}\>=0$, we can rewrite  \eqref{equ:pf-lemma7.1-3}  as
	\begin{align*}
		\cos\theta\cdot\left|D\tilde{F}^0(\tilde{z})\right|\cdot \tilde{G}(\tilde{z})(e_{\alpha},E_{n+1})=\cos^2\theta \cdot \left|D\tilde{F}^0(\tilde{z})\right|\cdot \tilde{G}(\tilde{z})(e_{\alpha},\nu),
	\end{align*}
	which implies
	\begin{align*}
      0&= \cos \theta\cdot \tilde{G}(\tilde{z})(e_{\alpha},\cos\theta\cdot\nu-E_{n+1})
      \\
      &=\cos\theta\cdot\sin\theta\cdot \tilde{G}(\tilde{z})(e_{\alpha},\mu),
	\end{align*}
	where we use the fact that $\left|D\tilde{F}^0(\tilde{z})\right|\neq 0$ and $E_{n+1}=-\sin\theta\cdot\mu+\cos\theta\cdot\nu$ (\cite{Wang-Weng-Xia}*{eq.(2.7)}). Then we have either $\theta=\frac{\pi}{2}$, or
	\begin{align}
		\label{equ:pf-lemma7.1-5}
		\tilde{G}(\tilde{z})(e_{\alpha},\mu)=0, \quad\forall \alpha=1,\cdots,n-1,\, {\tilde{z}}\in\partial \W_{1,\omega_0}.
	\end{align}
	
	\textbf{Case 1:} If $\theta\neq\frac{\pi}{2}$, then equation \eqref{equ:pf-lemma7.1-5} holds, which implies that
	\begin{align}
		\mu=&\sum_{\alpha=1}^{n-1}\tilde{G}(\tilde{z})(\mu,e_{\alpha})e_{\alpha}+\tilde{G}(\tilde{z})(\mu,e_n)e_n\nonumber
		\\
		=&\frac{\tilde{G}(\tilde{z})(A_F(\nu)\mu,\mu)}{\tilde{G}(\tilde{z})(A_F(\nu)\mu,A_F(\nu)\mu)}A_F(\nu)\mu\nonumber
		\\
		\overset{\eqref{equ:G(muF,Y)=<mu,Y>}}{=}&\frac{A_F(\nu)\mu}{\<A_F(\nu)\mu,\mu\>}\label{equ:pf-lemma7.1-6}.
	\end{align}
	From \eqref{equ:pf-lemma7.1-1}, \eqref{equ:pf-lemma7.1-2},  \eqref{equ:G}, and $\<D\tilde{F}^0(\tilde{z}),e_{\alpha}\>=\left|D\tilde{F}^0(\tilde{z})\right|\cdot\<\nu,e_{\alpha}\>=0$, the equation \eqref{equ:pf-lemma7.1-4} can be written as
	\begin{align}
		&\cos\theta\cdot\left|D\tilde{F}^0(\tilde{z})\right|\cdot\left(\tilde{F}^0_{n+1,\alpha\beta}-\cos\theta\cdot\sum_{i=1}^{n+1}\<\nu,\epsilon_i\>\tilde{F}^0_{i,\alpha\beta}\right)\nonumber
		\\
		=&\cos^2\theta\cdot\sum_{i=1}^{n+1}\tilde{G}(\tilde{z})(e_{\alpha},\epsilon_i)\tilde{G}(\tilde{z})(e_{\beta},\epsilon_i)	-\tilde{G}(\tilde{z})(e_{\alpha},E_{n+1})		\tilde{G}(\tilde{z})(e_{\beta},E_{n+1}).\label{equ:pf-lemma7.1-7}
	\end{align}
	By $E_{n+1}=-\sin\theta\cdot\mu+\cos\theta\cdot\nu$, \eqref{equ:Q}, $\<D\tilde{F}^0(\tilde{z}),e_{\alpha}\>=0$,  and $\<D\tilde{F}^0(\tilde{z}),\mu\>=|D\tilde{F}^0(\tilde{z})|\cdot\<\nu,\mu\>=0$, we obtain
	\begin{align}
		&\tilde{F}^0_{n+1,\alpha\beta}-\cos\theta\cdot\sum_{i=1}^{n+1}\<\nu,\epsilon_i\>\tilde{F}^0_{i,\alpha\beta}=\sum_{i=1}^{n+1}\<E_{n+1}-\cos\theta\cdot\nu,\epsilon_i\>\tilde{F}^0_{i,\alpha\beta}\nonumber
		\\
		=&-\sin\theta\sum_{i=1}^{n+1}\<\mu,\epsilon_i\>\tilde{F}^0_{i,\alpha\beta}\nonumber
		=-\sin\theta\cdot   \tilde{Q}(\tilde{z})(e_{\alpha},e_{\beta},\mu)
		\\
		\overset{\eqref{equ:pf-lemma7.1-6}}{=}&-\frac{\sin\theta}{\<A_F(\nu)\mu,\mu\>} \tilde{Q}(\tilde{z})(e_{\alpha},e_{\beta},A_F(\nu)\mu),\label{equ:pf-lemma7.1-8}
	\end{align}
	and
	\begin{align}
		\tilde{G}(\tilde{z})(e_{\alpha},E_{n+1})		\tilde{G}(\tilde{z})(e_{\beta},E_{n+1})\overset{\eqref{equ:pf-lemma7.1-5}}{=}\cos^2\theta\cdot \tilde{G}(\tilde{z})(e_{\alpha},\nu)		\tilde{G}(\tilde{z})(e_{\beta},\nu)
		\label{equ:pf-lemma7.1-9}.
	\end{align}
	Putting \eqref{equ:pf-lemma7.1-8}, \eqref{equ:pf-lemma7.1-9}, $\cos\theta\neq 0$, and $\left|D\tilde{F}^0(\tilde{z})\right|=\frac{1}{F(\nu)}$ into \eqref{equ:pf-lemma7.1-7} yields
	\begin{align}
		 &\tilde{Q}(\tilde{z})(e_{\alpha},e_{\alpha},A_F(\nu)\mu)\nonumber
		 \\
		 =&-\cot\theta\cdot F(\nu) \<A_F(\nu)\mu,\mu\>\left(\sum_{i=1}^{n+1}\left(\tilde{G}(\tilde{z})(e_{\alpha},\epsilon_i)\right)^2-\left(\tilde{G}(\tilde{z})(e_{\alpha},\nu)\right)^2\right).\label{equ:pf-lemma7.1-10}
	\end{align}
	We take another standard orthogonal frame   $\{\eta_i\}_{i=1}^{n+1}$  on $T_{\tilde{z}}\mathbb{R}^{n+1}$ with ${\tilde{z}}\in\partial \W_{1,\omega_0}$ and $\eta_{n+1}=\nu$,
	then $\{\eta_{i}\}_{i=1}^{n}\in T_{\tilde{z}}\W_{1,\omega_0}$, and there exists an $i_0\in\{1,\cdots,n\}$ such that  $\tilde{G}(\tilde{z})(e_{\alpha},\eta_{i_0})\neq 0$, otherwise we have $e_{\alpha}\notin T_{\tilde{z}}\W_{1,\omega_0}$ which leads to a contradiction.
	So we have \begin{align}
		&\sum_{i=1}^{n+1}\left(\tilde{G}(\tilde{z})(e_{\alpha},\epsilon_i)\right)^2-\left(\tilde{G}(\tilde{z})(e_{\alpha},\nu)\right)^2=\sum_{i=1}^{n+1}\left(\tilde{G}(\tilde{z})(e_{\alpha},\eta_i)\right)^2-\left(\tilde{G}(\tilde{z})(e_{\alpha},\nu)\right)^2\nonumber
		\\
		=&\sum_{i=1}^{n}\left(\tilde{G}(\tilde{z})(e_{\alpha},\eta_i)\right)^2\geq \left(\tilde{G}(\tilde{z})(e_{\alpha},\eta_{i_0})\right)^2>0.\label{equ:pf-lemma7.1-11}
	\end{align}
	From \eqref{equ:pf-lemma7.1-10} and \eqref{equ:pf-lemma7.1-11} we know that, for case $\theta\neq\frac{\pi}{2}$, the  inequality \eqref{equ:pf-lemma7.1-0} holds if and only if $\cot\theta\geq 0$, i.e., $\theta\in(0,\frac{\pi}{2})$.
	
	\textbf{Case 2:} If $\theta=\frac{\pi}{2}$, we have $E_{n+1}=-\sin\theta\cdot\mu+\cos\theta\cdot \nu=-\mu.$ From \eqref{equ:pf-lemma7.1-1} and \eqref{equ:pf-lemma7.1-2}, we have
	\begin{align*}
		\tilde{F}^0_{n+1}=0, \quad\text{for}\ {\tilde{z}}\in \partial \W_{1,\omega_0}.
	\end{align*}
	Similarly, for $\alpha,\beta\in\{1,\cdots,n-1\}$, we take the covariant derivative   on both sides of the above equation yields
	\begin{align*}
		\tilde{F}^0_{n+1,\alpha}=0, \quad\text{for}\ {\tilde{z}}\in \partial \W_{1,\omega_0};
		\\
		\tilde{F}^0_{n+1,\alpha\beta}=0, \quad\text{for}\ {\tilde{z}}\in \partial \W_{1,\omega_0},
	\end{align*}
	which implies
	\begin{align*}
		\tilde{G}(\tilde{z})(\mu,e_{\alpha})&=-\tilde{G}(\tilde{z})(E_{n+1},e_{\alpha})=-F^{0}_{n+1,\alpha}=0;\\
		\tilde{Q}(\tilde{z})(\mu,e_\alpha,e_{\alpha}
		)&=-\tilde{F}^0_{n+1,\alpha\alpha}=0.
	\end{align*}
	By the first equation above, we have $\mu=\sum_{i=1}^{n}\tilde{G}(\tilde{z})(\mu,e_i)=\lambda e_n$, where $\lambda=\tilde{G}(\tilde{z})(\mu,e_n)\neq 0$, combining with the second equation above, we have
	\begin{align*}
		\tilde{Q}(\tilde{z})(e_n,e_{\alpha},e_{\alpha})=0.
	\end{align*}
	Combining with \textbf{Case 1} and \textbf{Case 2}, we complete the proof of (i).
	\\
	
	(ii) Denote by $\nu_F^{\Sigma}$ the anisotropic unit outward normal of $\Sigma$. Since $\Sigma$ is an anisotropic $\omega_0$-capillary hypersurface, we have $\nu_{F}^{\Sigma}(\partial\Sigma)=\W\cap\{x_{n+1}=-\omega_0\}$, then  $\nu_F^{\Sigma}(\Sigma)=\W\cap\{x_{n+1}\geq-\omega_0\}=\W_{1,\omega_0}-\omega_0E_{n+1}^F$.
	It means that, for $z\in \W_{1,\omega_0}$, there exists a unique point $\xi\in\Sigma$ such that $\nu_F^{\Sigma}(\xi)=x:=z-\omega_0E_{n+1}^F$, and $T_z\W_{1,\omega_0}=T_x\W=T_{\xi}\Sigma$ by the definition of anisotropic Gauss map $\nu_{F}^{\Sigma}$. Denote $\nu^{\Sigma}$ the  unit outward normal of $\Sigma$, then we have $\nu^{\Sigma}(\xi)=\nu(z)$. From \eqref{equ:pf-lemma7.1-1}, we have
	\begin{align*}
		\<\nu^{\Sigma}(\xi),E_{n+1}\>=\cos\theta, \quad \text{for}\ \xi\in \partial \Sigma,
	\end{align*}
	which means that $\Sigma$ is also a capillary hypersurface with constant contact angle $\theta$.
\end{proof}

Inspired by Mei, Wang, Weng, and Xia \cite{Xia-arxiv}*{Lemma 2.14}, we have the following lemma.
\begin{lemma}
	\label{lemma:7.2}
	The anisotropic capillary quermassintegrals $\mathcal{V}_{k+1,\omega_0}$ ($k=0,\cdots,n$) defined by \eqref{equ:v1} and \eqref{equ:Vk} have the following forms.
	\begin{itemize}
		\item[ (i)] We have the following expression for $\mathcal{V}_{k+1,\omega_0}(\Sigma)$: \begin{align}\label{equ:Vk=Vkkklll}
			\mathcal{V}_{k+1,\omega_0}(\Sigma)=\frac{1}{n+1}\int_{\Sigma}H_k^F\(1+\omega_0G(\nu_{{F}})(\nu_{{F}},E_{n+1}^F)\) {\rm d}\mu_F.
		\end{align}
		\item[(ii)]  Specifically,
		 if $\widehat{\W_{1,\omega_0}}\in \mathcal{K}_{\theta}$ for $\theta\in(0,\pi)$, then we have
		\begin{align*}
		\mathcal{V}_{k+1,\omega_0}(\Sigma)	=V\(\underbrace{\widehat{\Sigma}, \cdots, \widehat{\Sigma}}_{(n-k) \text { copies }}, \underbrace{\widehat{\W_{1,\omega_0}}, \cdots, \widehat{\W_{1,\omega_0}}}_{(k+1) \text { copies }}\),
		\end{align*}
		where the right hand side of the equation is the mixed volume which is written as \eqref{equ:VKKKLLL}.
	\end{itemize}
\end{lemma}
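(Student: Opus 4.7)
Claim (i) will be tackled first by rewriting the proposed right-hand side of \eqref{equ:Vk=Vkkklll} using $d\mu_F=F(\nu)d\mu_g$ and \eqref{equ:G(vF,Y)=<v,Y>/F}, so that it becomes $\frac{1}{n+1}\int_\Sigma H_k^F\,\tilde F(\nu)\,d\mu_g$, where $\tilde F=F+\omega_0\<E_{n+1}^F,\cdot\>$ is the support function of the translated Wulff shape $\widetilde\W$ from Section \ref{subsec 5.2}. Comparing with \eqref{equ:Vk}, the claim becomes equivalent to the boundary-bulk identity $\int_{\partial\Sigma} H_{k-1}^{\bar F}\bar F(\bar\nu)\,ds=n\int_\Sigma H_k^F\<E_{n+1}^F,\nu\>\,d\mu_g$ for $k\ge 1$, and to $|\widehat{\partial\Sigma}|=\int_\Sigma\<E_{n+1}^F,\nu\>\,d\mu_g$ when $k=0$. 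The case $k=0$ is immediate from the Euclidean divergence theorem on $\widehat\Sigma$ applied to the constant field $E_{n+1}^F$, together with $\<E_{n+1}^F,E_{n+1}\>=1$. For $k\ge 1$ my plan is to apply the divergence theorem on $(\Sigma,g)$ to the tangential field $V=T_{k-1}\bigl(E_{n+1}^F-\<E_{n+1}^F,\nu\>\nu\bigr)$: using $\mathrm{tr}(T_{k-1}\,d\nu)=\mathrm{tr}(P_{k-1}S_F)=k\sigma_k$ and the Newton-tensor Codazzi-type identities built into Lemma \ref{lem2-1}, the interior contribution evaluates to a multiple of $H_k^F\,F(\nu)\<E_{n+1}^F,\nu\>$, whereas the boundary contribution $\int_{\partial\Sigma}\<V,\mu\>\,ds$ converts via \eqref{equ:pfthm5-13} (i.e.\ $\<P_k\mu,\mu\>=\sigma_k(S_{\bar F})$), \eqref{equ:SFab=SF-ab}, and the decomposition \eqref{equ:barF(v)} of $E_{n+1}^F$ along $\partial\Sigma$, giving a constant multiple of $\bar F(\bar\nu)\sigma_{k-1}(S_{\bar F})=\binom{n-1}{k-1}\bar F(\bar\nu)H_{k-1}^{\bar F}$.

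For claim (ii), the hypothesis $\widehat{\W_{1,\omega_0}}\in\mathcal{K}_\theta$ combined with Lemma \ref{lemma:7.1}(ii) yields $\widehat\Sigma\in\mathcal{K}_\theta$, so Proposition \ref{prop7.3} is applicable. Placing the capillary support function $h_{\W_{1,\omega_0}}$ outside and $(n-k)$ copies of $A[h_\Sigma]$, $k$ copies of $A[h_{\W_{1,\omega_0}}]$ inside, the right-hand side of (ii) reads $\frac{1}{n+1}\int_{\mathcal{C}_\theta}h_{\W_{1,\omega_0}}\,Q\bigl(A[h_\Sigma]^{n-k},A[h_{\W_{1,\omega_0}}]^{k}\bigr)\,d\mu_\sigma$. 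I then transport this integral to $\Sigma$ via the composition of the Gauss map $\nu:\Sigma\to\mathbb{S}_\theta$ with the isometric translation $z\mapsto z-\cos\theta E_{n+1}\in\mathcal{C}_\theta$. Under this diffeomorphism, $h_{\W_{1,\omega_0}}\circ\nu=u_{\W_{1,\omega_0}}(\nu)=\tilde F(\nu)$, the area element satisfies $d\mu_\sigma=K\,d\mu_g$ where $K=\det(d\nu)$, and the mixed discriminant is expanded by the classical identity $Q\bigl(A^{n-k},B^{k}\bigr)=\binom{n}{k}^{-1}\det B\cdot\sigma_{n-k}(B^{-1}A)$ with $A=A[h_\Sigma]$ and $B=A[h_{\W_{1,\omega_0}}]=A_F$ (the last equality follows from $\nabla^{\mathbb{S}}\nabla^{\mathbb{S}}\<E_{n+1}^F,\cdot\>+\<E_{n+1}^F,\cdot\>g^{\mathbb{S}^n}\equiv 0$). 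The generalized eigenvalues of $B^{-1}A$ are precisely the anisotropic principal curvature radii $\tau_i=(\kappa_i^F)^{-1}$ of $\Sigma$ with respect to $\W$, by Section \ref{subsec 5.2} and \eqref{equ:tau_ij}. Using the algebraic identity $\sigma_{n-k}(\tau)=\sigma_k(\kappa^F)/\sigma_n(\kappa^F)$ together with $K=\det S_F/\det A_F=\sigma_n(\kappa^F)/\det A_F$, every $\det A_F$ factor cancels and the $\mathcal{C}_\theta$-integral collapses to $\frac{1}{n+1}\int_\Sigma\tilde F(\nu)\binom{n}{k}^{-1}\sigma_k(\kappa^F)\,d\mu_g=\frac{1}{n+1}\int_\Sigma\tilde F(\nu)H_k^F\,d\mu_g$, which by claim (i) equals $\mathcal{V}_{k+1,\omega_0}(\Sigma)$.

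The principal technical obstacle lies in claim (i) for $k\ge 1$: one must establish a constant-vector Newton-type divergence identity in the anisotropic setting (the analogue of \eqref{equ:lemma2.1.1} with the test vector being a constant one in place of $\nabla^{\mathbb{S}} F\circ\nu$), and then match the boundary contraction $\<T_{k-1}Y^\top,\mu\>$ to $\bar F(\bar\nu)\sigma_{k-1}(S_{\bar F})$ through a careful use of Remark \ref{rk3.5} and the identities \eqref{equ:SFab=SF-ab}, \eqref{equ:SFau=0}, \eqref{equ:pfthm5-13}. In claim (ii), once the identification of $(\kappa^F)^{-1}$ with the generalized eigenvalues of $A[h_\Sigma]A_F^{-1}$ is in hand, the remainder is a fairly mechanical book-keeping of the mixed-discriminant expansion and the Jacobian factors between the round metric on $\mathcal{C}_\theta$ and the induced metric on $\Sigma$.
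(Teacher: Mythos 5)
Your part (ii) and the $k=0$ case of part (i) follow the paper's own route essentially verbatim (divergence theorem for the constant field $E_{n+1}^F$ on $\widehat\Sigma$; Proposition \ref{prop7.3} plus the mixed-discriminant identity \eqref{equ:Q(AB)=Q(A)det(B)}, the identification $A[\ell]=A_F$, $A[h]=B^{-1}g$, eigenvalues $\tau_i=1/\kappa_i^F$, and the Jacobian ${\rm d}\mu_\sigma={\rm d}\mu_g/\det(A[h])$), and those parts are fine. The problem is part (i) for $k\ge 1$, which is the actual core of the lemma and which you have correctly reduced to the identity $\int_\Sigma k\sigma_k\<\nu,E_{n+1}^F\>\,{\rm d}\mu_g=\int_{\partial\Sigma}\<P_{k-1}\mu,\mu\>\bar F(\bar\nu)\,{\rm d}s$, but not proved.

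Your plan is to apply the divergence theorem to $V=T_{k-1}(E_{n+1}^F)^\top$ and to claim that the interior term reduces to $-\tr(T_{k-1}\,{\rm d}\nu)\<E_{n+1}^F,\nu\>=-k\sigma_k\<E_{n+1}^F,\nu\>$. That reduction requires $T_{k-1}=P_{k-1}A_F$ to be divergence-free on $(\Sigma,g)$. What the He--Li identities in Lemma \ref{lem2-1} actually give (implicitly, via the second identity of that lemma) is that $P_{k-1}$ is divergence-free; $\operatorname{div}(P_{k-1}A_F\,Y^\top)$ for a constant vector $Y$ picks up the uncontrolled term $\tr\bigl(P_{k-1}\,\nabla(A_F\circ\nu)\,Y^\top\bigr)$ coming from third derivatives of $F$, and none of the lemmas you cite kill it. (Your interior term also carries a spurious factor $F(\nu)$: the target density is $H_k^F\<E_{n+1}^F,\nu\>\,{\rm d}\mu_g$, not $H_k^F F(\nu)\<E_{n+1}^F,\nu\>\,{\rm d}\mu_g$.) The paper avoids this by using a different field, namely $P_{k-1}\bigl(\<\nu,E_{n+1}^F\>\nabla^{\mathbb S}F\circ\nu-F(\nu)(E_{n+1}^F)^\top\bigr)$, whose divergence is computed by combining \eqref{equ:lemma2.1.1} with $\operatorname{div}(P_{k-1}(E_{n+1}^F)^\top)=-\<E_{n+1}^F,\nu\>\tr(P_{k-1}\circ{\rm d}\nu)$ and the symmetry cancellation $\<\nabla\<\nu,E_{n+1}^F\>,P_{k-1}(\nabla^{\mathbb S}F\circ\nu)\>=\<P_{k-1}(E_{n+1}^F)^\top,\nabla F(\nu)\>$; only this combination is an exact multiple of $k\sigma_k\<\nu,E_{n+1}^F\>$. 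A second, separate computation is then still needed on the boundary: one must show that the resulting scalar $\<\nu,E_{n+1}^F\>\<\nu_F,\mu\>-F(\nu)\<E_{n+1}^F,\mu\>$ equals $\bar F(\bar\nu)$, which the paper does via the decomposition of $E_{n+1}$ and $E_{n+1}^F$ in the $(\nu,\mu)$-frame along $\partial\Sigma$; your proposal only gestures at this matching. Since you yourself flag exactly this step as ``the principal technical obstacle'' to be established, the proposal as written does not prove part (i) for $k\ge 1$.
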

\begin{proof}
	(i) First, we consider the case $k = 0$. By applying integration by parts, we obtain
	\begin{align*}
		0=&\int_{\widehat{\Sigma}}\div (E_{n+1}^F) {\,\rm d}\mathcal{H}^{n+1}
		=\int_{\Sigma}\<E^F_{n+1},\nu\> {\,\rm d}\mu_g+\int_{\widehat{\partial\Sigma}}\<E^F_{n+1},-E_{n+1}\> {\,\rm d}\mathcal{H}^{n}
		\\
		=&\int_{\Sigma}\<E^F_{n+1},\nu\> {\,\rm d}\mu_g-|\widehat{\partial\Sigma}|,
	\end{align*}
	where $\mathcal{H}^{n+1}$ and $\mathcal{H}^n$ are the $(n+1)$-dimensional and $n$-dimensional Hausdorff measures, respectively.
	Then \eqref{equ:v1} can be written as
	\begin{align*}
		\mathcal{V}_{1,\omega_0}(\Sigma)=&\frac{1}{n+1}\left(\int_{\Sigma}F(\nu){\,\rm d}\mu_g +\omega_0\int_{\Sigma}\<E^F_{n+1},\nu\>
		 {\,\rm d}\mu_g\right)
		 \\
		 =&\frac{1}{n+1}\int_{\Sigma}\left(1+\omega_0G(\nu_{{F}})(\nu_{{F}},E_{n+1}^F)\right) {\rm d}\mu_F.
	\end{align*}
	
	Next, we consider the case $1\leq k\leq n$. Since ${\rm d}\mu_F=F(\nu){\rm d}\mu_g$ and $G(\nu_{F})(\nu_{F},E_{n+1}^F)=\frac{\<\nu,E_{n+1}^F\>}{F(\nu)}$, comparing with \eqref{equ:Vk}, we just need to check that
	\begin{align*}
		\int_{\Sigma}H^F_k\<\nu,E_{n+1}^F\>{\,\rm d}\mu_g=\frac{1}{n}\int_{\partial\Sigma}H_{k-1}^{\bar{F}}\bar{F}(\bar{\nu}){\,\rm d}s.
	\end{align*}
	Since $k\binom{n}{k}=n\binom{n-1}{k-1}$ and \eqref{equ:pfthm5-13},  we just need to prove the following \textbf{Claim}:
	\begin{align}
		\label{equ:pf-lemma7.2-0}
		\int_{\Sigma}k\sigma_k\<\nu,E_{n+1}^F\>{\,\rm d}\mu_g=\int_{\partial\Sigma}\<P_{k-1}\mu,\mu\>\bar{F}(\bar{\nu}){\,\rm d}s.
	\end{align}
	Now, let us prove the Claim above.
	Using \eqref{equ:lemma2.1.1}, we have
	\begin{align}
		\label{equ:pf-lemma7.2-1}
	\div \(P_{k-1}\left(\nabla^{\mathbb{S}} F\right) \circ \nu\)+F\left(\nu\right) \operatorname{tr}\(P_{k-1} \circ \mathrm{d} \nu\)=k \sigma_{k}.
	\end{align}
	Moreover, it's easy to  check that
	\begin{align}
		\label{equ:pf-lemma7.2-2}
		\div \(P_{k-1}(E^F_{n+1})^{\top}\)=-\<E^F_{n+1},\nu\>\tr \(P_{k-1}\circ {\rm d} \nu\),
	\end{align}
	and we can calculate that
	\begin{align}
		&\<\nabla\left(\<\nu,E^F_{n+1}\>\right),P_{k-1}(\nabla^{\mathbb{S}}F\circ\nu)\>=\<\mathrm{d}\nu(E^F_{n+1})^{\top},P_{k-1}(\nabla^{\mathbb{S}}F\circ\nu)\>\nonumber
		\\
		=&\<(E^F_{n+1})^{\top},\mathrm{d}\nu\circ P_{k-1}(\nabla^{\mathbb{S}}F\circ\nu)\>=\<\mathrm{d}\nu\circ P_{k-1}(E^F_{n+1})^{\top},\nabla^{\mathbb{S}}F\circ\nu\>\nonumber
		\\
		=&\< P_{k-1}(E^F_{n+1})^{\top},\mathrm{d}\nu(\nabla^{\mathbb{S}}F\circ\nu)\>=\< P_{k-1}(E^F_{n+1})^{\top},\nabla F(\nu)\>,	\label{equ:pf-lemma7.2-3}
	\end{align}
	where we use the symmetry of $\mathrm{d}\nu$ and $\mathrm{d}\nu\circ P_{k-1}$.
	
	Putting \eqref{equ:pf-lemma7.2-1}$\thicksim$\eqref{equ:pf-lemma7.2-3} into the left hand side of \eqref{equ:pf-lemma7.2-0}, by divergence Theorem,  we obtain
	\begin{align}
	(LHS):=&	\int_{\Sigma}k\sigma_k\<\nu,E_{n+1}^F\>{\,\rm d}\mu_g\nonumber
		\\
		\overset{\eqref{equ:pf-lemma7.2-1}}{=}&\int_{\Sigma}\(\div \left(P_{k-1}\left(\nabla^{\mathbb{S}} F\right) \circ \nu\right)+F\left(\nu\right) \operatorname{tr}\left(P_{k-1} \circ \mathrm{d} \nu\right)\)\<\nu,E^F_{n+1}\>
		{\,\rm d}\mu_g\nonumber
		\\
		\overset{\eqref{equ:pf-lemma7.2-2}}{=}&\int_{\Sigma}
		\<\nu,E^F_{n+1}\>\div \(P_{k-1}\left(\nabla^{\mathbb{S}} F\right) \circ \nu\)-F(\nu)\div\(P_{k-1}(E^F_{n+1})^{\top}\)
		 {\,\rm d}\mu_g\nonumber
		\\
		\overset{\eqref{equ:pf-lemma7.2-3}}{=}&\int_{\Sigma}
		\div\(P_{k-1}\left(\<\nu,E^F_{n+1}\>\nabla^{\mathbb{S}}F\circ \nu-F(\nu)(E_{n+1}^F)^{\top}\right)\)
		 {\,\rm d}\mu_g\nonumber
		\\
		=&\int_{\partial\Sigma}
		\(\<\nu,E_{n+1}^F\>\<\nu_{{F}},\mu\>-F(\nu)\<E_{n+1}^F,\mu\>\)\<P_{k-1}\mu,\mu\>
		 {\,\rm d}s,\label{equ:pf-lemma7.2-4}
	\end{align}
	where  in the last equality we use the fact that $\<P_{k-1}Y,\mu\>=0$ for any $Y\in T(\partial\Sigma)$
	 from \eqref{equ:SFau=0}.
	
	Along $\partial\Sigma$, using $\<\nu,\bar{\nu}\>=-\<\mu,E_{n+1}\>\neq 0$ and \eqref{equ:barF(v)}, we have
	\begin{align}
		\bar{F}(\bar{\nu})=\frac{F(\nu)+\omega_0\<E_{n+1}^F,\nu\>}{\<\nu,\bar{\nu}\>}=-\frac{\<\nu_{F}+\omega_0E_{n+1}^F,\nu\>}{\<\mu,E_{n+1}\>}.
		\label{equ:pf-lemma7.2-5}
	\end{align}
	Since $\Sigma$ is an anisotropic $\omega_0$-capillary hypersurface and $\<E_{n+1}^F,E_{n+1}\>=1$, we have
	\begin{align}
		0=&\<\nu_{F}+\omega_0E_{n+1}^F,E_{n+1}\>\nonumber
		\\=&\<\nu_{F}+\omega_0E_{n+1}^F,\nu\>\<\nu,E_{n+1}\>+\<\nu_{F}+\omega_0E_{n+1}^F,\mu\>\<\mu,E_{n+1}\>.
		\label{equ:pf-lemma7.2-6}
	\end{align}

Putting \eqref{equ:pf-lemma7.2-6} into \eqref{equ:pf-lemma7.2-5} we have
\begin{align}
	\bar{F}(\bar{\nu})=\frac{\omega_0\<E_{n+1}^F,\mu\>+\<\nu_{F},\mu\>}{\<E_{n+1},\nu\>}.
	\label{equ:pf-lemma7.2-7}
\end{align}
From \cite{Jia-Wang-Xia-Zhang2023}*{eq.(3.6)(3.8)} we have,
\begin{align*}
		-\omega_0  =&\left\langle E_{n+1}, \nu_F\right\rangle=\left\langle \nu, E_{n+1}\right\rangle F(\nu)+\left\langle\mu, E_{n+1}\right\rangle\left\langle \nu_F, \mu\right\rangle,
\\
1=&\left\langle E_{n+1}^F, E_{n+1}\right\rangle=\left\langle \nu, E_{n+1}\right\rangle\left\langle E_{n+1}^F, \nu\right\rangle+\left\langle\mu, E_{n+1}\right\rangle\left\langle E_{n+1}^F, \mu\right\rangle .
\end{align*}
Putting above equations into \eqref{equ:pf-lemma7.2-7} yields
\begin{align}
	\bar{F}(\bar{\nu})=&-\frac{\<E_{n+1}^F,\mu\>}{\<E_{n+1},\nu\>}\(\left\langle \nu, E_{n+1}\right\rangle F(\nu)+\left\langle\mu, E_{n+1}\right\rangle\left\langle \nu_F, \mu\right\rangle\)\nonumber
	\\
	&+\frac{\<\nu_{F},\mu\>}{\<E_{n+1},\nu\>}\(\left\langle \nu, E_{n+1}\right\rangle\left\langle E_{n+1}^F, \nu\right\rangle+\left\langle\mu, E_{n+1}\right\rangle\left\langle E_{n+1}^F, \mu\right\rangle\)\nonumber
	\\
	=&\<\nu,E_{n+1}^F\>\<\nu_{{F}},\mu\>-F(\nu)\<E_{n+1}^F,\mu\>.
	\label{equ:pf-lemma7.2-8}
\end{align}
Combination of  \eqref{equ:pf-lemma7.2-4} and \eqref{equ:pf-lemma7.2-8} yields the desired equality \eqref{equ:pf-lemma7.2-0}.
\\

(ii) If $\widehat{\W_{1,\omega_0}}\in \mathcal{K}_{\theta}$ for $\theta\in(0,\pi)$, then we have $\widehat{\Sigma}\in \mathcal{K}_{\theta}$ by Lemma \ref{lemma:7.1} (ii).
Using Proposition \ref{prop7.3} and \eqref{equ:Q(AB)=Q(A)det(B)}, we have
\begin{align}
	&V\(\underbrace{\widehat{\Sigma}, \cdots, \widehat{\Sigma}}_{(n-k) \text { copies }}, \underbrace{\widehat{\W_{1,\omega_0}}, \cdots, \widehat{\W_{1,\omega_0}}}_{(k+1) \text { copies }}\)\nonumber
	\\
	=&\frac{1}{n+1}\int_{\mathcal{C}_{\theta}}\ell(\xi)Q(\underbrace{A[h(\xi)],\cdots,A[h(\xi)]}_{(n-k) \text{ copies }},\underbrace{A[\ell(\xi)],\cdots,A[\ell(\xi)]}_{k \text{ copies }}){\,\rm d}\mu_{\sigma}\nonumber
	\\
	\overset{\eqref{equ:Q(AB)=Q(A)det(B)}}{=}&\frac{1}{n+1}\int_{\mathcal{C}_{\theta}}\ell(\xi)\det(A[\ell(\xi)])Q(\underbrace{A[h(\xi)](A[\ell(\xi)])^{-1},\cdots,A[h(\xi)](A[\ell(\xi)])^{-1}}_{(n-k) \text{ copies }},\underbrace{I_n,\cdots,I_n}_{k \text{ copies }}){\,\rm d}\mu_{\sigma}\nonumber
	\\
	=&\frac{1}{n+1}\int_{\mathcal{C}_{\theta}}\ell(\xi)\det(A[\ell(\xi)])\binom{n}{k}^{-1}\sigma_{n-k}(\lambda)
	{\,\rm d}\mu_{\sigma},\label{equ:pf-lemma7.2-12}
\end{align}
where $\lambda=\{\lambda_1,\cdots ,\lambda_n\}$ are the eigenvalues of  ${A[h(\xi)](A[\ell(\xi)])^{-1}}$,  
$\ell(\xi)$
and $h(\xi)$ are capillary support functions of $\W_{1,\omega_0}$ and $\Sigma$ respectively,  $I_n$ is an $(n\times n)$ identity matrix, and $A[l(\xi)]>0$ by \cite{Xia-arxiv}*{Lemma 2.4 (3)}.

 Since $\W_{1,\omega_0}=\left(\W+\omega_0E_{n+1}^F\right)\cap\overline{\mathbb{R}_+^{n+1}}$, the support function of $\W_{1,\omega_0}$ is $F(z)+\omega_0\<z,E_{n+1}^F\>$ for $z\in \mathbb{S}_{\theta}=\mathcal{C}_{\theta}+\cos\theta E_{n+1}$.
For any $\xi\in\mathcal{C}_{\theta}$, we denote $z(\xi)=\xi+\cos\theta E_{n+1}$, and reparametrize the position vector of  $\Sigma$ as $X(\xi)=\nu^{-1}(z(\xi))$, we have $T_{\xi}\mathcal{C}_{\theta}=T_{z(\xi)}\mathbb{S}_{\theta}=T_{X(\xi)}\Sigma=T_{\nu_{F}(X(\xi))}\W=T_{\nu_{F}(X(\xi))+\omega_0E_{n+1}^F}\W_{1,\omega_0}$. Then  it follows that  $$h(\xi)=\<X(\xi),z(\xi)\>=u\(\nu^{-1}(z(\xi))\)=u\(\nu^{-1}(\xi+\cos\theta E_{n+1})\),$$
 and \begin{align}\label{equ:pf-lemma7.2-9}
 	\ell(\xi)=F(z(\xi))+\omega_0\<z(\xi),E_{n+1}^F\>,
 \end{align} since  the capillary support function is obtained by reparameterizing the support function on  $\mathcal{C}_{\theta}$ (see \cite{Xia-arxiv}*{Page 6}). 

We denote by  $B(\xi),\bar{B}(\xi)$  the second fundamental form of point $X(\xi)\in(\Sigma,g)$ and point  $\nu_{F}(X(\xi))+\omega_0E_{n+1}^F\in(\W_{1,\omega_0},\bar{g})$, where $g$ and $\bar{g}$ are induced metric of $\Sigma\subset\mathbb{R}^{n+1}$ and $\W_{1,\omega_0}\subset\mathbb{R}^{n+1}$, respectively. Then $\bar{B}(\xi)$ is also the second fundamental form of point $\nu_{F}(X(\xi))\in\W$, which implies that the  anisotropic principal curvatures  $\kappa_i^F (i=1,\cdots,n)$ are the eigenvalues of $(\bar{g}^{-1}\bar{B})^{-1}g^{-1}B$.
By the proof of \cite{Xia-arxiv}*{Lemma 2.4 (3)}, we have
\begin{align*}
   B^{-1}g=A[h]\quad\text{and}\quad   \bar{B}^{-1}\bar{g}=A[\ell],
\end{align*}
then $\kappa_i^F (i=1,\cdots,n)$ are the eigenvalues of $A[{\ell}](A[h])^{-1}$, that is $\lambda_i=\frac{1}{\kappa_i^F}\, (i=1,\cdots,n)$, 
which implies
\begin{align}
	\det(A[\ell])\sigma_{n-k}(\lambda)=&\det(A[\ell])\frac{\sigma_k(\kappa^F)}{\sigma_n(\kappa^F)}=\det(A[\ell])\frac{\sigma_k(\kappa^F)}{\det(A[\ell])\det(A[h]^{-1})}\nonumber
	\\
	=&\det(A[h])\sigma_k(\kappa^F).
	\label{equ:pf-lemma7.2-10}
\end{align}
Since $\mathcal{C}_{\theta}=\mathbb{S}_{\theta}-\cos\theta E_{n+1}=\nu(\Sigma)-\cos\theta E_{n+1}$ (\cite{Xia-arxiv}*{Lemma 2.2}), we have
\begin{align}
	{\rm d}\mu_{\sigma}=\det(g^{-1}B){\rm d}\mu_{g}=\frac{1}{\det(A[h])}{\rm d}\mu_{g}.
	\label{equ:pf-lemma7.2-11}
\end{align}
Putting $\nu(X)=\xi+\cos\theta E_{n+1}=z(\xi)$ and  \eqref{equ:pf-lemma7.2-9}$\thicksim$\eqref{equ:pf-lemma7.2-11} into \eqref{equ:pf-lemma7.2-12} yields
\begin{align*}
	V\(\underbrace{\widehat{\Sigma}, \cdots, \widehat{\Sigma}}_{(n-k) \text { copies }}, \underbrace{\widehat{\W_{1,\omega_0}}, \cdots, \widehat{\W_{1,\omega_0}}}_{(k+1) \text { copies }}\)=&\frac{1}{n+1}\int_{\Sigma}H_k^F\(F(\nu)+\<\nu,E_{n+1}^F\>\){\rm d}\mu_g
	\\
	\overset{\eqref{equ:Vk=Vkkklll}}{=}&\mathcal{V}_{k+1,\omega_0}(\Sigma).
\end{align*}
This completes the proof.
\end{proof}
By combining Lemma \ref{lemma:7.1} (i), Lemma \ref{lemma:7.2} (ii), and Theorems \ref{thm:iso-neq}, \ref{thm:AF-neq}, we can prove  Corollary \ref{cor:7.3} as follows.
\begin{proof}[\textbf{Proof of Corollary \ref{cor:7.3}}]
	(i) For $L\in\mathcal{K}_{\theta}$, it's obvious that there exists a convex body $\bar{L}\subset\mathbb{R}^{n+1}$, such that $L=\bar{L}\cap\overline{\mathbb{R}_{+}^{n+1}}$. Let ${\Sigma}=\partial K\cap\mathbb{R}_+^{n+1}$ and $\W=\partial \bar{L}$, where $\W$ determines a support function $F$, then $\partial L\cap\mathbb{R}_+^{n+1}$ is a $\omega_0$-capillary Wulff shape $\W_{1,\omega_0}$ with $\omega_0=0$.
	
	From \cite{Xia-arxiv}*{Lemma 2.2} and $L\in\mathcal{K}_{\theta}$,  the Gauss maps of  $\W_{1,0}$ is a diffeomorphism 
	with its image  $\mathbb{S}_{\theta}$. Since $\Sigma$ is a $\theta$-capillary hypersurface, the Gauss map of $\Sigma$ maps onto $\mathbb{S}_{\theta}$. By composing the Gauss map of $\Sigma$  with the inverse Gauss map of $\W_{1,0}$, we  obtain an anisotropic Gauss map  of $\Sigma$, and at the corresponding point, $\Sigma$ and $\W_{1,0}$ have the same anisotropic unit outward normal, so $\Sigma$ is also an anisotropic $\omega_0$-capillary hypersurface with $\omega_0=0$.
	
	By Lemma \ref{lemma:7.2} (i) and Theorem \ref{thm:iso-neq}, we have
	\begin{align*}
	\Vol(K)^{\frac{n}{n+1}}\Vol(L)^{\frac{1}{n+1}}\leq
		&\frac{1}{n+1}\int_{\Sigma}\left(1+\omega_0G(\nu_{{F}})(\nu_{{F}},E_{n+1}^F)\right) {\rm d}\mu_F\\
		=& \frac{1}{n+1}\int_{\Sigma}\left(F(\nu)+\omega_0\<\nu,E_{n+1}^F\>\right) {\rm d}\mu_g\\
		:=&\frac{1}{n+1}\int_{\Sigma}\ell(\nu(X)) {\rm d}\mu_g
		,
	\end{align*}
	where $\ell(z)=F(z)+\omega_0\<z,E_{n+1}^F\>$ ($z\in\mathbb{S}^n$) is the  support function of $\mathcal{C}_{\theta}$,  and the equality holds if and only if $\Sigma$ is an $\omega_0$-capillary Wulff shape, i.e., there exists a constant $c\in\mathbb{R}^+$ and a constant vector $b\in\partial\overline{\mathbb{R}^{n+1}_+}$ such that  $K=cL+b$. 
	
(ii) Similar to (i), for $K,L\in\mathcal{K}_{\theta}$, we can take a Wulff shape $\W$ and an $\omega_0$-capillary hypersurface $\Sigma$ such that $K=\widehat{\Sigma}$ and $L=\widehat{\W_{1,\omega_0}}$ with $\omega_0=0$.
By Lemma \ref{lemma:7.1} (i) and $\theta\in(0,\frac{\pi}{2}]$, we know $\W$ and $\omega_0$ satisfy Condition \ref{condition}.

 From Theorem \ref{thm:AF-neq}, we have
 \begin{align*}
 	\mathcal{V}_{k+1,\omega_0}(\Sigma)\geq\Vol(K)^{\frac{n-k}{n+1}}\Vol(L)^{1-\frac{n-k}{n+1}},\quad k=0,\cdots,n-2.
 \end{align*}
 Combining with Lemma \ref{lemma:7.2} (ii), we derive \eqref{equ:cor-7.3}.
 The equality holds if and only if $\Sigma$ is an $\omega_0$-capillary Wulff shape with $\omega_0=0$, i.e., there exists a constant $c\in\mathbb{R}^+$ and a constant vector $b\in\partial\overline{\mathbb{R}^{n+1}_+}$ such that  $K=cL+b$. 

 Then we complete the proof.
\end{proof}

{\appendix \section{Translated Wulff shapes and examples}\label{Appendix}
	
	\setcounter{equation}{0}
	\renewcommand{\theequation}{A.\arabic{equation}}
For a constant vector $\eta\in\mathbb{R}^{n+1}$, we  take $\mathcal{T}_{\eta}:\mathbb{R}^{n+1}\rightarrow\mathbb{R}^{n+1}$ as a  translational transformation defined by $\mathcal{T}_{\eta}(z)=z+\eta$. Given a Wulff shape $\W$ with respect to $F$,  containing the origin $O$, we denote a translated Wulff shape $\widetilde{\W}=\mathcal{T}_{\eta}(\W)$ as a new Wulff shape with respect to $\tilde{F}$.  ${F}^0\in C^{\infty}(\mathbb{R}^{n+1})$ is the dual Minkowski norm of $F$, and $\tilde{F}^0\in C^{\infty}(\mathbb{R}^{n+1})$ is the dual Minkowski norm of $\tilde{F}$.
$G$ and $\tilde{G}$ are new metrics defined by \eqref{equ:G} with respect to $F^0$ and $\tilde{F}^0$, respectively. $Q$ and $\tilde{Q}$ are $(0,3)$-tensors defined by \eqref{equ:Q} with respect to $F^0$ and $\tilde{F}^0$, respectively.
We have following proposition (the special case that will be used frequently is written as Lemma \ref{lem:Q-translate}):

\begin{proposition}\label{prop:Q-Q}
If $\eta$ and $\W$ satisfy $1+G(z)(z,\eta)> 0$ for all	 $z\in \W$, then
for any $z\in\W$  and
$X,Y,Z\in T_z\W=T_{\tilde{z}}\widetilde{\W}$, where $\tilde{z}:=\mathcal{T}_{\eta}(z)=z+\eta\in \widetilde{\W}$, we have
\begin{align}
	&\tilde{G}(\tilde{z})(X,Y)=\frac{1}{1+G(z)(z,\eta)}G(z)(X,Y),\label{equ:A.G}
	\\
	&\tilde{Q}(\tilde{z})(X,Y,Z)=\frac{Q(z)(X,Y,Z)}{1+G(z)(z,\eta)}\nonumber
	\\&-
	\frac{G(z)(Z,Y)G(z)(X,\eta)+G(z)(X,Y)G(z)(Z,\eta)+G(z)(X,Z)G(z)(Y,\eta)}{\left(1+G(z)(z,\eta)\right)^2}.\label{equ:A.Qaan}
\end{align}
\end{proposition}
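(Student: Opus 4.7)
The plan is to obtain both identities by implicit differentiation of a scalar equation that characterizes $\tilde F^0$ in terms of $F^0$, and then to specialize to tangent vectors so that the many terms arising from the chain rule collapse.

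First I would set up the implicit characterization. Since $\widetilde{\W}=\W+\eta$ and both $F^0,\tilde F^0$ are $1$-homogeneous positive norms with unit level sets $\W$ and $\widetilde{\W}$, the value $\phi(\xi):=\tilde F^0(\xi)$ is the unique positive scalar with $\xi/\phi-\eta\in \W$, equivalently
\begin{equation*}
F^0\!\bigl(\xi-\phi(\xi)\eta\bigr)=\phi(\xi).
\end{equation*}
The hypothesis $1+G(z)(z,\eta)>0$ (equal to $1+\langle DF^0(z),\eta\rangle$ after using Euler's identity $G(z)(z,\,\cdot\,)=\langle DF^0(z),\,\cdot\,\rangle$ on $\W$) is precisely the non-degeneracy condition needed for the implicit function theorem to produce a smooth $\phi$ in a neighborhood of $\widetilde{\W}$.

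Next I differentiate. Writing $z=\tilde z-\phi(\tilde z)\eta$, $M:=1+\langle DF^0(z),\eta\rangle$, $a_i:=F^0_{ij}(z)\eta^j$, $b:=F^0_{ij}(z)\eta^i\eta^j$, one differentiation yields $\phi_i=F^0_i(z)/M$, and a second yields
\begin{equation*}
\phi_{ij}=\tfrac{1}{M}\!\left(F^0_{ij}(z)-\phi_ja_i-\phi_ia_j+\phi_i\phi_j\,b\right).
\end{equation*}
Since $\tilde G_{ij}(\tilde z)=\phi\,\phi_{ij}+\phi_i\phi_j$ and $\phi=1$ on $\widetilde{\W}$, contracting with $X,Y\in T_z\W=T_{\tilde z}\widetilde{\W}$ kills every factor of $\phi_iX^i$ and of $F^0_iX^i=\langle DF^0(z),X\rangle$. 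What remains is $\tilde G(\tilde z)(X,Y)=F^0_{ij}(z)X^iY^j/M=G(z)(X,Y)/M$, which is \eqref{equ:A.G} after identifying $M=1+G(z)(z,\eta)$.

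For \eqref{equ:A.Qaan} I differentiate once more and contract with tangent $X,Y,Z$. From $\tilde Q_{ijk}=\partial_k\tilde G_{ij}=\phi_k\phi_{ij}+\phi\,\phi_{ijk}+\phi_{ik}\phi_j+\phi_i\phi_{jk}$ and the vanishing of every $\phi_iX^i$ etc., only $\phi_{ijk}(\tilde z)X^iY^jZ^k$ survives. Differentiating the formula for $\phi_{ij}$ (using $\partial_k M=a_k-\phi_k b$, $\partial_k F^0_{ij}(z)=F^0_{ijk}(z)-\phi_kF^0_{ijl}(z)\eta^l$, and the analogous formulas for $\partial_ka_i,\partial_kb$), every term proportional to $\phi_k$ drops upon contraction with $Z$, every term with $F^0_i$ or $\phi_i$ drops upon contraction with $X$ (and similarly for $Y$). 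Using that on tangent vectors $F^0_{ijk}(z)X^iY^jZ^k=Q(z)(X,Y,Z)$ (which follows from the Leibniz expansion of $Q=D^3(\tfrac12(F^0)^2)$ and the orthogonality relations), and that $a_iX^i=G(z)(X,\eta)$ with its cyclic analogues, one reads off exactly the three ``correction'' terms in \eqref{equ:A.Qaan} with the correct symmetric structure and the factors of $1/M$ and $1/M^2$.

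The main obstacle will be purely bookkeeping in the third-derivative step: many chain-rule terms appear, and the argument depends on systematically using the two vanishing relations $\langle DF^0(z),X\rangle=0$ and $\phi_iX^i=0$ (and similarly for $Y,Z$) to discard terms before they proliferate. The conceptual content, however, is entirely captured by the implicit relation $F^0(\xi-\tilde F^0(\xi)\eta)=\tilde F^0(\xi)$ together with Euler's identity identifying $\langle DF^0(z),\eta\rangle$ with $G(z)(z,\eta)$ on $\W$.
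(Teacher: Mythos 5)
Your proposal is correct and follows essentially the same route as the paper: the implicit relation $F^0(\xi-\tilde F^0(\xi)\eta)=\tilde F^0(\xi)$, successive differentiation, and the use of $\langle DF^0(z),X\rangle=0$ for tangent vectors to discard all but the surviving terms. The only difference is organizational — you package the second- and third-derivative bookkeeping with the auxiliary quantities $a_i,b,M$ rather than writing out the full expanded formulas as the paper does, but the computation is the same.
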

\begin{proof}
For any $x\in\mathbb{R}^{n+1}$, we know $\frac{x}{\tilde{F}^0(x)}\in\widetilde{\W}$ and $\mathcal{T}^{-1}\left(\frac{x}{\tilde{F}^0(x)}\right)=\frac{x}{\tilde{F}^0(x)}-\eta\in\W$. Then $F^0\left(\frac{x}{\tilde{F}^0(x)} -\eta \right)=1$, by the $1$-homogeneity of $F^0$, we have
\begin{align}\label{equ:F0=tildeF0}
	F^0\left( x-\tilde{F}^0(x)\eta \right) = \tilde{F}^0(x), \quad \forall x\in\mathbb{R}^{n+1}.
\end{align}

 We write $x=(x_1,\cdots,x_{n+1}),\eta=(\eta_1,\cdots,\eta_{n+1}), X=(X_1,\cdots,X_{n+1}), Y=(Y_1,\cdots,Y_{n+1}),$ and $Z=(Z_1,\cdots,Z_{n+1})$  in $\mathbb{R}^{n+1}$ with a fixed Cartesian coordinate. And we denote $i,j,k,l,p,q,m\in\{1,\cdots,n+1\},$ and $\tilde{F}^0_i(x)=\frac{\partial \tilde{F}^0}{\partial x_i}(x),\tilde{F}^0_{ij}(x)=\frac{\partial^2 \tilde{F}^0}{\partial x_i\partial x_j}(x), \tilde{F}^0_{ijk}(x)=\frac{\partial^3 \tilde{F}^0}{\partial x_i\partial x_j\partial x_k}(x),
 F^0_i({\xi})=\frac{\partial F^0}{\partial {\xi}_i}({\xi}),F^0_{ij}({\xi})=\frac{\partial^2 F^0}{\partial {\xi}_i\partial {\xi}_j}({\xi}), F^0_{ijk}({\xi})=\frac{\partial^3 F^0}{\partial {\xi}_i\partial {\xi}_j\partial {\xi}_k}({\xi})$ for any $\xi=(\xi_1,\cdots,\xi_{n+1})$.
From \eqref{equ:F0=tildeF0} and $\frac{\partial(x_l-\tilde{F}^0(x)\eta_l)}{\partial x_i}=\delta_{il}-\eta_l\tilde{F}^0_i(x)$, we have
\begin{align*}
\left(\delta_{il}-\eta_l\tilde{F}^0_i(x)\right)\cdot F^0_{l}(x-\tilde{F}^0(x)\eta)=\tilde{F}^0_i(x),
\end{align*}
which implies
\begin{align}\label{equ:A.F_i}
	\tilde{F}^0_i(x)=\frac{F^0_i(x-\tilde{F}^0(x)\eta)}{1+\<\eta,DF^0(x-\tilde{F}^0(x)\eta)\>}.
\end{align}
Taking the partial derivative with $x_j$ in \eqref{equ:A.F_i}, we  have
\begin{align}
	\tilde{F}^0_{ij}(x)=\frac{F^0_{ij}}{1+\<\eta,DF^0\>}-\frac{\eta_pF^0_{ip}F^0_j}{(1+\<\eta,DF^0\>)^2}-\frac{\eta_pF^0_{jp}F^0_i}{(1+\<\eta,DF^0\>)^2}+\frac{\eta_p\eta_qF^0_{pq}F^0_iF^0_j}{(1+\<\eta,DF^0\>)^3}.\label{equ:A.F_ij}
\end{align}
Here we denote $F^0_i=F^0_i(x-\tilde{F}^0(x)\eta),F^0_{ij}=F^0_{ij}(x-\tilde{F}^0(x)\eta),F^0_{ijk}=F^0_{ijk}(x-\tilde{F}^0(x)\eta)$, and $DF^0=DF^0(x-\tilde{F}^0(x)\eta)=(F^0_1,\cdots,F^0_{n+1})$.
Taking the partial derivative of $x_k$ in \eqref{equ:A.F_ij}, we  have
\begin{align}
\tilde{F}	&^0_{ijk}(x)=\(1+\<\eta,DF^0\>\)^{-1}
F^0_{ijk}\nonumber
	\\
&	- \(1+\<\eta,DF^0\>\)^{-2}\eta_l\(F^0_{ijl}F^0_k +F^0_{ikl}F^0_j+F^0_{kjl}F^0_i+ F^0_{il}F^0_{jk}+F^0_{jl}F^0_{ik}+F^0_{kl}F^0_{ij}
	\)\nonumber
	\\
&	+\(1+\<\eta,DF^0\>\)^{-3}\eta_p\eta_q\(F^0_{ipq}F^0_kF^0_j+F^0_{jpq}F^0_kF^0_i+F^0_{kpq}F^0_iF^0_j\nonumber
\\
& +F^0_{pq}(F^0_{ik}F^0_j+F^0_{jk}F^0_i+F^0_{ij}F^0_k)
+2F^0_{ip}F^0_{jq}F^0_k +2F^0_{jp}F^0_{kq}F^0_i +2F^0_{kp}F^0_{iq}F^0_j
\)\nonumber
\\
&-\(1+\<\eta,DF^0\>\)^{-4}\eta_p\eta_q\eta_l\(F^0_{pql}F^0_iF^0_jF^0_k +3F^0_{pq}(F^0_{il}F^0_jF^0_k+F^0_{jl}F^0_kF^0_i+F^0_{kl}F^0_iF^0_j )
\)\nonumber
\\
&+3\(1+\<\eta,DF^0\>\)^{-5}\eta_p\eta_q\eta_l\eta_mF^0_kF^0_iF^0_jF^0_{pq}F^0_{lm}.\label{equ:A.F_ijk}
\end{align}
Next, we take  $x=\tilde{z}\in\widetilde{\W}$, then $x-\tilde{F}^0(x)\eta=\tilde{z}-\eta=z\in\W$. We know that $DF^0(z)$ is  perpendicular to $T_z\W$ with respect to standard Euclidean metric, since $\W=\{x\in\mathbb{R}^{n+1}:F^0(x)=1\}$. Similarly, $D\tilde{F}^0(\tilde{z})$ is  perpendicular to $T_{\tilde{z}}\widetilde{\W}$ with respect to standard Euclidean metric. Then combining with \eqref{equ:A.F_ij}, we have \begin{align*}
	\tilde{G}(\tilde{z})(X,Y)=&\tilde{F}^0(\tilde{z})\tilde{F}^0_{ij}(z)X_iY_j+\tilde{F}^0_i(z)\tilde{F}^0_j(z)X_iY_j=\tilde{F}^0_{ij}(z)X_iY_j
	\\
	=&\frac{F^0_{ij}(z)X_iY_j}{1+\<\eta,DF^0(z)\>}-\frac{\eta_pF^0_{ip}(z)F^0_j(z)X_iY_j}{(1+\<\eta,DF^0(z)\>)^2}-\frac{\eta_pF^0_{jp}(z)F^0_i(z)X_iY_j}{(1+\<\eta,DF^0(z)\>)^2}\nonumber
	\\
	&+\frac{\eta_p\eta_qF^0_{pq}(z)F^0_i(z)F^0_j(z)X_iY_j}{(1+\<\eta,DF^0(z)\>)^3}
	\\
	=&\frac{F^0_{ij}(z)X_iY_j}{1+\<\eta,DF^0(z)\>}
	\\
	=&\frac{F^0(z)F^0_{ij}(z)X_iY_j+F^0_i(z)F^0_j(z)X_iY_j}{1+\<\eta,DF^0(z)\>}
	\\
	=&\frac{G(z)(X,Y)}{1+\<\eta,DF^0(z)\>},
\end{align*}
for $X,Y\in T_z\W=T_{\tilde{z}}\widetilde{\W}$.
Similarly, combining with \eqref{equ:A.F_ijk}, we can calculate
\begin{align*}
	&\tilde{Q}(\tilde{z})(X,Y,Z)=\tilde{F}^0_{ijk}(\tilde{z})X_iY_jZ_k
	\\
	=&\(1+\<\eta,DF^0(z)\>\)^{-1}
	F^0_{ijk}(z)X_iY_jZ_k
		\\
		&- \(1+\<\eta,DF^0(z)\>\)^{-2}\( F^0_{il}(z)F^0_{jk}(z)+F^0_{jl}(z)F^0_{ik}(z)+F^0_{kl}(z)F^0_{ij}(z)\)\eta_lX_iY_jZ_k
	\\
	=&\(1+\<\eta,DF^0(z)\>\)^{-1}Q(z)(X,Y,Z)-\(1+\<\eta,DF^0(z)\>\)^{-2}\(G(z)(Z,Y)G(z)(X,\eta)
\\
	&+G(z)(X,Y)G(z)(Z,\eta)+G(z)(X,Z)G(z)(Y,\eta)
	\),
\end{align*}
 for $X,Y,Z\in T_z\W=T_{\tilde{z}}\widetilde{\W}$.
 Since $\sum_{i}{F}^0_{ij}(z)z_i=0$  and $\sum_{j} F^0_j(z)z_j=F^0(z)=1$, we have $G(z)(z,\eta)=\<DF^0(z),\eta\>$. Then we complete the proof.
\end{proof}
\begin{remark}
	Given  a Wulff shape $\W$  enclosing the origin $O$, a constant vector $\eta\in\mathbb{R}^{n+1}$, the following three statements are equivalent:
	\begin{itemize}
		\item[(i)] The translated Wulff shape $\widetilde{\W}=\mathcal{T}_{\eta}\W$ contains the origin $O$ in its interior;
		\item[(ii)] $F^0(-\eta)<1$;
		\item[(iii)] $1+G(z)(z,\eta)>0$ for all $z\in\W$.
	\end{itemize}
\end{remark}
\begin{proof}
	If $\eta=0$, it is trivial. We just  check the case  $\eta\neq 0$.
	
	(i)$\Leftrightarrow$(ii). This conclusion follows directly by $\widetilde{\W}=\{x\in\mathbb{R}^{n+1}:F^0(x-\eta)=1\}$. 
	
	(ii)$\Rightarrow$(iii). For any $z\in\W$, we have
	\begin{align*}
		1+G(z)(z,\eta)=&1+\<D^2F^0(z)z,\eta\>+\<DF^0(z),z\>\<DF^0(z),\eta\>
		\\=&1-\<DF^0(z),-\eta\>
		\\
		\geq&1-F(DF^0(z))F^0(-\eta)\\
		=&1-F^0(-\eta)\overset{\text{(ii)}}{>}0,
	\end{align*}
	where we use the following Cauchy-Schwarz inequality (see e.g.,
\cite{He-Li_Ma-Ge-09}*{Prop 2.4(2)} or  \cite{Jia-Wang-Xia-Zhang2023}*{Proposition 2.1(iv)}):	$$\<x,z\>\leq F^0(x)F(z).$$

(iii)$\Rightarrow$(ii).  Taking $z=\frac{-\eta}{F^0(-\eta)}\in\W$ into (iii), we have
\begin{align*}
	0<1+G(z)(z,\eta)=1+G(z)(z,-F^0(-\eta)z)=1-F^0(-\eta).
\end{align*}
That means $F^0(-\eta)<1$.
\end{proof}

At last, we give several examples of Wulff shapes satisfying Condition \ref{condition}.

\begin{Examples}\label{exam:elliptic}
	We take an ellipse $\W=\{(x,y,z):\frac{x^2}{A}+\frac{y^2}{B}+\frac{z^2}{C}=1\}\subset\mathbb{R}^3$, with constants $A,B,C>0$. Assume $\eta=\omega_0E_{n+1}^F=\omega_0E_{n+1}=(0,0,\omega_0)$ with $\omega_0\in(-C^{1/2},C^{1/2})$, then the translated Wulff shape $\mathcal{T}_{\eta}\W$ is $\widetilde{\W}=\{(x,y,z):\frac{x^2}{A}+\frac{y^2}{B}+\frac{(z-\omega_0)^2}{C}=1\}$.
	
	It's easy to check that $F^0=\left(\frac{x^2}{A}+\frac{y^2}{B}+\frac{z^2}{C}\right)^{\frac{1}{2}}$ and $Q_{ijk}(\xi)\equiv0$ for $\xi=(x,y,z)\in\W$ with $ i,j,k=1,2,3.$ Then $Q(\xi)(e_1,e_1,e_2)=0$ for $\xi=(x,y,-\omega_0)\in\W,{e}_{1}(\xi),{e}_{2}(\xi)\in T_\xi\W, e_1\in T_\xi\partial(\W\cap\{z=-\omega_0\})$ and $G(\xi)(e_1,e_2)=0$.
	
	For  $\tilde{\xi}=\mathcal{T}_{\eta}\xi=(x,y,0)\in\widetilde{\W}\cap \{z=0\}$, we can take $e_1=(\frac{y}{B},-\frac{x}{A},0),e_2=\left(\omega_0x,\omega_0y,C(1-
	\frac{\omega_0^2}{C})\right)$, it's easy to check that $\<e_1,\nu\>=0,\ \<e_2,\nu\>=0,\ G(\xi)(e_1,e_2)=0$, where $\nu=\frac{(\frac{x}{A},\frac{y}{B},\frac{-\omega_0}{C})}{((\frac{x}{A})^2+(\frac{y}{B})^2+(\frac{-\omega_0}{C})^2)^{1/2}}$.
 Then	we have $G(\xi)(e_1,e_1)=\frac{1}{AB}\left(1-\frac{\omega_0^2}{C}\right)>0,G(\xi)(e_2,\eta)=\omega_0(1-
 \frac{\omega_0^2}{C}),1+G(\xi)(\xi,\eta)=1+\frac{-\omega_0^2}{C}>0$, then from \eqref{equ:A.Qaan} we have $\tilde{Q}(\tilde{\xi})(e_1,e_1,e_2)=-\frac{\omega_0}{AB}$. If $\omega_0\neq 0$, we have $\tilde{Q}(\tilde{\xi})(e_1,e_1,e_2)\neq0$.
\end{Examples}

The ellipsoid in Example \ref{exam:elliptic} is so special. Let's calculate another example. 
\begin{Examples}\label{exam:x^4}
 	Given a Minkowski norm in $\mathbb{R}^3$:
 \begin{align*}
 	F^0=\((x^2+y^2+z^2)(x^2+ y^2)+z^4\)^{\frac{1}{4}},
 \end{align*}
 it uniquely determines  a Wulff shape $\W=\{(x,y,z)\in\mathbb{R}^3:F^0(x,y,z)=1\}$. 

 (1) We take a constant $z_0\in(-1,1)$, a point $\xi\in\W\cap\{z=z_0\}$, vectors $e_1,e_2\in T_{\xi}\W$,  $e_1\in T_{\xi}\partial(\W\cap\{z=z_0\}),$  $ G(\xi)(e_1,e_2)=0$, and $e_2$ points outward of $ ({\W}\cap\{z\geq z_0\})\subset{\W}$. Then we can calculate that $Q(\xi)(e_1,e_1,e_2)=0$ for $z_0=0$; $Q(\xi)(e_1,e_1,e_2)<0$ for $z_0<0$; $Q(\xi)(e_1,e_1,e_2)>0$ for $z_0>0$.

 (2) Taking a constant $\omega_0\in (-1,1)$, for this $\W$, Condition \ref{condition} is equivalent to  $\omega_0\leq 0$.
\end{Examples}
 \begin{proof}
 	First, we check that the  $F^0(\xi)$  is a Minkowski norm of $\mathbb{R}^3$. It is easy to see that $F^0$ is a $1$-homogeneous function which satisfies $F^0(\xi)>0$ for $\xi\neq 0$.
 	We denote $G(\xi)=D^2\left(\frac{(F^0)^2}{2}\right)(\xi)$, for $\xi=(x,y,z)\in \mathbb{R}^3$. We calculate the first, second, and third order leading principal minors of the $(3\times 3)$ matrix $ G(\xi)$ as follows:
 	\begin{align*}
 		D_1=&\frac{\partial^2 \frac12(F^0)^2(\xi)}{\partial x^2}
 		\\
 		=&
 		\frac{2 x^6 + 6 x^4 y^2 + 3 x^4 z^2 + 6 x^2 y^4 + 6 x^2 y^2 z^2 + 6 x^2 z^4 + 2 y^6 + 3 y^4 z^2 + 3 y^2 z^4 + z^6}{2\left((x^2 + y^2) (x^2 + y^2 + z^2) + z^4\right)^{\frac{3}{2}}}
 		\\
 		\geq&
 		 \frac{2x^6+ 2y^6 + z^6}{2\left((x^2 + y^2) (x^2 + y^2 + z^2) + z^4\right)}
 		\\ 		
 		>&0,\quad \text{for}\ \xi\neq (0,0,0),
 \\
 		D_2=&\frac{\partial^2 \frac12(F^0)^2(\xi)}{\partial x^2}\frac{\partial^2 \frac12(F^0)^2(\xi)}{\partial y^2}-\left(\frac{\partial^2 \frac12(F^0)^2(\xi)}{\partial x\partial y}\right)^2
 		\\
 		=&\frac{1}{4}\left(4 x^8 + 16 x^6 y^2 + 8 x^6 z^2 + 24 x^4 y^4 + 24 x^4 y^2 z^2 + 15 x^4 z^4 + 16 x^2 y^6 + 24 x^2 y^4 z^2\right.
 		\\
 		&\left.  + 30 x^2 y^2 z^4 + 8 x^2 z^6 + 4 y^8 + 8 y^6 z^2 + 15 y^4 z^4 + 8 y^2 z^6 + z^8\right)
 		\\
 		&\cdot \left(x^4 + 2 x^2 y^2 + x^2 z^2 + y^4 + y^2 z^2 + z^4\right)^{-2}
 		\\
 		\geq &\frac{1}{4}\left(4 x^8 + 4y^8 +  z^8\right)\cdot \left(x^4 + 2 x^2 y^2 + x^2 z^2 + y^4 + y^2 z^2 + z^4\right)^{-2}
 		\\
 		>&0,\quad \text{for}\ \xi\neq (0,0,0),
\\
 		D_3=&\det(G(\xi))
 		\\
 		=&\frac{1}{8}\left(4 x^{10} + 20 x^8 y^2 + 28 x^8 z^2 + 40 x^6 y^4 + 112 x^6 y^2 z^2 + 43 x^6 z^4 + 40 x^4 y^6 + 168 x^4 y^4 z^2 \right.
 		\\
 		&+ 129 x^4 y^2 z^4 + 41 x^4 z^6 + 20 x^2 y^8 + 112 x^2 y^6 z^2 + 129 x^2 y^4 z^4 + 82 x^2 y^2 z^6 + 17 x^2 z^8
 		\\
 		&\left.  + 4 y^{10} + 28 y^8 z^2 + 43 y^6 z^4 + 41 y^4 z^6 + 17 y^2 z^8 + 2 z^{10}\right)
 		\\
 		&\cdot\left(x^4 + 2 x^2 y^2 + x^2 z^2 + y^4 + y^2 z^2 + z^4\right)^{-\frac{5}{2}}
 		\\
 		\geq&
 		\frac{2x^{10}+ 2y^{10}+ z^{10}}{4\left(x^4 + 2 x^2 y^2 + x^2 z^2 + y^4 + y^2 z^2 + z^4\right)^{\frac{5}{2}}}
 		\\
 		>&0,\quad \text{for}\ \xi\neq (0,0,0),
  \end{align*}
  then $F^0$ satisfies the uniformly elliptic condition:  $D^2(F^0)^2$ is positive definite in $\mathbb{R}^3\setminus\{0\}$. By using \cite{Xia-phd}*{Proposition 1.4} and  \cite{Xia-phd}*{Proposition 1.2(ii)}, we know $D^2F^0$  is a positive semi-definite matrix in $\mathbb{R}^3$, that means $F^0$ is a convex function. Thus $F^0$ is a Minkowski norm. Then the dual norm $F$ is also a Minkowski norm (see \cite{Xia2017}*{Page 110} or \cite{Shen-Lecture-2001}*{Lemma 3.12}), and convex hypersurface  $\W=\{\xi\in\mathbb{R}^3:F^0(\xi)=1\}$ is the Wulff shape with respect to $F$ (see \cite{Xia-phd}*{Page 102-103}).

  (1) Next, we take  vector fields $e_1,e_2\in T_{\xi}\W$ such that  $e_1\in T_{\xi}\partial(\W\cap\{z=z_0\})$ and $ G(\xi)(e_1,e_2)=0$ if  $\xi\in\W\cap\{z=z_0\}$.
   Set $\nu(\xi)=(\nu_1,\nu_2,\nu_3)=\frac{DF^0(\xi)}{|DF^0(\xi)|}$, which is the unit outward normal vector of $\W\subset\mathbb{R}^3$ (since $\sgn({\nu}_3)=\sgn(z_0)$). Taking $e_1=(e_1^1,e_1^2,e_1^3)=(\nu_2,-\nu_1,0)\in\partial\mathbb{R}_+^3$, we then have $e_1\in T_{\xi}\partial(\W\cap\{z=z_0\})$ since $\<e_1,\nu\>=0$, and \begin{equation}
  	e_2=(e_2^1,e_2^2,e_2^3)= \nu\times(G(\xi)e_1)=\left|
  	\begin{array}{ccc}
  		\nu_1 & \nu_2 & \nu_3
  		\\
  		\hat{e}_1^1 & \hat{e}_1^2 & \hat{e}_1^3
  		\\
  		\mathbf{i} & \mathbf{j} & \mathbf{k}
  	\end{array}
  	\right|,\label{equ:A-en}
  \end{equation}
  where we denote vector $G(\xi)e_1=(	\hat{e}_1^1, \hat{e}_1^2 , \hat{e}_1^3)$. Obviously $\<e_2,\nu\>=0$, $G(\xi)(e_1,e_2)=\<G(\xi)e_1,e_2\>=0$, and $e_2^3(\xi)=-G(\xi)(e_1,e_1)<0$ (i.e., $e_2$ points outward of $\W\cap\{z\geq z_0\}\subset\W$).

  Then we calculate $Q_{112}(\xi)$ as follows:
  \begin{align*}
  	Q_{112}(\xi)=&Q(\xi)(e_1,e_1,e_2)=\sum_{i,j,k=1}^{3}Q_{\xi_i\xi_j\xi_k}(\xi)e_1^ie_1^je_2^k=\sum_{i,j,k=1}^{3} \frac{\partial^3\frac12(F^0)^2(\xi)}{\partial \xi_i \partial \xi_j \partial \xi_k}e_1^ie_1^je_2^k
  	\\
  	=&\frac{3}{2} z^3 \left(x^2 + y^2\right)^2 \left(2 x^2 + 2 y^2 + z^2\right)^4
    \left(x^4 + 2 x^2 y^2 + x^2 z^2 + y^4 + y^2 z^2 + z^4\right)^{-1}
  	\\
  &\cdot  \left( 4 x^6 + 12 x^4 y^2 + 5 x^4 z^2 + 12 x^2 y^4 + 10 x^2 y^2 z^2 + 5 x^2 z^4 + 4 y^6 + 5 y^4 z^2\right.
  \\
  &\left.  + 5 y^2 z^4 + 4 z^6\right)^{-2}	,
  \end{align*}
  which implies $Q_{112}(\xi)=0$ for $\xi\in\W\cap\{z=0\}$, $Q_{112}(\xi)>0$ for $\xi\in\W\cap\{z>0\}$, and $Q_{112}(\xi)<0$ for $\xi\in\W\cap\{z<0\}$.

  (2) In \eqref{equ:A-en}, we have determined the vector $e_2(\xi)$ (represented by $F^0$), which is  the same direction as $A_F(\nu)\mu$ (since Remark \ref{rk3.5} (1)),  but  different in length. In order to check $\eqref{equ:condition}$, we need to calculate $F(\nu)\cdot A_F(\nu)\mu$ (represented by $F^0$). 

  We denote  $\{\epsilon_i\}_{i=1}^{3}$ as the orthogonal frame on $\mathbb{R}^{3}$. For any $\xi=\xi_i\epsilon_i\in\mathbb{R}^{3}$, we have $F^0(\xi)D_iF(DF^0(\xi))=\xi_i$ (see \cite{Xia-phd}*{Proposition 1.3(ii)}). Taking the partial derivative with respect to $\xi_j$ on both sides of the above equation yields
  \begin{align*}
  	D_jF^0|_{\xi}\cdot D_iF|_{DF^0(\xi)}+F^0(\xi)D^2_{ik}F|_{DF^0(\xi)}\cdot D^2_{kj}F^0|_{\xi}=\delta_{ij}.
  \end{align*}
  Taking $\xi\in \W$, we have $F^0(\xi)=1, D_iF(DF^0(\xi))=\xi_i$, and $  F(\nu(\xi))=\<\xi,\nu(\xi)\>=\frac{\<\xi,DF^0(\xi)\>}{|D^0F(\xi)|}=\frac{1}{|D^0F(\xi)|}$, combining with \cite{Xia-phd}*{Proposition 1.2 (i) and (ii)}, we obtain
 \begin{align*}
 	\delta_{ij}=&D_jF^0(\xi){\xi}_i+D^2_{ik}F|_{DF^0(\xi)}\cdot D^2_{kj}F^0|_{\xi}
 	\\
 	=&D_jF^0(\xi){\xi}_i+D^2_{ik}F|_{DF^0(\xi)}\cdot \left(D^2_{kj}F^0({\xi})+ D_kF^0(\xi)D_jF^0(\xi)\right)
 	\\
 	=&D_jF^0(\xi){\xi}_i+D^2_{ik}F(\frac{\nu} {F(\nu)})\cdot G(\xi)
 	\\
 	=&D_jF^0(\xi){\xi}_i+F(\nu)D^2_{ik}F|_{\nu}\cdot G(\xi).
 \end{align*}
  Then we have $F(\nu)D^2F|_{\nu}=\(I-\xi^{T}\cdot DF^0(\xi)\)\cdot \(G(\xi)\)^{-1}$.
  We denote $\tilde{e}_2:=(\tilde{e}_2^1,\tilde{e}_2^2,\tilde{e}_2^3):=F(\nu)\<\mu,E_{n+1}\>A_F(\nu)\mu=F(\nu)D^2F(\nu)\(\<\mu,E_{n+1}\>\mu+\<\nu,E_{n+1}\>\nu\)=F(\nu)D^2F(\nu)E_{n+1}=\(I-\xi^{T}\cdot DF^0(\xi)\)\cdot \(G(\xi)\)^{-1}E_{n+1}$. Then we can calculate
  \begin{align*}
  	&z-\left.\frac{Q(\xi)(e_{1},e_{1},A_F(\nu)\mu)\cdot \<E_{n+1},\mu           \> \cdot F(\nu)}{G(\xi)(e_{1},e_{1})}\right|_{\xi_3=-z}
  	=z+\left.\frac{Q(\xi)(e_1,e_1,\tilde{e}_2)}{G(\xi)(e_{1},e_{1})}\right|_{\xi_3=z}
  	\\
  	=&z+\left.\sum_{i,j,k=1}^{3}\frac{Q_{\xi_i\xi_j\xi_k}(\xi)e_1^ie_1^j\tilde{e}_2^k}{G(\xi)(e_{1},e_{1})}\right|_{\xi_3=-z}
  	\\
  	=&z-6 z^3 (x^2 + y^2)\cdot(x^4 + 2 x^2 y^2 + x^2 z^2 + y^4 + y^2 z^2 + z^4)^{-\frac{1}{2}}\left(2 x^4 + 4 x^2 y^2 + 11 x^2 z^2 \right.
  	\\
  	&\left.+ 2 y^4 + 11 y^2 z^2 + 2 z^4\right)^{-1}
  	\\
  	=&z-6 z^3 (x^2 + y^2)\left(2+9 (x^2 + y^2) z^2 \right)^{-1}
  	\\
  	=&z\left(2+3(x^2 + y^2) z^2\right)\left(2+9 (x^2 + y^2) z^2 \right)^{-1}, 
  	  \end{align*}
  	  we can see that $z-\left.\frac{Q(\xi)(e_{1},e_{1},A_F(\nu)\mu)\cdot \<E_{n+1},\mu           \> \cdot F(\nu)}{G(\xi)(e_{1},e_{1})}\right|_{\xi_3=-z}\leq 0$ if and only if $z\leq 0$.
  \end{proof}
  \begin{remark}\label{rk:A.2}
  	Similarly, if we take $F^0=\left((x^2+2y^2+z^2)(x^2+ 2y^2)+z^4\right)^{\frac{1}{4}}$, one can check that $F^0$  is also a  Minkowski norm, and $\W=\{\xi\in\mathbb{R}^3:F^0(\xi)=1\}$ still satisfies (1) and (2) in Example \ref{exam:x^4}.
  \end{remark}

	\begin{Examples}\label{exam:x^4T}
		Given a constant $z_0\in(-1,1)$,  we can take a  Wulff shape ${\W}=\{(x,y,z):\((x^2+y^2+(z+z_0)^2)(x^2+ y^2)+(z+z_0)^4\)=1\}$. If we take  $\omega_0=z_0$, then $\omega_0$  and $\W$ satisfy Condition \ref{condition}.
	\end{Examples}
     \begin{proof}
     	We will prove it by using Lemma \ref{lem:Q-translate} (some relationship of $\W$ and the translated Wulff shape).
     	It's easy to check that $E_{n+1}^F=E_{n+1}$ since $\nu^{-1}((0,0,1))=(0,0,1)$.
     	We take a translated Wulff shape $\widetilde{\W}=\W+z_0E_{n+1}^F$, that is $\widetilde{\W}=\{(x,y,z):\((x^2+y^2+z^2)(x^2+ y^2)+z^4\)=1\}$. In Example \ref{exam:x^4} (1),  we have proved that $\tilde{Q}(\tilde{\xi})({e_1,e_1,e_2})=0$, for  $\tilde{\xi}=\xi+z_0E_{n+1}^F\in\widetilde{\W}\cap\{(x,y,z)\in\mathbb{R}^3:z=0\}$,  $\xi=(x,y,z)\in{\W}\cap\{z=-\omega_0\} $,  $e_1\in T_{\tilde{\xi}}\partial(\widetilde{\W}\cap\{z=0\})=T_{{\xi}}\partial({\W}\cap\{z=-\omega_0\})$ and $e_2=\widetilde{A_F}(\nu)\mu={A_F}(\nu)\mu$. By \eqref{Q-translate}, we know $\tilde{Q}_{112}(\tilde{\xi})=\frac{1}{1+\omega_0G(\xi)(\xi,E_{n+1}^F)}\(
     	Q_{112}(\xi)
     	-\frac{\omega_0G(\xi)(e_1,e_1)}{F(\nu)\<\mu,E_{n+1}\>}
     	\)$. Therefore, we have  $
     	Q_{112}(\xi)
     	=\frac{\omega_0G(\xi)(e_1,e_1)}{F(\nu)\<\mu,E_{n+1}\>}$ which satisfies \eqref{equ:condition} in Condition \ref{condition}.
     	     \end{proof}
     	
   \begin{remark}\label{rk:A.3}
   	 Similar to Example \ref{exam:x^4T},  one can check  more Wulff shapes $\W$ and corresponding $\omega_0$ which  satisfy Condition \ref{condition}, such as:
 \begin{itemize}
 	\item [(1)]  For ${\W}=\{(x,y,z):\left(2 x^2+y^2+(z+z_0)^2\right)^2+\left(x^2+2 y^2+(z+z_0)^2\right)^2+\left(x^2+y^2+2 (z+z_0)^2\right)^2=1\}$ with a constant $z_0\in(-6^{1/4},6^{1/4})$, we take $\omega_0=z_0$;
    \item  [(2)] For ${\W}=\{(x,y,z):x^4+y^4+(z+z_0)^4+x^2y^2+ x^2(z+z_0)^2+y^2 (z+z_0)^2=1\}$ with $z_0\in(-1,1)$, we take $\omega_0=z_0$.
 \end{itemize}
   \end{remark}
   \begin{remark}
   	In Example \ref{exam:x^4T} and  Remark \ref{rk:A.3}, we can take $z_0>0$. 
   	For these Wulff shapes, the result of preserving convexity in Theorem \ref{thm:convex preservation} and the   Alexandrov-Fenchel inequalities in Theorem \ref{thm:AF-neq} can hold  for   $\omega_0=z_0>0$ (that means anisotropic  contact angle   $\theta>\frac{\pi}{2}$).   	
   \end{remark}
}
\section*{Reference}
\begin{biblist}
\bib{And01}{article}{
	title={Volume-Preserving Anisotropic Mean Curvature Flow},
	author={Andrews, Ben},
	JOURNAL = {Indiana Univ. Math. J.},
	FJOURNAL = {Indiana University Mathematics Journal},
	volume={50},
	number={2},
	pages={783-827},
	year={2001},
}
\bib{Andrews}{article}{
	AUTHOR = {Andrews, Ben },
	AUTHOR = {McCoy, James},
	AUTHOR = {Zheng, Yu},
	TITLE = {Contracting convex hypersurfaces by curvature},
	JOURNAL = {Calc. Var. Partial Differential Equations},
	FJOURNAL = {Calculus of Variations and Partial Differential Equations},
	VOLUME = {47},
	YEAR = {2013},
	NUMBER = {3-4},
	PAGES = {611--665},
	ISSN = {0944-2669},
	MRCLASS = {53C44 (35K55 58J35)},
	MRNUMBER = {3070558},
	MRREVIEWER = {Kin Ming Hui},
	DOI = {10.1007/s00526-012-0530-3},
	URL = {https://doi.org/10.1007/s00526-012-0530-3},
}

\bib{Jost-1988}{article}{
	AUTHOR = {Barbosa, J. Lucas},
	AUTHOR = {do Carmo, Manfredo },
	AUTHOR = {Eschenburg, Jost},
	TITLE = {Stability of hypersurfaces of constant mean curvature in
		{R}iemannian manifolds},
	JOURNAL = {Math. Z.},
	FJOURNAL = {Mathematische Zeitschrift},
	VOLUME = {197},
	YEAR = {1988},
	NUMBER = {1},
	PAGES = {123--138},
	ISSN = {0025-5874},
	MRCLASS = {53C42},
	MRNUMBER = {917854},
	MRREVIEWER = {Johan Deprez},
	DOI = {10.1007/BF01161634},
	URL = {https://doi.org/10.1007/BF01161634},
}

	\bib{Brakke1978}{book}{
	AUTHOR = {Brakke, Kenneth A.},
	TITLE = {The motion of a surface by its mean curvature},
	SERIES = {Mathematical Notes},
	VOLUME = {20},
	PUBLISHER = {Princeton University Press, Princeton, NJ},
	YEAR = {1978},
	PAGES = {i+252},
	ISBN = {0-691-08204-9},
	MRCLASS = {49F22 (35K99 49F20 58D25)},
	MRNUMBER = {485012},
	MRREVIEWER = {Jean E. Taylor},
}

\bib{Ding-Li-2022}{article}{
	author={Ding, Shanwei},
	author={Li, Guanghan},
	title={A class of curvature flows expanded by support function and
		curvature function in the Euclidean space and hyperbolic space},
	journal={J. Funct. Anal.},
	volume={282},
	date={2022},
	number={3},
	pages={Paper No. 109305, 38},
	issn={0022-1236},
	review={\MR{4339010}},
	doi={10.1016/j.jfa.2021.109305},
}
\bib{Gao-Li2024}{article}{
	author={Gao, Jinyu},
	author={Li, Guanghan},
	title={Anisotropic Alexandrov--Fenchel Type Inequalities and
		Hsiung--Minkowski Formula},
	journal={J. Geom. Anal.},
	volume={34},
	date={2024},
	number={10},
	pages={Paper No. 312},
	issn={1050-6926},
	review={\MR{4784920}},
	doi={10.1007/s12220-024-01759-7},
}
\bib{Arxiv}{article}{
title={Generalized Minkowski formulas and rigidity results for anisotropic capillary hypersurfaces},
author={Gao, Jinyu},
author={Li, Guanghan},
year={2024},
eprint={2401.12137},
}

\bib{Gao-Luo-Xu2024}{article}{
	AUTHOR = {Gao, Zhenghuan},
	AUTHOR = {Lou, Bendong},
	AUTHOR = { Xu, Jinju},
	TITLE = {Uniform gradient bounds and convergence of mean curvature
		flows in a cylinder},
	JOURNAL = {J. Funct. Anal.},
	FJOURNAL = {Journal of Functional Analysis},
	VOLUME = {286},
	YEAR = {2024},
	NUMBER = {5},
	PAGES = {Paper No. 110283},
	ISSN = {0022-1236},
	MRCLASS = {35K93 (35B40 35B45 35C07 53E10)},
	MRNUMBER = {4682454},
	DOI = {10.1016/j.jfa.2023.110283},
	URL = {https://doi.org/10.1016/j.jfa.2023.110283},
}
\bib{Bo-Shi-Sui}{article}{
	AUTHOR = {Guan, Bo},
	AUTHOR = {Shi, Shujun},
	AUTHOR = {Sui, Zhenan},
	TITLE = {On estimates for fully nonlinear parabolic equations on
		{R}iemannian manifolds},
	JOURNAL = {Anal. PDE},
	FJOURNAL = {Analysis \& PDE},
	VOLUME = {8},
	YEAR = {2015},
	NUMBER = {5},
	PAGES = {1145--1164},
	ISSN = {2157-5045},
	MRCLASS = {58J35 (35B45 35K55)},
	MRNUMBER = {3393676},
	MRREVIEWER = {Vladimir Grebenev},
	DOI = {10.2140/apde.2015.8.1145},
	URL = {https://doi.org/10.2140/apde.2015.8.1145},
}

\bib{Guan-Li-2009}{article}{
author={Guan, Pengfei},
author={Li, Junfang},
title={The quermassintegral inequalities for $k$-convex starshaped
	domains},
journal={Adv. Math.},
volume={221},
date={2009},
number={5},
pages={1725--1732},
issn={0001-8708},
review={\MR{2522433}},
doi={10.1016/j.aim.2009.03.005},

}

\bib{Guan-Li-2015}{article}{
	AUTHOR = {Guan, Pengfei},
	AUTHOR = {Li, Junfang},
	TITLE = {A mean curvature type flow in space forms},
	JOURNAL = {Int. Math. Res. Not. IMRN},
	FJOURNAL = {International Mathematics Research Notices. IMRN},
	YEAR = {2015},
	NUMBER = {13},
	PAGES = {4716--4740},
	ISSN = {1073-7928},
	MRCLASS = {53C44},
	MRNUMBER = {3439091},
	MRREVIEWER = {Bang Xiao},
	DOI = {10.1093/imrn/rnu081},
	URL = {https://doi.org/10.1093/imrn/rnu081},
}

\bib{Guan-Li-Wang-2019}{article}{
	AUTHOR = {Guan, Pengfei},
	AUTHOR = {Li, Junfang},
	AUTHOR = {Wang, Mu-Tao},
	TITLE = {A volume preserving flow and the isoperimetric problem in
		warped product spaces},
	JOURNAL = {Trans. Amer. Math. Soc.},
	FJOURNAL = {Transactions of the American Mathematical Society},
	VOLUME = {372},
	YEAR = {2019},
	NUMBER = {4},
	PAGES = {2777--2798},
	ISSN = {0002-9947},
	MRCLASS = {53E10 (53C42)},
	MRNUMBER = {3988593},
	MRREVIEWER = {Greg\'{o}rio Manoel Silva Neto},
	DOI = {10.1090/tran/7661},
	URL = {https://doi.org/10.1090/tran/7661},
}

\bib{Guo-Xia}{article}{
author={Guo, Jinyu},
author={Xia, Chao},
title={Stable anisotropic capillary hypersurfaces in a half-space},
 year={2024},
eprint={2301.03020},
archivePrefix={arXiv},
primaryClass={math.DG}
}

\bib{HL08}{article} {
	AUTHOR = {He, Yijun},
AUTHOR = {Li, Haizhong},
	TITLE = {Stability of hypersurfaces with constant {$(r+1)$}-th
		anisotropic mean curvature},
	JOURNAL = {Illinois J. Math.},
	FJOURNAL = {Illinois Journal of Mathematics},
	VOLUME = {52},
	YEAR = {2008},
	NUMBER = {4},
	PAGES = {1301--1314},
	ISSN = {0019-2082},
	MRCLASS = {53C42 (49Q10)},
	MRNUMBER = {2595769},
	MRREVIEWER = {C\'{e}sar Rosales},
	URL = {http://projecteuclid.org/euclid.ijm/1258554364},
}
\bib{He-Li_Ma-Ge-09}{article}{
	AUTHOR = {He, Yijun},
	AUTHOR = {Li, Haizhong},
	AUTHOR = {Ma, Hui},
	AUTHOR = {Ge, Jianquan},
	TITLE = {Compact embedded hypersurfaces with constant higher order
		anisotropic mean curvatures},
	JOURNAL = {Indiana Univ. Math. J.},
	FJOURNAL = {Indiana University Mathematics Journal},
	VOLUME = {58},
	YEAR = {2009},
	NUMBER = {2},
	PAGES = {853--868},
	ISSN = {0022-2518},
	MRCLASS = {53C42},
	MRNUMBER = {2514391},
	MRREVIEWER = {C\'{e}sar Rosales},
	DOI = {10.1512/iumj.2009.58.3515},
	URL = {https://doi.org/10.1512/iumj.2009.58.3515},
}

	\bib{Huisken1984}{article}{
	AUTHOR = {Huisken, Gerhard},
	TITLE = {Flow by mean curvature of convex surfaces into spheres},
	JOURNAL = {J. Differential Geom.},
	FJOURNAL = {Journal of Differential Geometry},
	VOLUME = {20},
	YEAR = {1984},
	NUMBER = {1},
	PAGES = {237--266},
	ISSN = {0022-040X},
	MRCLASS = {53C45 (49F05 58F17)},
	MRNUMBER = {772132},
	MRREVIEWER = {R. Schneider},
	URL = {http://projecteuclid.org/euclid.jdg/1214438998},
}

\bib{Huisken1987}{article}{
	AUTHOR = {Huisken, Gerhard},
	TITLE = {The volume preserving mean curvature flow},
	JOURNAL = {J. Reine Angew. Math.},
	FJOURNAL = {Journal f\"{u}r die Reine und Angewandte Mathematik. [Crelle's
		Journal]},
	VOLUME = {382},
	YEAR = {1987},
	PAGES = {35--48},
	ISSN = {0075-4102},
	MRCLASS = {53A07 (35K99 53C40)},
	MRNUMBER = {921165},
	MRREVIEWER = {Michael Gr\"{u}ter},
	DOI = {10.1515/crll.1987.382.35},
	URL = {https://doi.org/10.1515/crll.1987.382.35},
}
\bib{Hu-Wei-yang-zhou2023}{article}{
	AUTHOR = {Hu, Yingxiang},
	AUTHOR = {Wei, Yong},
	AUTHOR = {Yang, Bo},
	AUTHOR = {Zhou, Tailong},
	TITLE = {On the mean curvature type flow for convex capillary
		hypersurfaces in the ball},
	JOURNAL = {Calc. Var. Partial Differential Equations},
	FJOURNAL = {Calculus of Variations and Partial Differential Equations},
	VOLUME = {62},
	YEAR = {2023},
	NUMBER = {7},
	PAGES = {Paper No. 209, 23},
	ISSN = {0944-2669},
	MRCLASS = {53E10 (35K93 52A40 53C21)},
	MRNUMBER = {4626464},
	DOI = {10.1007/s00526-023-02554-y},
	URL = {https://doi.org/10.1007/s00526-023-02554-y},
}
\bib{Hu-Wei-yang-zhou2023-2}{article}{
	author={Hu, Yingxiang},
	author={Wei, Yong},
	author={Yang, Bo},
	author={Zhou, Tailong},
	title={A complete family of Alexandrov-Fenchel inequalities for convex
		capillary hypersurfaces in the half-space},
	journal={Math. Ann.},
	volume={390},
	date={2024},
	number={2},
	pages={3039--3075},
	issn={0025-5831},
	review={\MR{4801847}},
	doi={10.1007/s00208-024-02841-9},
}
\bib{Jia-Wang-Xia-Zhang2023}{article}{
	AUTHOR = {Jia, Xiaohan},
	AUTHOR = {Wang, Guofang},
	AUTHOR = {Xia, Chao},
	AUTHOR = {Zhang, Xuwen},
	title={Alexandrov's theorem for anisotropic capillary hypersurfaces in the half-space},
	journal={Arch. Ration. Mech. Anal. },
	VOLUME = {247},
	YEAR = {2023},
	NUMBER = {2},
	PAGES = {19-25},
}

\bib{Koiso2006}{article}{
	author={Koiso, Miyuki},
	author={Palmer, Bennett},
	title={Stability of anisotropic capillary surfaces between two parallel
		planes},
	journal={Calc. Var. Partial Differential Equations},
	volume={25},
	date={2006},
	number={3},
	pages={275--298},
	issn={0944-2669},
	review={\MR{2201674}},
	doi={10.1007/s00526-005-0336-7},
}

\bib{Koiso2007-1}{article}{
	author={Koiso, Miyuki},
	author={Palmer, Bennett},
	title={Anisotropic capillary surfaces with wetting energy},
	journal={Calc. Var. Partial Differential Equations},
	volume={29},
	date={2007},
	number={3},
	pages={295--345},
	issn={0944-2669},
	review={\MR{2321891}},
	doi={10.1007/s00526-006-0066-5},
}

\bib{Koiso2007-2}{article}{
	author={Koiso, Miyuki},
	author={Palmer, Bennett},
	title={Uniqueness theorems for stable anisotropic capillary surfaces},
	journal={SIAM J. Math. Anal.},
	volume={39},
	date={2007},
	number={3},
	pages={721--741},
	issn={0036-1410},
	review={\MR{2349864}},
	doi={10.1137/060657297},
}
\bib{Koiso2023}{article}{
	AUTHOR = {Koiso, Miyuki},
	TITLE = {Stable anisotropic capillary hypersurfaces in a wedge},
	JOURNAL = {Math. Eng.},
	FJOURNAL = {Mathematics in Engineering},
	VOLUME = {5},
	YEAR = {2023},
	NUMBER = {2},
	PAGES = {Paper No. 029, 22},
	MRCLASS = {58E12},
	MRNUMBER = {4431669},
	MRREVIEWER = {Futoshi Takahashi},
	DOI = {10.3934/mine.2023029},
	URL = {https://doi.org/10.3934/mine.2023029},
}

\bib{3-pde}{book}{
	AUTHOR = {Lady{z}enskaja, O. A. },
	AUTHOR = { Solonnikov, V. A.},
	AUTHOR = { Ural'ceva, N. N.},
	TITLE = {Linear and quasilinear equations of parabolic type},
	SERIES = {Translations of Mathematical Monographs, Vol. 23},
	NOTE = {Translated from the Russian by S. Smith},
	PUBLISHER = {American Mathematical Society, Providence, RI},
	YEAR = {1968},
	PAGES = {xi+648},
	MRCLASS = {35.62},
	MRNUMBER = {241822},
	MRREVIEWER = {B. Frank Jones, Jr.},
}

\bib{2-pde}{book}{
	AUTHOR = {Lieberman, Gary M.},
	TITLE = {Second order parabolic differential equations},
	PUBLISHER = {World Scientific Publishing Co., Inc., River Edge, NJ},
	YEAR = {1996},
	PAGES = {xii+439},
	ISBN = {981-02-2883-X},
	MRCLASS = {35-02 (35Bxx 35Dxx 35Kxx)},
	MRNUMBER = {1465184},
	MRREVIEWER = {Siegfried Carl},
	DOI = {10.1142/3302},
	URL = {https://doi.org/10.1142/3302},
}
	
\bib{Li-Ma2020}{article}{
	AUTHOR = {Li, Guanghan},
	AUTHOR = {Ma, Kuicheng},
	TITLE = {The mean curvature type flow in {L}orentzian warped product},
	JOURNAL = {Math. Phys. Anal. Geom.},
	FJOURNAL = {Mathematical Physics, Analysis and Geometry. An International
		Journal Devoted to the Theory and Applications of Analysis and
		Geometry to Physics},
	VOLUME = {23},
	YEAR = {2020},
	NUMBER = {2},
	PAGES = {Paper No. 15, 15},
	ISSN = {1385-0172},
	MRCLASS = {53E10 (53C50)},
	MRNUMBER = {4093926},
	MRREVIEWER = {Abimbola Abolarinwa},
	DOI = {10.1007/s11040-020-09338-2},
	URL = {https://doi.org/10.1007/s11040-020-09338-2},
}
\bib{Ma-Ma-Yang}{article}{
	title={Anisotropic capillary hypersurfaces in a wedge},
	author={Hui Ma and Jiaxu Ma and Mingxuan Yang},
	year={2024},
	eprint={2403.19815},
	archivePrefix={arXiv},
	primaryClass={math.DG},
	url={https://arxiv.org/abs/2403.19815},
}
	\bib{Mei-Wang-Weng}{article}{
	TITLE = {A constrained mean curvature flow and {A}lexandrov-{F}enchel
		inequalities},
	JOURNAL = {Int. Math. Res. Not. IMRN},
	FJOURNAL = {International Mathematics Research Notices. IMRN},
	YEAR = {2024},
	NUMBER = {1},
	PAGES = {152--174},
	ISSN = {1073-7928},
	MRCLASS = {53E10 (52A40)},
	MRNUMBER = {4686648},
	DOI = {10.1093/imrn/rnad020},
	URL = {https://doi.org/10.1093/imrn/rnad020},
	author={Mei, Xinqun},
	author={Wang, Guofang},
	author={Weng, Liangjun},
}
\bib{Xia-arxiv}{article}{
	title={Alexandrov-Fenchel inequalities for convex hypersurfaces in the half-space with capillary boundary. II},
	author={Xinqun Mei and Guofang Wang and Liangjun Weng and Chao Xia},
	year={2024},
	eprint={2408.13655},
	archivePrefix={arXiv},
	primaryClass={math.MG},
	url={https://arxiv.org/abs/2408.13655},
}
	\bib{Mullins1956}{article}{
	AUTHOR = {Mullins, W. W.},
	TITLE = {Two-dimensional motion of idealized grain boundaries},
	JOURNAL = {J. Appl. Phys.},
	FJOURNAL = {Journal of Applied Physics},
	VOLUME = {27},
	YEAR = {1956},
	PAGES = {900--904},
	ISSN = {0021-8979},
	MRCLASS = {76.0X},
	MRNUMBER = {78836},
}

	\bib{Nina1992}{article}{
	AUTHOR = {Nazarov, A. I.},
	AUTHOR = {Ural\cprime tseva, N. N.},
	TITLE = {A problem with an oblique derivative for a quasilinear
		parabolic equation},
	JOURNAL = {Zap. Nauchn. Sem. S.-Peterburg. Otdel. Mat. Inst. Steklov.
		(POMI)},
	FJOURNAL = {Rossi\u{\i}skaya Akademiya Nauk. Sankt-Peterburgskoe Otdelenie.
		Matematicheski\u{\i} Institut im. V. A. Steklova. Zapiski Nauchnykh
		Seminarov (POMI)},
	VOLUME = {200},
	YEAR = {1992},
	NUMBER = {Kraev. Zadachi Mat. Fiz. Smezh. Voprosy Teor. Funktsi\u{\i}. 24},
	PAGES = {118--131, 189},
	ISSN = {0373-2703},
	MRCLASS = {35K55},
	MRNUMBER = {1192119},
	MRREVIEWER = {I. \L ojczyk-Kr\'{o}likiewicz},
	DOI = {10.1007/BF02364713},
	URL = {https://doi.org/10.1007/BF02364713},
}
	
\bib{Shen-Lecture-2001}{book}{
	AUTHOR = {Shen, Zhongmin},
	TITLE = {Lectures on {F}insler geometry},
	PUBLISHER = {World Scientific Publishing Co., Singapore},
	YEAR = {2001},
	PAGES = {xiv+307},
	ISBN = {981-02-4531-9},
	MRCLASS = {53B40 (53C60)},
	MRNUMBER = {1845637},
	MRREVIEWER = {David Bao},
	DOI = {10.1142/9789812811622},
	URL = {https://doi.org/10.1142/9789812811622},
}
	
\bib{Scheuer-Wang-Xia-2022}{article}{
	AUTHOR = {Scheuer, Julian},
	AUTHOR = {Wang, Guofang},
	AUTHOR = {Xia, Chao},
	TITLE = {Alexandrov-{F}enchel inequalities for convex hypersurfaces
		with free boundary in a ball},
	JOURNAL = {J. Differential Geom.},
	FJOURNAL = {Journal of Differential Geometry},
	VOLUME = {120},
	YEAR = {2022},
	NUMBER = {2},
	PAGES = {345--373},
	ISSN = {0022-040X},
	MRCLASS = {53A10 (53C21 53C24 53E10)},
	MRNUMBER = {4385120},
	MRREVIEWER = {Haizhong Li},
	DOI = {10.4310/jdg/1645207496},
	URL = {https://doi.org/10.4310/jdg/1645207496},
}

\bib{Convex-book}{book}{
	author={Schneider, Rolf},
	title={Convex bodies: the Brunn-Minkowski theory},
	series={Encyclopedia of Mathematics and its Applications},
	volume={151},
	edition={Second expanded edition},
	publisher={Cambridge University Press, Cambridge},
	date={2014},
	pages={xxii+736},
	isbn={978-1-107-60101-7},
	review={\MR{3155183}},
}

\bib{Wang-Weng-2020}{article}{
	AUTHOR = {Wang, Guofang},
	AUTHOR = {Weng, Liangjun},
	TITLE = {A mean curvature type flow with capillary boundary in a unit
		ball},
	JOURNAL = {Calc. Var. Partial Differential Equations},
	FJOURNAL = {Calculus of Variations and Partial Differential Equations},
	VOLUME = {59},
	YEAR = {2020},
	NUMBER = {5},
	PAGES = {Paper No. 149, 26},
	ISSN = {0944-2669},
	MRCLASS = {53E10 (35K93)},
	MRNUMBER = {4137803},
	MRREVIEWER = {Yoshihiro Tonegawa},
	DOI = {10.1007/s00526-020-01812-7},
	URL = {https://doi.org/10.1007/s00526-020-01812-7},
}

	\bib{Wang-Weng-Xia}
	{article}{
		   TITLE = {Alexandrov-{F}enchel inequalities for convex hypersurfaces in
			the half-space with capillary boundary},
		JOURNAL = {Math. Ann.},
		FJOURNAL = {Mathematische Annalen},
		VOLUME = {388},
		YEAR = {2024},
		NUMBER = {2},
		PAGES = {2121--2154},
		ISSN = {0025-5831},
		MRCLASS = {53E40 (35K96 53C21 53C24)},
		MRNUMBER = {4700391},
		DOI = {10.1007/s00208-023-02571-4},
		URL = {https://doi.org/10.1007/s00208-023-02571-4},
		author={Wang, Guofang},
		author={Weng, Liangjun},
		author={Xia, Chao},
	}
	
\bib{Wang-Xia-2022}{article}{
	AUTHOR = {Wang, Guofang},
	AUTHOR = {Xia, Chao},
	TITLE = {Guan-{L}i type mean curvature flow for free boundary
		hypersurfaces in a ball},
	JOURNAL = {Comm. Anal. Geom.},
	FJOURNAL = {Communications in Analysis and Geometry},
	VOLUME = {30},
	YEAR = {2022},
	NUMBER = {9},
	PAGES = {2157--2174},
	ISSN = {1019-8385},
	MRCLASS = {53E10},
	MRNUMBER = {4631178},
	MRREVIEWER = {Yue He},
}

\bib{Wei-Xiong2021}{article}{
	AUTHOR = {Wei, Yong },
	AUTHOR = {Xiong, Changwei},
	TITLE = {A volume-preserving anisotropic mean curvature type flow},
	JOURNAL = {Indiana Univ. Math. J.},
	FJOURNAL = {Indiana University Mathematics Journal},
	VOLUME = {70},
	YEAR = {2021},
	NUMBER = {3},
	PAGES = {881--905},
	ISSN = {0022-2518},
	MRCLASS = {53E10},
	MRNUMBER = {4284100},
	MRREVIEWER = {James Alexander McCoy},
	DOI = {10.1512/iumj.2021.70.8337},
	URL = {https://doi.org/10.1512/iumj.2021.70.8337},
}

\bib{Wei-Xiong2022}{article}{
	AUTHOR = {Wei, Yong },
AUTHOR = {Xiong, Changwei},
	TITLE = {A fully nonlinear locally constrained anisotropic curvature
		flow},
	JOURNAL = {Nonlinear Anal.},
	FJOURNAL = {Nonlinear Analysis. Theory, Methods \& Applications. An
		International Multidisciplinary Journal},
	VOLUME = {217},
	YEAR = {2022},
	PAGES = {Paper No. 112760, 29},
	ISSN = {0362-546X},
	MRCLASS = {53E10 (53C21)},
	MRNUMBER = {4361845},
	MRREVIEWER = {Xiaolong Li},
	DOI = {10.1016/j.na.2021.112760},
	URL = {https://doi.org/10.1016/j.na.2021.112760},
}
	
\bib{Weng-Xia-2022}{article}{
	AUTHOR = {Weng, Liangjun},
	AUTHOR = {Xia, Chao},
	TITLE = {Alexandrov-{F}enchel inequality for convex hypersurfaces with
		capillary boundary in a ball},
	JOURNAL = {Trans. Amer. Math. Soc.},
	FJOURNAL = {Transactions of the American Mathematical Society},
	VOLUME = {375},
	YEAR = {2022},
	NUMBER = {12},
	PAGES = {8851--8883},
	ISSN = {0002-9947},
	MRCLASS = {53C21 (35K96 52A40)},
	MRNUMBER = {4504655},
	MRREVIEWER = {Changwei Xiong},
	DOI = {10.1090/tran/8756},
	URL = {https://doi.org/10.1090/tran/8756},
}

\bib{Xia-phd}{book}{
	AUTHOR = {Xia, Chao},
	TITLE = {On a Class of Anisotropic Problem},
	SERIES = {PhD Thesis},
	PUBLISHER = {Albert-Ludwigs University Freiburg},
	YEAR = {2012},
}

\bib{Xia13}{article}{
	author={Xia, Chao},
	title={On an anisotropic Minkowski problem},
	journal={Indiana Univ. Math. J.},
	volume={62},
	number={5},
	pages={1399–1430},
	year={2013},	
}


\bib{Xia-2017-convex}{article}{
	AUTHOR = {Xia, Chao},
	TITLE = {Inverse anisotropic curvature flow from convex hypersurfaces},
	JOURNAL = {J. Geom. Anal.},
	FJOURNAL = {Journal of Geometric Analysis},
	VOLUME = {27},
	YEAR = {2017},
	NUMBER = {3},
	PAGES = {2131--2154},
	ISSN = {1050-6926},
	MRCLASS = {53C44 (52A20)},
	MRNUMBER = {3667425},
	MRREVIEWER = {Alina Stancu},
	DOI = {10.1007/s12220-016-9755-2},
	URL = {https://doi.org/10.1007/s12220-016-9755-2},
}

\bib{Xia2017}{article}{
	title={Inverse anisotropic mean curvature flow and a Minkowski type inequality},
	author={Xia, Chao},
	JOURNAL = {Adv. Math.},
	FJOURNAL = {Advances in Mathematics},
	volume={315},
	pages={102-129},
	year={2017},
}

\end{biblist}

\end{document}